\newtheorem{thm}{Theorem}[section]
\newtheorem{lem}[thm]{Lemma}
\newtheorem{cor}[thm]{Corollary}
\newtheorem{prop}[thm]{Proposition}
\theoremstyle{definition}
\newtheorem{defn}[thm]{Definition}
\newtheorem{rem}[thm]{Remark}
\newtheorem{ex}[thm]{Example}
\newcommand{\B}{\mathscr{B}}
\newcommand{\C}{\mathbb{C}}
\newcommand{\Cx}{\C^\times}
\newcommand{\Q}{\mathbb{Q}}
\newcommand{\Z}{\mathbb{Z}}
\newcommand{\g}{\mathfrak{g}}
\newcommand{\Uqg}{U_q(\g)}
\newcommand{\Urg}{\overline{U}_q(\g)}
\newcommand{\Uzg}{\overline{U}_\zeta(\g)}
\newcommand{\U}{\overline{U}}
\newcommand{\sltwo}{\mathfrak{sl_2}}
\newcommand{\slthree}{\mathfrak{sl}_3}
\newcommand{\Uzslthree}{\overline{U}_\zeta(\slthree)}
\newcommand{\bracks}[1]{\langle #1\rangle}
\newcommand{\comm}[1]{}
\newcommand{\floor}[1]{\left\lfloor #1 \right\rfloor}
\newcommand{\Deltasl}{{\Delta\mkern-12mu/}}
\newcommand{\R}{\mathcal{R}}
\newcommand{\X}{\mathcal{X}}
\newcommand{\mvec}[1]{{\smash{\text{\boldmath{$#1$}}}}}
\newcommand{\hotimes}{\widehat{\otimes}}
\newcommand{\s}{\mvec{s}}
\newcommand{\Vt}{V(\mvec{t})}
\newcommand{\Vs}{V(\mvec{s})}
\newcommand{\Vts}{V(\mvec{t})\hotimes V(\mvec{s})}
\newcommand{\A}{\mathcal{A}}
\newcommand{\Phip}{{\Phi^+}}
\newcommand{\Deltap}{{\Delta^+}}
\newcommand{\Hom}{\text{Hom}}
\newcommand{\x}{\mvec{\mu_1}}
\newcommand{\y}{\mvec{\mu_2}}
\renewcommand{\H}{\X_{12}}
\renewcommand{\t}{\mvec{t}}
\renewcommand{\P}{\mathcal{P}}
\renewcommand{\DH}{\widehat{\Delta}(\H)}
\begin{document}
	\title[Verma Modules for Restricted Quantum Groups at a Fourth Root of Unity]{Verma Modules for Restricted Quantum Groups at a Fourth Root of Unity}
	\author{Matthew Harper}
	\begin{abstract} \noindent
		For a semisimple Lie algebra $\mathfrak{g}$ of rank $n$, let $\overline{U}_\zeta(\mathfrak{g})$ be the restricted quantum group of $\mathfrak{g}$ at a primitive fourth root of unity. This quantum group admits a natural Borel-induced representation $V({\boldsymbol{t}})$, with ${\boldsymbol{t}}\in(\mathbb{C}^\times)^n$ determined by a character on the Cartan subalgebra. Ohtsuki showed that for $\mathfrak{g}=\mathfrak{sl}_2$, the braid group representation determined by tensor powers of $V({\boldsymbol{t}})$ is the exterior algebra of the Burau representation. In this paper, we generalize the tensor decomposition of $V({\boldsymbol{t}})\otimes V({\boldsymbol{s}})$ used in Ohtsuki's proof to any semisimple $\mathfrak{g}$. Upon specializing to the $\slthree$ case, we describe all projective covers of $V({\boldsymbol{t}})$ in terms of induced representations. The above decomposition formula for $V({\boldsymbol{t}})\otimes V({\boldsymbol{s}})$ is then extended to more general $\boldsymbol{t}$ and $\boldsymbol{s}$ where these projective covers occur as indecomposable summands.  We also define a stratification of $(\mathbb{C}^\times)^{4}$ whose points $({\boldsymbol{t}},{\boldsymbol{s}})$ in the lower strata are associated with representations $V({\boldsymbol{t}})\otimes V({\boldsymbol{s}})$ that do not have a homogeneous cyclic generator. With this information, we characterize under what conditions the isomorphism ${V({\boldsymbol{t}})\otimes V({\boldsymbol{s}})\cong V({\boldsymbol{\lambda t}})\otimes V({\boldsymbol{\lambda^{-1} s}})}$ holds.
	\end{abstract}
	\maketitle
	\setcounter{tocdepth}{1}
	\tableofcontents
	\section{Introduction}
	The study of quantum groups at fourth roots of unity is motivated both by central questions in quantum topology and universal behaviors in the representation theory of quantum groups. Regarding the former, exact relations between quantum invariants of knots and 3-manifolds and classical topological and geometrically constructed invariants have been an important focus of research in quantum topology since its inception over three decades ago. Both the Alexander-Conway polynomial of knots and the Reidemeister torsion of 3-manifolds are obtained by quantum invariants derived from low-rank quantum groups, including quantum $\sltwo$ at a fourth root of unity. See \cite{KauffmanSaleur,KohliPatureau,Sartori,Viro} for identifications of the Alexander polynomial with quantum invariants derived from $R$-matrices. A modification of the Reshetikhin-Turaev invariant for $\overline{U}_\zeta^H(\sltwo)$, the unrolled restricted quantum group at a fourth root of unity, is shown to equal classical torsion in \cite{BCGP} via the Turaev surgery formula \cite{TuraevReidemeister}. We expect that other classical topological invariants can be recovered from quantum groups at a fourth root of unity.\newline 
	
	The relation to the Alexander polynomial has been extended to the Burau representation of the braid group by Jun Murakami \cite{Murakami, MurakamiStateModel} in the context of state sum models. Ohtsuki \cite{Ohtsuki} describes this relation in terms of an explicit isomorphism between two braid group representations. One is the Turaev-type \cite{Turaev88} $R$-matrix action on tensor powers of a two-dimensional Verma module $V(t)$ of the restricted quantum group $\overline{U}_\zeta(\sltwo)$, where $t\in \Cx$ denotes the highest weight. The other is the full exterior algebra of the unreduced Burau representation. We may view this action on the exterior algebra as a twisted action of the braid group on the cohomology ring of the $U(1)$-character variety of the $n$-punctured disk.\newline 
	
	A natural generalization is to understand $R$-matrix representations from $\overline{U}^H_\zeta(\g)$ for higher rank $\g$ in similar classical topological terms. Identifying tensors of basis elements with geometric generators as in \cite{Ohtsuki} requires explicit descriptions of both Verma modules with continuous highest weights and their tensor decompositions. These give insights into underlying ring structures and spectral properties of braid generators via skein relations.\newline 
	
	While quantum invariants in rank one have been studied extensively in the literature, the respective algebraic and topological constructions are more complicated and have received less attention for higher rank Lie types, and little is known in the way of descriptions. This article is an introduction to studying the higher rank restricted quantum groups.\newline 
	
	A second rationale for focusing on the fourth root of unity is that this case captures much of the behavior of the representation theory of quantum groups at general roots of unity. One may consider the blocks of the abelian representation category of a quantum algebra, obtained from minimal central idempotents of the algebra. It has been known for some time that all blocks constituting the representation category of quantum $\sltwo$ at general roots of unity are isomorphic to those at a fourth root of unity, see \cite{CGP,Kerler}. Among odd orders, $p$ of roots of unity and general Lie types, Andersen, Jantzen, and Soergel \cite{AJS} show that blocks are independent of $p$. Therefore, it is reasonable to expect that a similar result holds for even roots of unity. More specifically, block types, decomposition series of indecomposable representations, and descriptions of projective covers studied here should be mostly unchanged for other even roots of unity.\newline 
	
	This category of modules will, however, depend on the order of the root of unity when considered as a tensor category. Even so, our exploration of the fourth root of unity case reveals general phenomena likely to be encountered for arbitrary roots. The setting in this paper produces a multi-parameter fusion ring very different from the classical integral theory with irreducibility decomposition rules determined by generic and singular loci of the parameters. We identify the modules which occur as particular induced representations depending on the stratum of degeneracy, hinting towards an interesting, more general theory for all Lie types.
   
	\subsection{Statement of Results} We start with basic definitions and properties of $\Uzg$ and its induced modules for a semisimple Lie algebra of rank $n$. We then give the generic tensor decomposition for its representations $\Vt$. However, the goal of this paper is to give a description of interesting structures which arise in the $\slthree$ case in the non-semisimple setting that can be further expanded upon in higher rank.\newline
	
	We recall, in Section \ref{sec:notation}, the construction of quantum groups at roots of unity from Lusztig's divided powers algebra $U_q^{div}(\g)$. In contrast to the small quantum group $u_q(\g)$ described in \cite{LusztigADE,Lusztig}, the restricted quantum group $\Urg$ considered here is infinite dimensional. For $q=\zeta$, a primitive fourth root of unity, we give a generators and relations description and a PBW basis of $\Uzg$ for each Lie type. At this root of unity,  $E_\alpha^2=F_\alpha^2=0$ for every root $\alpha$, and the Serre relations are reduced to ``far commutativity.''\newline 
	
	Outside of the simply laced types, some $E_\alpha$ and $F_\alpha$ are zero. We refer to these roots $\alpha$ as \textit{negligible}. Here, $\overline{\Phip}$ denotes the set of positive roots which are not negligible, equipped with an ordering $<_{br}$, and $\Psi$ is the collection of maps $\overline{\Phip}\to\{0,1\}$.\newline

	Consider the group of characters $\P$ on the Cartan torus of the restricted quantum group. A character $\t$ is determined by the images $t_i\in \C^\times$ of Cartan generators $K_i$ for $1\leq i\leq n$. Thus, $\t$ can be identified with a tuple $(t_1,\dots, t_n)\in (\Cx)^n$. Multiplication in $\P$ is given by $\t\s=(t_1s_1,\dots, t_ns_n)$ with identity $\mvec{1}=(1,\dots, 1)$. The character $\t$ extends to a character $\gamma_{\t}$ on the Borel subalgebra $B$ by taking the value zero off the Cartan torus.\newline
	
	Let $V_{\t}=\langle v_h\rangle$ be the 1-dimensional left $B$-module determined by $\gamma_{\t}$, i.e. for $b\in B$, ${bv_h=\gamma_{\t}(b)v_h}$.
	We then define the representation $V(\t)$ to be the induced module 
	\begin{align}
	\Vt={\text{{Ind}} _{B}^{\Urg}(V_{\t})=\Urg\otimes_{B}V_{\t}}. 
	\end{align}
		We fix $q=\zeta$, a primitive fourth root of unity. In this case, the representations $\Vt$ each have dimension $|\Psi|$ which, in the simply-laced case, is equal to $2^{|\Phip|}$. 
	\begin{lem}
		The representations $\Vt$ of $\Uzg$ are indecomposable and non-isomorphic for each $\t\in\P$. Moreover, there exists an algebraic set $\R$ so that $\Vt$ is irreducible if and only if $\t\in\P\setminus\R$.
	\end{lem}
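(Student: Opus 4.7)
I will prove indecomposability and pairwise non-isomorphism using a PBW height grading on $V(\t)$, and realize the reducibility locus as the common vanishing set of Laurent polynomials in $t_1,\dots,t_n$ arising as minors of the $E_i$-action matrices. Set $\text{ht}(\alpha)$ equal to the sum of simple-root coefficients of $\alpha\in\overline{\Phip}$, extend to $\text{ht}(\psi)=\sum_\alpha\psi(\alpha)\text{ht}(\alpha)$, and grade $V(\t)=\bigoplus_k V(\t)_{(k)}$ by $V(\t)_{(k)}=\text{span}\{F^\psi v_h:\text{ht}(\psi)=k\}$. The commutation relations at $q=\zeta$ are height-homogeneous, so $F_\alpha$ shifts height by $+\text{ht}(\alpha)\geq 1$, each $E_i$ by $-1$, $U^0$ preserves it, and $V(\t)_{(0)}=\C v_h$.

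\textbf{Non-isomorphism.} Given an isomorphism $\phi:V(\t)\to V(\s)$, decompose $\phi(v_h^{(\t)})=\sum_kw_k$ with $w_k\in V(\s)_{(k)}$. Each $w_k$ is singular (the identity $E_i\phi(v_h^{(\t)})=0$ separates by grade, since $E_i$ shifts height by $-1$) and of weight $\t$. Hence $w_0\in V(\s)_\t\cap V(\s)_{(0)}=V(\s)_\t\cap\C v_h^{(\s)}$, which vanishes whenever $\t\neq\s$. In that case $\Uzg\phi(v_h^{(\t)})=\sum_k U^-w_k\subseteq V(\s)_{(\geq 1)}$, a proper subset of $V(\s)$, contradicting cyclicity of the image of $v_h^{(\t)}$. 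Therefore $\t=\s$.

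\textbf{Indecomposability.} Suppose $V(\t)=A\oplus B$ with $A,B$ nonzero and write $v_h=v_A+v_B$. Projecting $E_iv_h=0$ and $K_iv_h=t_iv_h$ onto each summand shows $v_A,v_B$ are both singular weight-$\t$ vectors. Since $v_h\in V(\t)_{(0)}$ is nonzero, at least one of $v_A,v_B$ has a nonzero height-$0$ component, say $v_A=cv_h+v_A^{(\geq 1)}$ with $c\neq 0$. The linear map $T:V(\t)\to V(\t)$ defined on the PBW basis by $T(F^\psi v_h)=F^\psi v_A$ equals $cI+N$, where $N(F^\psi v_h)=F^\psi v_A^{(\geq 1)}\in V(\t)_{(\geq\text{ht}(\psi)+1)}$ strictly raises height and is thus nilpotent. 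Therefore $T$ is invertible, $\{F^\psi v_A\}_{\psi\in\Psi}$ is a basis of $V(\t)$, and $V(\t)=U^-v_A\subseteq\Uzg v_A\subseteq A$, forcing $B=0$, a contradiction.

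\textbf{Irreducibility and main obstacle.} $V(\t)$ is reducible iff there is a nonzero singular vector in $V(\t)_{(\geq 1)}$, equivalently iff $\bigcap_i\ker E_i|_{V(\t)_{(k)}}\neq 0$ for some $k\geq 1$ (singular vectors decompose into singular height-homogeneous pieces by the same grade-separation argument). Iterating the commutation identities $[E_i,F_\alpha]$ and $K_jF_\alpha=\zeta^{-(\alpha_j|\alpha)}F_\alpha K_j$ through the $<_{br}$-ordering expresses each $E_i\cdot F^\psi v_h$ in the PBW basis with Laurent-polynomial coefficients in $t_1,\dots,t_n$. Non-triviality of the kernel intersection is the simultaneous vanishing of the maximal minors of the stacked map $\bigoplus_i E_i|_{V(\t)_{(k)}}:V(\t)_{(k)}\to\bigoplus_iV(\t)_{(k-1)}$, each itself a Laurent polynomial in $\t$; taking the union of these vanishing loci over the finitely many $k\geq 1$ yields the algebraic set $\R\subset(\Cx)^n$. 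The principal technical step is confirming that the iterated commutations yield Laurent-polynomial (rather than merely rational) matrix entries, which requires careful bookkeeping of the $<_{br}$-ordering and the parity phenomena from $\zeta^2=-1$; in the non-simply-laced setting, negligible roots must be excluded throughout.
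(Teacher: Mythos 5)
Your proposal is correct, but it takes a genuinely different route from the paper's. For indecomposability the paper (Proposition \ref{prop:indecomp}) pushes an arbitrary nonzero vector down to the lowest weight vector $v_l$, using the PBW combinatorics of Lemmas \ref{lem:compl}, \ref{lem:triangle} and Corollary \ref{cor:lowest} (for every nonzero $F\in U^-$ some $F^{\psi}F$ is a nonzero multiple of $F^{(1\dots 1)}$), so that $v_l$ would lie in both summands of a putative splitting; your height-grading argument with $T=cI+N$, $N$ strictly height-raising and hence nilpotent, replaces that combinatorics entirely, and it also substantiates the non-isomorphism claim that the paper dismisses in one line. For irreducibility the paper is more explicit than you are: it introduces $\Omega=E^{(1\dots 1)}F^{(1\dots 1)}v_h$, shows $\{E^{\psi}v_l\}$ is a basis iff $\Omega\neq 0$ (Lemma \ref{lem:det}, Corollary \ref{cor:irred}), and computes $\Omega$ in closed, factored form (Proposition \ref{prop:computation}), so that $\R=\bigcup_{\alpha}\X_\alpha$ is read off as an explicit finite union of hypersurfaces. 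Your determinantal construction (simultaneous vanishing of the maximal minors of $\bigoplus_i E_i|_{V(\t)_{(k)}}$ over the finitely many $k\geq 1$) does produce an algebraic set with the required ``iff'' property and is softer and more portable to other roots of unity, but it buys less: it neither identifies $\R$ nor shows $\R\neq\P$, i.e.\ that irreducible $\Vt$ exist at all, which the factorization of $\Omega$ gives instantly and which the rest of the paper (non-degenerate pairs, projectivity) leans on. Two steps you should still write out: first, the asserted equivalence ``reducible iff there is a nonzero singular vector of positive height'' needs both that every nonzero submodule contains a singular vector (the augmentation ideal of $U^+$ is nilpotent, since each $E_\alpha^2=0$ and $U^+$ is graded in positive degrees) and that a singular vector with nonzero $v_h$-component is cyclic; your own $T=cI+N$ trick supplies the latter, but neither half is currently on the page. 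Second, the ``principal technical step'' you flag is not actually an obstacle: the structure constants of $\Uzg$ lie in $\Q(\zeta)$ and the Cartan part acts on PBW vectors by Laurent monomials in $t_1,\dots,t_n$, so the matrix entries of each $E_i|_{V(\t)_{(k)}}$ are Laurent polynomials with no bookkeeping beyond what you already use for the grading.
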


	Let $\mvec{\sigma^\psi}$ denote the weight of $F^\psi v_h$ in $V(\mvec{1})$. A pair of characters $(\t,\s)\in\P^2$ is called \emph{non-degenerate} if $V(\mvec{\sigma^\psi}\t\s)$ is irreducible for each $\psi\in\Psi$.  Our first main result is a decomposition rule for  $\Vt\otimes\Vs$, given that the pair $(\t,\s)$ is non-degenerate.\\
	
	\begin{restatable}[Semisimple Tensor Product Decomposition]{thm}{directsum}
		\label{thm:directsum}
		Let $\g$ be a semisimple Lie algebra. If $(\t,\s)$ is a non-degenerate pair, then the tensor product $\Vt\otimes\Vs$ decomposes as a direct sum of irreducibles according to the formula
		\begin{align}
		\Vt\otimes\Vs\cong\bigoplus_{\psi\in\Psi}V(\mvec{\sigma^\psi}\t\s).
		\end{align}
	\end{restatable}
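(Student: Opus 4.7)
My strategy is to construct, for each $\psi\in\Psi$, a nonzero singular vector $w_\psi\in\Vt\otimes\Vs$ of weight $\sigma^\psi\t\s$, meaning $E_i w_\psi=0$ for every simple root $i$, and then assemble these into the claimed decomposition via the universal property of induced modules and a dimension count. Dimensions and formal characters already agree on both sides: $\dim(\Vt\otimes\Vs)=|\Psi|^2=\sum_\psi\dim V(\sigma^\psi\t\s)$, and
\[
\operatorname{ch}(\Vt\otimes\Vs)=\Bigl(\sum_\psi\sigma^\psi\t\Bigr)\Bigl(\sum_{\psi'}\sigma^{\psi'}\s\Bigr)=\sum_\psi\operatorname{ch}V(\sigma^\psi\t\s),
\]
using $\operatorname{ch}V(\t)=\sum_{\psi\in\Psi}\sigma^\psi\t$ and multiplicativity of characters. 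Under non-degeneracy each $V(\sigma^\psi\t\s)$ is irreducible, so any nonzero singular vector $w_\psi$ of the correct weight produces an injection $V(\sigma^\psi\t\s)\hookrightarrow\Vt\otimes\Vs$ sending the canonical cyclic generator to $w_\psi$.

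\textbf{Constructing the singular vectors.} I proceed by induction on $|\psi|=\sum_\alpha\psi(\alpha)$, taking $w_{\mvec 0}=v_h\otimes v_h$ as the base case; this is singular because $\Delta(E_i)=E_i\otimes 1+K_i\otimes E_i$ and $E_iv_h=0$ on each tensorand. For $|\psi|>0$ I adopt the ansatz
\[
w_\psi=F^\psi v_h\otimes v_h+\text{(lower terms in the PBW order)},
\]
where the correction ranges over monomials $F^{\phi_1}v_h\otimes F^{\phi_2}v_h$ with $\phi_2\neq\mvec 0$ whose weight equals $\sigma^\psi\t\s$. Imposing $E_iw_\psi=0$ for each simple $i$, expanding with the coproduct, and simplifying using the fourth-root identities $E_\alpha^2=F_\alpha^2=0$ and the far-commutativity form of the Serre relations, reduces to a triangular linear system in the unknown coefficients. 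Its diagonal entries are differences of the form $(\sigma^\phi\t)(K_i)-(\sigma^{\phi'}\t)(K_i)$ between neighbouring intermediate weights; non-degeneracy is exactly the condition (equivalent to nondegeneracy of the relevant Shapovalov pairings on the weight space $(\Vt\otimes\Vs)_{\sigma^\psi\t\s}$) that these entries are units, so the system admits a unique solution. The chosen leading coefficient $1$ ensures $w_\psi\neq 0$, since $F^\psi v_h\otimes v_h$ is a PBW basis vector of $\Vt\otimes\Vs$.

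\textbf{Assembly and main difficulty.} Let $\Theta\colon\bigoplus_{\psi\in\Psi}V(\sigma^\psi\t\s)\to\Vt\otimes\Vs$ send the cyclic generator of the $\psi$-th summand to $w_\psi$. A highest weight vector in $\ker\Theta$ is necessarily a linear combination $u=\sum_\psi a_\psi v_\psi$ supported on indices $\psi$ sharing a common weight $\sigma^\psi\t\s$ (since highest weight vectors lie in a single weight space), and its image $\sum_\psi a_\psi w_\psi$ lies in that same weight space of $\Vt\otimes\Vs$. The distinct leading PBW monomials $F^\psi v_h\otimes v_h$ of the $w_\psi$ remain linearly independent within each weight space, forcing all $a_\psi=0$. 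Thus $\ker\Theta$ contains no nonzero simple submodule, and semisimplicity of the source (all summands irreducible) forces $\ker\Theta=0$; equality of dimensions $|\Psi|^2$ then makes $\Theta$ an isomorphism. The principal technical obstacle is the inductive step in the presence of non-simple positive root vectors: the coproducts of the $F_\beta$ for $\beta\notin\{\alpha_i\}$ introduce cross-terms that must be carefully tracked against the braid ordering $<_{br}$ on $\overline{\Phip}$, and verifying the triangularity of the linear system for the coefficients---and hence the essential use of the non-degeneracy hypothesis---is the calculation that must be carried out case by case using the PBW structure of $\Uzg$.
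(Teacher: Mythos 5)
Your skeleton (one singular vector of weight $\mvec{\sigma^\psi}\t\s$ per $\psi$, then irreducibility of the targets plus a dimension count) is the same as the paper's, and your character/dimension bookkeeping is fine; but the step you defer as ``the calculation that must be carried out'' is precisely where the argument, as you have set it up, fails rather than merely remains unverified. The ansatz $w_\psi=F^\psi v_h\otimes v_h+(\text{corrections with }\phi_2\neq\mvec{0})$, together with the claim that the resulting linear system is triangular with pivots that are invertible exactly when $(\t,\s)$ is non-degenerate, is not correct: non-degeneracy constrains only the products $\mvec{\sigma^\psi}\t\s$, whereas the pivots of the system you describe involve $\t$ and $\s$ separately (factors such as $\floor{t_i}$, $\floor{s_i}$ rather than differences of intermediate weights), and these can vanish on the non-degenerate locus. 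A minimal counterexample is $\g=\sltwo$ with $s=1$ and $t$ generic: the pair is non-degenerate (only $(ts)^2\neq1$ is required), yet the unique singular vector of weight $-ts$ in $V(t)\otimes V(s)$ is proportional to $v_h\otimes Fv_h$, because the coefficient of $Fv_h\otimes v_h$ in the singularity equation is forced to carry the factor $\floor{s}=0$; hence no vector of your prescribed form $Fv_h\otimes v_h+c\,v_h\otimes Fv_h$ is singular and your system is inconsistent. The same phenomenon occurs in the paper's own non-degenerate example $V(t_1,1)\otimes V(1,s_2)$ for $\slthree$. Since your injectivity argument for the assembled map also relies on the leading monomials $F^\psi v_h\otimes v_h$ actually being present in the $w_\psi$, the gap propagates to the assembly step as well.

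This is exactly the difficulty the paper's proof is engineered to avoid: rather than hunting for singular vectors in $\Vt\otimes\Vs$ under the coproduct action, it transports the problem through the natural isomorphism of Proposition \ref{prop:iso} to $\Vts=\text{Ind}_B^{\U}\bigl(V_{\t}\otimes\text{Ind}_B^{\U}(V_{\s})\bigr)$, where the candidate highest weight vectors are simply $\Omega\hotimes F^\psi v_h$. By Proposition \ref{prop:computation}, the component of $\Omega\hotimes F^\psi v_h$ along $v_h\hotimes F^\psi v_h$ is a product of brackets evaluated at $\mvec{\sigma^\psi ts}$, i.e.\ a function of the product $\t\s$ alone, so non-degeneracy is exactly the nonvanishing needed (Lemma \ref{lem:inclusions}); the resulting irreducible submodules then sum directly and exhaust the module by a dimension count (Lemma \ref{lem:iso}). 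To repair your route you would have to abandon the fixed normalization, allow the leading term of $w_\psi$ to sit on any basis vector of the relevant weight space (for instance on $v_h\otimes F^\psi v_h$ when $\s\in\R$), and prove nonvanishing of an appropriate minor that depends only on $\t\s$ --- which in effect reproduces the paper's computation without the simplifying change of model.
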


	Theorem \ref{thm:directsum} generalizes the generic tensor product formula given in \cite{Ohtsuki} for the $\sltwo$ case to any semisimple Lie type:
	\begin{align}
	V(t)\otimes V(s)\cong V(ts)\oplus V(-ts).
	\end{align} 
	
	\comm{Observe that the tensor product of reducible representations may still be isomorphic to a direct sum of indecomposables. For example, consider $V(t,1)\otimes V(1,s)$ with $t$ and $s$ generic.\newline }
	
	We may also consider tensor products for which $(\t,\s)$ is a degenerate pair by giving decompositions in terms of the projective covers of $\Vt$. 
	\begin{restatable}{lem}{projgen}\label{lem:projgen}
		The representation $\Vt$ is projective in $\U$-mod if and only if $\t\notin \R$.
	\end{restatable}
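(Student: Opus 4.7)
I prove both directions, using the indecomposability of $\Vt$ from the preceding lemma and Theorem \ref{thm:directsum}.

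For the implication $\t\notin\R \Rightarrow \Vt$ projective: the plan is to realize $\Vt$ as an indecomposable summand of a module already known to be projective. Choose $\mvec{u}$ so that the pair $(\mvec{u},\mvec{u}^{-1}\t)$ is non-degenerate; Theorem \ref{thm:directsum} then gives
\begin{equation*}
V(\mvec{u})\otimes V(\mvec{u}^{-1}\t)\;\cong\;\bigoplus_{\psi\in\Psi}V(\mvec{\sigma^{\psi}}\t),
\end{equation*}
in which $\Vt$ appears as the $\psi=0$ summand, since $\mvec{\sigma^0}=\mvec{1}$. Because tensoring by a finite-dimensional module in the module category of a Hopf algebra preserves projectivity, it suffices to exhibit a single base projective Verma $V(\mvec{u_0})$ from which every other $V(\t)$ with $\t\notin\R$ can be bootstrapped. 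A natural candidate is a regular weight $\mvec{u_0}$ whose block in $\U$-mod consists solely of the simple module $V(\mvec{u_0})$, which is then automatically its own projective cover.

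For the implication $\t\in\R \Rightarrow \Vt$ not projective: I argue by contradiction. Suppose $\Vt$ is both reducible and projective. Because $\Vt$ is indecomposable, it is the projective cover of its unique simple head, call it $L$. The Frobenius structure of $\Uzg$, inherited from its Hopf algebra structure, then implies that the socle of such an indecomposable projective is also $L$. However, the $\t$-weight subspace of $\Vt$ is one-dimensional and spanned by the cyclic generator $v_h$, so any submodule of $\Vt$ isomorphic to $L$ must contain $v_h$ and therefore coincide with all of $\Vt$---contradicting reducibility.

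The principal obstacle lies in justifying the two structural inputs used above. First, producing a base projective Verma: if no singleton-block weight exists, one must instead identify $V(\mvec{u_0})$ as a direct summand of the regular representation, which requires a careful analysis of the PBW structure of $\Uzg$. Second, the socle/head identification for indecomposable projectives: because $\Uzg$ is infinite-dimensional, the Frobenius property is not automatic and must be either established directly or replaced by an explicit $\mathrm{Ext}^1$ computation producing a non-split extension that obstructs projectivity of $\Vt$ in the reducible case.
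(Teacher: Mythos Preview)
Your proposal is a sketch with gaps you yourself flag, and those gaps are real. For the forward direction, the bootstrap argument is circular: to conclude that $V(\mvec{u})\otimes V(\mvec{u}^{-1}\t)$ is projective you already need one factor to be projective, so you need a single projective Verma $V(\mvec{u_0})$ to get started. Neither of your proposed sources works without further input: a block containing only one simple need not be semisimple (there can be nontrivial self-extensions), and the regular representation of the infinite-dimensional algebra $\Uzg$ is not a finite-dimensional module, so being a summand of it says nothing about projectivity in $\U$-mod. For the backward direction, the socle-equals-head argument genuinely requires a Frobenius-type property, and for the infinite-dimensional $\Uzg$ this is not available and would be at least as hard as the lemma itself.

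The paper bypasses both obstacles by working directly with the element $E^{(1\dots1)}F^{(1\dots1)}$. For $\t\notin\R$, given any surjection $p:N\twoheadrightarrow\Vt$ and any $n_0\in p^{-1}(v_h)$, the vector $E^{(1\dots1)}F^{(1\dots1)}n_0$ is killed by each $E_i$ and maps under $p$ to $\Omega$, a nonzero multiple of $v_h$; thus $v_h\mapsto E^{(1\dots1)}F^{(1\dots1)}n_0$ extends to a splitting. For $\t\in\R$, the paper exhibits a concrete surjection from an induced tensor product $V(\mvec{-\lambda t})\hotimes V(\mvec{\lambda^{-1}})$ onto $\Vt$ (sending $p_0=v_h\hotimes v_l$ to $v_h$) and checks by a PBW argument that no splitting exists: any candidate section must have nonzero $p_0$-coefficient, but then $E^{(1\dots1)}F^{(1\dots1)}$ applied to its image is nonzero while $E^{(1\dots1)}F^{(1\dots1)}v_h=\Omega=0$. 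Both directions are thus handled by the same central-element trick, with no appeal to block structure or Frobenius properties.
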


	We give a description of this decomposition into reducible indecomposables for the $\slthree$ case. Let $B^\emptyset=B$ and for each nonempty $I\subsetneq \overline{\Phip}$ we consider a subalgebra $B^I$ of $\U$. For example:
	\begin{align}
	B^{1}=\bracks{K_1,K_2,E_{12},E_2},&& \text{and} &&
	B^{1,2}=\bracks{K_1,K_2,F_1E_2E_1,F_2E_1E_2}.
	\end{align}

	Let $\bracks{p_0^I}$ be the $B^I$-module on which the Cartan generators $K_i$ act by $t_i$ and all other generators of $B^I$ act trivially. Denote the induced representation $\text{Ind}_{B^I}^{\U}(\bracks{p_0^I})$ by $P^I(\t)$. \newline 
	
	Consider the subsets of $\P$:
	\begin{align}
		\X_1&=\{\t\in\P :t_1^2=1\},  & 
		\X_2&=\{\t\in\P :t_2^2=1\}, & 
		\H&=\{\t\in\P :(t_1t_2)^{2}=-1\},
	\end{align}
	and $\R=\X_1\cup\X_2\cup\H.$
We partition $\R$ into disjoint subsets indexed by nonempty subsets $I\subsetneq \overline{\Phip}$, with
	\begin{align}
		\R_{I}=\left(\bigcap_{\alpha\in I} \X_\alpha\right)\setminus \left(\bigcup_{\alpha\notin I}\X_\alpha\right).
	\end{align} We define $\R_\emptyset$ to be $\P\setminus\R$, so that  $\{\R_I\}_{I\subsetneq\overline{\Phip}}$ yields a partition of $\P$.\\

	\begin{restatable}{thm}{projind}
		For each $I\subsetneq \overline{\Phip}$ and $\t\in\R_I$, 
		$P^I(\t)$ is the projective cover of $\Vt$.\\
	\end{restatable}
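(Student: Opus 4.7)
My approach is to verify the three defining properties of a projective cover: that $P^I(\t)$ is projective, that there is a surjection $P^I(\t)\twoheadrightarrow\Vt$, and that this surjection is essential. For projectivity, I would combine two ingredients. First, a PBW-type triangular decomposition of $\U$ realizes it as a free right $B^I$-module, so the induction functor $\Ind_{B^I}^{\U}$ is exact, and as a left adjoint to the exact restriction functor it always preserves projectives. Second, one must verify that $\langle p_0^I\rangle$ is projective as a $B^I$-module. The subalgebras $B^I$ have been defined precisely so that, on the stratum $\R_I$, the character $\gamma_{\t}$ extends to a primitive idempotent of $B^I$ and $\langle p_0^I\rangle$ is an indecomposable direct summand of the regular $B^I$-module. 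In the $\slthree$ setting there are only finitely many $I\subsetneq\overline{\Phip}$, so this reduces to a finite, case-by-case verification.

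For the surjection, Frobenius reciprocity gives $\Hom_\U(P^I(\t),\Vt)\cong\Hom_{B^I}(\langle p_0^I\rangle,\mathrm{Res}_{B^I}\Vt)$. The highest weight vector $v_h\in\Vt$ has weight $\t$, and each non-Cartan generator of $B^I$ either is itself a positive root vector (as with $E_{12}$ and $E_2$ in $B^1$) or has a rightmost $E_\alpha$ factor acting on $v_h$ first (as with $F_1E_2E_1$ and $F_2E_1E_2$ in $B^{1,2}$). In every case such a generator annihilates $v_h$, so $p_0^I\mapsto v_h$ defines a $B^I$-homomorphism. By adjunction this yields a $\U$-map $P^I(\t)\to\Vt$, which is surjective since $\Vt$ is generated by $v_h$.

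The main obstacle is showing the surjection is essential, equivalently that $P^I(\t)$ is indecomposable with simple head isomorphic to $L(\t):=\Vt/\mathrm{rad}(\Vt)$. My approach is once again Frobenius reciprocity: for any simple $\U$-module $L$, $\Hom_\U(P^I(\t),L)\cong\Hom_{B^I}(\langle p_0^I\rangle,\mathrm{Res}_{B^I}L)$ is nonzero precisely when $L$ carries a weight-$\t$ vector in the common kernel of the non-Cartan generators of $B^I$. I would then show that on each stratum $\R_I$ the unique such simple is $L(\t)$, relying on the explicit description of the simple $\U$-modules in the block of $\Vt$ and the action of the $B^I$-generators on their weight spaces, which should be extractable from the composition series analysis of $\Vt$ developed earlier in the paper. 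Once the head of $P^I(\t)$ is identified as $L(\t)$, projectivity together with the surjection onto $\Vt$ forces $P^I(\t)$ to be the projective cover of $\Vt$.
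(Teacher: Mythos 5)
Your overall skeleton (projective, surjects onto $\Vt$, kernel superfluous) is the right target, and your Frobenius-reciprocity construction of the surjection $P^I(\t)\twoheadrightarrow\Vt$ is sound. But the projectivity step contains a genuine error. The module $\bracks{p_0^I}$ is \emph{not} projective over $B^I$: each $B^I$ is of the form (Cartan torus)$\,\ltimes\,$(finite-dimensional subalgebra generated by the listed non-Cartan elements), and that finite part is a local algebra with nilpotent augmentation ideal acting trivially on $p_0^I$. A one-dimensional module over such an algebra is never a direct summand of a free (or projective) module, so there is no primitive idempotent realizing $\gamma_{\t}$, and the ``induction preserves projectives'' principle has nothing to apply to. The failure is not repairable by a more careful functorial argument, because your reasoning nowhere uses $\t\in\R_I$, whereas projectivity of $P^I(\t)$ genuinely depends on the stratum: for instance $P^{1}(\t)$ is $16$-dimensional and surjects onto $\Vt$ for every $\t$, but for $\t\in\R_{1,2}$ the projective cover of $\Vt$ is the $48$-dimensional $P^{1,2}(\t)$, so $P^{1}(\t)$ cannot be projective there. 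The paper's proof is correspondingly hands-on: for a surjection $N\twoheadrightarrow P^I(\t)$ it exhibits an explicit splitting (e.g.\ $p_0^{1}\mapsto E_{12}E_2F_{12}F_2n_0$ for $I=\{\alpha_1\}$), and the nonvanishing of the relevant coefficient, essentially $\floor{t_2}\floor{\zeta t_1t_2}$, is exactly where $\t\in\R_1$ is used; similar stratum-dependent coefficients appear for the other $I$.

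The essentiality step is also thinner than it needs to be. The paper does not develop the block-by-block classification of simple $\U$-modules and their restrictions to $B^I$ that you propose to ``extract,'' so identifying the head of $P^I(\t)$ as the simple head of $\Vt$, and checking that the relevant Hom space is one-dimensional (multiplicity one, not just nonvanishing), would be new work of comparable size to the paper's argument. The paper instead proves hollowness directly: any weight-$\t$ vector generating $P^I(\t)$ modulo a submodule has the form $Up_0^I$ with $U$ a weight-preserving element ($U=1+aF_1E_1$, or $1+aF_1E_1+bF_2E_2$, etc.), and one shows $U$ acts invertibly on the $\t$-weight space by a nilpotency computation, again using $\t\in\R_I$ (e.g.\ for $P^{12}(\t)$ the simultaneous vanishing of the two relevant coefficients is shown to be impossible precisely when $\t\in\R_{12}$). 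Hollow plus projective plus the surjection then gives the cover, which is the same final inference you make; it is the two intermediate properties that your proposal does not actually establish.
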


	We prove that each $P^I(\t)$ is hollow and indecomposable for all $\t\in\R_I$. For each  subset $I\subsetneq \overline{\Phip}$ we define $\Psi_I(\t)\subseteq \Psi$ and a collection of subsets of roots $\mathscr{I}(I)$, that together index the projective representations and their position in tensor products. We generalize the generic tensor product decomposition for the $\slthree$ case.\\
	
	\begin{restatable}{thm}{projdecomp}\label{thm:projdecomp}
		Let $\g=\slthree$. Suppose $\t,\s\notin\R$ and $\t\s\in\R_J$. Then $V(\t)\otimes V(\s)$ decomposes as a direct sum of indecomposable representations:
		\begin{align}
		V(\t)\otimes V(\s)\cong \bigoplus_{I\in \mathscr{I}(J)}\bigoplus_{\psi\in \Psi_I(\t\s)}P^I(\mvec{\sigma^\psi\t\s}).
		\end{align}
	\end{restatable}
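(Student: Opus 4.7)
The plan rests on exploiting the fact that both factors are projective. Since $\t,\s\notin\R$, Lemma \ref{lem:projgen} gives that $\Vt$ and $\Vs$ are projective in $\U$-mod, and because $\U$ is a Hopf algebra, the tensor product $\Vt\otimes\Vs$ is again projective. By the Krull--Schmidt theorem it decomposes uniquely into a direct sum of indecomposable projectives. The preceding theorem classifies these as $V(\mvec{u})$ for $\mvec{u}\notin\R$ and $P^I(\mvec{u})$ for $\mvec{u}\in\R_I$ with $I\neq\emptyset$, so the task reduces to determining which $P^I(\mvec{u})$ appear and with what multiplicities.

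I would carry out this identification in two stages. First, count weights: on the PBW basis $\{F^\psi v_h\}_{\psi\in\Psi}$ of $\Vt$, the Cartan generators $K_i$ act by the character $\mvec{\sigma^\psi}\t$, so the Cartan character of $\Vt\otimes\Vs$ is the multiset $\{\mvec{\sigma^\psi\sigma^{\psi'}}\t\s\}_{\psi,\psi'\in\Psi}$. Regrouping this double sum recovers the character of $\bigoplus_{\psi\in\Psi}V(\mvec{\sigma^\psi}\t\s)$, matching the generic answer from Theorem \ref{thm:directsum}. Hence $\Vt\otimes\Vs$ and this sum of simples share composition factors, and only the assembly into indecomposable projectives remains. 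Second, use the combinatorics of the stratification: for $\t\s\in\R_J$, the shifted weights $\mvec{\sigma^\psi}\t\s$ distribute among the strata $\R_I$, and the explicit description of each $P^I$ built up in the preceding sections dictates exactly which subsets $\{V(\mvec{\sigma^{\psi_k}}\t\s)\}_k$ form the composition series of a single $P^I(\mvec{\sigma^{\psi_0}}\t\s)$. The index set $\mathscr{I}(J)$ records which types $I$ occur, and $\Psi_I(\t\s)$ picks one cyclic generator per $P^I$-block.

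The main obstacle is promoting this character-level matching to a genuine module decomposition. To do so, I would construct $\U$-linear maps $P^I(\mvec{\sigma^\psi}\t\s)\to \Vt\otimes\Vs$ by Frobenius reciprocity: for each $(I,\psi)$ in the indexing set, exhibit a vector $w_{I,\psi}\in \Vt\otimes\Vs$ of weight $\mvec{\sigma^\psi}\t\s$ annihilated by the non-Cartan generators of $B^I$. Summing these maps and invoking that $\Vt\otimes\Vs$ is already known to be projective of the correct total dimension then forces the resulting homomorphism to be an isomorphism. An alternative route is a deformation argument: let $\s$ vary along a one-parameter path through generic values, apply Theorem \ref{thm:directsum}, and use upper semicontinuity of $\Hom$-spaces together with preservation of projectivity to conclude that the generic simple summands coalesce in the degenerate limit precisely into the $P^I$-blocks described by $\mathscr{I}(J)$. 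The delicate point in either approach is verifying that no merges are finer or coarser than claimed---equivalently, that $\mathscr{I}(J)$ and $\Psi_I(\t\s)$ are exactly the right indexing sets---which hinges on the careful calculation of projective covers in the $\slthree$ case developed earlier in the paper.
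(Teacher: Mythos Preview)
Your proposal is correct, and in its decisive step---constructing maps $P^I(\mvec{\sigma^\psi}\t\s)\to\Vt\otimes\Vs$ by finding weight vectors annihilated by the non-Cartan generators of $B^I$---it coincides with the paper's proof. The paper does exactly this: it reuses the splitting maps from the projectivity proofs of the $P^I$ (e.g.\ $p_0^1\mapsto E_{12}E_2F_{12}F_2(v_h\hotimes F^\psi v_h)$), observes that the resulting images have trivial pairwise intersection, and counts dimensions to reach $64$.

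What you add, and the paper omits, is the a priori observation that $\Vt\otimes\Vs$ is projective (since $\Vt$ is a direct summand of $\U$ and $\U\otimes M$ is free for finite-dimensional $M$), so Krull--Schmidt guarantees a decomposition into indecomposable projectives before any calculation. This is a cleaner conceptual framing: it tells you in advance that only the $P^I$ can occur, reducing the problem to multiplicities, which in principle follow from composition factors and the Cartan data assembled in Propositions \ref{prop:seq12,2}--\ref{prop:proj11}. The paper's purely constructive route, by contrast, never invokes projectivity of the tensor product; it buys explicit formulas for the summand inclusions, which matter for the applications sketched in Section \ref{sec:future}. Your deformation alternative is plausible but would require more care than the direct construction, since upper semicontinuity alone does not control how summands merge.
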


	We do not consider the tensor product between two projective covers in this paper. However, since these are induced representations, we expect that the methods used in the proof of Theorem \ref{thm:directsum} can be applied.\\ 
	
	Observe that these tensor product representations only depend on the product of $\t$ and $\s$, thus motivating the following definition. We call an isomorphism  
	\begin{align}
	\Vt\otimes\Vs\cong V(\mvec{\lambda}\t)\otimes V(\mvec{\lambda^{-1}}\s),
	\end{align} 
	for some $\mvec{\lambda}\in\P$, a \emph{transfer}.\newline

	In order to classify representations not considered in Theorem \ref{thm:projdecomp}, we give a description of all transfers on tensor product representations ${\Vt\otimes\Vs}$ of $\Uzslthree$. Our approach is to first find representations generated by a single weight vector under the action of non-Cartan elements, we call such a representation \emph{homogeneous cyclic}. If $\Vt\otimes\Vs$ is homogeneous cyclic then it is characterized by the weight $\mvec{-ts}$ of its generator. The values of $(\t,\s)$ for which cyclicity fails determine the \emph{acyclicity locus} $\A$. \\
	
	\begin{restatable}[Homogeneous Cyclic Tensor Product Representations]{thm}{cyclic} \label{thm:cyclic}
		The acyclicity locus $\A\subseteq\P^2$ is given by
		\begin{align}
		\X_1^2\cup\X_2^2\cup\H^2\cup(\H\times\R_{1,2})
		\cup(\R_{1,2}\times\H).
		\end{align}
	\end{restatable}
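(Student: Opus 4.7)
The plan is to pin down the unique weight at which any homogeneous cyclic generator of $V(\t)\otimes V(\s)$ can lie, and then establish the two inclusions defining $\A$ separately. The candidate generator is $p_0 := v_h\otimes v_l$, where $v_l=F_1F_2F_{12}v_h$ is the lowest weight vector of $V(\s)$; a direct Cartan computation gives $K_i(v_h\otimes v_l)=-t_is_i(v_h\otimes v_l)$, so $p_0$ has weight $-\t\s$. To justify that any cyclic generator must have this weight, I would argue by weight-space bookkeeping: under the coproducts $\Delta(E_i)=E_i\otimes 1+K_i\otimes E_i$ and $\Delta(F_i)=F_i\otimes K_i^{-1}+1\otimes F_i$, starting from a vector $\alpha(v_h\otimes v_h)+\beta(v_l\otimes v_l)$ of weight $\t\s$ the orbit splits into an $F$-descendant piece scaled by $\alpha$ and an $E$-ascendant piece scaled by $\beta$, each spanning only a proper submodule, so no weight-$\t\s$ vector is cyclic; a similar argument rules out all other weights except $-\t\s$.

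For the direction $(\t,\s)\notin\A\Rightarrow V(\t)\otimes V(\s)$ cyclic, I would split by the available structure theorems. When $(\t,\s)$ is non-degenerate, Theorem~\ref{thm:directsum} gives a multiplicity-free direct sum of irreducibles $V(\mvec{\sigma^\psi}\t\s)$; each summand contains the weight $-\t\s$, and distinctness of the simples allows $p_0$ to generate via a standard separation-of-idempotents argument. When $\t,\s\notin\R$ but $\t\s\in\R_J$, Theorem~\ref{thm:projdecomp} gives a multiplicity-free decomposition into hollow projective covers $P^I(\mvec{\sigma^\psi}\t\s)$; hollowness guarantees each summand is generated by any vector mapping nontrivially to its simple head, which I would verify via a direct PBW computation on the projection of $p_0$. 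The remaining subcases, with at least one of $\t,\s\in\R$ but $(\t,\s)\notin\A$, fall outside Theorems~\ref{thm:directsum} and~\ref{thm:projdecomp} and are handled by an explicit first-principles computation of the orbit of $p_0$ under the PBW basis.

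For the direction $(\t,\s)\in\A\Rightarrow V(\t)\otimes V(\s)$ not cyclic, I would exhibit a stratum-specific obstruction. On $\X_1^2$, with $t_1,s_1\in\{\pm1\}$, the coproduct $\Delta(E_1)$ degenerates because $K_1$ acts by aligned scalars on both tensor factors; this mirrors Ohtsuki's $\sltwo$ obstruction and leaves a weight vector unreachable from any vector of weight $-\t\s$. The stratum $\X_2^2$ is symmetric. On $\H^2$, with $(t_1t_2)^2=(s_1s_2)^2=-1$, a parallel degeneracy of $\Delta(E_{12})$ and $\Delta(F_{12})$ from the $K_1K_2$ eigenvalues yields a missing weight vector, and the mixed strata $\H\times\R_{1,2}$ and $\R_{1,2}\times\H$ combine both mechanisms. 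In each case the missing vector would be identified explicitly in the PBW basis.

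The main obstacle will be the $\H^2$ stratum and its mixed variants. Unlike $\X_i^2$, which reduces to an $\sltwo$-level argument, the condition $(t_1t_2)^2=-1$ is nonlinear in $t_1,t_2$ and genuinely involves the long-root generators $E_{12},F_{12}$, so pinpointing the precise missing weight vector requires a coordinated calculation over the full PBW basis rather than just establishing the existence of some obstruction. A secondary difficulty is the cyclicity direction for pairs with one of $\t,\s\in\R$ but $(\t,\s)\notin\A$, which fall outside Theorems~\ref{thm:directsum} and~\ref{thm:projdecomp} and require a direct verification that $p_0$ spans the tensor product.
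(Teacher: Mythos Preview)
Your approach diverges from the paper's in an essential way. The paper never invokes Theorems~\ref{thm:directsum} or~\ref{thm:projdecomp} to establish cyclicity; instead it works throughout in the induced model $\Vts=\text{Ind}_B^{\U}(V_\t\otimes\text{Ind}_B^{\U}(V_\s))\cong V(\t)\otimes V(\s)$ (Proposition~\ref{prop:iso}). The advantage is that in this model $U^-$ acts only on the left tensor factor, so the PBW bookkeeping is dramatically simpler. The paper then introduces the notion of \emph{effectiveness at level $\psi$} (Definition~\ref{defn:effective}): a $-\t\s$ weight vector $v$ is effective at level $\psi$ if some $x\in\U$ gives $xv$ a nonzero $v_h\hotimes F^\psi v_h$ component. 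Cyclicity is equivalent to the existence of a single vector effective at every level. The proof is then a case analysis on $\s$ (Sections~\ref{sec:cyclic1} and~\ref{sec:cyclici}), at each stage asking which weight vectors must be added to $v_h\hotimes v_l$ to restore effectiveness at a given level, and pinpointing exactly when no such correction exists (Corollaries~\ref{cor:011,110}, \ref{cor:X1X2}, \ref{cor:101,010}, \ref{cor:100,001}, \ref{cor:000}). Both directions of the theorem fall out of the same computation.

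Your plan to deduce cyclicity from the decomposition theorems is a legitimate alternative for the pairs those theorems cover, but you correctly identify that it leaves a residual class (one of $\t,\s\in\R$, pair not in $\A$) requiring direct computation anyway---and that residual class is large enough that you end up doing most of the paper's work regardless, without the benefit of the induced model. For the acyclic direction, your sketch is too schematic: the $\H^2$ obstruction is not a simple ``degeneracy of $\Delta(E_{12})$''; the paper shows (Lemma~\ref{F1F3 effective} and surrounding results) that the failure occurs at levels $(100)$ and $(001)$ and requires checking that \emph{every} vector in the relevant weight space---not just $p_0$---is ineffective there. The passage through the multiplicity-two weight space (levels $(101),(010)$) is where the real subtlety lies, and your outline does not engage with it. Finally, note that your coproduct conventions differ from the paper's (equations~(\ref{eq:HopfE})--(\ref{eq:HopfF})), which will matter once you start the explicit computations.
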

	Let
	\begin{align}
	\DH&=\{(\t,\s)\in \H^2:(t_1s_1)^2=1\}\\
	&=\{(\t,\s)\in \P^2:(t_1s_1)^2=1, (t_1t_2)^2=(s_1s_2)^2=-1\}.
	\end{align}
	Then $\P^2$ is stratified according to the filtration $\P^2_0\subset\P^2_1=\A\subset\P^2_2=\P^2$,
	with
	\begin{equation}
	\mathcal{P}^{2}_0=\R_{1,2}^2\cup\R_{1,12}^2\cup\R_{12,2}^2\cup \DH\cup((\R_{1,12}\cup\R_{12,2})\times \R_{1,2})\cup( \R_{1,2}\times (\R_{1,12}\cup\R_{12,2}))
	\end{equation}
	\comm{ We illustrate the inclusion $\P_0^2\subseteq\P_1^2$ in Figure \ref{fig:stratify}	below. 
	
	{	\begin{figure}[h]\centering
			\includegraphics[]{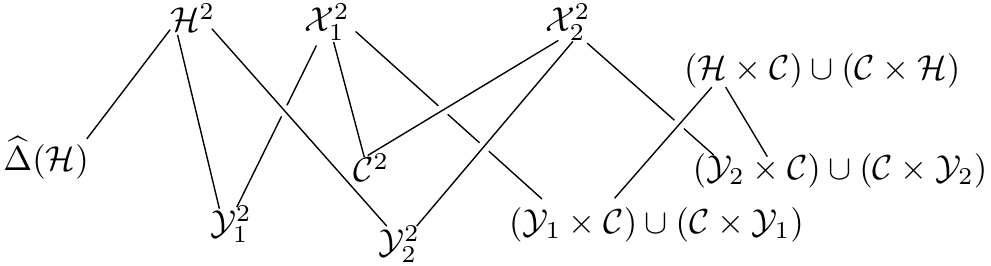}\caption{The inclusion of $\mathcal{P}^2_0$ in $\mathcal{P}^2_1$.  }\label{fig:stratify}
	\end{figure}}}
	
	We may also define an action of $\P$ on $\P^2$ as follows. Let $\mvec{\lambda}\in \P$ and $(\t,\s)\in \P^2$, then  
	$
	\mvec{\lambda}\cdot(\t,\s)=(\mvec{\lambda t},\mvec{\lambda^{-1}s}).
	$
	The swapping of coordinates is replicated by the action of $\mvec{\lambda}=\mvec{t^{-1}s}$. As $\P^2_0$ and $\P^2_1$ are preserved under the exchange of $\t$ and $\s$, the stratification respects the equivalence determined by a braiding. We group the defining subsets of $\P^2_0$ and $\P^2_1$ so that they are preserved by swaps, and we refer to the resulting subsets as \emph{symmetrized}. Under this grouping, we have the following theorem.\\
	
	\begin{restatable}[Transfer Principle]{thm}{transfer}\label{thm:transfer}
		Suppose $(\t,\s)$ belongs to a symmetrized subset in the $n$-stratum. Then 
		\begin{align}
		\Vt\otimes\Vs\cong V(\mvec{\lambda}\t)\otimes V(\mvec{\lambda^{-1}}\s)
		\end{align}
		if and only if $\mvec{\lambda}\cdot(\t,\s)$ belongs to the same symmetrized subset in the $n$-stratum.\\
	\end{restatable}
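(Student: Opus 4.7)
The plan is to exploit the fact that the action $\mvec{\lambda}\cdot(\t,\s) = (\mvec{\lambda t},\mvec{\lambda^{-1} s})$ preserves the product $\t\s$, so that both sides of the proposed transfer share the same ``total highest weight'' data. Within each symmetrized subset I would show that the decomposition of $\Vt\otimes\Vs$ depends only on $\t\s$, which yields the backward implication; across different symmetrized subsets I would produce structural invariants of the tensor product that detect which subset $(\t,\s)$ lies in, which yields the forward implication.

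For the backward implication I would proceed stratum by stratum. In the 2-stratum $\P^2\setminus\A$, Theorem \ref{thm:directsum} gives a decomposition of $\Vt\otimes\Vs$ depending only on $\t\s$, and on the sub-locus where $\t,\s\notin\R$ but $\t\s\in\R$ Theorem \ref{thm:projdecomp} does the same. For each symmetrized subset inside $\P^2_1\setminus\P^2_0$ and inside $\P^2_0$, I would compute the decomposition explicitly at a representative $(\t,\s)$ using the induced module description $\Vt=\Urg\otimes_{B}V_{\t}$ and the PBW basis from Section \ref{sec:notation}, realizing each summand as an induction $P^I(\mvec{\sigma^\psi}\t\s)$ or, where necessary, a finer induced module from an intermediate subalgebra $B^I$. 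Applying the same computation to any $\mvec{\lambda}\cdot(\t,\s)$ in the same symmetrized subset, which has the same product $\t\s$ by construction, produces an identical decomposition and hence the desired transfer isomorphism.

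For the forward implication I would identify module-theoretic invariants of $\Vt\otimes\Vs$ that separate the symmetrized subsets. The coarsest invariant is homogeneous cyclicity: by Theorem \ref{thm:cyclic}, $\Vt\otimes\Vs$ is homogeneous cyclic precisely when $(\t,\s)\notin\A$, so cyclicity alone detects whether $(\t,\s)$ lies in the 2-stratum. Within $\A\setminus\P^2_0$ and within $\P^2_0$, finer invariants are needed, such as the multiplicities and isomorphism classes of the indecomposable summands, the structure of their socles and radicals, and the dimensions of the weight spaces supporting homogeneous generators. Since any hypothesized isomorphism $\Vt\otimes\Vs\cong V(\mvec{\lambda}\t)\otimes V(\mvec{\lambda^{-1}}\s)$ preserves every such invariant, and since these invariants together with $\t\s$ pin down the symmetrized subset, it will follow that $\mvec{\lambda}\cdot(\t,\s)$ lies in the same symmetrized subset as $(\t,\s)$.

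The main obstacle will be the explicit computation of $\Vt\otimes\Vs$ in the acyclicity locus, particularly on $\DH$, where neither Theorem \ref{thm:directsum} nor Theorem \ref{thm:projdecomp} applies directly and where homogeneous cyclicity fails in a subtle way. In these cases one must identify the correct $B^I$-submodule generated by a sum of weight vectors rather than a single one, and only then recognize the module as an induction from $B^I$. A second delicate point, on the forward side, is producing enough invariants to separate all symmetrized subsets within a single stratum; some of these distinctions may require invariants tailored to the $\slthree$ generators, for example the ranks of composite operators such as $F_1F_2$ or $F_{12}$ on specific weight spaces of the tensor product.
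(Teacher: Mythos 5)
Your forward implication is in the same spirit as the paper's, but your backward implication has a concrete coverage gap on the 2-stratum. You propose to handle $\P^2\setminus\A$ using Theorems \ref{thm:directsum} and \ref{thm:projdecomp}, but these do not exhaust the 2-stratum: take, for instance, $\t\in\H$ with $t_1$ generic and $\s\notin\R$ with $s_1=t_1^{-1}$, so that $\t\s\in\X_1$. Then $(\t,\s)\notin\A$, so it lies in the 2-stratum, yet the pair is degenerate (some $V(\mvec{\sigma^\psi}\t\s)$ is reducible, so Theorem \ref{thm:directsum} does not apply) and $\t\in\R$ (so Theorem \ref{thm:projdecomp} does not apply). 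There are also pairs with $\t,\s,\t\s\notin\R$ but $(t_1s_1)^2=-1$, where again some $V(\mvec{\sigma^\psi}\t\s)$ is reducible and neither cited theorem yields the needed dependence on $\t\s$ alone. The paper's proof sidesteps all of this with Lemma \ref{lem:cyclic}: a homogeneous cyclic tensor product is determined up to isomorphism by the weight $\mvec{-ts}$ of its generator, so on the entire 2-stratum the isomorphism class depends only on $\t\s$ and no decomposition is needed (Corollary \ref{cor:cyclic1}); the forward direction there is then immediate because cyclicity is an isomorphism invariant, which you do use. You should either invoke this cyclicity classification for the backward direction on the 2-stratum or supply the missing cases explicitly.

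On the lower strata your route also diverges from the paper's in a way that is heavier and riskier than necessary. You plan to compute full decompositions into induced modules $P^I(\mvec{\sigma^\psi}\t\s)$ or finer inductions, but the paper deliberately does not do this: explicit decompositions for degenerate pairs with $\t,\s\in\R$ are deferred to future work, and on loci such as $\H^2$ and $\DH$ the pieces generated by a $\mvec{-ts}$ weight vector have two highest weight vectors (Figures \ref{fig:titi} and \ref{fig:tisi}) and are not among the modules $V$ or $P^I$ catalogued in Section \ref{sec:proj}, so "realizing each summand as an induction" is itself a nontrivial open problem, as you partly acknowledge. The paper instead argues directly from the representation graphs of Sections \ref{sec:cyclic1} and \ref{sec:cyclici}: on each symmetrized subset the module is generated by two or three weight vectors whose weights are determined by $\mvec{-ts}$, the graph is constant on the subset, and distinct symmetrized subsets give non-isomorphic graphs; this yields both implications simultaneously. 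Your list of invariants (summand multiplicities, socles, radicals) plays the role of these graphs, but the assertion that they, together with $\t\s$, pin down the symmetrized subset is precisely the content the paper establishes through the figures and must be proved rather than assumed.
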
	
	
	\subsection{Future Work}\label{sec:future} 
	We have found that the subalgebras we induce over to construct the projective covers of $\Vt$ have a short list of generators given by Cartan elements and products of root vectors. Such a description should lend itself to an underlying theory that allows us to easily describe projective covers for any $\g$, indexed by subsets of positive root vectors. \newline
	
	Although Theorem \ref{thm:transfer} addresses isomorphism classes of tensor products, we do not give explicit tensor decompositions when both $\t,\s\in\R$ and $(\t,\s)$ is degenerate. The tensor product of $P^I(\t)$ with $P^J(\s)$ is not discussed here since there are many combinations to consider. We leave these cases for a full generalization of the theory. \newline
		
	We defined the representations $\Vt$ for any restricted quantum group, but we have only considered the fourth root of unity case. As explained above, we expect the representation category of $\overline{U}_q(\slthree)$ at other roots of unity, and those of higher rank quantum groups, to be comparable with what we have found here.  \newline 
	
	In \cite{Harpersl3}, we give the various decompositions of tensor products between irreducible subrepresentations of $\Vt$ and $\Vs$. We also study the link invariant obtained from quantum group representations $\Vt$. In the $\slthree$ case, we prove that certain evaluations of  this invariant yield the Alexander polynomial of knots. For this reason, we expect that the Burau representation and its geometric interpretation naturally include into the higher rank constructions associated with the quantum $\slthree$ invariant.

	\subsection{Structure of Paper}  In Section \ref{sec:notation} we recall the quantum group $U_q(\g)$ according to Lusztig \cite{Lusztig} and define the restricted quantum group $\overline{U}_q(\g)$. We prove various properties of these algebras when $q=\zeta$. In Section \ref{sec:Vt}, we define the induced representations $\Vt$ for any root of unity before assuming $q=\zeta$. We then prove indecomposibility and irreducibility properties of these representations. The results on irreducibility are then applied in Section \ref{sec:directsum} to find a direct sum decomposition for sufficiently generic tensor product representations.\newline 
	
	 In Section \ref{sec:props} we specialize the above results and give explicit examples in the $\slthree$ case. We then discuss the projective covers of $V(\t)$ for each $\t\in\P$ in Section \ref{sec:proj}, and give the various decompositions of $\Vts$ in terms of these representations.  \newline
	
	Sections \ref{sec:cyclicgen}, \ref{sec:cyclic1}, and \ref{sec:cyclici} are concerned with finding homogeneous cyclic representations, and transfer isomorphisms which are not implied by Theorem \ref{thm:projdecomp}. Section \ref{sec:cyclicgen} sets up the language and the method used for finding cyclic representations,  we also characterize cyclicity in the generic case. Sections \ref{sec:cyclic1} and \ref{sec:cyclici} each study cyclicity for some non-generic choice of characters. Gathering the conclusions of these sections yields the cyclicity theorem and transfer principle for representations $\Vt\otimes\Vs$, stated in Section \ref{sec:thms}.\newline
	
	General computations which are referenced throughout the paper are compiled in \hbox{Appendix \ref{sec:comp}}. The latter section of the Appendix includes information on induced representations used in proving Theorem \ref{thm:directsum}.

	\subsection{Acknowledgments}
	I am very grateful to Thomas Kerler for helpful discussions. I would also like to extend my thanks to Sachin Gautam for useful suggestions. I thank the NSF for support through the grant \textbf{NSF-RTG \#DMS-1547357}.

	\section{Restricted Quantum Groups}\label{sec:notation}
	In this section, we recall the definition of the quantum group for a semisimple Lie algebra $\g$. Following Lusztig \cite{LusztigADE,Lusztig}, we obtain a restricted quantum group by setting the deformation parameter $q$ to a root of unity in the context of the divided powers algebra. We call the restricted quantum group at a primitive fourth root of unity $\Uzg$. In the non-simply laced cases, we relate the restricted quantum groups to those of type $A$. We give a generators and relations description, and a PBW basis of $\Uzg$. We then prove various properties of these algebras, which will be useful in later sections.
	\newline 
	
	\subsection{Lusztig's Construction} Let $q$ be a formal parameter and let $\g$ be Lie algebra with $n\times n$ Cartan matrix $(A_{ij})$ symmetrized by the vector $(d_i)$ with entries in $\{1,2,3\}$. Let $\Phip$ be the space of positive root vectors, and $\Delta^+$ the positive simple roots of $\g$. The positive simple roots are indexed so that $(\alpha_i,\alpha_j)=A_{ij}$
	We define the quantum group $U_q(\g)$ following \cite{LusztigADE,Lusztig} and refer the reader there for additional details. We set
	\begin{align}
	[N]_d!=\prod_{j=1}^N\frac{q^{dj}-q^{-dj}}{q^d-q^{-d}}, &&
	\left[\!\begin{matrix} M+N \\ N \end{matrix}\!\right]_{d}
	=\frac{[M+N]_d!}{[M]_d![N]_d!},
	\end{align} and \begin{align}\label{eq:floor}
	\floor{x}_d=\frac{x-x^{-1}}{q^d-q^{-d}},
	\end{align} 
	omitting subscripts when $d=1$. 
	
	\begin{defn}\label{def:qgrp}
		Let $\Uqg$ be the algebra over $\Q(q)$ generated by $E_i$, $F_i$, and $K^{\pm1}_i$ for ${1\leq i\leq n}$ subject to the relations:
		\begin{align}
		&K_iK_i^{-1}=1, & &K_iK_j=K_jK_i,\label{eqn:qgroup1}\\
		&K_iE_j=q^{d_iA_{ij}}E_jK_i, & &K_iF_j=q^{-d_iA_{ij}}F_jK_i,\label{eqn:qgroup2}
		\end{align}	
		\begin{align}\label{eqn:qgroup3}
		[E_i,F_j]=\delta_{ij}\floor{K_i}_{d_i},
		\end{align}
		\begin{align}\label{eq:SerreE}
		&\sum_{r+s=1-A_{ij}}(-1)^s\bigl[\!\begin{smallmatrix} 1-A_{ij} \\ s \end{smallmatrix}\!\bigr]_{d_i} E_i^rE_jE_i^s=0, & \text{for }i\neq j,
		\end{align}	
		\begin{align}\label{eq:SerreF}
		&\sum_{r+s=1-A_{ij}}(-1)^s\bigl[\!\begin{smallmatrix} 1-A_{ij} \\ s \end{smallmatrix}\!\bigr]_{d_i} F_i^rF_jF_i^s=0, & \text{for }i\neq j.
		\end{align}
	\end{defn}
	Equations (\ref{eq:SerreE}) and (\ref{eq:SerreF}) are called the \emph{quantum Serre relations}.
	
	\begin{defn}
		Let $U_q^{div}(\g)$ be the subalgebra of $U_q(\mathfrak{\g})$ generated by 
		\[
		E_i^{(N)}=\frac{E_i^N}{[N]_{d_i}!},F_i^{(N)}=\frac{F_i^N}{[N]_{d_i}!},K_i^{\pm}
		\] 
		 for $N\geq0$ over $\Z[q,q^{-1}]$. We call $U_q^{div}(\g)$ the \emph{divided-powers algebra}.
	\end{defn}
	
	The Hopf algebra structure on $\Uqg$ is defined by the maps below for $1\leq i \leq n$, and extends to the entire algebra via their (anti-)homomorphism properties:
	\begin{align}
	\Delta(E_i)&=E_i\otimes K_i+1\otimes E_i & S(E_i)&=-E_iK_i^{-1}&\epsilon(E_i)&=0\label{eq:HopfE}\\
	\Delta(F_i)&=F_i\otimes 1+ K_i^{-1}\otimes F_i & S(F_i)&=-K_iF_i&\epsilon(F_i)&=0\label{eq:HopfF}\\
	\Delta(K_i)&=K_i\otimes K_i&S(K_i)&=K_i^{-1}&\epsilon(K_i)&=1\label{eq:HopfK}.
	\end{align}
	According to \cite{LusztigADE,Lusztig}, powers of $K_i$ and the collection of maps $\psi:\Phip\rightarrow\mathbb{Z}_{\geq0}$, together with the braid group action $T_i$ on the quantum group defined therein determine a PBW basis of $\Uqg$.\\
	
	Let $\Q_l$ be the quotient of $\Q[q,q^{-1}]$ by the ideal generated by the $l$-th cyclotomic polynomial. Let $l_i$ be the order of $q^{2d_i}$ in $\Q_l$. For each $\alpha\in\Phip$ set $l_\alpha=l_i$ if $\alpha$ is in the Weyl orbit of $\alpha_i\in \Deltap$. The divided powers elements $E_\alpha^{(N)}$ and $F_\alpha^{(N)}$ for $\alpha\in\Phip$ and $0\leq N\leq l_\alpha-1$, together with $K_i^\pm$ for $1\leq i\leq n$ generate a $\Q_l$-subalgebra $\overline{U}_q^{div}(\g)\subseteq U_q^{div}(\g)\otimes_{\Z[q,q^{-1}]} \Q_l$. As elements of ${U}_q^{div}(\g)\otimes_{\Z[q,q^{-1}]} \Q_l$,
	\begin{align}
	E^{l_\alpha}_\alpha=[l_\alpha]_{d_\alpha}!E^{(l_\alpha)}_\alpha=0
	&& \text{and}&& 
	F^{l_\alpha}_\alpha=[l_\alpha]_{d_\alpha}!F^{(l_\alpha)}_\alpha=0.
	\end{align}
	
	\begin{defn}
		The \emph{restricted quantum group} $\Urg$ is the $\Q_l$-algebra generated by $E_i,F_i,$ and $K_i^\pm$ inside $\overline{U}_q^{div}(\g)$. The Hopf algebra structure on $\Urg$ is inherited from the one carried by $\Uqg$.
	\end{defn}
	
	\begin{rem}
		In contrast to the small quantum group $u_q(\g)$ described in \cite{LusztigADE,Lusztig}, the restricted quantum group $\Urg$ considered here is infinite dimensional.
	\end{rem}

	\subsection{Specialization of $q$} Recall that $l_i$ is the order of $q^{2d_i}$ in $\Q_l$. For the remainder of this section, assume $q=\zeta$ so that $l=4$. If $d_i=2$, then $l_i=1$ and implies $E_i=0$. If $d_i\in\{1,3\}$, then $l_i=2$ and $E^{2}_i=F_i^2=0$.\newline 
		
	These relations extend to non-simple root vectors using the braid group action. Note that at a fourth root of unity, the Serre relations found in equations (\ref{eq:SerreE}) and (\ref{eq:SerreF}) reduce to ``far commutativity.'' However, if $\text{rank}(\g)>2$, additional commutation relations must be considered so that the maps $T_i$, used for defining higher root vectors, are automorphisms of the restricted quantum group. 
	
	\begin{defn}
		Suppose $E_\alpha$, or $F_\alpha$, in $U_\zeta^{div}(\g)$ does not belong to the subalgebra over $\Z[\zeta]$ generated by simple root vectors or that $d_i=2$. Then the root $\alpha$ is said to be \emph{negligible}. The collection of negligible positive roots is denoted by $\Phi_{0}^+\subseteq\Phip$. We set $\Delta_{0}^+=\Deltap\cap \Phi_{0}^+$, $\overline{\Phip}=\Phip\setminus \Phi_{0}^+$, and $\overline{\Deltap}=\Deltap\setminus\Delta_{0}^+$.  In addition, $\overline{\Phip}$ is equipped with some ordering $<_{br}$ according the braid group action mentioned above.
	\end{defn}

	We refer the reader to \cite{CP} for more details on $<_{br}$. In the $\slthree$ case, we use the ordering 
	\begin{align}
	\alpha_1<_{br}\alpha_1+\alpha_2<_{br}\alpha_2.
	\end{align} The root vectors associated with the root $\alpha_1+\alpha_2$ are
	\begin{align}
	E_{12}=-(E_1E_2+\zeta E_2E_1)  &&\text{and}&& F_{12}=-(F_2F_1-\zeta F_1F_2).
	\end{align}

	\begin{rem}
		Upon deletion of negligible simple roots in types $BCF$, those with \hbox{$d_i=2$} in the convention of \cite{Bourbaki4-6}, we can make identifications in the resulting Cartan data. This leads to the following isomorphisms:
		\begin{align}
		\overline{U}_\zeta(\mathfrak{b}_n)&\cong \overline{U}_\zeta(\mathfrak{a}_{n-1})[K_n^\pm]/\bracks{\{K_n,E_{n-1}\},\{K_n,F_{n-1}\}}
		\\
		\overline{U}_\zeta(\mathfrak{c}_n)&\cong \overline{U}_\zeta(\mathfrak{a}_{1})[K_2^\pm,\dots,K_n^\pm]/\bracks{\{K_2,E\},\{K_2,E\}}
		\\
		\overline{U}_\zeta(\mathfrak{f}_4)&\cong
		\overline{U}_\zeta(\mathfrak{a}_2)[K_3^\pm,K_4^\pm]/\bracks{\{K_3,E_2\},\{K_3,F_2\}}.
		\end{align}
		Here we have used $\{\cdot,\cdot\}$ to denote the anticommutator.
	\end{rem}
	
	\begin{rem}
		In type $G_2$, the only non-simple root vector generated by $E_1$, $E_2$ over $\Z[\zeta]$ is $E_{12}=\zeta E_1E_2-E_2E_1$. All other $E_\alpha$ are given by expressions in higher divided powers and so they do not belong to the restricted quantum group. Therefore, we may identify the non-negligible roots in types $\mathfrak{a}_2$ and $\mathfrak{g}_2$. However, the associated quantum groups are not isomorphic because the off-diagonal entries of the symmetrized Cartan matrices, modulo 4, are not equal.
	\end{rem}
	By the above remarks, it is enough to describe the non-Cartan components of the restricted quantum group for the simply-laced cases. We have referred to \cite{LusztigADE} for the following generators and relations description. For a root $\alpha=\sum c^\alpha_j\alpha_j$, we define $g(\alpha)$ to be the greatest index $i$ for which $c_i\neq0$. Let $h(\alpha)=\sum c_j$ and $h'(\alpha)=c_i^{-1}h(\alpha)$.
\begin{restatable}{defnprop}{genrel}
	The restricted quantum group $\overline{U}_\zeta(\g)$ is the $\Q_4$-algebra generated by $E_{\alpha}, F_{\alpha},$  for $\alpha\in \overline{\Phip}$, and $K_j$ for $1\leq j\leq n$ with relations:
	\begin{gather}
		\begin{align}
			&K_iK_i^{-1}=1, && K_iK_j=K_jK_i,\\	
			&K_iE_{\alpha_j}=\zeta^{d_iA_{ij}}E_{\alpha_j}K_i, && K_iF_{\alpha_j}=\zeta^{d_iA_{ij}}F_{\alpha_j}K_i,\\ &[E_{\alpha_i},F_{\alpha_j}]=\delta_{ij}\floor{K_i}_{d_i}, && E_\alpha^2=F_\alpha^2=0,
		\end{align}\\
		\left.\begin{matrix}
			E_\alpha E_{\alpha_i}=E_{\alpha_i}E_\alpha\\
			F_\alpha F_{\alpha_i}=F_{\alpha_i}F_\alpha
		\end{matrix}\right\rbrace
		\text{ for } (\alpha,\alpha_i)=0, i<g(\alpha), \text{ and } h'(\alpha)\in\Z,\\
		\left.\begin{matrix}
			E_\beta E_{\alpha}=\zeta E_\alpha E_\beta + \zeta E_{\alpha+\beta}\\
			E_\beta E_{\alpha+\beta}= -\zeta E_{\alpha+\beta}E_\beta \\
			E_\alpha E_{\alpha+\beta}= \zeta E_{\alpha+\beta}E_\alpha \\
			F_\alpha F_{\beta}=-\zeta F_\beta F_\alpha - \zeta F_{\alpha+\beta}\\
			F_\beta F_{\alpha+\beta}=-\zeta F_{\alpha+\beta}F_\beta \\
			F_\alpha F_{\alpha+\beta}= \zeta F_{\alpha+\beta}F_\alpha
		\end{matrix}\right\rbrace
		\begin{matrix} \text{ for } (\alpha,\beta)=-1 \text{ and either }\\
			\beta=\alpha_i \text{ and } i<g(\alpha) \text{ or}\\
			h(\beta)=h(\alpha)+1\text{ and } g(\alpha)=g(\beta).
		\end{matrix}
	\end{gather}
\end{restatable}
	
	The Hopf algebra structure on $\Uzg$ is inherited from $\Uqg$, and described in equations (\ref{eq:HopfE})-(\ref{eq:HopfK}). Let $\Psi$ denote the space of maps $\psi:\overline{\Phip}\to\{0,1\}$. We now state a modification of Theorem 8.3 from \cite{Lusztig}.
	\begin{restatable}{cor}{PBW}\label{cor:basis}
		The restricted quantum group $\Uzg$ has a PBW basis
		\begin{align}
			\{E^{\psi}F^{\psi'}K^{\underline{k}}:\psi,\psi'\in\Psi \text{ and } \underline{k}\in \Z^n\},
		\end{align}
		where $E^{\psi}=\prod_{\alpha\in \overline{\Phip}}E_\alpha^{\psi(\alpha) }$, 
		$F^{\psi}=\prod_{\alpha\in \overline{\Phip}}F_\alpha^{\psi(\alpha) }$,  $K^{\underline{k}}=\prod_{i=1}^nK_i^{{k}_i}$, and products are ordered with respect to $<_{br}$. 
	\end{restatable}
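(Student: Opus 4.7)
The plan is to deduce this PBW basis directly from Lusztig's Theorem 8.3 in \cite{Lusztig}, which furnishes $U_q^{div}(\g)\otimes_{\Z[q,q^{-1}]}\Q_4$ with a PBW basis consisting of monomials $E^{(\psi)}F^{(\psi')}K^{\underline{k}}$ with $\psi,\psi':\Phi^+\to\Z_{\geq0}$ arbitrary, $\underline{k}\in\Z^n$, and products taken in the order $<_{br}$. The key observation is that the restricted quantum group $\Uzg$ is, by construction, the subalgebra generated by the non-divided simple generators together with $K_i^{\pm}$, and that the divided powers $E_\alpha^{(N)}$ for $N\geq 2$ (or for $\alpha$ negligible) do not lie in this subalgebra.

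The first step is to show that $E_\alpha,F_\alpha\in\Uzg$ precisely for $\alpha\in\overline{\Phip}$: by the definition of non-negligibility, every such $E_\alpha$ is obtainable as a $\Z[\zeta]$-polynomial in the simple root vectors, while for negligible $\alpha$ one has $E_\alpha=0$ in $\overline{U}_q^{div}(\g)$ (either because $d_\alpha=2$ forces $l_\alpha=1$, or because $E_\alpha$ is proportional to a divided power outside the subalgebra). Next, at $q=\zeta$ one computes $[2]_{d_\alpha}=\zeta^{d_\alpha}+\zeta^{-d_\alpha}=0$ for $d_\alpha\in\{1,3\}$, so the identity $E_\alpha^2=[2]_{d_\alpha}!\,E_\alpha^{(2)}=0$ holds for every $\alpha\in\overline{\Phip}$, and similarly for $F_\alpha$. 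Consequently, any monomial in Lusztig's basis that actually lies in $\Uzg$ must have all exponents supported on $\overline{\Phip}$ and bounded by $1$ on each root, which is exactly the content of $\psi,\psi'\in\Psi$.

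With this in hand, linear independence of the proposed family over $\Q_4$ follows immediately because the identification $E_\alpha^{\psi(\alpha)}=E_\alpha^{(\psi(\alpha))}$ (valid for $\psi(\alpha)\in\{0,1\}$) embeds our set into Lusztig's PBW basis. For spanning, I would argue by induction on the total degree of a word in the generators: using the commutation and far-commutativity relations in the Definition/Proposition above, one can migrate all $E_\alpha$'s to the left, $F_\alpha$'s to the middle, and $K_i$'s to the right, ordering the $E$'s and $F$'s by $<_{br}$. The relations creating an $E_{\alpha+\beta}$ (resp. $F_{\alpha+\beta}$) when commuting $E_\beta E_\alpha$ produce strictly shorter correction terms in the induction, and the quadratic relations $E_\alpha^2=F_\alpha^2=0$ truncate each exponent to $\{0,1\}$.

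The main obstacle is ensuring that the rewriting procedure terminates and that the resulting expression is well-defined independently of the rewriting strategy. This is precisely the content of the argument underlying Lusztig's Theorem 8.3, which is adapted here only by the truncation imposed by $E_\alpha^2=F_\alpha^2=0$; since the ordering $<_{br}$ has already been chosen to be compatible with the braid group action on root vectors, and the commutation relations are exactly those arising from that action, the termination and basis property follow as in the original proof with only notational changes.
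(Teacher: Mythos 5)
Your proposal is correct and follows essentially the route the paper intends: the paper states this corollary as ``a modification of Theorem 8.3 from \cite{Lusztig}'' with no separate proof, i.e.\ it relies on exactly the adaptation you spell out (non-negligible root vectors lie in $\Uzg$, $[2]_{d_\alpha}!=0$ at $q=\zeta$ forces $E_\alpha^2=F_\alpha^2=0$ and truncates exponents to $\{0,1\}$, negligible roots drop out, and independence/spanning are inherited from Lusztig's basis and the straightening relations). Your write-up simply supplies the details the paper leaves implicit, so no substantive discrepancy arises.
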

		\begin{lem}\label{lem:central}
		Let $\underline{k}\in \Z^n$ and $K^{\underline{k}}=\prod_{i=1}^n K_i^{{k}_i}$ be a basis vector of the Cartan torus. Then $K^{\underline{k}}$ is central if and only if \[(d_iA_{ij})\underline{k}\in (l\Z)^n.\]
	\end{lem}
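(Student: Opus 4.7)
The plan is to test centrality against a generating set. Since Cartan elements pairwise commute, $K^{\underline{k}}$ automatically commutes with every $K_j^{\pm1}$, so by the generators-and-relations presentation in Definition/Proposition~\ref{genrel} it suffices to check commutation against each $E_\alpha$ and $F_\alpha$ for $\alpha\in\overline{\Phip}$. The key observation is that each such generator is a weight vector: $E_\alpha$ has weight $\alpha=\sum_j c_j^\alpha \alpha_j$, so $K_i$ conjugates $E_\alpha$ by $\zeta$ to a power determined by the pairing $d_i\sum_j A_{ij}c_j^\alpha$, with the opposite sign for $F_\alpha$.

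First I would handle the simple root vectors. Iterating the defining relation $K_i E_{\alpha_j}=\zeta^{d_iA_{ij}}E_{\alpha_j}K_i$ gives
\begin{equation}
K^{\underline{k}} E_{\alpha_j} = \zeta^{\sum_i k_i d_i A_{ij}} E_{\alpha_j}K^{\underline{k}},
\end{equation}
and likewise $K^{\underline{k}} F_{\alpha_j} = \zeta^{-\sum_i k_i d_i A_{ij}} F_{\alpha_j} K^{\underline{k}}$. Thus commutation with each simple $E_{\alpha_j},F_{\alpha_j}$ is equivalent to $\sum_i k_i d_i A_{ij}\equiv 0\pmod{l}$. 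Because the symmetrized Cartan matrix is symmetric, $d_iA_{ij}=d_jA_{ji}$, and the left-hand side equals the $j$-th entry of $(d_iA_{ij})\underline{k}$. Requiring this for every $j$ yields exactly the matrix condition $(d_iA_{ij})\underline{k}\in(l\Z)^n$.

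Next I would extend to non-simple generators. For any $\alpha=\sum_j c_j \alpha_j\in\overline{\Phip}$, the commutation exponent of $K^{\underline{k}}$ with $E_\alpha$ is
\begin{equation}
\sum_{i,j} k_i d_i A_{ij} c_j \;=\; \sum_j c_j\!\left(\sum_i k_i d_i A_{ij}\right),
\end{equation}
a $\Z_{\geq 0}$-combination of the simple-root exponents, and the same holds for $F_\alpha$ up to sign. Hence once the simple-root condition is satisfied mod $l$, commutation with every $E_\alpha,F_\alpha$ follows for free, and $K^{\underline{k}}$ is central. The reverse implication is immediate: centrality forces commutation with each simple $E_{\alpha_j}$, which recovers the matrix condition.

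The argument is essentially a direct computation, so the only point requiring care is confirming that every non-simple generator $E_\alpha,F_\alpha$ in $\Uzg$ remains a weight vector of weight $\pm\alpha$; this follows from the braid-group / iterated $q$-commutator construction recalled above (e.g.\ $E_{12}=-(E_1E_2+\zeta E_2 E_1)$ in type $A_2$). With that in hand, weight-additivity of exponents reduces the whole problem to the simple-root computation and no further obstacle arises.
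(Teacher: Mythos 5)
Your proof is correct and follows essentially the same route as the paper: the paper's proof is exactly the one-line computation $K^{\underline{k}}E_j=\zeta^{\sum_i k_i d_iA_{ij}}E_jK^{\underline{k}}$ against the simple root vectors, using the symmetry $d_iA_{ij}=d_jA_{ji}$ implicitly just as you do explicitly. Your additional verification that the non-simple generators $E_\alpha,F_\alpha$ are weight vectors, so that their conjugation exponents are $\Z_{\geq0}$-combinations of the simple ones, is a harmless elaboration of a step the paper leaves tacit.
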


	\begin{proof} The proof follows from
		$
		K^{\underline{k}}E_j
		=q^{\sum_i \underline{k}_i (d_iA_{ij}) }E_jK^{\underline{k} }
		=q^{ ((d_iA_{ij})\underline{k})_j }E_jK^{\underline{k} }
		$.
	\end{proof}

\begin{lem}\label{lem:Kcomm}
	For each $\alpha=\sum c_i^\alpha\alpha_i\in\overline{\Phip}$, let  $K_\alpha={\prod K_i^{c^\alpha_i}}$. Then $
	[E_\alpha,F_\alpha]=\floor{K_\alpha}$ for all $\alpha\in\overline{\Phip}$.
\end{lem}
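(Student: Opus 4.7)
The plan is to prove the lemma by induction on the height of $\alpha\in\overline{\Phip}$. For the base case, when $\alpha=\alpha_i$ is a simple root, the identity $[E_{\alpha_i},F_{\alpha_i}]=\floor{K_i}_{d_i}$ is one of the defining relations of Definition/Proposition 2.3. In the simply-laced setting (and more generally for the non-negligible roots isolated in $\overline{\Phip}$ for the cases considered here), $d_i=1$ so $K_\alpha=K_i$ and $\floor{K_i}_{d_i}=\floor{K_\alpha}$.

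For the inductive step, let $\alpha\in\overline{\Phip}$ be non-simple. The braid ordering $<_{br}$ on $\overline{\Phip}$ lets us write $\alpha=\beta+\gamma$ with $\beta,\gamma\in\overline{\Phip}$ of strictly smaller height and $(\beta,\gamma)=-1$, and the relations in Definition/Proposition 2.3 provide explicit formulas
\begin{align*}
E_\alpha=\zeta^{-1}E_\gamma E_\beta-E_\beta E_\gamma,\qquad
F_\alpha=-\zeta^{-1}F_\beta F_\gamma-F_\gamma F_\beta
\end{align*}
(up to indexing conventions of $\beta,\gamma$). Expanding $[E_\alpha,F_\alpha]=E_\alpha F_\alpha-F_\alpha E_\alpha$ produces eight quartic monomials in $E_\beta,E_\gamma,F_\beta,F_\gamma$. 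I would simplify each of these by commuting $E$'s past $F$'s using (i) the inductive hypotheses $[E_\beta,F_\beta]=\floor{K_\beta}$ and $[E_\gamma,F_\gamma]=\floor{K_\gamma}$, (ii) the Cartan moves $K_\mu F_\nu=\zeta^{-(\mu,\nu)}F_\nu K_\mu$, and (iii) cross-commutators $[E_\beta,F_\gamma]$ and $[E_\gamma,F_\beta]$, which a direct calculation (e.g.\ my sample computation $[E_1,F_{12}]=\zeta F_2 K_1$ in $\slthree$) shows have the form of a root vector times a Cartan monomial. After all cross-terms cancel, the surviving contributions reorganize into $(K_\beta K_\gamma-K_\beta^{-1}K_\gamma^{-1})/(\zeta-\zeta^{-1})=\floor{K_\beta K_\gamma}=\floor{K_\alpha}$, completing the induction.

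The main obstacle is the bookkeeping: the cross-commutators $[E_\beta,F_\gamma]$ must be determined in parallel with the main induction, and the many powers of $\zeta$ and sign conventions need to be tracked carefully. A conceptually cleaner alternative is to invoke Lusztig's braid group automorphisms $T_w$ (whose existence on $\Uzg$ is implicit in the use of $<_{br}$ and the PBW basis of Corollary 2.8). Since each $\alpha\in\overline{\Phip}$ has the form $\alpha=w(\alpha_i)$ for some Weyl element $w$ and simple $\alpha_i$, and since $T_w$ is an algebra map with $T_w(K_i)=K_\alpha$, applying $T_w$ to the base relation $[E_{\alpha_i},F_{\alpha_i}]=\floor{K_i}_{d_i}$ gives $[T_w(E_{\alpha_i}),T_w(F_{\alpha_i})]=\floor{K_\alpha}_{d_\alpha}=\floor{K_\alpha}$ (the latter using $W$-invariance of root lengths and $d_\alpha=1$). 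The cost of this shortcut is checking that the $T_w$-images coincide with the $E_\alpha,F_\alpha$ defined by the relations of Definition/Proposition 2.3, up to a sign normalization compatible with the lemma's conclusion.
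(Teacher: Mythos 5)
Your first route is essentially the paper's proof --- induction on the height of $\alpha$, expanding the bracket via the quadratic expressions for higher root vectors --- but the step you defer is the crux, and your expectation for it is wrong in the configuration that actually occurs. The paper inducts by adjoining a \emph{simple} root: it treats $\alpha+\alpha_j$ with $\alpha_j$ simple and $(\alpha,\alpha_j)=-1$ (the generators-and-relations description of $\Uzg$ only supplies the quadratic formulas for $E_{\alpha+\beta}$, $F_{\alpha+\beta}$ in such special configurations, so an arbitrary splitting $\alpha=\beta+\gamma$ into two lower-height roots is not available off the shelf), and the cancellation of cross-terms rests entirely on the identities $[E_{\alpha_j},F_\alpha]=[E_\alpha,F_{\alpha_j}]=0$, which the paper proves by applying the braid automorphism $T_j$ and computing that $T_j([E_j,F_\alpha])=0$. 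Your model computation $[E_1,F_{12}]=\zeta F_2K_1$ is not of this type: there $(\alpha_1,\alpha_{12})=1$ and $\alpha_1+\alpha_{12}$ is not a root, whereas in the inductive configuration ($\alpha+\alpha_j\in\overline{\Phip}$, $(\alpha,\alpha_j)=-1$, simply laced) the difference $\alpha-\alpha_j$ is not a root and the cross-commutator vanishes --- a fact that still requires an argument in $\Uzg$, which is exactly what the paper's $T_j$ computation provides. If the cross-commutators really were ``a root vector times a Cartan monomial,'' the quartic expansion would retain cubic terms and your assertion that ``all cross-terms cancel'' would not follow; as written, this step is unsupported. With the vanishing in hand, the surviving terms reduce, using the inductive hypothesis and the Cartan moves, to $\floor{K_\alpha}\floor{\zeta K_j}+\floor{K_j}\floor{\zeta K_\alpha}=\frac{1}{2\zeta}\left(K_jK_\alpha-K_j^{-1}K_\alpha^{-1}\right)=\floor{K_{\alpha+j}}$, which is the reorganization you anticipate (note this uses $\zeta+\zeta^{-1}=0$, so it is special to the fourth root of unity).

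Your alternative route via Lusztig's automorphisms $T_w$ applied to the base relation $[E_i,F_i]=\floor{K_i}$ is viable in outline --- the paper itself invokes $T_j$ inside its proof --- but the compatibility check you postpone (that $T_w(E_{\alpha_i})$ and $T_w(F_{\alpha_i})$ agree with the $E_\alpha$, $F_\alpha$ normalized by the paper's relations, with the two normalization constants multiplying to $1$) is precisely the nontrivial content of the statement; so neither route, as submitted, closes the argument.
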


\begin{proof}
	We give a proof by induction on the height $h(\alpha)=\sum c_i$. If $h(\alpha)=1$, then $\alpha$ is simple and $[E_i,F_i]=\floor{K_i}$. Suppose now that $[E_\alpha,F_\alpha]=\floor{K_\alpha}$ for some $\alpha\in \overline{\Phip}$, we prove that $[E_{\alpha+j},F_{\alpha+j}]=\floor{K_{\alpha+j}}$ for any simple root $\alpha_j\in \overline{\Phip}$ such that $(\alpha,\alpha_j)=-1$. We compute,
	\[
	[E_{\alpha+j},F_{\alpha+j}]=\comm{
	[-E_{\alpha}E_{j} -\zeta E_{j} E_{\alpha} ,\zeta F_{\alpha}F_{j}-F_{j}F_{\alpha} ]
	\]
	\[
	=}-\zeta[E_\alpha E_{j},F_\alpha F_{j}]+[E_\alpha E_{j},F_{j}F_{\alpha}]+[E_{j} E_{\alpha},F_{\alpha}F_{j}]
	+\zeta[E_{j} E_{\alpha},F_{j}F_{\alpha}].
	\]
	Since $(\alpha,\alpha_i)=-1$, we have 
	\[T_i([E_i,F_{\alpha}])=[-F_iK_i,\zeta F_iF_\alpha-F_\alpha F_i]
	=F_iK_iF_\alpha F_i+\zeta F_iF_\alpha F_iK_i=-\zeta F_iF_\alpha F_i+\zeta F_iF_\alpha F_iK_i=0
	\]
	and $[E_i,F_\alpha]=[E_\alpha,F_i]=0$. This implies,
	\[
	[E_\alpha E_{j},F_\alpha F_{j}]=E_\alpha F_\alpha E_jF_j-F_\alpha E_\alpha F_jE_j=
	\floor{K_\alpha}E_jF_j -F_\alpha E_\alpha \floor{K_j}
	\]
	\[
	[E_j E_{\alpha},F_j F_{\alpha}]= E_jF_jE_\alpha F_\alpha- F_jE_jF_\alpha E_\alpha=
	E_jF_j\floor{K_\alpha}-\floor{K_j}F_\alpha E_\alpha.
	\]
	So
		\[
	[E_{\alpha+j},F_{\alpha+j}]\comm{=[E_\alpha E_{j},F_{j}F_{\alpha}]+[E_{j} E_{\alpha},F_{\alpha}F_{j}]\]\[}=
	E_\alpha\floor{K_j}F_\alpha +F_j\floor{K_\alpha} E_j +E_j\floor{K_\alpha}F_j+F_\alpha\floor{K_j}E_\alpha.
	\]
	Again making use of $(\alpha,\alpha_i)=-1$, $K_\alpha E_i=\zeta^{-1}K_\alpha$ and $K_j E_\alpha=\zeta^{-1}E_\alpha K_j$. Therefore,
	\[
	[E_{\alpha+j},F_{\alpha+j}]=\floor{K_\alpha}\floor{\zeta K_j}+\floor{K_j}\floor{\zeta K_\alpha}\comm{\]
	\[=\frac{1}{(2\zeta)^2}
	\left(\left(K_\alpha-K_\alpha^{-1}\right)(\zeta K_j+\zeta K_j^{-1})+
	(\zeta K_j- K_j^{-1})\left(\zeta K_\alpha+\zeta K_\alpha^{-1}\right)
	\right)
	\]\[}
	=\frac{1}{2\zeta}\left(K_jK_\alpha-K_j^{-1}K_\alpha^{-1}\right)=\floor{K_{\alpha+j}}.
	\qedhere\]
\end{proof}

	\subsection{Properties of $E^\psi$ and $F^\psi$} Consider the subalgebras of $\Uzg$:
	\begin{align}
	U^0=\langle K_i^\pm : 1\leq i \leq n \rangle,&&  U^+=\langle E_\alpha: \alpha\in\overline{\Phip}\rangle, && \text{and} && U^-=\langle F_\alpha: \alpha\in \overline{\Phip}\rangle.
	\end{align} 
	
	We recall that $<_{br}$ is convex \cite{FeiginVassiliev}, i.e. if $\alpha,\beta,\alpha+\beta\in \Phip$ and $\alpha<_{br}\beta$ then ${\alpha<_{br}\alpha+\beta<_{br}\beta}$. Thus, the following subalgebras are well defined for any $\alpha<_{br}\beta$: 
	\begin{align}
	U^+_{\alpha\beta}=\langle E_\gamma: \alpha<_{br}\gamma<_{br}\beta\rangle && \text{and} &&
	U^-_{\alpha\beta}=\langle F_\gamma: \alpha<_{br}\gamma<_{br}\beta\rangle.
	\end{align} 
	Moreover, 
	\begin{align}
	U^+_{\beta}=\langle E_\gamma: \gamma<_{br}\beta\rangle && \text{and} &&
	U^-_{\beta}=\langle F_\gamma: \gamma<_{br}\beta\rangle
	\end{align} 
	are well defined subalgebras. \newline 
	
		Given an ordering $<_{br}$ on $\overline{\Phip}$, we define a lexicographical ordering on $\Psi=\{0,1\}^{\overline{\Phip}}$. We say that $\psi_1<\psi_2$ if there exists $\alpha\in\overline{\Phip}$ such that $\psi_1(\alpha)=0$, $\psi_2(\alpha)=1$, and $\psi_1(\beta)=\psi_2(\beta)$ for all $\beta>_{br}\alpha$. Observe that $<$ is a total ordering on $\Psi$ with $(1\dots1)$ maximal. 
	
	\begin{defn}
		To each $F^\psi$, we assign the simple root sum \begin{align}
	rt(F^\psi)=\sum_{\alpha\in\overline{\Phip}} \psi(\alpha)h(\alpha).
	\end{align}
	Here, we recall that $h(\alpha)$ is the height of $\alpha\in \overline{\Phip}$, which is given by $\sum c_j^\alpha$ for ${\alpha=\sum c_j^\alpha \alpha_j}$. We remark that $rt(F^\psi)$ can be interpreted as the $l^2$-inner product of the functions $\psi$ and $h$ on $\overline{\Phip}$, and in this sense $rt(F^\psi)=h^*(\psi)$.
	\end{defn}	
	
	Maximality of $(1\dots1)$ implies that $rt(F^{(1\dots1)})\geq rt(F^\psi)$ for all $\psi\in  \Psi$. Since $\Psi$ is totally ordered, the inequality is an equality if and only if $\psi=(1\dots 1)$. Observe that $rt(F^{\psi_1}F^{\psi_2})=rt(F^{\psi_1})+rt(F^{\psi_2})$. By maximality, $rt(F^{\psi_1})+rt(F^{\psi_2})>rt(F^{(1\dots1)})$ implies $F^{\psi_1}F^{\psi_2}=0$. Suppose $rt(F^{\psi})=rt(F^{(1\dots 1)})$ and $\psi(\alpha)=0$ for some $\alpha\in \overline{\Phip}$. Since $(1\dots 1)$ is the unique maximal element of $\Psi$, $F^\psi=0$. \newline 
	
	We also introduce the following notations. For any $\psi\in \Psi$, define $\psi_{<\gamma}\in \Psi$ so that $\psi_{<\gamma}(\alpha)=\psi(\alpha)$ for all $\alpha<\gamma$ and is zero otherwise. The root vectors associated to $\psi_{<\gamma}$ are denoted $F^\psi_{<\gamma}$ and $E^\psi_{<\gamma}$. These maps and root vectors are notated analogously for the other inequalities. 	For each $\alpha\in\overline{\Phip}$ we define $\delta_\alpha\in\Psi$ so that $\delta_\alpha(\alpha)=1$ and is zero otherwise.
	
	\begin{lem}\label{lem:compl}
		Let $\psi\in \Psi$ and $\psi'=1-\psi.$ Then $F^\psi F^{\psi'}$ is a nonzero multiple of $F^{(1\dots 1)}$. Similarly, $E^\psi E^{\psi'}\in \bracks{E^{(1\dots 1)}}$ is nonzero.
	\end{lem}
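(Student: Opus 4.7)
The plan is two-fold: first to show that $F^\psi F^{\psi'} = a \cdot F^{(1\dots 1)}$ for some scalar $a \in \Q_4$ using the $rt$ grading, and then to verify $a \neq 0$ by passing to an associated graded algebra under a length filtration. The assertion for $E^\psi E^{\psi'}$ will follow by the analogous argument applied to the commutation relations for the $E$-generators.

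For the first step, I use the $rt$ additivity noted in the paragraph preceding the lemma: $rt(F^\psi F^{\psi'}) = rt(F^\psi) + rt(F^{\psi'}) = rt(F^{(1\dots 1)})$ because $\psi + \psi' = (1\dots 1)$. Since $rt$ is a linear function of the weight, it defines a grading on $U^-$; hence expanding $F^\psi F^{\psi'}$ in the PBW basis $\{F^\chi : \chi \in \Psi\}$ produces only terms $F^\chi$ with $rt(F^\chi) = rt(F^{(1\dots 1)})$. The uniqueness of $(1\dots 1)$ as the maximum of $\Psi$ (again noted in the paragraph before the lemma) forces the expansion to consist of the single term $a \cdot F^{(1\dots 1)}$.

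For the second step, I introduce the length filtration $U^-_{\leq k}$ on $U^-$ spanned by PBW basis elements $F^\chi$ with $|\chi| \leq k$. Inspection of the commutation relations in the Definition/Proposition shows that each has the form $F_\alpha F_\beta = c_1 F_\beta F_\alpha + c_2 F_{\alpha+\beta}$, with $c_1 \in \{1, \pm\zeta\}$ nonzero and $c_2$ possibly zero; when present, the correction $F_{\alpha+\beta}$ is a single generator lying in $U^-_{\leq 1}$, strictly below $F_\alpha F_\beta \in U^-_{\leq 2}$. Thus the associated graded $\mathrm{gr}(U^-)$ is a skew-exterior algebra generated by $\{F_\alpha : \alpha \in \overline{\Phip}\}$ with relations $F_\alpha^2 = 0$ and $F_\alpha F_\beta = c_{\alpha\beta} F_\beta F_\alpha$ for nonzero scalars $c_{\alpha\beta}$. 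Reordering $F^\psi F^{\psi'}$ into $<_{br}$-order in this associated graded algebra requires a finite sequence of adjacent transpositions, each contributing a nonzero scalar $c_{\alpha\beta}$. Hence $F^\psi F^{\psi'}$ has nonzero image in $\mathrm{gr}^n(U^-)$ for $n = |\overline{\Phip}|$; in particular $F^\psi F^{\psi'} \notin U^-_{\leq n-1}$. Combined with the first step, $a \neq 0$.

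The principal obstacle is verifying that every commutation relation listed in the Definition/Proposition behaves correctly with respect to the length filtration — in particular, checking that for relations involving higher root vectors (such as $F_\beta F_{\alpha+\beta} = -\zeta F_{\alpha+\beta} F_\beta$ and $F_\alpha F_{\alpha+\beta} = \zeta F_{\alpha+\beta} F_\alpha$) the leading scalar is nonzero and any corrections are strictly in lower filtration, so that $\mathrm{gr}(U^-)$ is genuinely the claimed skew-exterior algebra on $\{F_\alpha\}_{\alpha \in \overline{\Phip}}$.
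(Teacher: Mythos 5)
Your first step is fine, and it is the same mechanism the paper relies on: the defining relations of $U^-$ are homogeneous for the root-lattice grading, so $rt$ is additive, and $(1\dots1)$ is the unique element of $\Psi$ of maximal $rt$; hence the PBW expansion of $F^\psi F^{1-\psi}$ can only involve $F^{(1\dots 1)}$. The gap is in your nonvanishing step. Your description of $\mathrm{gr}(U^-)$ as a skew-exterior algebra is justified only by the relations displayed in the Definition/Proposition, but those govern special pairs only: a root vector against a simple one, or the pairs inside a triple $\alpha,\beta,\alpha+\beta$ satisfying the stated side conditions. The lemma is for every semisimple $\g$, so it covers simply-laced root systems of rank at least three (and type $A_{n-1}$ inside $\overline{U}_\zeta(\mathfrak{b}_n)$), where pairs of non-simple root vectors occur that the list never touches, e.g. $F_{12}=\zeta F_1F_2-F_2F_1$ and $F_{23}=\zeta F_2F_3-F_3F_2$ in type $A_3$. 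For such pairs the straightening relation must be derived, and it acquires Levendorskii--Soibelman corrections of the \emph{same} length: with $F_{123}=\zeta F_{12}F_3-F_3F_{12}$ one computes $F_{12}F_{23}-F_{23}F_{12}=\zeta\bigl(F_{123}F_{2}+F_{2}F_{123}\bigr)$, and one checks this is nonzero at a fourth root of unity (for this convex order $F_2$ and $F_{123}$ commute rather than anticommute, so the right-hand side is a nonzero multiple of the PBW monomial $F_{123}F_2$). So the associated graded of the length filtration is not the claimed skew-exterior algebra, and the count ``each adjacent transposition contributes a nonzero scalar'' does not compute the image of $F^\psi F^{1-\psi}$ in top degree: the corrections create words of the same top length with repeated letters, and such words need not die --- already in $\Uzslthree$ one has $F_1F_2F_1F_2=-F^{(111)}\neq 0$ --- so they can contribute to, and a priori cancel, the coefficient of $F^{(1\dots1)}$. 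Your closing ``principal obstacle'' only re-examines the displayed relations, which are indeed harmless; the missing case is precisely the pairs the display does not cover.

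There are two natural repairs. One is to refine the filtration in the De Concini--Kac style (height together with the exponent vector ordered reverse-lexicographically with respect to $<_{br}$), for which the Levendorskii--Soibelman corrections are strictly lower and the associated graded genuinely is quasi-polynomial with nonzero commutation scalars; but that imports a straightening theorem the paper never states. The other is the paper's own argument, which is local rather than global: induct on the size of the support of $\psi$, peel off the $<_{br}$-largest root $\beta$ with $\psi(\beta)=1$, and push the single factor $F_\beta$ through $F^{1-\psi}_{<\beta}$. Convexity of $<_{br}$ keeps all corrections inside the subalgebra $U^-_\beta$, so after re-expansion every unwanted PBW monomial omits $F_\beta$; since it would have to carry the full height $rt(F^{(1\dots1)})$, which is impossible for a monomial missing a root, its coefficient vanishes, and the surviving term $a_0F^{\psi}_{<\beta}F^{(1-\psi)+\delta_\beta}$, with $a_0$ a power of $\zeta$, is handled by the inductive hypothesis. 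That route never needs to know the structure of the associated graded algebra, which is exactly the point where your argument breaks.
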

	\begin{proof}
		Let $k=\sum_{\alpha\in\overline{\Phip}}\psi(\alpha).$ If $k=0$, then $\psi'=(1\dots 1)$ and the claim holds immediately. Suppose that the claim holds for some $k< \text{rank}(\g)$, we show that it is also true for $\sum_{\alpha\in\overline{\Phip}}\psi(\alpha)=k+1$. Fix such a $\psi$. Suppose that $F^\psi=F^\psi_{<\beta} F_{\beta}$. Then 
	\begin{align}
	F^\psi F^{\psi'}&=
	F^\psi_{<\beta} F_{\beta}F^{\psi'}\\
	&=
	F^\psi_{<\beta}\left(a_0F^{\psi'}_{<\beta}F_{\beta}
	+\sum a_{\varphi}F^\varphi_{<\beta}\right)F^{\psi'}_{> \beta}\\
	&=
	a_0F^\psi_{<\beta}F^{\psi'}_{<\beta}F_{\beta}F^{\psi'}_{> \beta}+
	\left(\sum b_{\varphi}F^\varphi_{<\beta}\right) F^{\psi'}_{> \beta}.
	\end{align}
	with $a_{\varphi},b_{\varphi}\in \Q(\zeta)$ and $a_{0}$ equal to a power of $\zeta$. The third equality above is a consequence of the fact $F^\psi_{<\beta}$ and $F^\varphi_{<\beta}$ belong to $U^-_\beta$ for all $\varphi_{<\beta}\in\Psi$, which implies $F^\psi_{<\beta}F^\varphi_{<\beta}\in U^-_\beta$. By induction,
	$F^\psi_{<\beta}F^{\psi'}_{<\beta}F_{\beta}F^{\psi'}_{> \beta}=F^\psi_{<\beta}F^{\psi'+\delta_\beta}$ is a nonzero multiple of $F^{(1\dots 1)}$. Moreover, for each nonzero $b_\varphi$ in the above sum, 
	$rt(F^\varphi_{<\beta} F^{\psi'}_{> \beta})=rt(F^{1\dots1})$ and $\varphi_{<\beta}(\beta)+\psi'_{> \beta}(\beta)=0$. Therefore, $F^\varphi_{<\beta} F^{\psi'}_{> \beta}=0$. Thus, $F^\psi F^{\psi'}$ is a nonzero multiple of $F^{(1\dots 1)}$ for $\sum_{\alpha\in\overline{\Phip}}\psi(\alpha)=k+1\leq \text{rank}(\g)$. The proof for $E^\psi$ is analogous.
	\end{proof}
 For each $\psi\in\Psi$, we define ${\chi_\psi:U^-\to \Q(\zeta)}$ so that $\chi_\psi(F)$ is the coefficient of $F^\psi$ in the PBW basis expression for any $F\in U^-$.
\begin{lem}\label{lem:triangle}
	Fix $\psi_1,\psi_2\in\Psi$ such that $\psi_1<\psi_2$. Then $\chi_{(1\dots 1)}(F^{1-\psi_2}F^{\psi_1})=0$ and $\chi_{(1\dots 1)}(E^{1-\psi_2}E^{\psi_1})=0$.
\end{lem}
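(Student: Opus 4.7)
The plan is to induct on $|\overline{\Phip}|$, peeling off the $<_{br}$-maximal root $\beta^*=\max\overline{\Phip}$; the base case $|\overline{\Phip}|\leq 1$ is immediate. Two structural observations do most of the work. First, convexity of $<_{br}$ implies that $\beta^*$ cannot be written as $\delta+\epsilon$ for positive roots $\delta,\epsilon<_{br}\beta^*$, since otherwise one of them would exceed $\beta^*$ in the convex order; consequently, no commutator among $\{F_\delta:\delta<_{br}\beta^*\}$ can produce an $F_{\beta^*}$ term, and any product of these generators stays inside the subalgebra $U^-_{<\beta^*}$. Second, for $\delta<_{br}\beta^*$, the defining relations give $F_{\beta^*}F_\delta=\lambda_\delta F_\delta F_{\beta^*}+c_\delta F_{\delta+\beta^*}$ with $\lambda_\delta$ a nonzero scalar, and $F_{\delta+\beta^*}\in U^-_{<\beta^*}$ by convexity (when $\delta+\beta^*$ is a root at all).

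The argument then splits on $(\psi_1(\beta^*),\psi_2(\beta^*))$. If $\psi_1(\beta^*)\neq\psi_2(\beta^*)$, the hypothesis $\psi_1<\psi_2$ forces $\psi_1(\beta^*)=0$ and $\psi_2(\beta^*)=1$, so neither $F^{1-\psi_2}$ nor $F^{\psi_1}$ contains the factor $F_{\beta^*}$; the product $F^{1-\psi_2}F^{\psi_1}$ then lies entirely in $U^-_{<\beta^*}$ by the first observation, and its PBW expansion contains no $F^{(1\dots 1)}$ summand. If instead $\psi_1(\beta^*)=\psi_2(\beta^*)$, then the maximum index of disagreement sits strictly below $\beta^*$, so $\psi_1|_{<\beta^*}<\psi_2|_{<\beta^*}$ in the induced lex order on $\{0,1\}^{\overline{\Phip}\setminus\{\beta^*\}}$. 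In the subcase where both values are $1$, the product factors immediately as $F^{1-\psi_2}_{<\beta^*}F^{\psi_1}_{<\beta^*}F_{\beta^*}$, so its $F^{(1\dots 1)}$-coefficient equals the coefficient of $F^{(1\dots 1)}_{<\beta^*}$ in $F^{1-\psi_2}_{<\beta^*}F^{\psi_1}_{<\beta^*}$, which vanishes by the inductive hypothesis. In the subcase where both are $0$, I apply the second observation iteratively to commute the single $F_{\beta^*}$ factor (coming from $F^{1-\psi_2}$) past $F^{\psi_1}_{<\beta^*}$; the result is a nonzero scalar multiple of $F^{1-\psi_2}_{<\beta^*}F^{\psi_1}_{<\beta^*}F_{\beta^*}$ plus a remainder in $U^-_{<\beta^*}$, and once again the $F^{(1\dots 1)}$-coefficient is a nonzero scalar times the restricted coefficient, vanishing by induction.

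The parallel statement for $E^{1-\psi_2}E^{\psi_1}$ follows by the identical argument applied to $U^+$, whose generators share the same convex structure with respect to $<_{br}$. The hard part will be making the second observation fully precise: one must confirm that iteratively commuting $F_{\beta^*}$ past a product $F_{\delta_1}F_{\delta_2}\cdots$ of lower generators yields exactly one surviving $F_{\beta^*}$ (as a right factor, up to a nonzero scalar), with all other residues forced into $U^-_{<\beta^*}$. This rests on the fact that each single-step commutator contributes at most one new root vector $F_{\delta_i+\beta^*}$ while consuming the $F_{\beta^*}$ in that branch, combined with the convexity bound $\delta_i+\beta^*<_{br}\beta^*$ whenever $\delta_i+\beta^*\in\Phi^+$.
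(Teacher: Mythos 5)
Your argument is correct in substance but follows a genuinely different route from the paper. You induct on the size of the root set by peeling off the $<_{br}$-maximal root $\beta^*$, splitting on the values $\psi_1(\beta^*),\psi_2(\beta^*)$ and reducing the coefficient computation to the initial segment $\{\gamma:\gamma<_{br}\beta^*\}$; the paper instead argues directly, pivoting at the lex-disagreement root $\gamma$ (which both $1-\psi_2$ and $\psi_1$ omit), first reducing to the case $rt(F^{1-\psi_2})+rt(F^{\psi_1})=rt(F^{(1\dots1)})$ and then showing, via convexity, that every PBW monomial in the straightened product misses some root $\geq_{br}\gamma$, so that unique maximality of $(1\dots1)$ for $rt$ forces the whole product to vanish --- a conclusion stronger than the coefficient statement. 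What your approach buys is that it avoids the $rt$ bookkeeping and the preliminary reduction entirely; what it costs is that the induction is not literally on the rank of a smaller quantum group, since $\{\gamma<_{br}\beta^*\}$ is not a positive system, so you must restate the lemma for initial segments of $(\overline{\Phip},<_{br})$ inside the fixed $\Uzg$ and verify that the span of PBW monomials supported on such a segment is closed under multiplication. That closure, and your ``second observation'' that straightening $F_{\beta^*}$ past lower root vectors produces only a scalar multiple of the reordered product plus corrections supported strictly below $\beta^*$, are exactly the convexity/Levendorskii--Soibelman-type facts that follow from the explicit relations in the paper's Definition/Proposition (and are of the same nature as the straightening steps the paper itself uses in Lemma \ref{lem:compl}); in general the correction need not be the single vector $F_{\delta+\beta^*}$ but a combination of monomials in roots strictly between $\delta$ and $\beta^*$, which is all your argument needs. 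With the induction hypothesis so formulated, your case analysis (disagreement at $\beta^*$ forces both factors to omit $F_{\beta^*}$; agreement reduces, after at most one straightening pass, to the truncated coefficient) is complete and correct, and the $E$-side argument transfers verbatim, as in the paper.
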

\begin{proof}
	We prove the result in $U^-$, the proof is similar in $U^+$. We assume that $\psi_1$ and $\psi_2$ are chosen so that $rt(F^{1-\psi_2})+rt(F^{\psi_1})=rt(F^{(1\dots 1)})$.
	Let $\gamma$ be the root satisfying the properties $\psi_1(\gamma)=0$, $\psi_2(\gamma)=1,$ and  $\psi_1(\beta)=\psi_2(\beta)$ for all $\beta>_{br}\gamma$. Then,
	$F^{1-\psi_2}F^{\psi_1}=(F^{1-\psi_2}_{<\gamma}F^{1-\psi_1}_{>\gamma})(F^{\psi_1}_{<\gamma}F^{\psi_1}_{>\gamma}).$ Since $<_{br}$ is a convex ordering, we have the following equalities: 
	\begin{align}
	F^{1-\psi_1}_{>\gamma}F^{\psi_1}_{<\gamma}&=
	a_0F^{\psi_1}_{<\gamma}F^{1-\psi_1}_{>\gamma}+\sum_{\substack{\psi_1(\alpha)=1\\\alpha<\gamma}}a_\alpha F^{\psi_1}_{<\alpha}F^{1-\psi_1}_{>\gamma}F_\alpha F^{\psi_1}_{>\alpha,<\gamma}\\&=
	a_0F^{\psi_1}_{<\gamma}F^{1-\psi_1}_{>\gamma}+\sum_{\varphi_{>\gamma}<(1-\psi_1)_{>\gamma}}b_\varphi F^{\psi_1}_{<\alpha}F^{\varphi} F^{\psi_1}_{>\alpha,<\gamma}=
	\sum_{\varphi_{\geq\gamma}<(1-\psi_1)_{\geq\gamma}}c_\varphi F^{\varphi}
	\end{align} 
	for some $a_0,a_\alpha,b_\varphi,c_\varphi\in \Q(\zeta)$. We note that
	\begin{align}
	\{\varphi\in\Psi:\varphi_{>\gamma}<(1-\psi_1)_{>\gamma} \}=
	\{\varphi\in\Psi:\varphi<(1-\psi_1)_{>\gamma} \}=
	\{\varphi\in\Psi:\varphi_{>\gamma}<(1-\psi_1) \}.
	\end{align}
	Therefore,
	\begin{align}
	F^{1-\psi_2}F^{\psi_1}=
	F^{1-\psi_2}_{<\gamma}
	\left(	\sum_{\varphi_{\geq\gamma}<(1-\psi_1)_{\geq\gamma}}c_\varphi F^{\varphi}\right)
	F^{\psi_1}_{>\gamma}=\sum_{\varphi_{\geq\gamma}<1_{\geq\gamma}} a_{\varphi}F^{\varphi}
	\end{align}
	for some $a_{\varphi}\in\Q(\zeta)$. In each summand $F^{\varphi}$ above, there is a root $\beta\geq\gamma$ such that $\varphi(\beta)=0$. Since $rt(F^{1-\psi_2})+rt(F^{\psi_1})=rt(F^{(1\dots 1)})$,  $F^{1-\psi_2}F^{\psi_1}=0$. In particular, ${\chi_{(1\dots 1)}(F^{1-\psi_2}F^{\psi_1})=0}$.
\end{proof}
\begin{cor}\label{cor:lowest}
	For any nonzero vector $F\in U^-$, there exists ${\psi}\in\Psi$ such that $F^{\psi}F$ is a nonzero multiple of $F^{{(111)}}$. Similarly for $U^+$.
\end{cor}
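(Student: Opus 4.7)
The plan is to introduce the root-lattice grading on $U^-$ and then apply a non-degeneracy fact for a bilinear pairing to the maximal weight component of $F$.

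The starting observation is that $U^-=\bigoplus_w U^-_w$ is graded by weight, with $F^\mu$ lying in weight $w_\mu=-\sum_{\alpha\in\overline{\Phip}}\mu(\alpha)\alpha$. The space $U^-_{-\sum\alpha}$ is one-dimensional, spanned by $F^{(1\ldots 1)}$, because $(1\ldots 1)$ is the unique $\mu\in\Psi$ with that weight. More generally, if $U^-_w\neq 0$ then the coefficient of each simple root $\alpha_i$ in $w$ is bounded below by $-n_i$, where $n_i$ is the $\alpha_i$-coefficient of $\sum_{\alpha\in\overline{\Phip}}\alpha$, with equality only at $w=-\sum\alpha$.

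The core step is to establish non-degeneracy of
\begin{equation*}
B_{w_0}\colon U^-_{w_0}\times U^-_{-\sum\alpha-w_0}\longrightarrow\Q(\zeta),\qquad B_{w_0}(X,Y)=\chi_{(1\ldots 1)}(XY),
\end{equation*}
for each weight $w_0$ in the support of $U^-$. Using the complement bijection $\mu\leftrightarrow 1-\mu$ to index both sides by $T_{w_0}=\{\mu\in\Psi:w_\mu=w_0\}$, the pairing matrix becomes $M_{\mu,\mu'}=\chi_{(1\ldots 1)}(F^\mu F^{1-\mu'})$. Lemma \ref{lem:compl} forces the diagonal entries $M_{\mu,\mu}$ to be nonzero, while Lemma \ref{lem:triangle}---after noting that complementation reverses the lex order on $\Psi$---forces $M_{\mu,\mu'}=0$ whenever $\mu<\mu'$. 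Thus $M$ is triangular with nonzero diagonal, so $B_{w_0}$ is non-degenerate.

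To finish, decompose the given nonzero $F$ as $F=\sum_w F_w$ by weight and choose $w_0$ maximal in $\operatorname{supp}(F)$ with respect to the usual $\Phi^+$-partial order. Non-degeneracy of $B_{w_0}$ supplies a $\psi\in\Psi$ with $\mathrm{wt}(F^\psi)=-\sum\alpha-w_0$ satisfying $\chi_{(1\ldots 1)}(F^\psi F_{w_0})\neq 0$; since $U^-_{-\sum\alpha}$ is one-dimensional, $F^\psi F_{w_0}$ is automatically a nonzero scalar multiple of $F^{(1\ldots 1)}$. For any other $w\in\operatorname{supp}(F)$, maximality of $w_0$ guarantees that $w-w_0$ has a strictly negative coefficient in some simple root $\alpha_i$, pushing $\mathrm{wt}(F^\psi)+w=-\sum\alpha+(w-w_0)$ strictly below the admissible lower bound $-n_i$ in that coordinate; hence $U^-_{\mathrm{wt}(F^\psi)+w}=0$ and $F^\psi F_w=0$. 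Therefore $F^\psi F=F^\psi F_{w_0}$ is the desired nonzero multiple of $F^{(1\ldots 1)}$. The $U^+$ case follows identically with $E$ replacing $F$ throughout.

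The main obstacle is the triangularity of $M$: the complement $\mu\mapsto 1-\mu$ reverses lex order, so one must carefully reindex Lemma \ref{lem:triangle} to conclude $M_{\mu,\mu'}=0$ for $\mu<\mu'$. Once that triangularity is in place, the non-degeneracy is immediate and the rest of the argument is routine bookkeeping with the weight grading and the bound on simple-root coefficients.
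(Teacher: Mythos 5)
Your argument is correct in substance and rests on exactly the same two ingredients as the paper's proof, namely Lemma \ref{lem:compl} for the nonzero diagonal terms and Lemma \ref{lem:triangle} for the vanishing ones, but it packages them differently. The paper works with the coarse height statistic $rt$: among the PBW components of $F$ it takes those of minimal $rt$, picks the lex-maximal $\psi^*$ among them, and checks that the single element $F^{1-\psi^*}$ annihilates every other component (larger $rt$ for degree reasons, equal $rt$ by Lemma \ref{lem:triangle}) while sending $F^{\psi^*}$ to a nonzero multiple of $F^{(1\dots 1)}$. You instead use the finer root-lattice grading of $U^-$, prove non-degeneracy of the $\chi_{(1\dots 1)}$-pairing between complementary graded pieces via a triangular Gram matrix, and apply it to the dominance-maximal weight component of $F$, letting the grading kill the remaining components; your observation that the weight space of weight $-\sum_{\alpha\in\overline{\Phip}}\alpha$ is one-dimensional (no nonempty subset of positive roots sums to zero) is a legitimate refinement of the paper's $rt$-maximality argument, and your check that complementation reverses the lexicographic order is correct. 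One bookkeeping point needs repair: you define $B_{w_0}(X,Y)=\chi_{(1\dots 1)}(XY)$ with $X\in U^-_{w_0}$ in the \emph{left} slot, yet the conclusion you extract is $\chi_{(1\dots 1)}(F^\psi F_{w_0})\neq 0$ with $F^\psi$ on the left; in this noncommutative algebra these are two different pairings, and non-degeneracy of the one you proved does not formally yield the other, while the corollary (and its later uses, e.g.\ in Proposition \ref{prop:indecomp} and Lemma \ref{lem:det}) genuinely requires left multiplication by $F^\psi$. The fix is immediate and even shortens your argument: define the pairing with the complementary factor on the left, so its Gram matrix has entries $\chi_{(1\dots 1)}(F^{1-\mu'}F^{\mu})$, which is triangular directly by Lemma \ref{lem:triangle} (no order-reversal remark needed) and has nonzero diagonal by Lemma \ref{lem:compl}; with that adjustment the rest of your proof goes through as written.
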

\begin{proof}
	Fix $F\in U^-$.	We express $F$ using the PBW basis,
	\[F=\sum_{\psi\in\Psi} a_\psi F^\psi. \]
	Consider the set $A=\{\psi\in\Psi:a_\psi\neq 0\}$ and let $x=\min\limits_{\psi\in A}rt(\psi)$. Fix $\psi^*\in A$ to be maximal with respect to $<$ such that $rt(\psi^*)=x$.
	By Lemma \ref{lem:compl}, $F^{1-\psi^*}F^{\psi^*}$ is a nonzero multiple of $F^{{(111)}}$.
	Fix $\psi\in A$. If $rt(\psi)>x$, then $rt(F^{1-\psi^*}F^\psi)>rt(F^{(1\dots1)})$, which implies $F^{1-\psi^*}F^\psi=0$. If $rt(\psi)=x$, we may apply Lemma \ref{lem:triangle},  which tells us $F^{1-\psi^*}F^\psi=0$. Thus,
	\[
	F^{1-\psi^*}F
	=F^{1-\psi^*}\left(\sum_{\psi\in A} a_\psi F^\psi\right)
	=a_{\psi^*} F^{1-\psi^*}F^{{\psi^*}}\in\bracks{F^{(111)}}
	\]
	is nonzero.
\end{proof}
\begin{lem}\label{lem:EFtype}
	For each $\alpha<\beta\in{\overline{\Phip}}$, $E_\alpha F^{(1\dots 1)}_{\geq\beta}=
	\sum c_{\psi,\varphi}F^{\psi}_{\geq\beta}E^{\varphi}_{\leq\alpha}$ with each $\varphi_{\leq\alpha}\neq 0$. 
\end{lem}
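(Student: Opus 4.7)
My plan is to argue by induction on the cardinality of the set $S = \{\gamma \in \overline{\Phip} : \gamma \geq \beta\}$, pushing $E_\alpha$ past one $F_\gamma$ at a time from the left. In the base case $S = \{\beta\}$, the identity $E_\alpha F_\beta = F_\beta E_\alpha + [E_\alpha, F_\beta]$ produces the first summand $F_\beta E_\alpha = F^{\delta_\beta}_{\geq \beta} E^{\delta_\alpha}_{\leq \alpha}$, which is in the required form with $(\delta_\alpha)_{\leq \alpha} = \delta_\alpha \neq 0$, while the commutator is handled by the structural claim described below.

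For the inductive step I write $F^{(1\dots 1)}_{\geq \beta} = F_\beta \, F^{(1\dots 1)}_{> \beta}$ and split
\begin{equation*}
E_\alpha F^{(1\dots 1)}_{\geq \beta} \;=\; F_\beta \bigl( E_\alpha F^{(1\dots 1)}_{> \beta} \bigr) \;+\; [E_\alpha, F_\beta]\, F^{(1\dots 1)}_{> \beta}.
\end{equation*}
The inductive hypothesis (applied with $\beta$ replaced by the smallest root strictly exceeding it) puts $E_\alpha F^{(1\dots 1)}_{> \beta}$ in the required normal form, and left-multiplication by $F_\beta$ preserves the condition that every $F$-factor sits at a root $\geq \beta$. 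The second summand is where the key structural claim enters.

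The claim I would establish is: for any $\alpha < \gamma$ under the convex ordering $<_{br}$, any PBW summand $F^{\psi'} K^{\underline{k}} E^{\varphi'}$ appearing in $[E_\alpha, F_\gamma]$ with $\varphi'_{\leq \alpha} = 0$ must have $\psi'$ supported on some $\delta > \gamma$. Indeed, any such summand has weight $\alpha - \gamma$, so $\sum_{\varphi'(\mu)=1} \mu - \sum_{\psi'(\delta)=1} \delta = \alpha - \gamma$, and the absence of $E$-factors at or below $\alpha$ forces the $F$-support to supply the negative weight. Any positive root $\delta$ with $\alpha + \delta = \gamma$ in the root lattice satisfies $\delta > \gamma$ by convexity of $<_{br}$ (from $\alpha < \gamma$ we get $\alpha < \delta$, and then convexity places $\gamma = \alpha + \delta$ strictly between $\alpha$ and $\delta$), and iterated convexity handles multi-factor splittings. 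Consequently, when such an $F$-only term is multiplied on the right by $F^{(1\dots 1)}_{> \beta}$, the existing $F_\delta$ already present there forces $F_\delta^2 = 0$ and the contribution vanishes. The remaining PBW summands of $[E_\alpha, F_\beta]$ already carry a nonzero $E$-part supported at roots $\leq \alpha$; commuting these $E$-factors past $F^{(1\dots 1)}_{> \beta}$ via the same inductive scheme preserves the support condition $\varphi_{\leq \alpha} \neq 0$.

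The main obstacle will be making the convexity argument rigorous when $\gamma - \alpha$ admits multiple root decompositions and the PBW expansion of $[E_\alpha, F_\gamma]$ involves products of several $F_\delta$ factors simultaneously. Here one must combine the explicit commutation relations from the Definition/Proposition with a secondary induction on the height $h(\gamma)$ to reduce any multi-factor splitting to a chain of single-root splits, at each step invoking convexity of $<_{br}$ to control the support of the emerging $F$-factors.
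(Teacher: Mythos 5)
Your proposal follows essentially the same route as the paper's proof: downward induction on $\beta$ with respect to $<_{br}$, splitting $E_\alpha F^{(1\dots 1)}_{\geq\beta}=(E_\alpha F_\beta)F^{(1\dots 1)}_{>\beta}$, PBW-expanding $E_\alpha F_\beta$, observing that the summands with no $E$-part are supported at roots $>_{br}\beta$ and die against $F^{(1\dots 1)}_{>\beta}$, and treating the summands carrying $E$-factors at roots $\leq\alpha$ by the inductive hypothesis. One minor remark: the cleanest justification for that vanishing is the paper's maximality argument (namely $F_\gamma F^{(1\dots 1)}_{>\beta}=0$ for every $\gamma>_{br}\beta$, by the $rt$-count and uniqueness of the maximal element $(1\dots 1)$) rather than literally invoking $F_\delta^2=0$ after commuting factors, but this is the same idea, and your convexity analysis of the PBW summands of $[E_\alpha,F_\beta]$ is a reasonable fleshing-out of the expansion the paper simply asserts.
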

\begin{proof}
	We induct downwards on the index of $\beta\in {\overline{\Phip}}$ with respect to $<_{br}$. Suppose that $\beta$ is maximal in $\overline{\Phip}$, then $E_\alpha F_{\geq\beta}^{(1\dots 1)}=E_\alpha F_\beta$. In the PBW basis, this is a linear combination of $F_\beta E_\alpha$ and $E^\varphi$ with $\varphi<\delta_\gamma$ for various $\gamma<\alpha$. Thus, the claim holds in this case. Suppose the result holds for $\beta$ of index greater than $k>1$, we prove that it also holds for $\beta$ of index $k-1$ and any $\alpha<\beta$.  We have
\[	
	E_\alpha F_{\geq\beta}^{(1\dots 1)}=E_\alpha F_\beta F_{>\beta}^{(1\dots 1)}
	=\left(F_\beta E_\alpha+\sum_{\substack{\alpha< \gamma<\beta\\\varphi<\delta_\gamma}} c_{\psi\varphi}F^\psi E^\varphi+\sum_{0<\psi_{>\beta }} c_\psi F^\psi \right)F_{>\beta}^{(1\dots 1)}.\]
By induction, the first two terms above can be expressed in the desired form. The expression $\sum_{0<\psi_{>\beta }} c_\psi F^\psi$ can arise when $h(\alpha)<h(\beta)$. By maximality of $(1\dots1)$, $F_\gamma F_{>\beta}^{(1\dots 1)}=0$ for each $\gamma>\beta$. Therefore, the last term vanishes and the remaining sum has the desired form.
\end{proof}
\section{Induced Representations}\label{sec:Vt}
In this section, we define the representations $\Vt$ as being induced by $\Uqg$ with respect to a character on the Borel subalgebra. We then prove various results for $q=\zeta$. In Proposition \ref{prop:indecomp}, we show that $\Vt$ is indecomposable for all $\t\in\P$. We end this section by giving a characterization of reducibility of $\Vt$.  We specialize our discussion of $\Vt$ to the $\slthree$ case in Section \ref{sec:props}. Throughout this section, we assume $\g$ is a semisimple Lie algebra and following Definition \ref{defn:Vt}, $\zeta$ is a primitive fourth root of unity. \newline 

We denote the Borel subalgebra by
	$B$, which is the subalgebra generated by $U^0$ and $U^-$. Using the PBW basis, we have $\Uzg\cong U^-\otimes B$. \newline
	
	We now define the representation $\Vt$ as a Verma module over $\Uqg$ at a primitive $l$-th root of unity. Note that the group of characters $\P$ on $U^0$ is isomorphic to $(\Cx)^{n}$. Each character $\t=(t_1,\dots, t_n)$ is determined by the images $t_i$ of $K_i$ in  $\Cx$. The character $\t$ extends to a character $\gamma_{\t}:B\rightarrow \C$ by \begin{align}\label{eq:char}
	\gamma_{\t}(K_i)=t_i,  &&\gamma_{\t}(E_i)=0.
	\end{align}
	
	\begin{defn}\label{defn:Vt}
		Let $\gamma_{\t}:B\rightarrow\C$ be a character as in {(\ref{eq:char})}. 
		Let $V_{\t}=\langle v_h\rangle$ be the \hbox{1-dimensional} left $B$-module determined by $\gamma_{\t}$, i.e. for $b\in B$, $bv_h=\gamma_{\t}(b)v_h$.
		We define the representation $V(\t)$ to be the induced module 
		\begin{align}
		\Vt={\text{{Ind}} _{B}^{\Urg}(V_{\t})=\Urg\otimes_{B}V_{\t}}. 
		\end{align}
	\end{defn}
	
	In the $\slthree$ case, we will consider various other induced representations coming from characters of this type. See Appendix \ref{Ind} for more details on the induction functor and induced modules. 
\newline 

		Note that $\Vt$ and $U^-$ are isomorphic as vector spaces. The PBW basis of $U^-$ extends to a basis
		of $\Vt$ by tensoring $v_h$ and $U^-$ acts on these basis vectors accordingly. In particular, this action is independent of $\t$. We denote the lowest weight vector $F^{(1\dots 1)}v_h$ by $v_l$. \newline 
		
		For the remainder of this section, we assume $q=\zeta$. Then $\Vt$ has dimension $|\Psi|=2^{|\overline{\Phip}|}$ with basis determined by \hbox{Corollary \ref{cor:basis}.} In types $ADE$, $\overline{\Phip}=\Phip$ and  so $\Vt$ has dimension $2^{|{\Phip}|}$.

		\begin{prop}\label{prop:indecomp}
		For all $\t\in\P$, $\Vt$ is an indecomposable representation. 
	\end{prop}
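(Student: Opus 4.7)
The plan is to establish indecomposability by showing that $V(\mvec{t})$ has a simple socle generated by the lowest weight vector $v_l = F^{(1\dots 1)}v_h$. If every nonzero submodule contains $v_l$, then any direct sum decomposition $V(\mvec{t}) = M_1 \oplus M_2$ with both summands nonzero yields $v_l \in M_1 \cap M_2 = 0$, which is a contradiction.

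To prove that every nonzero submodule contains $v_l$, I would use Corollary \ref{cor:lowest} directly. Given a nonzero submodule $M \subseteq V(\mvec{t})$, pick a nonzero vector $x \in M$. Since $V(\mvec{t}) \cong U^- \otimes_\C \bracks{v_h}$ as a $U^-$-module via the PBW basis identification inherited from the induction, we may write $x = F v_h$ for some unique nonzero $F \in U^-$. Corollary \ref{cor:lowest} then produces $\psi \in \Psi$ such that $F^\psi F$ is a nonzero scalar multiple of $F^{(1\dots 1)}$. Acting on $x$ by $F^\psi$ (which preserves $M$) gives $F^\psi x = (F^\psi F)v_h = c\, v_l$ for some $c \in \C^\times$, so $v_l \in M$.

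With $v_l$ lying in every nonzero submodule of $V(\mvec{t})$, the indecomposability argument sketched above applies verbatim, completing the proof.

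The main obstacle is ensuring that the action of $U^-$ on $V(\mvec{t})$ is genuinely free on the PBW basis, so that $F^\psi \cdot (Fv_h) = (F^\psi F)v_h$ with $F^\psi F$ computed in $U^-$ (and not merely in $\U$, where Cartan and $E_\alpha$ terms could intrude). This follows from the structure of the induced module $\U \otimes_B V_{\mvec{t}}$ together with the triangular decomposition $\U \cong U^- \otimes B$ from Corollary \ref{cor:basis}; once this identification is noted, the rest of the argument is immediate from Corollary \ref{cor:lowest}.
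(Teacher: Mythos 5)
Your proposal is correct and follows essentially the same route as the paper: the paper likewise applies Corollary \ref{cor:lowest} to a nonzero vector in each hypothetical summand (using the identification of $\Vt$ with $U^-$ via the induced PBW basis) to conclude $v_l$ lies in both, contradicting directness. Your explicit remark that the $U^-$-action is free on $v_h$ just makes precise what the paper states when introducing the basis of $\Vt$.
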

	\begin{proof}
		Suppose that $\Vt$ admits a direct sum decomposition $W_1\oplus W_2$ with respect to the $\overline{U}$ action. Fix non-zero vectors $w_1\in W_1$ and $w_2\in W_2$. By Lemma \ref{cor:lowest}, there exists ${\psi}\in \Psi$ so that $F^{{\psi}}w_1$ is a nonzero multiple of $v_l$. Thus, $v_l\in W_1$. The same argument applies to $w_2$, and so $v_l\in W_2$. Hence, $\langle v_l\rangle$ is a subspace in $W_1\cap W_2$, which contradicts the existence of a direct sum decomposition. Thus, $\Vt$ is indecomposable.
	\end{proof}
	\begin{rem}
		For every pair of distinct characters $\t,\s\in\P$, $\Vt\not\cong \Vs$. This is clear since the highest weight determines the representation. Thus, the representations $\Vt$ form an infinite family of representation classes.
	\end{rem}
	The actions of $K_i$ break $\Vt$ into weight spaces. Recall that multiplication in $\mathcal{P}$ is entrywise. To each $\psi\in\Psi$ we assign $\mvec{\sigma^\psi}\in\mathcal{P}$ defined so that 
$K_iF^\psi v_h=\mvec{\sigma^\psi t}(K_i) F^\psi v_h$. 
More precisely,  \begin{align}\label{eq:xipowersforsigma}
\mvec{\sigma^\psi}(K_i)=\zeta^{(\alpha_i, \sum \psi(\alpha)\alpha)}.
\end{align}  
We define 
\begin{align}\label{defn:sigma}
\Sigma=\{\mvec{\sigma^\psi}:\psi\in\Psi \}\subseteq\P
\end{align} to be the weights of $V(\mvec{1})$.
Therefore, the  weight spaces of $\Vt$ are labeled by $\Sigma\t$. Note that the map $\Psi\to\Sigma$ given by $\psi\mapsto \mvec{\sigma^\psi}$ is not an injection in general. \newline 

Using the weight space data above, we describe the remaining actions of the induced $\U$-module.
The action of $U^+$ is defined to be zero on $v_h$, but the commutation relations $[E_i,F_i]=\floor{K_i}$ imply that it does not act trivially on all of $\Vt$. For specific values of $\t$, we will see that some matrix entries of $E_i$ vanish.

\begin{defn}
	The \emph{irreducibility vector} of a representation $\Vt$ is the vector \[\Omega=E^{{(1\dots 1)}}v_l=E^{{(1\dots 1)}}F^{{(1\dots 1)}}v_h.\]
	
\end{defn}
We will prove in Corollary \ref{cor:irred} that $\Vt$ is reducible if and only if $\Omega$ vanishes.
\begin{rem}
	If $\Omega$ is non-zero, then it is a highest weight vector. 
\end{rem}

\begin{prop}\label{prop:computation}
	For each semisimple Lie algebra $\g$, we have the equality \begin{align}
	\Omega= \prod_{\alpha\in \overline{\Phip}}\left(\zeta^{\sum_{\beta>_{br}\alpha}(\alpha,\beta)}\floor{\zeta^{-\sum_{\beta>_{br}\alpha}(\alpha,\beta)}K_\alpha}\right)v_h.
	\end{align}
\end{prop}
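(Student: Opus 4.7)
The plan is to compute $\Omega$ by induction on $<_{br}$, pairing each $E_{\gamma_k}$ with its partner $F_{\gamma_k}$ starting from the maximal root and working outward. By weight considerations $\Omega$ lies in the same weight space as $v_h$, so $\Omega = c(\t)\,v_h$ for a scalar that we aim to identify with the stated product.

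The principal preliminary identity is
\begin{equation*}
E_{\gamma_k}F^{(1\dots 1)}_{\geq\gamma_k}v_h = \floor{K_{\gamma_k}} F^{(1\dots 1)}_{>\gamma_k}v_h,
\end{equation*}
which follows from $E_{\gamma_k}F_{\gamma_k} = \floor{K_{\gamma_k}} + F_{\gamma_k}E_{\gamma_k}$ (Lemma \ref{lem:Kcomm}) together with Lemma \ref{lem:EFtype}: the residual term $F_{\gamma_k}E_{\gamma_k}F^{(1\dots 1)}_{>\gamma_k}v_h$ rewrites with every surviving $E^\varphi$-factor supported on some $\alpha \leq \gamma_k$, and since $E_\alpha v_h=0$ for every $\alpha\in\overline{\Phip}$, these annihilate $v_h$.

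At stage $k$ of the induction, the outer pairings for $j>k$ have already produced scalar factors, and one must pair $E_{\gamma_k}$ with $F_{\gamma_k}$ by commuting $E_{\gamma_k}$ through $F^{(1\dots 1)}_{<\gamma_k}$. The intermediate commutators $[E_{\gamma_k},F_\beta]$ for $\beta<_{br}\gamma_k$ produce $E$-, $F$-, or Cartan-type corrections; those that vanish via the relations $F_\alpha^2=0$, $E_\alpha^2=0$, or $E_\alpha v_h=0$ drop out, while the surviving corrections are $K^{-1}$-monomials. These combine with the diagonal $\floor{K_{\gamma_k}}$ and with the Cartan commutations $K_\alpha F_\beta = \zeta^{-(\alpha,\beta)}F_\beta K_\alpha$ (from moving the Cartan factor past the previously-consumed $F_{\gamma_j}$'s with $j>k$) to yield precisely $\zeta^{a_k}\floor{\zeta^{-a_k}K_{\gamma_k}}$ with $a_k=\sum_{j>k}(\gamma_k,\gamma_j)$.

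The main obstacle is verifying this recombination uniformly across Lie types. A direct check in $\slthree$ illustrates the underlying identity: for $\gamma_k=\alpha_1+\alpha_2$, the commutator $[E_{12},F_1]=E_2K_1^{-1}$ propagates through $F_{12}$ via $[E_2,F_{12}]=-F_1K_2^{-1}$, and its contribution to $E_{12}F_1F_{12}v_h$ combines with $\floor{K_{12}}F_1v_h$ through $\zeta^2=-1$ to yield $\zeta\floor{\zeta^{-1}K_{12}}F_1v_h$. A Lie-type-independent proof demands systematic bookkeeping of such commutator propagations organized by the $<_{br}$-combinatorics of $\overline{\Phip}$; alternatively one might specialize a generic-$q$ Shapovalov-form identity in $U_q(\g)$ to $q=\zeta$.
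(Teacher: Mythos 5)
Your preliminary identity $E_{\gamma_k}F^{(1\dots 1)}_{\geq\gamma_k}v_h=\floor{K_{\gamma_k}}F^{(1\dots 1)}_{>\gamma_k}v_h$, obtained from Lemmas \ref{lem:Kcomm} and \ref{lem:EFtype}, is exactly the identity the paper's proof rests on, but the induction you wrap around it has a genuine gap. That identity is usable only when $E_{\gamma_k}$ meets $F^{(1\dots 1)}_{\geq\gamma_k}v_h$, i.e.\ when all $F_\beta$ with $\beta<_{br}\gamma_k$ have already been consumed; this is the situation reached by processing the \emph{smallest} remaining root first, which in turn requires first rewriting $E^{(1\dots 1)}$ in reverse order. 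In your maximal-root-first scheme, the vector at stage $k$ is $F^{(1\dots 1)}_{\leq\gamma_k}v_h$, so $E_{\gamma_k}$ must be dragged through $F^{(1\dots 1)}_{<\gamma_k}$, the stated identity never applies, and the entire content of the proposition is shifted onto the cross terms $[E_{\gamma_k},F_\beta]$, $\beta<_{br}\gamma_k$, which are of the form (lower $E$)$\cdot$(Cartan) and keep propagating to the right. You assert that the survivors recombine into $\zeta^{a_k}\floor{\zeta^{-a_k}K_{\gamma_k}}$ with $a_k=\sum_{j>k}(\gamma_k,\gamma_j)$, verify this only for $\slthree$, and yourself label the uniform verification ``the main obstacle''; that unverified recombination \emph{is} the proposition, so the argument is incomplete. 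Your stated mechanism for the phase --- moving a Cartan factor past the ``previously-consumed'' $F_{\gamma_j}$ with $j>k$ --- is also not coherent in your ordering (those factors are already gone), and indeed in your own $\slthree$ check the phase comes from the propagated correction $[E_{12},F_1]=E_2K_1^{-1}$, not from a Cartan commutation. The alternative you mention (specializing a generic-$q$ Shapovalov identity to $q=\zeta$) is not carried out either.

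The paper closes precisely this gap with one extra observation: by maximality of $(1\dots 1)$, $E_\alpha E^{(1\dots 1)}_{>\alpha}=\zeta^{\sum_{\beta>_{br}\alpha}(\alpha,\beta)}E^{(1\dots 1)}_{>\alpha}E_\alpha$, since every correction term in the PBW rewriting repeats a root vector and vanishes. Reversing the $E$-product therefore costs only the scalar $\prod_{\alpha}\zeta^{\sum_{\beta>_{br}\alpha}(\alpha,\beta)}$, after which your identity applies verbatim at every stage (Lemma \ref{lem:EFtype} kills $F_\alpha E_\alpha F^{(1\dots 1)}_{>\alpha}v_h$ on $v_h$), and the inner factors $\zeta^{-\sum_{\beta>_{br}\alpha}(\alpha,\beta)}$ arise simply from pushing $\floor{K_\alpha}$ through the not-yet-consumed $F^{(1\dots 1)}_{>\alpha}$. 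Inserting this reordering step and running your induction from the smallest root upward would convert your outline into a complete, type-independent proof with no cross-term bookkeeping at all.
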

\begin{proof}
	We compute $\Omega$ directly. Observe that by maximality of $(1\dots 1)$, for each $\alpha\in \overline{\Phip}$, $E_\alpha E^{(1\dots1)}_{>\alpha}=\left(\zeta^{\sum_{\beta>_{br}\alpha}(\alpha,\beta)}\right) E^{(1\dots1)}_{>\alpha}E_\alpha$. 
	By Lemmas \ref{lem:Kcomm} and \ref{lem:EFtype},
	\[E_\alpha F^{(1\dots 1)}_{\geq \alpha}v_h=E_\alpha F_\alpha F^{(1\dots 1)}_{> \alpha}v_h=
	\left(F_\alpha E_\alpha +\floor{K_\alpha }\right)F^{(1\dots 1)}_{>\alpha}v_h
	=F^{(1\dots 1)}_{>\alpha}\floor{\prod_{\beta>\alpha}\zeta^{-(\alpha,\beta)}K_\alpha}v_h. \]
	Therefore,
	\[
	\Omega=
	\left(\prod_{\alpha\in \overline{\Phip}} E_\alpha \right)
	\left(\prod_{\alpha\in \overline{\Phip}} F_\alpha \right)v_h=
	\left(\prod_{\alpha\in \overline{\Phip}}^{reverse} \left(\zeta^{\sum_{\beta>_{br}\alpha}(\alpha,\beta)}\right)E_\alpha\right)
	\left(\prod_{\alpha\in \overline{\Phip}} F_\alpha \right)v_h\]\[
	=\prod_{\alpha\in \overline{\Phip}}\left(\zeta^{\sum_{\beta>_{br}\alpha}(\alpha,\beta)}\floor{\prod_{\beta>\alpha}\zeta^{-(\alpha,\beta)}K_\alpha}\right)v_h.
	\]
\end{proof}

Let $\X_\alpha=\left\lbrace\t\in\P:\floor{\zeta^{-\sum_{\beta>_{br}\alpha}(\alpha,\beta)}\t(K_\alpha)}=0\right\rbrace$, which determines a variety in $\P$. We then define the algebraic set $\R=\bigcup_{\alpha\in\overline{\Phip}} \X_\alpha$.
\begin{lem}\label{lem:det}
	The collection of vectors
	\[\mathscr{E}= \{E^\psi v_l:\psi\in\Psi\}\] 
	forms a basis for $\Vt$ if and only if $\Omega\neq0$.
\end{lem}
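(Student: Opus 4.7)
The plan is to first observe that $|\mathscr{E}|=|\Psi|=\dim\Vt$ by Corollary \ref{cor:basis}, so $\mathscr{E}$ will be a basis if and only if it is linearly independent. If $\mathscr{E}$ is a basis then $\Omega = E^{(1\dots 1)}v_l\in\mathscr{E}$ must be nonzero, which establishes the forward implication at once.

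For the converse, I will assume $\Omega\neq 0$ and form the transition matrix $M$ defined by $E^\psi v_l=\sum_{\psi'\in\Psi} M_{\psi,\psi'} F^{\psi'}v_h$. Weight considerations immediately give $M_{\psi,\psi'}=0$ unless $\mvec{\sigma^{1-\psi}}=\mvec{\sigma^{\psi'}}$. My strategy is to pair the row indexed by $\psi$ with the column indexed by $1-\psi$ and show, under the $<$-ordering on $\Psi$ introduced before Corollary \ref{cor:lowest}, that $M$ is triangular with every diagonal entry $M_{\psi,1-\psi}$ nonzero.

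I will compute the diagonal entries by pushing each $E_\alpha$ with $\psi(\alpha)=1$ through $F^{(1\dots 1)}$ to reach its matching $F_\alpha$, applying $[E_\alpha,F_\alpha]=\floor{K_\alpha}$ from Lemma \ref{lem:Kcomm} at that step and using Lemma \ref{lem:EFtype} together with the simple-root commutation $[E_i,F_j]=0$ for $i\neq j$ to handle intervening factors. The unpaired $F_\beta$'s with $\psi(\beta)=0$ assemble into $F^{1-\psi}v_h$, and tracking the $\zeta$-powers that appear when commuting the various $E_\alpha$'s past each other, in the same manner as the proof of Proposition \ref{prop:computation}, should yield
\begin{align*}
M_{\psi,1-\psi}=\prod_{\alpha:\psi(\alpha)=1}\zeta^{\sum_{\beta>_{br}\alpha}(\alpha,\beta)}\floor{\zeta^{-\sum_{\beta>_{br}\alpha}(\alpha,\beta)}\t(K_\alpha)}.
\end{align*}
Every factor here also appears in $\Omega$ by Proposition \ref{prop:computation}, so $\Omega\neq 0$ forces $M_{\psi,1-\psi}\neq 0$ for every $\psi$.

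Triangularity will then reduce to controlling the correction terms produced by Lemma \ref{lem:EFtype}, which expresses $E_\alpha F^{(1\dots 1)}_{\geq\beta}$ for $\alpha<_{br}\beta$ as a sum $\sum F^{\psi''}_{\geq\beta}E^\varphi_{\leq\alpha}$ in which every $\varphi_{\leq\alpha}\neq 0$. Following such a correction through the remaining commutations and the action on $v_h$, each summand should either vanish (a residual $E$ annihilating $v_h$) or yield a vector $F^{\psi'}v_h$ that agrees with $F^{1-\psi}v_h$ on all roots above some $\gamma$ while being strictly larger on $\gamma$ itself; by the definition of $<$ this will place $\psi'>_\Psi 1-\psi$. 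Granting this, $\det M$ equals, up to sign, the product of the nonzero diagonal entries, so $M$ is invertible and $\mathscr{E}$ is a basis. The hardest part will be precisely this triangular claim: a careful inductive bookkeeping (parallel in spirit to Lemmas \ref{lem:compl} and \ref{lem:triangle}) of the supports of the $E^\varphi_{\leq\alpha}$ produced by iterated applications of Lemma \ref{lem:EFtype} will be needed to ensure that every nonvanishing correction indeed produces a $\psi'$ strictly above $1-\psi$ in the $<$-ordering rather than one below.
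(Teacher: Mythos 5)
Your forward direction and the reduction to invertibility of the transition matrix $M$ are fine, but the two claims your converse rests on are both false, already for $\g=\slthree$. Take the ordering $\alpha_1<_{br}\alpha_{12}<_{br}\alpha_2$ and $\psi=(010)$, so $E^\psi=E_{12}=-(E_1E_2+\zeta E_2E_1)$. Using Table \ref{table:actions} one computes directly
\begin{align*}
E_{12}v_l=-\zeta\floor{t_1t_2}\,F^{(101)}v_h+t_1^{-1}\floor{t_2}\,F^{(010)}v_h .
\end{align*}
Thus the ``diagonal'' entry is $M_{(010),(101)}=-\zeta\floor{t_1t_2}$, not your claimed $\zeta\floor{\zeta^{-1}t_1t_2}$, and it is \emph{not} a product of factors of $\Omega=-\zeta\floor{t_1}\floor{t_2}\floor{\zeta t_1t_2}v_h$: at $\t=(\zeta,\zeta)$ one has $\Omega\neq0$ while $\floor{t_1t_2}=\floor{-1}=0$, so the entry you need to be nonzero vanishes. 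Moreover the second term sits in column $(010)$, and $(010)<(101)=1-\psi$ in the lexicographic order, so correction terms do land strictly \emph{below} your diagonal; triangularity fails exactly in the multiplicity-two weight space. There the rows $(010)$ and $(101)$ form a genuine $2\times2$ block, and it is its determinant, not any single entry, that is controlled by $\Omega$ (one checks the block determinant equals $\zeta\floor{t_1}\floor{t_2}\floor{\zeta t_1t_2}$). Salvaging your route therefore requires evaluating block determinants in every weight space of higher multiplicity, which only gets worse in higher rank.

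Note also that the paper disposes of the converse in two lines, with tools you already cite: if $\mathscr{E}$ is linearly dependent there is a nonzero $E\in U^+$ with $Ev_l=0$; Corollary \ref{cor:lowest} (applied in $U^+$) gives $\psi\in\Psi$ with $E^{\psi}E$ a nonzero multiple of $E^{(1\dots1)}$, whence $\Omega=E^{(1\dots1)}v_l$ is a nonzero multiple of $E^{\psi}Ev_l=0$. This bypasses the transition matrix entirely and is the argument you should use.
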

\begin{proof}
	The last vector in $\mathscr{E}$, with respect to $<$, is $\Omega$. Thus, $\Omega=0$ implies $\mathscr{E}$ is not a basis. \newline
	
	Suppose now that $\mathscr{E}$ does not form a basis. Then there exists a nonzero element $E\in U^+$ that yields a linear dependence $Ev_l=0.$
	By Lemma \ref{cor:lowest}, there exists  ${\psi}\in \Psi$ such that $E^{{\psi}}E=cE^{{(111)}}$ for some nonzero $c\in\Q_4$. Then
	\begin{align*}
	&\Omega=E^{(1\dots 1)}v_l=cE^{{\psi}}Ev_l=0.\qedhere
	\end{align*} 
\end{proof}
\begin{cor}\label{cor:irred}
	The following are equivalent:
	\begin{center}
		\begin{tabular}{ccccc}
		$\bullet$~$\Vt$ is irreducible &\hspace*{5em}& $\bullet$~$\Omega\neq0$ &\hspace*{5em}& $\bullet$~$\t\notin\R$.
	\end{tabular}
	\end{center}
\end{cor}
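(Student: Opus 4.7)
The plan is to treat the three conditions as a cycle, peeling off the algebraic-geometric equivalence first and then handling the two directions of irreducibility separately. For $\Omega \neq 0 \Leftrightarrow \t \notin \R$, I would simply unpack Proposition \ref{prop:computation}: each $K_\alpha$ acts on $v_h$ by the scalar $\t(K_\alpha)$, so $\Omega$ evaluates to a single scalar $C_\t$ times $v_h$, where $C_\t$ is the product $\prod_{\alpha \in \overline{\Phip}} \zeta^{\sum_{\beta>_{br}\alpha}(\alpha,\beta)} \floor{\zeta^{-\sum_{\beta>_{br}\alpha}(\alpha,\beta)} \t(K_\alpha)}$. Because the $\zeta$ prefactors are units, this product vanishes exactly when some bracketed factor does, which by definition is the condition $\t \in \X_\alpha$ for some $\alpha$, i.e., $\t \in \R$.

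For the implication $\Omega \neq 0 \Rightarrow \Vt$ irreducible, the plan is to show that every nonzero submodule $W \subseteq \Vt$ must contain the cyclic generator $v_h$. Given $0 \neq w \in W$, I would write $w = F v_h$ for a nonzero $F \in U^-$ and invoke Corollary \ref{cor:lowest} to produce $\psi \in \Psi$ for which $F^\psi F$ is a nonzero scalar multiple of $F^{(1\dots 1)}$; acting on $w$ then places $v_l$ in $W$. Applying $E^{(1\dots 1)}$ yields $\Omega = C_\t v_h \in W$, and the hypothesis $\Omega \neq 0$ delivers $v_h \in W$, forcing $W = \Vt$. Conversely, when $\Omega = 0$, the natural candidate for a proper nonzero submodule is the cyclic module $\U v_l$. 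The crucial preliminary step is to verify that $v_l$ is a lowest weight vector, i.e. $F_\alpha v_l = 0$ for every $\alpha \in \overline{\Phip}$; this should follow from maximality of $(1\dots 1)$ in $\Psi$ since $rt(F^{\delta_\alpha} F^{(1\dots 1)}) > rt(F^{(1\dots 1)})$ forces $F^{\delta_\alpha} F^{(1\dots 1)} = 0$. Combined with the triangular decomposition $\U = U^+ U^0 U^-$ and the fact that $v_l$ is a $U^0$-eigenvector, this collapses $\U v_l$ onto $\mathrm{span}\{E^\psi v_l : \psi \in \Psi\}$, which by Lemma \ref{lem:det} is a proper subspace of $\Vt$ precisely when $\Omega = 0$.

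The main obstacle I anticipate is making the final triangular reduction fully rigorous: one must slide arbitrary PBW monomials $E^{\psi'} K^{\underline{k}} F^{\psi''}$ past $v_l$ and confirm that every $F$-contribution annihilates $v_l$, leaving only $U^+$-translates. Once the lowest weight property $F_\alpha v_l = 0$ is in hand, this reduces to routine bookkeeping, and the remainder of the argument follows immediately from Proposition \ref{prop:computation}, Corollary \ref{cor:lowest}, and Lemma \ref{lem:det}.
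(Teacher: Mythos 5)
Your proposal is correct and follows essentially the same route as the paper: reduce any nonzero vector to the lowest weight vector $v_l$ via Corollary \ref{cor:lowest}, detect whether $v_l$ generates by looking at $\mathrm{span}\{E^\psi v_l\}$ and Lemma \ref{lem:det}, and read off the vanishing of $\Omega$ from Proposition \ref{prop:computation}. Your explicit check that $F_\alpha v_l=0$ and the resulting collapse of $\U v_l$ onto $\mathrm{span}\{E^\psi v_l:\psi\in\Psi\}$ merely spells out a step the paper's proof leaves implicit.
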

\begin{proof}
	Irreducibility holds if and only if any nonzero $v\in\Vt$ is a cyclic vector for the module. That is to say, the action of $\U$ on $v$ generates $\Vt$. Fix any $v\neq0$. By \hbox{Lemma \ref{cor:lowest}}, we may assume $v=v_l$. Raising this lowest weight vector $v_l$ by each $E^\psi$, we obtain the vectors of $\mathscr{E}$, which we claim to be a basis of $\Vt$. Equivalently, by Lemma \ref{lem:det}, we verify that $\Omega$ is nonzero. From Proposition \ref{prop:computation}, $\Omega=0$ {exactly} when $\floor{\prod_{\beta>\alpha}\zeta^{-(\alpha,\beta)}K_\alpha}v_h=0$ for some $\alpha\in \overline{\Phip}$. It follows that $\U$ acting on $v$ generates $\Vt$ if and only if $\t\notin\R$. Since this holds for every non-zero $v\in\Vt$, we have proven the claim. 
\end{proof} 
\begin{rem}
	By Lemma \ref{lem:central}, the $\bracks{K_i^4:i\in\{1,\dots,n\}}$ is a subalgebra of central Cartan elements. Let $\mathcal{C}$ be the category of finite dimensional representations on which each $K_i$ acts diagonally. Let $\mvec{a}\in\P$,  and let $\mathcal{C}_{\mvec{a}}\subseteq \mathcal{C}$ be the subcategory on which $K_i^4=\mvec{a}(K_i)1$. Then  
	$
	\mathcal{C}=\bigoplus_{\mvec{a}\in\P}\mathcal{C}_{\mvec{a}}.
	$
	Since each $K_i$ is group-like, we have 
	$
	\mathcal{C}_{\mvec{a}}\otimes \mathcal{C}_{\mvec{b}}\subseteq \mathcal{C}_{\mvec{ab}}$ 
	for every $\mvec{a},\mvec{b}\in\P$.
	The category $\mathcal{C}_{\mvec{a}}$ contains the representations $\{\Vt:\t^4=\mvec{a} \}$. In particular,  $\mathcal{C}_{\mvec{1}}$ is non-semisimple. For each $\mvec{a}\in\P$,
	$
	\mathcal{C}_{\mvec{1}}\otimes \mathcal{C}_{\mvec{a}}\subseteq \mathcal{C}_{\mvec{a}},$
	and so each $\mathcal{C}_{\mvec{a}}$ is non-semisimple.
\end{rem}

	\section{Semisimple Tensor Products}\label{sec:directsum}
	Here, we introduce the notion of non-degeneracy to characterize complete reducibility of the representations $\Vt\otimes\Vs$. Given such a tensor product, we prove it is isomorphic to a direct sum of irreducible representations provided that $V(\mvec{\sigma ts})$ is irreducible for all $\mvec{\sigma}\in\Sigma$. Recall that $\Sigma$ consists of the weights of $V(\mvec{1})$, see \hbox{(\ref{defn:sigma})}. The goal of this section is to prove Theorem \ref{thm:directsum}. Throughout this section, we assume $q$ is a primitive fourth root of unity $\zeta$ and that $\g$ is a semisimple Lie algebra, unless stated otherwise.
	
	\begin{defn} A pair $(\t,\s)\in\P^2$ is called \emph{non-degenerate} if $\mvec{\sigma ts}$ is irreducible for all $\mvec{\sigma}\in \Sigma$. We call $\Vt\otimes\Vs$ a \emph{non-degenerate representation} if 
		$(\t,\s)$ non-degenerate.
	\end{defn}
	
	\directsum*
	
	Our proof of the theorem relies on finding highest weight vectors in the tensor product and looking at their image under $U^-$. Generically, each of these cyclic subspaces is isomorphic to an induced representation and is identified by the weight of its highest weight vector. These vectors can be more easily described in $\text{{Ind}} _{B}^{\U}\left(V_{\t}\otimes \text{{Ind}} _{B}^{\U}(V_{\s})\right)$, which by Proposition \ref{prop:iso}, is isomorphic to $\Vt\otimes\Vs$.  Denote $\Vts=\text{{Ind}} _{B}^{\U}\left(V_{\t}\otimes \text{{Ind}}_{B}^{\U}(V_{\s})\right)$.
	\newline
	
	Recall from the construction of $\Vt$ that $\gamma_{\t}$ is the character which determines the action of $B$ on $v_h$. For $a\in\U$, let $\Delta(a)=a'\otimes a''$ be the coproduct of $a$ with the implicit summation notation. By definition, 
	\begin{align}\label{eq:identify}
	V(\t) \hotimes  V(\s)&=\U\otimes_{B}\left(V_{\t}\otimes (\U\otimes_{B} V_{\s})\right)\cong \left(\U\otimes \left(V_{\t}\otimes (\U\otimes V_{\s})\right)\right)/Q\cong \left(\U\otimes \U\right)/Q'
	\end{align}
	with
	\begin{align}
	Q&=\langle a_1b_1\otimes \left(v_h\otimes (a_2b_2\otimes v_h)\right)-a_1\otimes \left(b_1'.v_h\otimes (b_1''a_2\otimes b_2.v_h)\right):a_i\in\U, b_i\in B\rangle\\
	&\cong \langle a_1(b_1v_h\otimes a_2b_2v_h) -\gamma_{\t}(b_1')\gamma_{\s}(b_2)a_1v_h\otimes b_1''a_2v_h:a_i\in \U, b_i\in B\rangle\\
	&=Q'
	\end{align} 
	and the above isomorphisms suppress tensoring of 1-dimensional vector spaces $V_{\t}$ and $V_{\s}$. We include $v_h$ in the notation for vectors in $\Vts$ to avoid confusion with the algebra $\U\otimes\U$ i.e. a vector $v=a_1\otimes(v_h\otimes (a_2\otimes v_h))\in\Vts$ will be denoted by $a_1(v_h\hotimes a_2v_h)$ under the identification in (\ref{eq:identify}). The action of $\U$ is by left multiplication on the first tensor factor, which may then be simplified. An example of the action in the $\slthree$ case is provided below. \begin{ex}\label{ex}
		The action of $F_1E_1$ on $v_h\hotimes F^{(101)} v_h$ is given as follows:
		\begin{align}
		F_1E_1.(v_h\hotimes  F^{(101)} v_h)
		&=F_1(\gamma_{\t}(E_1)v_h\hotimes K_1F^{(101)} v_h+v_h\hotimes E_1F^{(101)} v_h)\\
		&=F_1v_h\hotimes \floor{\zeta s_1}F^{(001)}v_h.
		\end{align}
	\end{ex}
	We fix a basis on $\Vts$,
	\begin{align}\label{eq:basis2}
	\{F^\psi v_h\hotimes F^{\psi'}v_h:\psi,\psi'\in\Psi\},
	\end{align} 
	which is the standard tensor product basis given by the PBW basis of $U^-$ in each factor. We see that the tensor product representation has dimension $|\Psi|^2.$
	\begin{lem}\label{lem:inclusions}
		Let $(\t,\s)\in\P$ be a non-degenerate pair and $\mvec{\sigma^\psi}\in \Sigma$. Then the subspace
		\begin{align}
		\overline{V}_{\mvec{\sigma^\psi}}:=\langle F^\varphi( \Omega\hotimes F^\psi v_h):\varphi\in\Psi \rangle \subseteq \Vts
		\end{align}
		and $V(\mvec{\sigma^\psi}\t\s)$ are isomorphic as $\U$-modules.
	\end{lem}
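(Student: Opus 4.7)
The plan is to realize $\Omega\hotimes F^\psi v_h$ as a nonzero highest-weight vector in $\Vts$ of weight $\mvec{\sigma^\psi}\t\s$, and then invoke the universal property of the induced module $V(\mvec{\sigma^\psi}\t\s)$ together with its irreducibility from non-degeneracy to obtain the claimed $\U$-module isomorphism.

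Working in the induced picture $\Vts=\U\otimes_B(V_\t\otimes V(\s))$, the expression $\Omega\hotimes F^\psi v_h$ is interpreted as $E^{(1\dots 1)}F^{(1\dots 1)}\otimes(v_h\otimes F^\psi v_h)$, with the element $E^{(1\dots 1)}F^{(1\dots 1)}\in\U$ kept unreduced in the first tensor slot. Under the canonical identification with $V(\t)\otimes V(\s)$ this corresponds to the coproduct-action image $E^{(1\dots 1)}F^{(1\dots 1)}.(v_h\otimes F^\psi v_h)$. Since $E^{(1\dots 1)}F^{(1\dots 1)}$ has zero net root, this vector shares the weight of $v_h\hotimes F^\psi v_h$, which by $\Delta(K_i)=K_i\otimes K_i$ and the individual weights is $\t\cdot\mvec{\sigma^\psi}\s=\mvec{\sigma^\psi}\t\s$.

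For the highest-weight condition, I compute in the induced picture $E_i.(\Omega\hotimes F^\psi v_h)=(E_i\cdot E^{(1\dots 1)}F^{(1\dots 1)})\otimes(v_h\otimes F^\psi v_h)$. The key structural fact is $E_i\cdot E^{(1\dots 1)}=0$ in $U^+$: prepending $E_i$ to $E^{(1\dots 1)}$ produces an element whose total root $\alpha_i+\sum_{\alpha\in\overline{\Phip}}\alpha$ strictly exceeds the maximum $\sum_{\alpha\in\overline{\Phip}}\alpha$ available to any $E^\varphi$ in the PBW basis, so the product vanishes by maximality of $(1\dots 1)\in\Psi$, which is the $U^+$-analog of the discussion surrounding Lemma \ref{lem:compl}. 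Hence $E_i$ annihilates $\Omega\hotimes F^\psi v_h$ for each $\alpha_i\in\overline{\Deltap}$.

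With the highest-weight property secured and nonvanishing of $\Omega\hotimes F^\psi v_h$ established, the universal property of the Verma-type induced module $V(\mvec{\sigma^\psi}\t\s)$ yields a $\U$-module homomorphism $V(\mvec{\sigma^\psi}\t\s)\to\Vts$ sending the Verma generator to $\Omega\hotimes F^\psi v_h$. Non-degeneracy forces $V(\mvec{\sigma^\psi}\t\s)$ to be irreducible, so this hom is either zero or injective; nonvanishing of the image of the generator gives an embedding. By construction the image is the cyclic $\U$-submodule generated by this highest-weight vector, which equals $U^-\cdot(\Omega\hotimes F^\psi v_h)=\overline{V}_{\mvec{\sigma^\psi}}$, giving the isomorphism. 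The main obstacle I foresee is confirming the nonvanishing of $\Omega\hotimes F^\psi v_h$: one expands the coproduct of $E^{(1\dots 1)}$ on $v_h\otimes F^\psi v_h$, uses that $E_i v_h=0$ in $V(\t)$ to eliminate most distributions of raising operators across the two factors, and identifies a surviving PBW term whose Cartan coefficient --- of the type appearing in Proposition \ref{prop:computation} --- is guaranteed nonzero under non-degeneracy.
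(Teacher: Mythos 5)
Your proposal is correct and follows essentially the same route as the paper: the same highest-weight vector $\Omega\hotimes F^\psi v_h$ of weight $\mvec{\sigma^\psi}\t\s$, annihilated by the $E_i$ because $E_iE^{(1\dots 1)}=0$, with irreducibility of $V(\mvec{\sigma^\psi}\t\s)$ under non-degeneracy upgrading the induced map to an isomorphism onto $\overline{V}_{\mvec{\sigma^\psi}}$. The ``main obstacle'' you flag is exactly the paper's one-line step: by Proposition \ref{prop:computation} the $v_h\hotimes F^\psi v_h$ component of $\Omega\hotimes F^\psi v_h$ carries the Cartan coefficient evaluated at $\mvec{\sigma^\psi}\t\s$, which Corollary \ref{cor:irred} and non-degeneracy guarantee is nonzero.
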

	\begin{proof}
		Following Proposition \ref{prop:computation},
		\begin{align*}
			\Omega\hotimes  F^\psi v_h
			=\prod_{\alpha\in \overline{\Phip}}\left(\zeta^{\sum_{\beta>_{br}\alpha}(\alpha,\beta)}\floor{\zeta^{-\sum_{\beta>_{br}\alpha}(\alpha,\beta)}\mvec{\sigma^\psi ts}(K_\alpha)}\right)v_h\hotimes F^\psi v_h+\sum_{\psi'\neq 0} c_{\psi'\psi''} F^{\psi'} v_h\hotimes F^{\psi''} v_h
		\end{align*}
		
		for some $c_{\psi'\psi''}\in \Q(t_i,s_i,\zeta)$. Having assumed non-degeneracy, the $v_h\hotimes  F^{\psi} v_h$ component of $\Omega\hotimes  F^{\psi} v_h$ is non-zero. Therefore, $ \Omega\hotimes  F^{\psi} v_h$ is a highest weight vector of weight $\mvec{\sigma^\psi}\t\s$ and $\overline{V}_{\mvec{{\sigma}^\psi}}$ is an irreducible $|\Psi|$-dimensional subrepresentation of $\Vts$. Thus, by the irreducibility of $V(\mvec{\sigma^\psi }\t\s)$, the map which sends $v_h\in V(\mvec{\sigma^\psi}\t\s)$ to $\Omega\hotimes  F^{\psi} v_h\in \overline{V}_{\mvec{\sigma^\psi}}$ determines an isomorphism.
	\end{proof}
	\begin{lem}\label{lem:iso}
		Let $(\t,\s)$ be a non-degenerate pair. Then $\Vts$ is isomorphic to the direct sum $\bigoplus_{\psi \in\Psi} \overline{V}_{\mvec{\sigma^\psi}}$.
	\end{lem}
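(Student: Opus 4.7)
The strategy is to show that the submodule $W := \sum_{\psi \in \Psi} \overline{V}_{\mvec{\sigma^\psi}}$ coincides with the ambient representation $\Vts$. Since each summand $\overline{V}_{\mvec{\sigma^\psi}} \cong V(\mvec{\sigma^\psi}\t\s)$ has dimension $|\Psi|$ by Lemma \ref{lem:inclusions}, and $\Vts$ has dimension $|\Psi|^2$ via the basis in (\ref{eq:basis2}), once the equality $W = \Vts$ is established the sum will automatically be direct by dimension count, yielding the stated decomposition.

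The first step is to establish that the highest weight vectors $\{\Omega \hotimes F^\psi v_h : \psi \in \Psi\}$ are linearly independent in $\Vts$. From the PBW expansion derived in the proof of Lemma \ref{lem:inclusions},
\[
\Omega \hotimes F^{\psi'} v_h = c_{\psi'}\,(v_h \hotimes F^{\psi'} v_h) + \sum_{\tau \neq 0} a_{\tau,\tau'}\,F^{\tau} v_h \hotimes F^{\tau'} v_h,
\]
with leading scalar $c_{\psi'}$ nonzero by non-degeneracy of $(\t,\s)$. Every term in the residual sum has a nontrivial first tensor factor, so the coordinate of the basis vector $v_h \hotimes F^{\psi} v_h$ inside $\Omega \hotimes F^{\psi'} v_h$ is exactly $c_{\psi'}\delta_{\psi,\psi'}$, yielding linear independence of the $|\Psi|$ vectors.

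Next I would exploit semisimplicity. The module $W$ is a sum of simple submodules and is therefore semisimple; any irreducible constituent of $W$, being a composition factor of the surjection $\bigoplus_\psi \overline{V}_{\mvec{\sigma^\psi}} \twoheadrightarrow W$, must be isomorphic to some $V(\mvec{\sigma^\psi}\t\s)$ and thus have dimension $|\Psi|$. Writing $W = \bigoplus_i L_i$, the space of $U^+$-invariants decomposes as $\bigoplus_i L_i^{U^+}$, whose dimension equals the number of summands since each simple $L_i$ has a one-dimensional highest weight space. The first step produces $|\Psi|$ linearly independent highest weight vectors in $W$, so $W$ has at least $|\Psi|$ irreducible constituents and $\dim W \geq |\Psi|^2 = \dim \Vts$. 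This forces $W = \Vts$, and the dimension equality $\dim \Vts = \sum_{\psi} \dim \overline{V}_{\mvec{\sigma^\psi}}$ then certifies that the sum is direct.

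The main subtlety is the triangularity in the first step, which depends on the explicit shape of $\Omega \hotimes F^\psi v_h$ from Lemma \ref{lem:inclusions} and on non-degeneracy guaranteeing that each leading scalar $c_{\psi'}$ is nonzero; once linear independence of the $|\Psi|$ candidate highest weight vectors is in hand, the remainder is a dimension count inside the semisimple submodule they generate.
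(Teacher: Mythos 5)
Your proposal is correct, and its skeleton is the same as the paper's: everything rests on Lemma \ref{lem:inclusions}, which supplies the simple submodules $\overline{V}_{\mvec{\sigma^\psi}}$ generated by the highest weight vectors $\Omega\hotimes F^\psi v_h$, and the final step is the count $\dim\Vts=|\Psi|^2=\sum_{\psi}\dim\overline{V}_{\mvec{\sigma^\psi}}$. The difference lies in how directness is secured. The paper notes that the highest weight vectors are distinct, deduces $\overline{V}_{\mvec{\sigma^\psi}}\cap\overline{V}_{\mvec{\sigma^{\psi'}}}=\langle 0\rangle$ from irreducibility, and concludes that the direct sum injects; you instead prove that the vectors $\Omega\hotimes F^\psi v_h$ are linearly independent (via the triangularity of their expansions in the basis (\ref{eq:basis2}), with leading coefficients nonzero by non-degeneracy) and then bound the number of simple constituents of $W=\sum_\psi\overline{V}_{\mvec{\sigma^\psi}}$ from below by $\dim W^{U^+}$. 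If anything, your bookkeeping is tighter: pairwise trivial intersections alone do not make a sum of more than two submodules direct, so the semisimplicity-plus-counting argument is the cleaner way to finish. The one assertion you should justify is that each simple constituent, i.e.\ each irreducible $V(\mvec{\sigma^\psi}\t\s)$, has a one-dimensional space of $U^+$-invariants. ``One-dimensional highest weight space'' is not by itself the reason: weights here are only characters on the Cartan torus, distinct PBW vectors can share a weight (the map $\psi\mapsto\mvec{\sigma^\psi}$ is not injective), and a priori an invariant vector could sit in a different weight space. A clean fix uses the grading of the induced module in which $F^\varphi v_h$ has degree $-\sum_\alpha\varphi(\alpha)\alpha$: each $E_\beta$ raises this degree, so the homogeneous components of an invariant vector are again invariant, and a homogeneous invariant vector of nonzero degree would generate, via $U^-U^0$ (writing elements of $\U$ with the $E$'s on the right, as the commutation relations permit), a nonzero proper submodule avoiding $v_h$, contradicting irreducibility; hence the invariants are spanned by the highest weight vector. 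With that line added, your proof is complete.
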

	\begin{proof}
		Observe that the non-degeneracy assumption on $(\t,\s)$ implies the irreducibility of each $\overline{V}_{\mvec{\sigma^\psi}}$. Every   $\overline{V}_{\mvec{\sigma}^\psi}$ includes into $\Vts$ as the subspace generated by $U^-$ acting on the highest weight vector $ \Omega\hotimes  F^{\psi} v_h$. Since each $ \Omega\hotimes  F^{\psi} v_h$ is distinct and each $V_{\mvec{\sigma^\psi}}$ is irreducible, $V_{\mvec{\sigma^\psi}}\cap V_{\mvec{\sigma^{\psi'}}}=\langle0\rangle$ for $\psi\neq\psi'$.  Hence, $\bigoplus_{\psi \in\Psi} \overline{V}_{\mvec{\sigma^\psi}}$ injects into $\Vts$. By dimensionality, this injection is a surjection and, therefore, an isomorphism.
	\end{proof}
	Let $\widetilde{\Gamma}$ denote the isomorphism $\bigoplus_{\psi\in\Psi} V_{\mvec{\sigma^\psi}}\cong \Vts$ described in Lemma \ref{lem:iso}. Using the aforementioned lemmas we prove the first main theorem.
	\begin{proof}[Proof of Theorem \ref{thm:directsum}]
		We construct an intertwiner $\Gamma$ in the following diagram when $(\t,\s)$ is a non-degenerate tuple.
		\[
		\begin{tikzcd}[]
		\Vt\otimes\Vs
		\arrow{r}{\Theta}\arrow[swap]{dr}{\Gamma} &\arrow{d}{\widetilde{\Gamma}}
		\Vts\\    
		& \bigoplus_{\mvec{\sigma^\psi}\in\Psi}V({\mvec{\sigma^\psi}}\t\s)
		\end{tikzcd}
		\]
		We see that $\Gamma=\widetilde{\Gamma}\circ\Theta$ is given by a composition of isomorphisms. The above lemmas establish that  $\widetilde{\Gamma}$ is an isomorphism for non-degenerate tuples. Moreover, $\Theta$ is an isomorphism by Proposition \ref{prop:iso}, which is independent of $\t$ and $\s$. This proves the theorem.
	\end{proof}
	\begin{rem}
		We see that a tensor product of indecomposable, but reducible, representations may decompose into a direct sum of irreducibles. For example, when $\g=\slthree$, $V(t_1,1)\otimes V(1,s_2)$ is a non-degenerate tensor product representation for generic $t_1$ and $s_2$.
	\end{rem}
	Observe that the isomorphism class of a non-degenerate tensor product depends only on the product $\t\s$. We define an action of $\P$ on $\P^2$ which preserves the product $\mvec{ts}$ as follows. Let $\mvec{\lambda},\t,\s\in \P$ and set
	\begin{equation}\label{eq:action}
	\mvec{\lambda}\cdot(\mvec{t},\mvec{s})=(\mvec{\lambda t},\mvec{\lambda^{-1}s}).
	\end{equation}

	\begin{cor}\label{cor:transfer}
		Let $(\t,\s)$ be a non-degenerate tuple. For any $\mvec{\lambda}\in \mathcal{P}$ such that $\mvec{\lambda}\cdot(\t,\s)$ is also non-degenerate, then
		\begin{equation}\label{eq:transfer}
		\Vt\otimes\Vs\cong V(\mvec{\lambda t})\otimes V(\mvec{\lambda^{-1}s}).
		\end{equation}
	\end{cor}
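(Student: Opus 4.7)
The plan is to obtain this corollary as an immediate consequence of Theorem \ref{thm:directsum} by observing that both sides of the claimed isomorphism produce the same direct sum decomposition once the non-degeneracy hypotheses are in place.

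First, I would note that the action of $\mathcal{P}$ on $\mathcal{P}^2$ defined in (\ref{eq:action}) is product-preserving: for any $\mvec{\lambda}\in\mathcal{P}$,
\begin{equation}
(\mvec{\lambda t})(\mvec{\lambda^{-1}s}) = \mvec{t}\mvec{s}.
\end{equation}
Therefore the collection of weights $\{\mvec{\sigma^\psi}\t\s : \psi\in\Psi\}$ appearing in the semisimple decomposition is literally invariant under replacing $(\t,\s)$ by $\mvec{\lambda}\cdot(\t,\s)$.

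Next, I would apply Theorem \ref{thm:directsum} to each side. Since $(\t,\s)$ is non-degenerate by hypothesis, the theorem gives
\begin{equation}
\Vt\otimes\Vs \cong \bigoplus_{\psi\in\Psi} V(\mvec{\sigma^\psi}\t\s).
\end{equation}
Since $\mvec{\lambda}\cdot(\t,\s) = (\mvec{\lambda t},\mvec{\lambda^{-1}s})$ is also assumed non-degenerate, the theorem applies equally and yields
\begin{equation}
V(\mvec{\lambda t})\otimes V(\mvec{\lambda^{-1}s}) \cong \bigoplus_{\psi\in\Psi} V(\mvec{\sigma^\psi}(\mvec{\lambda t})(\mvec{\lambda^{-1}s})) \cong \bigoplus_{\psi\in\Psi} V(\mvec{\sigma^\psi}\t\s).
\end{equation}
Comparing the two right-hand sides gives the desired isomorphism.

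There is essentially no obstacle here, since both non-degeneracy assumptions are built into the hypothesis precisely so that Theorem \ref{thm:directsum} is applicable to each side. The only small thing worth flagging is that the isomorphism is not canonical but is given by the composition $\widetilde{\Gamma}_2^{-1}\circ\widetilde{\Gamma}_1$ (through the decompositions on either side), which one could make explicit by tracing highest weight vectors $\Omega\hotimes F^\psi v_h$ in each of the two induced realizations; but for the statement as given, no further work beyond the product-preserving observation is needed.
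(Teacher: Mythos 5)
Your proposal is correct and matches the paper's intent: the corollary is stated as an immediate consequence of Theorem \ref{thm:directsum} together with the observation that the action (\ref{eq:action}) preserves the product $\t\s$, so applying the theorem to both sides and comparing the identical direct sums is exactly the argument. (One could even note that, since non-degeneracy depends only on the product $\t\s$, the second hypothesis is automatic, but stating it does no harm.)
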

We examine when this isomorphism holds more generally in the $\slthree$ case in Sections \ref{sec:cyclicgen}, \ref{sec:cyclic1}, \ref{sec:cyclici}, and \ref{sec:thms}. 
\section{Specialization to $\g=\slthree$}\label{sec:props}
In this section, we describe the structure of $\Uzslthree$ explicitly and specialize the results of Sections \ref{sec:notation} and \ref{sec:Vt} to $\g=\slthree$. In particular, we give an explicit description of $U^+$ on $\Vt$ in Table \ref{table:actions} and give an explicit description of the algebraic set $\R$ on which $\Vt$ is reducible. \newline 

\comm{Recall our convention for ordering the roots of $\overline{\Phip}$:
\begin{align}
\alpha_1<_{br}\alpha_1+\alpha_2<_{br}\alpha_2,
\end{align}
which also yields a lexicographical ordering on $\Psi=\{0,1\}^{\overline{\Phip}}$. } We will use $\alpha_{12}$ to denote the root $\alpha_1+\alpha_2$. We define
\begin{align}
F^\psi=F^{(\psi_{(1)},\psi_{(12)},\psi_{(2)})}=F_1^{\psi(\alpha_1)}F_{12}^{\psi(\alpha_{12})}F_2^{\psi(\alpha_2)}.
\end{align}  
\comm{Recall that 	
\begin{align}
E_{12}=-(E_1E_2+\zeta E_2E_1)  &&\text{and}&& F_{12}=-(F_2F_1-\zeta F_1F_2).
\end{align}}
The relations $F_{12}^2=(F_2F_1-\zeta F_1F_2)^2=0$ and $E_{12}^2=(E_1E_2+\zeta E_2E_1)^2=0$ imply $(F_1F_2)^2=(F_2F_1)^2$ and $(E_1E_2)^2=(E_2E_1)^2$. By Corollary \ref{cor:basis},
\begin{align}\label{eq:basis}
\B&=\{1, F_1, F_2,F_1F_2, F_{12}, F_1F_{12}, F_{12}F_2, F_1F_{12}F_2\}\\&=
\{F^{(000)},F^{(100)},F^{(010)},F^{(110)},F^{(001)},F^{(101)},F^{(011)},F^{(111)}\}
\end{align}
is a basis of $U^-$, which is equipped with the lexicographical ordering $<$. 
The space $\Psi$ is presented in Figure \ref{fig:cube} below. Adding $\delta_1$, $\delta_2$, and $\delta_{12}$, the indicators of a root $\alpha$, is indicated by green, blue, and turquoise segments, respectively.

\begin{figure}[h]
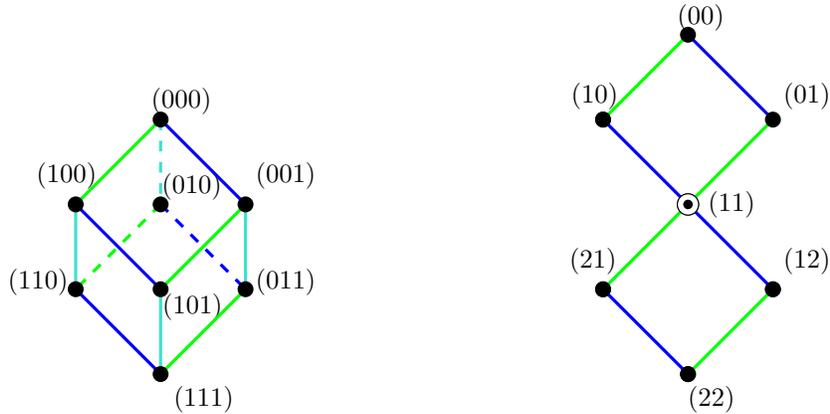
\centering
		\begin{subfigure}[b]{0.4\textwidth}
		\centering
		\includegraphics[]{/rootlattice}\caption{Visualization of $\Psi$ as a cube.}\label{fig:cube}
	\end{subfigure}
	\quad
\begin{subfigure}[b]{0.4\textwidth}
	\centering
	\includegraphics[]{/projectedrootlattice}
	\caption{\label{fig:proj} The image of $\Psi$ under $P$.}
\end{subfigure}
\caption{Presentations of $\Psi$.}
\end{figure}

Let $\overline{\alpha}$ be the simple root components of a root $\alpha\in\overline{\Phip}$. In particular, $\overline{\alpha_{12}}=\overline{\alpha_1}+\overline{\alpha_2}$.
This map induces
\begin{align}\label{eq:proj}
P:\Psi&\rightarrow \Z^{\Deltap}
\\(\psi_{(1)},\psi_{(12)},\psi_{(2)})&\mapsto (\psi_{(1)}+\psi_{(12)},\psi_{(2)}+\psi_{(12)})\notag
\end{align}
which can be seen as a projection of the cube in Figure \ref{fig:cube} into the plane and the deletion of turquoise segments. This also determines the weight spaces on representations. \newline

Recall that $\Vt$ is an induced representation and is isomorphic to $U^-$ as vector spaces. We use $\B$ to determine a basis of $\Vt$ by tensoring $v_h$. All vector expressions in $\Vt$ will be expressed using the basis $\B$.\newline

As noted above, the action of $U^+$ on $\Vt$ depends heavily on $\t$. The actions of $E_1$ and $E_2$ are given in terms of $t_1$ and $t_2$ in Table \ref{table:actions} below. Note that when either $t_1^2=1$ or $t_2^2=1$, terms vanish from the expressions in Table \ref{table:actions}. These vanishings are related to the reducibility of the representation. 

\begin{table}[h!]	\caption{Generic nonzero actions of $E_1$ and $E_2$ on $\Vt$ expressed in the induced PBW basis. }
	\begin{align*}
	&E_1F^{(100)}v_h=\floor{t_1}F^{(000)}v_h &&E_2F^{(001)}v_h=\floor{t_2}F^{(000)}v_h\\
	&E_1F^{(101)}v_h=\floor{\zeta t_1}F^{(001)}v_h && E_2F^{(101)}v_h=\floor{t_2}F^{(100)}v_h\\
	&E_1F^{(010)}v_h=\zeta t_1F^{(001)}v_h && E_2F^{(010)}v_h=-t_2^{-1}F^{(100)}v_h\\
	&E_1F^{(110)}v_h=\zeta t_1F^{(101)}v_h-\floor{\zeta t_1}F^{(010)}v_h 
	&&E_2F^{(011)}v_h= t_2^{-1}F^{(101)}v_h+ \floor{t_2}F^{(010)}v_h\\
	&E_1F^{(111)}v_h=\floor{t_1}F^{(011)}v_h &&E_2F^{(111)}v_h= \floor{t_2}F^{(110)}v_h
	\end{align*}
	\label{table:actions}
\end{table}

\comm{Recall that 
\begin{align}
\Omega= \prod_{\alpha\in \overline{\Phip}}\left(\zeta^{\sum_{\beta>_{br}\alpha}(\alpha,\beta)}\floor{\zeta^{-\sum_{\beta>_{br}\alpha}(\alpha,\beta)}K_\alpha}\right)v_h
\end{align}
and if $\Omega=0$, then $\Vt$ is reducible.} In the $\slthree$ case, Proposition \ref{prop:computation} shows that
$
\Omega=	-\zeta\floor{t_1}\floor{t_2}\floor{\zeta t_1t_2}v_h$ and vanishes on the following subsets of $\P$:
\begin{align}
\X_1&=\{\t\in\P :t_1^2=1\},  & 
\X_2&=\{\t\in\P :t_2^2=1\}, &
\H&=\{\t\in\P :(t_1t_2)^{2}=-1\}.
\end{align}
 For $\t\in\H$, the subspace $\bracks{E_1E_2v_l,E_2E_1v_l}$ is 1-dimensional. In particular, $E^{(110)}v_l=0$ and $E^{(011)}v_l=0$. 	Let $\R$ be the union $\X_1\cup\X_2\cup\H$. We partition $\R$ into disjoint subsets indexed by nonempty subsets $I\subsetneq \overline{\Phip}$, with
 \begin{align}
 \R_{I}=\left(\bigcap_{\alpha\in I} \X_\alpha\right)\setminus \left(\bigcup_{\alpha\notin I}\X_\alpha\right).
 \end{align} We define $\R_\emptyset$ to be $\P\setminus\R$, so that  $\{\R_I\}_{I\subsetneq\overline{\Phip}}$ yields a partition of $\P$.\newline

We give a graphical description of $\Vt$ in terms of basis vectors and maps between them by presenting the action of this module on weight spaces, as seen in Figure \ref{fig:action}. Each solid vertex indicates a one dimensional weight space of $\Vt$, and the ``dotted'' vertex indicates the two dimensional weight space spanned by $F^{(101)}v_h$ and $F^{(010)}v_h$. Green edges indicate actions of $E_1$ and $F_1$, and blue edges indicate actions of $E_2$ and $F_2$. All downward arrows represent action of $F_i$, and upward arrows represent the action of $E_i$. For non-generic choices of the parameter $\t$, upward arrows are deleted from the graph because matrix elements of $E_1$ and $E_2$ vanish.

\begin{figure}[h!]
	\centering
	\includegraphics[scale=1.25]{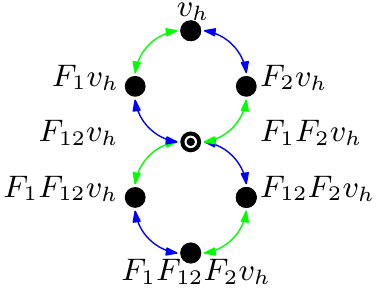}
	\caption{
		The action of $\U$ on the weight spaces of $V(\mvec{t})$ for generic $\t$. }\label{fig:action}
\end{figure}

	Depending on how irreducibility fails, different upward pointing arrows vanish from \hbox{Figure \ref{fig:action}}. The basic cases can be seen in Figure \ref{fig:subs}. In this figure, the representations are color-coded to indicate the quotient representation (gray) and the subrepresentation (red). Actions of $F_i$ which vanish under the quotient are indicated by dotted arrows. Tensor product decompositions involving the 4-dimensional irreducible representations are given in \cite{Harpersl3}.	The correspondence between a choice of $\t$ and the subrepresentation generated by $v_l$ is shown in Table \ref{table:corresp} and Figure \ref{fig:corresp}.

\begin{figure}[h!]
	\centering
	\begin{subfigure}[b]{0.3\textwidth}
		\centering
		\includegraphics[scale=1.3]{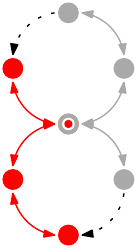}
		\label{sub:a}
		\caption{$\R_1$}
	\end{subfigure}
	\hfill
	\begin{subfigure}[b]{0.3\textwidth}
		\centering
		\includegraphics[scale=1.3]{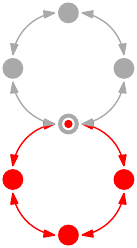}
		\label{sub:c}
		\caption{$\R_{12}$}
	\end{subfigure}
	\hfill
	\begin{subfigure}[b]{0.3\textwidth}
		\centering
		\includegraphics[scale=1.3]{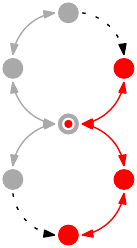}
		\label{sub:b}
		\caption{$\R_2$}
	\end{subfigure}
	\caption{Reducible representation $\Vt$ when $\t$  belongs to the indicated $\R_\alpha$. 
	}\label{fig:subs}
\end{figure}

\hspace*{-3em}\begin{minipage}{.7\linewidth}\centering
		\captionof{table}{Correspondence between $\t\in\R_I$ and the subrepresentation generated by $v_l\in\Vt$.
	}\label{table:corresp}
	\begin{tabular}{c|c|c}
		Subset & Highest Wt. Vector & Dimension\\\hline
		
		$\R_\emptyset$ & $E^{(111)}v_l$ & 8\\
		$\R_1$ & $E^{(011)}v_l$ & 4\\
		$\R_2$ & $E^{(110)}v_l$ & 4\\
		$\R_{12}$ & $E^{(010)}v_l$ & 4\\
		$\R_{1,12}$ & $E^{(101)}v_l$ & 3\\
		$\R_{12,2}$ & $E^{(010)}v_l$ & 3\\
		$\R_{1,2}$ & $E^{(000)}v_l$ & 1\\
	\end{tabular}
\end{minipage}\hspace*{-5em} \begin{minipage}{.6\linewidth}\centering
\includegraphics[scale=1.5]{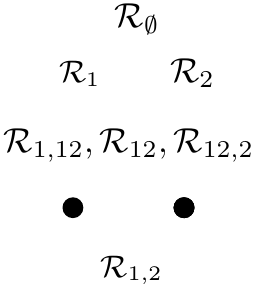}\captionof{figure}{Position of highest weight vector in the subrepresentation generated by $v_l\in\Vt$ for $\t\in \R_I$.}\label{fig:corresp}
\end{minipage}

\section{Projective Covers and Tensor Product Decompositions}\label{sec:proj}
This section is concerned with projectivity of $\Vt$ and showing the projective covers occur in tensor product decompositions. We show that $\Vt$ is projective in $\U$-mod if and only if ${\t\notin\R}$. If $\Vt$ is not projective, we prove that its projective cover is given by an induced representation. For the appropriate choices of $\t$, each of these projective covers are indecomposable. We show in Theorem \ref{thm:projdecomp} that $\Vt\otimes \Vs$ can be decomposed into a direct sum of these indecomposable projective representations. Aside from in Remark \ref{rem:proj}, we assume that $\g=\slthree$ throughout this section.
	
\projgen*

	\begin{proof}
		Suppose $\t\notin\R$. Let $N\twoheadrightarrow \Vt$ be a surjection, and let $n_0$ be a vector in the preimage of $v_h\in \Vt$. Then the map $v_h\mapsto E^{(111)}F^{(111)}n_0$ defines a splitting.\newline 
		
		Suppose $\t\in\R$ and $\mvec{\lambda}\in\P\setminus\R$. Consider the map $V(\mvec{-\lambda t})\hotimes V(\mvec{\lambda^{-1}}) \twoheadrightarrow\Vt$ defined by mapping $p_0:=v_h\hotimes v_l$ to $v_h$ and assume that a splitting exists.  Since $p_0$ is a cyclic vector for $V(\mvec{-\lambda t})\hotimes V(\mvec{\lambda^{-1}})$, the splitting can be assumed to take the form $v_h\mapsto \sum a_{\psi\psi'}F^\psi E^{\psi'}p_0$ for some $a_{\psi\psi'}\in\Q_4(t_1,t_2)$. Since $F^{(111)}v_h\neq0$, $a_{(000)(000)}=1$. Note that $E^{(111)}F^{(111)}v_h=0$ and $E^{(111)}F^{(111)}p_0=F^{(111)}E^{(111)}p_0+\sum_{\psi,\psi'<(111)} b_{\psi\psi'}F^{\psi}E^{\psi'}p_0$ for some $b_{\psi\psi'}\in\Q_4(t_1,t_2)$. In particular, $E^{(111)}F^{(111)}p_0\neq 0$, which is a contradiction. Thus, no splitting exists.
	\end{proof}

	\begin{rem}\label{rem:proj}
		Our method of proving Lemma \ref{lem:projgen} can be applied to $\Vt$ as a representation of $\Uzg$ for any semisimple Lie algebra $\g$. 
	\end{rem}

	Recall that a module $V$ is \emph{hollow} if every proper submodule $M\subseteq V$ is superfluous, i.e. for all $M\subsetneq V$ such that $V=M+N$ for some submodule $N$, then $N=V$. We say that $(P,p)$ is a \emph{projective cover} of $V$ if $P$ is projective, $p:P\to V$ is surjective, and $\ker p$ is a superfluous submodule of $P$.
	\\ 
	
	Let $B^\emptyset=B$, the Borel subalgebra, and for each nonempty $I\subsetneq \overline{\Phip}$ we set $B^I$ to be:
	\begin{align}
		B^{1}&=\bracks{K_1,K_2,E_{12},E_2},&
		B^{2}&=\bracks{K_1,K_2,E_1,E_{12}},\\
		B^{1,12}&=\bracks{K_1,K_2,E_{12}E_2,E_2},&
		B^{12,2}&=\bracks{K_1,K_2,E_1,E_1E_{12}},\\
		B^{12}&=\bracks{K_1,K_2,E_{12},F_2E_1,F_1E_2},&
		B^{1,2}&=\bracks{K_1,K_2,F_1E_2E_1,F_2E_1E_2}.
	\end{align}
	
	Let $\bracks{p_0^I}$ be the $B^I$-module on which the Cartan generators $K_i$ act by $t_i$ and all other generators of $B^I$ act trivially. Denote the induced representation $\text{Ind}_{B^I}^{\U}(\bracks{p_0^I})$ by $P^I(\t)$.
	 \projind*
	Let $\x=(-1,-\zeta)$ and $\y=(-\zeta,-1)$ be elements of $\P$, which are the multiplicative weight shifts under the actions of $E_1$ and $E_2$, respectively. We prove that each $P^I(\t)$ is hollow and projective for all $\t\in \R_I$, and as a corollary it is the projective cover of $\Vt$.\newline

We study each case individually, beginning with $I=\{\alpha_1\}$.

	\begin{prop}\label{prop:seq12,2}
		For any $\t\in\P$, $P^{1}(\t)$ belongs to the exact sequence
		\begin{align}
		0\to V(\x\t)\to P^{1}(\t)\to \Vt\to 0.
		\end{align}
		Moreover, this sequence splits if and only if $t_1^2\neq 1$. There is a similar exact sequence for $P^{2}(\t)$.
	\end{prop}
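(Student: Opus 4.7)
The plan is to construct the surjection $P^{1}(\t) \twoheadrightarrow \Vt$, identify its kernel as $V(\x\t)$ via a highest weight vector, and analyze when the sequence splits through a weight-$\t$ subspace calculation.

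First, I would use the universal property of induction to define $\pi : P^{1}(\t) \to \Vt$ by $p_0^1 \mapsto v_h$. This assignment is a valid $B^{1}$-module map because $K_i v_h = t_i v_h$, $E_2 v_h = 0$, and $E_{12} v_h = -(E_1 E_2 + \zeta E_2 E_1) v_h = 0$. The map is surjective since $v_h$ generates $\Vt$. Using the PBW-type basis $\{F^\psi E_1^\epsilon p_0^1 : \psi \in \Psi,\ \epsilon \in \{0,1\}\}$ of $P^{1}(\t)$, which has dimension $16$, every vector $F^\psi E_1 p_0^1$ lies in $\ker \pi$; so $\ker \pi$ has dimension at least $8$, and by the first isomorphism theorem this dimension is exactly $8$.

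Next, I would identify $\ker \pi$ with $V(\x\t)$. One checks directly that $E_1 p_0^1$ has weight $\x\t$ (read off from $K_i E_1 = \zeta^{A_{i1}} E_1 K_i$), that it is annihilated by $E_1$ (since $E_1^2 = 0$), and that it is annihilated by $E_2$ using the relation $\zeta E_2 E_1 = -E_1 E_2 - E_{12}$ together with $E_2 p_0^1 = E_{12} p_0^1 = 0$. The universal property of $V(\x\t)$ then gives a $\U$-module map $V(\x\t) \to P^{1}(\t)$ sending the generator to $E_1 p_0^1$; its image is the span of $\{F^\psi E_1 p_0^1 : \psi \in \Psi\}$, which coincides with $\ker \pi$ and has the same dimension as $V(\x\t)$. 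Hence this map is an isomorphism onto $\ker \pi$, producing the claimed short exact sequence.

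For the splitting, I would analyze the weight-$\t$ subspace of $P^{1}(\t)$. Since $\x^{-1}$ appears exactly once among the weights $\{\mvec{\sigma^\psi}\}$ of $V(\mvec{1})$ (namely, as $\mvec{\sigma^{\delta_{\alpha_1}}}$), the weight-$\t$ subspace of $\ker \pi \cong V(\x\t)$ is one-dimensional, spanned by $F_1 E_1 p_0^1$. Thus the weight-$\t$ subspace of $P^{1}(\t)$ is two-dimensional, spanned by $p_0^1$ and $F_1 E_1 p_0^1$. Any splitting sends $v_h \in \Vt$ to some $v = p_0^1 + b\, F_1 E_1 p_0^1$ annihilated by $U^+$; by dimension, such a vector indeed generates a copy of $\Vt$ complementary to $\ker \pi$. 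Using $[E_1, F_1] = \floor{K_1}$, $E_1^2 = 0$, and $K_1 E_1 p_0^1 = -t_1 E_1 p_0^1$, one computes
\begin{equation*}
E_1 v = (1 - b \floor{t_1})\, E_1 p_0^1,
\end{equation*}
while $E_2 v = 0$ is automatic from the previous step. A splitting therefore exists if and only if $\floor{t_1} \neq 0$, equivalently $t_1^2 \neq 1$. The analog for $P^{2}(\t)$ follows from the symmetric argument obtained by interchanging $E_1 \leftrightarrow E_2$, $t_1 \leftrightarrow t_2$, and replacing $\x$ by $\y$. The main obstacle is the weight-space bookkeeping needed to pin down the two-dimensionality of the weight-$\t$ subspace; once available, the splitting criterion reduces to the single scalar condition $\floor{t_1} \neq 0$.
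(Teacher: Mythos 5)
Your proposal is correct and follows essentially the same route as the paper: exhibit $E_1p_0^1$ as a highest weight vector of weight $\x\t$ generating the subrepresentation $V(\x\t)$ (using $E_1^2p_0^1=E_2E_1p_0^1=0$), identify the quotient with $\Vt$ via $E_2p_0^1=E_{12}p_0^1=0$, and reduce the splitting question to the two-dimensional weight-$\t$ space spanned by $p_0^1$ and $F_1E_1p_0^1$, where the condition $\floor{t_1}\neq 0$, i.e.\ $t_1^2\neq 1$, emerges. You supply somewhat more detail (the $16$-dimensional basis, the explicit kernel identification, the universal property), but the argument is the same as the paper's.
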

	\begin{proof}
		The representation $V(\x\t)$ is generated by the image of $E_1p_0^{1}$ under $U^-$ in $P^{1}(\t)$, and is a subrepresentation since $E_2E_1p_0^1=0$. Then quotient by $V(\x\t)$ is a surjection to $V(\t)$ since $E_2p_0^1=0$. \newline
		
		Accounting for weights, any splitting must be of the form $v_h\mapsto ap_0^{12,2}+bF_1E_1p_0^{1}$ for some $a,b\in \Q_4(t_1,t_2)$.  Since $E_1v_h=0$, $a=-b\floor{t_1}$. Composing the splitting with the surjection yields $v_h \mapsto -b\floor{t_1}v_h$, which can be normalized to the identity map if and only if $t_1^2\neq 1$.
	\end{proof}

	\begin{prop}\label{prop:proj1}
	Let $\t\in\R_1$, then $P^{1}(\t)$ is hollow and projective in $\U$-mod.
\end{prop}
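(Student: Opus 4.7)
The plan is to establish hollowness and projectivity separately, both exploiting that $t_1^2 = 1$ forces $\floor{t_1} = \floor{-t_1} = 0$.

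For hollowness, I would use the short exact sequence $0 \to V(\x\t) \to P^{1}(\t) \to V(\t) \to 0$ from Proposition \ref{prop:seq12,2}. For $\t \in \R_1$, the quotient $V(\t)$ is itself hollow since Table \ref{table:corresp} gives it a unique four-dimensional maximal submodule. It therefore suffices to show that $V(\x\t)$ is a superfluous submodule of $P^1(\t)$: any submodule $M$ with $M + V(\x\t) = P^1(\t)$ must equal $P^1(\t)$. Given such $M$, a preimage of $v_h \in V(\t)$ combined with extraction of its weight-$\t$ component lets me write $m = p_0^1 + c\,F_1 E_1 p_0^1$, because the weight-$\t$ subspace of $V(\x\t)$ is one-dimensional, spanned by $F_1 E_1 p_0^1$. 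The key computation
\begin{equation*}
E_1 m = E_1 p_0^1 + c(F_1 E_1 + \floor{K_1})E_1 p_0^1 = E_1 p_0^1 + c\floor{-t_1}\,E_1 p_0^1 = E_1 p_0^1
\end{equation*}
uses $\floor{-t_1}=0$ and puts $E_1 p_0^1 \in M$; since $E_1 p_0^1$ generates $V(\x\t)$, this forces $V(\x\t) \subseteq M$, hence $p_0^1 = m - c F_1 E_1 p_0^1 \in M$ and $M = P^1(\t)$. Combining superfluousness with hollowness of the quotient yields hollowness of $P^1(\t)$.

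For projectivity, I would invoke Frobenius reciprocity: since restriction is exact, its left adjoint $\text{Ind}_{B^1}^{\U}$ gives
\begin{equation*}
\text{Hom}_{\U}(P^1(\t), M) \cong M^{B^1,\t} := \{v \in M : K_i v = t_i v,\ E_{12}v = 0,\ E_2 v = 0\}
\end{equation*}
for every $\U$-module $M$, so projectivity of $P^1(\t)$ is equivalent to exactness of the functor $M \mapsto M^{B^1,\t}$. Left exactness is automatic; the content is a lifting statement: for a surjection $\pi\colon M \twoheadrightarrow N$ and any $v_N \in N^{B^1,\t}$, I must produce $v_M \in M^{B^1,\t}$ with $\pi(v_M) = v_N$. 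Starting from a weight-$\t$ preimage $\tilde v$, the obstructions $E_{12}\tilde v, E_2 \tilde v \in \ker\pi$ satisfy the compatibility $E_{12}(E_2 \tilde v) = \zeta E_2(E_{12}\tilde v)$ enforced by $E_{12}E_2 = \zeta E_2 E_{12}$. The remaining task is to find $a \in (\ker\pi)^{\t}$ with $(E_{12}a, E_2 a) = (E_{12}\tilde v, E_2\tilde v)$; I would construct $a$ by exploiting the vanishing $\floor{K_1}=0$ on weight-$\t$ vectors (once more from $t_1^2 = 1$), which, mirroring the hollowness calculation, makes $F_1 E_1 \tilde v$ behave like a $B^1$-primitive vector modulo $\ker\pi$ and yields an explicit expression for $a$ in terms of $F_1$-actions on $\tilde v$ that lie in $\ker\pi$.

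The main obstacle is this lifting step in the projectivity argument. While the hollowness calculation takes place internally to $P^1(\t)$, the construction of $a$ must go through uniformly for an arbitrary kernel $\ker\pi$, and it requires verifying that the compatibility condition on $(E_{12}\tilde v, E_2\tilde v)$ together with the identity $E_1 F_1 = F_1 E_1$ on weight-$\t$ vectors in $\R_1$ suffice to hit the entire image of $(E_{12}, E_2)$ on $(\ker\pi)^\t$. Once this is verified, hollowness of $P^1(\t)$ combined with the projectivity just established will identify $P^1(\t)$ as the projective cover of $V(\t)$.
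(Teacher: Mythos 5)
Your hollowness argument is correct and in substance the same as the paper's: the paper works directly inside $P^{1}(\t)$, writing any weight-$\t$ vector as $(a+bF_1E_1)p_0^{1}$ and inverting it using $(F_1E_1)^2p_0^{1}=0$, while you route the same computation through the exact sequence of Proposition \ref{prop:seq12,2} and superfluousness of $V(\x\t)$; both work. (One small repair: to see that $\Vt$ is hollow you should argue that it is cyclic with one-dimensional $\t$-weight space, rather than cite Table \ref{table:corresp}, which describes the submodule generated by $v_l$, not a priori the unique maximal submodule.)

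The projectivity half, however, has a genuine gap, and you flag it yourself: after the (correct) reduction via Frobenius reciprocity to exactness of $M\mapsto M^{B^{1},\t}$, the lifting step is precisely the content of the proposition and is never carried out. Moreover, the construction you sketch invokes only $\floor{K_1}=0$, i.e.\ only $t_1^2=1$, and that alone cannot suffice: for $\t\in\R_{1,12}$ or $\R_{1,2}$ one still has $t_1^2=1$, yet $P^{1}(\t)$ is not projective there, since the projective cover of $\Vt$ is the strictly larger $P^{1,12}(\t)$, resp.\ $P^{1,2}(\t)$, which cannot be a direct summand of the $16$-dimensional $P^{1}(\t)$. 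So the conditions $t_2^2\neq 1$ and $(t_1t_2)^2\neq -1$ defining $\R_1$ must enter the lifting argument in an essential way, and your sketch never shows where. The paper closes exactly this gap with an explicit lift: for any weight-$\t$ preimage $n_0$ of $p_0^{1}$ (equivalently of your $v_N$), the vector $E_{12}E_2F_{12}F_2n_0$ is annihilated by $E_2$ and $E_{12}$ for purely formal reasons ($E_2^2=E_{12}^2=0$ together with $E_2E_{12}=-\zeta E_{12}E_2$), has weight $\t$, and by (\ref{eq:E32F32}) its image in $P^{1}(\t)$ is $\left(-\zeta\floor{t_2}\floor{\zeta t_1t_2}+\zeta F_1E_1\right)p_0^{1}$, which is invertible on the $\t$-weight space exactly because $\t\in\R_1$ forces $\floor{t_2}\floor{\zeta t_1t_2}\neq 0$ while $(F_1E_1)^2p_0^{1}=0$; composing with the inverse of this automorphism produces the desired splitting. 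Your proposal needs this element, or an equivalent explicit construction, before it is a complete proof.
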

\begin{proof}
	First, we show $P^{1}(\t)$ is projective. By construction, applying $E_2$ to either $p_0^{1}$ or $E_1p_0^{1}$ must be zero. Therefore, given a surjection $p:N\rightarrow P^{1}(\t)$ and any $n_0\in p^{-1}(p_0^{1})$, we claim that there is a splitting given by the map which sends $p_0^{1}\in P^{1}(\mvec{\sigma^\psi ts})$ to $E_{12}E_2F_{12}F_2n_0\in N.$	Expressing $E_{12}E_2F_{12}F_2n_0$ in the PBW basis, we see that its $n_0$ component is nonzero. Therefore, we may apply $F^\psi$ and $F^\psi E_1$ to this vector and obtain linearly independent vectors in $N$. Moreover, applying $E_2$ or $E_2E_1$ to this vector is zero. Since the weights of $p_0^{1}$ and $E_{12}E_2F_{12}F_2n_0$ agree, this map determines a splitting of any surjection onto $P(\t)$.\newline
	
	To prove that a proper submodule $M$ of $P^{1}(\t)$ is superfluous, suppose that $P^{1}(\t)=M+N$ for some $N\subseteq P^1(\t)$. Then there is a decomposition $p_0^{1}=(p_0^{1}-v)+v$ such that $p_0^{1}-v\in M$ and $v\in N$. Since $v$ is of weight $(t_1,t_2)$, $v=(a+bF_1E_1)p_0^{1}$ for some nonzero $a,b\in \Q_4(t_1,t_2)$. Then, $(F_1E_1)^2p_0^{1}=0$ implies $p_0^{1}=(\frac{1}{a}-\frac{b}{a}F_1E_1)v$ and $N=P^{1}(\t)$. 
\end{proof}
\begin{cor}
	For all $\t\in \R_1$, $P^{1}(\t)$ is the projective cover of $\Vt$.
\end{cor}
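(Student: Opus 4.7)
The plan is to combine the two preceding propositions directly against the definition of projective cover recalled just before Proposition \ref{prop:seq12,2}: I need a projective module $P^{1}(\t)$, a surjection $p:P^{1}(\t)\twoheadrightarrow \Vt$, and verification that $\ker p$ is a superfluous submodule. Projectivity is already established by Proposition \ref{prop:proj1}, and the surjection $p$ is provided by the exact sequence of Proposition \ref{prop:seq12,2}, so only the superfluousness of $\ker p=V(\x\t)$ remains.

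First I would observe that because $\t\in\R_1$, we have $t_1^2=1$, so the splitting criterion in Proposition \ref{prop:seq12,2} fails; thus $V(\x\t)$ sits as a genuine (non-split) submodule of $P^{1}(\t)$. In particular $V(\x\t)\subsetneq P^{1}(\t)$, so $\ker p$ is a proper submodule. Next, I would invoke hollowness from Proposition \ref{prop:proj1}: in a hollow module every proper submodule is superfluous by definition, so $\ker p$ is superfluous in $P^{1}(\t)$. Combining the three bullet-points, $(P^{1}(\t),p)$ satisfies the defining properties of a projective cover of $\Vt$.

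There is essentially no obstacle here, as the work was done in Propositions \ref{prop:seq12,2} and \ref{prop:proj1}; the only small point worth stating explicitly is why $\ker p$ is a \emph{proper} submodule, i.e.\ why the surjection $p$ is not an isomorphism. This is immediate from dimension count: $\Vt$ is $4$-dimensional for $\t\in\R_1$ (see Table \ref{table:corresp}) while $P^{1}(\t)$ has dimension $2\cdot\dim U^-/(B\cap U^-)$-style equal to that of the induction over $B^{1}$, which is strictly larger, so $V(\x\t)=\ker p\neq 0$. With this the corollary follows, and the analogous statement for $P^{2}(\t)$ on $\R_2$ is obtained by the same argument applied to the symmetric version of Proposition \ref{prop:seq12,2}.
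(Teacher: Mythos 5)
Your assembly is exactly how the paper intends the corollary to follow: Proposition \ref{prop:proj1} gives projectivity and hollowness, Proposition \ref{prop:seq12,2} gives the surjection onto $\Vt$, and hollowness makes the (proper) kernel superfluous, so no separate argument is given in the paper and your route is the same. One small correction: your dimension remark is off --- $\Vt$ is always $8$-dimensional (the ``4'' in Table \ref{table:corresp} is the dimension of the subrepresentation generated by $v_l$, not of $\Vt$) --- but this is immaterial, since the kernel $V(\x\t)$ is nonzero by the exact sequence itself, and the properness needed for hollowness ($\ker p\subsetneq P^{1}(\t)$) is immediate from surjectivity onto the nonzero module $\Vt$.
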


\comm{\begin{thm}\label{thm:proj1}
	Suppose $\t,\s\notin\R$ and either $\t\s$ or $\y\t\s$ belong to $\R_1$. Then $\Vt\otimes \Vs$ decomposes as a direct sum of indecomposable  representations:
	\begin{align}
	V(\t)\otimes V(\s)\cong \bigoplus_{\psi\in\Psi_1(\t\s)} P^{1}(\mvec{\sigma^\psi ts})\oplus \bigoplus_{\psi\in\Psi_\emptyset(\t\s)}V(\mvec{\sigma^\psi ts}).
	\end{align}
	We have a similar decomposition for $\t,\s\notin\R$ and either $\t\s$ or $\x\t\s$ belong to $\R_2$:
	\begin{align}
	\Vt\otimes \Vs\cong \bigoplus_{\psi\in\Psi_2(\t\s)}P^{2}(\mvec{\sigma^\psi ts})\oplus  \bigoplus_{\psi\in\Psi_\emptyset(\t\s)} V(\mvec{\sigma^\psi}\t\s).
	\end{align}
\end{thm}
\begin{proof}
	We describe an inclusion of the summands into the tensor product. For each $\psi\in\Psi_\emptyset(\t\s)$, we include the irreducible representation $V(\mvec{\sigma^\psi}\t\s)$ into $\Vts$ via the map $v_h\mapsto\Omega\hotimes F^\psi v_h$. Note that the image of each $V(\mvec{\sigma^\psi}\t\s)$ is distinct in $\Vts$. \newline 
	
	For each $\psi\in \Psi_1(\t\s)$, the map $p_0^1\mapsto E_{12}E_2F_{12}F_2v_h\otimes F^\psi v_h$ gives an inclusion of $P^1(\mvec{\sigma^\psi}\t\s)$ into $\Vts$. By another weight argument, these inclusions have distinct from each other and the image of each $V(\mvec{\sigma^\psi}\t\s)$. Together these inclusions span a 64-dimensional subspace of $\Vts$. Thus, yielding an isomorphism.
\end{proof}}
	We now consider the representation $P^{1,12}(\t)$, which is given by inducing over $B^{1,12}=\bracks{K_1,K_2,E_2,E_1E_{12}}. $
\begin{prop}\label{prop:seq2,112}
	For any $\t\in\P$, $P^{1,12}(\t)$ belongs to the exact sequence
	\begin{align}
	0\to P^{2}(\x\t)\to P^{1,12}(\t)\to \Vt\to 0
	\end{align}
	Moreover, this sequence splits if and only if $t_1^2\neq 1$.
\end{prop}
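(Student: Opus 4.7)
The plan is to mimic the proof of Proposition~\ref{prop:seq12,2}, with an added step because $E_{12}p_0^{1,12}\neq 0$ in $P^{1,12}(\t)$. In that proposition the vector $E_1 p_0^1$ is annihilated by $E_1$, $E_{12}$, and $E_2$ and so immediately generates the subrepresentation $V(\x\t)$. Here the naive candidate $E_1 p_0^{1,12}$ fails the $E_{12}$-annihilation because $E_{12}E_1 p_0^{1,12}=\zeta^{-1}E_1 E_{12}p_0^{1,12}$ is a nonzero basis vector, so the first task is to find a linear combination in the weight-$\x\t$ subspace of $P^{1,12}(\t)$ annihilated by both $E_1$ and $E_{12}$.

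Using the PBW basis $\{F^\psi E^{\psi'} p_0^{1,12}:\psi\in\Psi,\ \psi'(\alpha_2)=0\}$ together with the weight-shift data, the weight-$\x\t$ subspace is three-dimensional, spanned by $v_1=E_1 p_0^{1,12}$, $v_2=F_2 E_{12}p_0^{1,12}$, and $v_3=F_{12}E_1 E_{12}p_0^{1,12}$. Imposing $E_1 v=0$ and $E_{12}v=0$ on $v=av_1+bv_2+cv_3$, via the commutators $[E_1,F_{12}]=\zeta F_2 K_1$ and $[E_{12},F_2]=-K_2 E_1$ (derived from the defining relations together with $[E_i,F_j]=\delta_{ij}\floor{K_i}$), yields a one-parameter family of solutions which normalize to $v=\floor{t_1 t_2}\,v_1-t_1\,v_2+v_3$. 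By the universal property of induction, $p_0^2\mapsto v$ defines a $\U$-module map $P^2(\x\t)\to P^{1,12}(\t)$; injectivity follows from matching Jordan--H\"older factors or by a direct dimension count, yielding $P^2(\x\t)$ as a submodule.

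The quotient $P^{1,12}(\t)/\langle\U v\rangle$ is identified with $\Vt$ by verifying that the class $[p_0^{1,12}]$ is annihilated by each of $E_1,E_{12},E_2$: for the first two, the vectors $E_1 p_0^{1,12}$ and $E_{12}p_0^{1,12}$ are recovered modulo the subrepresentation from the expressions for $v$ and related lower-weight vectors, while for $E_2$ the annihilation holds in $P^{1,12}(\t)$ itself. Equality with $\Vt$ then follows by matching dimensions.

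For the splitting criterion I mimic Proposition~\ref{prop:seq12,2}: any splitting $s\colon\Vt\to P^{1,12}(\t)$ sends $v_h$ to a weight-$\t$ vector; after normalizing so that its coefficient at $p_0^{1,12}$ is $1$, the remaining weight-$\t$ corrections lie in the span of $F_1 E_1 p_0^{1,12}$, $F_{12}E_{12}p_0^{1,12}$, $F_1 F_{12}E_1 E_{12}p_0^{1,12}$, and $F_1 F_2 E_{12}p_0^{1,12}$. Imposing $\U$-equivariance---in particular $E_1 s(v_h)=0$---produces a linear equation whose leading coefficient is $\floor{t_1}$; hence the splitting exists precisely when $\floor{t_1}\neq 0$, i.e.\ when $t_1^2\neq 1$. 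The main computational obstacle is step one, where finding $v$ requires careful bookkeeping of Cartan-twisted commutators such as $\floor{K_1}F_{12}=-F_{12}\floor{\zeta K_1}$ and the cancellation $\zeta^{-1}\floor{t_1 t_2}+t_1 t_2+\floor{-\zeta t_1 t_2}=0$ that makes $E_{12}v$ vanish.
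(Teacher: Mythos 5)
Your proposal breaks down at its very first step, and the error propagates through everything that follows. The module $P^{1,12}(\t)$ is induced from the character of $B^{1,12}=\bracks{K_1,K_2,E_2,E_1E_{12}}$ on which the non-Cartan generators act by zero (this is the presentation given immediately before the proposition, and it is the one forced by the rest of Section \ref{sec:proj}: it makes $\dim P^{1,12}(\t)=24$, matches the weight-$\t$ space $\bracks{p_0^{1,12},\,F_1E_1p_0^{1,12},\,F_{12}E_2E_1p_0^{1,12},\,F_1F_2E_2E_1p_0^{1,12}}$ used in Proposition \ref{prop:proj1i}, and makes the $64$-dimensional count in Theorem \ref{thm:projdecomp} close; the displayed list earlier in the section, which shows $\bracks{K_1,K_2,E_{12}E_2,E_2}$, is inconsistent with all of this and may be what misled you). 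Consequently $E_1E_{12}p_0^{1,12}=0$, so your assertion that $E_{12}E_1p_0^{1,12}=\zeta^{-1}E_1E_{12}p_0^{1,12}$ is a nonzero basis vector is false, your vector $v_3=F_{12}E_1E_{12}p_0^{1,12}$ is zero, and the ``correction'' problem you spend the first half of the argument solving does not exist: $E_1p_0^{1,12}$ is already annihilated by $E_1$ and by $E_{12}$, so the universal property of induction gives the map $p_0^2\mapsto E_1p_0^{1,12}$ directly. Its image is the $16$-dimensional copy of $P^{2}(\x\t)$ (note $E_2E_1p_0^{1,12}=\zeta E_{12}p_0^{1,12}\neq0$, which is exactly the point the paper records), and the quotient is the highest-weight module generated by the image of $p_0^{1,12}$, hence $\Vt$ by dimension. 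That is the paper's short argument.

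Even taken on its own terms your proof does not close. The spanning set you call a PBW basis, $\{F^\psi E^{\psi'}p_0^{1,12}:\psi'(\alpha_2)=0\}$, has $32$ elements, while the relation $E_1E_{12}p_0^{1,12}=0$ cuts the module down to (at most) $24$ dimensions with $E$-part $\{1,E_1,E_{12}\}$; on your $32$-dimensional reading the quotient by a $16$-dimensional submodule is $16$-dimensional, so your final step ``equality with $\Vt$ follows by matching dimensions'' cannot work, and in addition your weight-$\x\t$ space would be four-dimensional (you omit $F_1F_2E_1E_{12}p_0^{1,12}$), so the linear algebra producing $v$ is incomplete in any case. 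The splitting discussion at the end follows the same pattern as Proposition \ref{prop:seq12,2} and is the right idea, but it is carried out in the wrong module (its weight-$\t$ space contains the spurious vector $F_1F_{12}E_1E_{12}p_0^{1,12}$), and the injectivity of your map $P^2(\x\t)\to P^{1,12}(\t)$ is only asserted (``matching Jordan--H\"older factors or a dimension count''), so as written the argument does not establish the proposition.
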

\begin{proof}
	Since $E_2E_1p_0^{1,12}\neq 0$, $E_1p_0^{1,12}$ generates a subrepresentation isomorphic to $P^{2}(\x\t)$. The quotient by this subrepresentation yields a surjection to  $\Vt$. \newline 
	
	The existence of a splitting uses the same argument as in Proposition \ref{prop:seq12,2}.
\end{proof}

\begin{prop}\label{prop:proj1i}
	For all $\t\in \R_{1,12}$, $P^{1,12}(\t)$ is a hollow projective in $\U$-mod.
\end{prop}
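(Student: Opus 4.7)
The plan mirrors the structure of Proposition \ref{prop:proj1}: first produce a splitting of an arbitrary surjection $p:N\twoheadrightarrow P^{1,12}(\t)$ to establish projectivity, then show that any proper submodule is superfluous. Recall that $B^{1,12}=\bracks{K_1,K_2,E_2,E_1E_{12}}$, so $p_0^{1,12}$ is annihilated by $E_2$ and $E_1E_{12}$, and the hypothesis $\t\in\R_{1,12}$ forces $t_1^2=1$, $(t_1t_2)^2=-1$, hence $t_2^2=-1$, and consequently $\floor{t_1}=\floor{\zeta t_1t_2}=\floor{\zeta t_2}=0$.

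For projectivity, fix a lift $n_0\in p^{-1}(p_0^{1,12})$ and consider $\tilde n := E_2F_2E_{12}F_{12}\cdot n_0 \in N$. The two algebraic identities needed for $\tilde n$ to define a $\U$-linear splitting via Frobenius reciprocity are $E_2\cdot(E_2F_2E_{12}F_{12})=0$ and $E_1E_{12}\cdot(E_2F_2E_{12}F_{12})=0$, both holding in $\U$. The first follows from $E_2^2=0$. The second follows by commuting $E_1E_{12}$ past $E_2F_2$---possible because $E_1E_{12}$ anticommutes with $E_2$ (a consequence of $E_{12}E_2=-\zeta E_2E_{12}$ and $E_1E_{12}=\zeta E_{12}E_1$) and commutes with $F_2$ (since $E_1[E_{12},F_2]=0$ by $E_1^2=0$)---after which $(E_1E_{12})E_{12}=E_1E_{12}^2=0$ closes the argument. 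A direct PBW expansion using $[E_{12},F_{12}]=\floor{K_{12}}$, $E_{12}\,p_0^{1,12}=-\zeta E_2E_1\,p_0^{1,12}$, and $[E_2,F_2]=\floor{K_2}$ yields
\[
E_2F_2E_{12}F_{12}\,p_0^{1,12} = \floor{t_1t_2}\floor{t_2}\,p_0^{1,12} + \zeta t_2^{-1}\,F_1F_2E_2E_1\,p_0^{1,12}.
\]
Under the $\R_{1,12}$ hypothesis, $\floor{t_1t_2}=-\zeta t_1t_2$ and $\floor{t_2}=-\zeta t_2$, so the $p_0^{1,12}$-coefficient evaluates to $\zeta^2 t_1t_2^2 = t_1\ne 0$. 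Therefore $p\circ s$ is a $\U$-endomorphism of $P^{1,12}(\t)$ sending $p_0^{1,12}$ to a vector with nonzero $p_0^{1,12}$-component, and being upper-triangular along the filtration $0\subset P^2(\x\t)\subset P^{1,12}(\t)$ with scalar $t_1\ne 0$ on both graded pieces, it is invertible. Composing $s$ with $(p\circ s)^{-1}$ gives the desired splitting.

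For hollowness, assume $P^{1,12}(\t)=M+N$ with $M$ proper. Decompose $p_0^{1,12}=m+n$ with $m\in M$, $n\in N$, both of weight $\t$. Via Proposition \ref{prop:seq2,112}, the weight-$\t$ subspace of $P^{1,12}(\t)$ is spanned by $p_0^{1,12}$ together with the three vectors $F_1E_1\,p_0^{1,12}$, $F_{12}E_2E_1\,p_0^{1,12}$, and $F_1F_2E_2E_1\,p_0^{1,12}$ lying in $P^2(\x\t)$. Each of $F_1E_1$, $F_{12}E_2E_1$, and $F_1F_2E_2E_1$ squares to zero on $p_0^{1,12}$: this follows from $\floor{t_1}=0$ and $\floor{\zeta t_2}=0$, which cause the relevant commutator contributions to vanish. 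Expanding $n=a\,p_0^{1,12}+b\,F_1E_1\,p_0^{1,12}+c\,F_{12}E_2E_1\,p_0^{1,12}+d\,F_1F_2E_2E_1\,p_0^{1,12}$, if $a\ne 0$, a polynomial-in-nilpotents inversion of $n$ recovers $p_0^{1,12}\in N$, so $N=P^{1,12}(\t)$; if $a=0$, the same inversion applied to $m=p_0^{1,12}-n$ forces $p_0^{1,12}\in M$, contradicting properness.

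The main obstacle is the projectivity step. The direct analogue $E_{12}E_2F_{12}F_2$ of the operator from Proposition \ref{prop:proj1} produces a $p_0^{1,12}$-coefficient of $\floor{t_2}(\floor{t_1t_2}-\zeta t_1t_2^{-1})$, which vanishes identically on $\R_{1,12}$ because $t_2^{-1}=-t_2$ forces $\zeta t_1t_2^{-1}=\floor{t_1t_2}$. The reordering to $E_2F_2E_{12}F_{12}$ moves the $E_2F_2$ pairing outside, so that the two Cartan scalars $\floor{K_{12}}$ and $\floor{K_2}$ contribute multiplicatively rather than subtractively, yielding the nonzero product $t_1$. Verifying this requires careful tracking of the off-diagonal commutator $[E_2,F_{12}]=-F_1K_2^{-1}$, which is responsible for the $F_1F_2E_2E_1\,p_0^{1,12}$ remainder term and must be shown to lie in $P^2(\x\t)$ so as not to obstruct invertibility of $p\circ s$.
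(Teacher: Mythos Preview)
Your overall strategy matches the paper's: construct a splitting operator for projectivity, then show the weight-$\t$ operator $U'$ is nilpotent for hollowness. There are, however, two points worth flagging.

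First, you have misstated the defining subalgebra: the paper sets $B^{1,12}=\bracks{K_1,K_2,E_{12}E_2,E_2}$, not $\bracks{K_1,K_2,E_2,E_1E_{12}}$. The element $E_1E_{12}$ belongs to $B^{12,2}$, not $B^{1,12}$. Fortunately your chosen operator $E_2F_2E_{12}F_{12}$ is annihilated by the \emph{correct} generator $E_{12}E_2$ for the trivial reason $E_2^2=0$, so your splitting argument survives the slip---but the verification you actually wrote (commuting $E_1E_{12}$ past $E_2F_2$) is checking the wrong condition. The paper's own splitting operator is $E_2E_1F_1F_2$, which is simpler to handle: both annihilation conditions reduce immediately to $E_2^2=0$, and the $p_0^{1,12}$-coefficient computes to $-\zeta t_1t_2\neq 0$ without needing the $[E_2,F_{12}]$ commutator.

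Second, in the hollowness step you assert that each of $F_1E_1$, $F_{12}E_2E_1$, $F_1F_2E_2E_1$ squares to zero on $p_0^{1,12}$ and then invoke a ``polynomial-in-nilpotents inversion.'' Individual square-zero is not sufficient for the sum $U'=bF_1E_1+cF_{12}E_2E_1+dF_1F_2E_2E_1$ to be nilpotent when the summands do not commute. The paper computes directly that $(U')^2=bd\,F_1F_2E_2E_1$ and hence $(U')^3=bd\,F_1F_2(E_2E_1U')=0$, which is the clean way to close this. Your computation of the $p_0^{1,12}$-coefficient $\floor{t_1t_2}\floor{t_2}=t_1$ is correct.
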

\begin{proof}
	To prove $P^{1,12}(\t)$ is projective, we modify the argument given in Proposition \ref{prop:proj1}. It can be shown that $p_0^{1,12}\mapsto E_2E_1F_1F_2n_0$ determines a splitting. \comm{ consider a surjection $N\twoheadrightarrow P^{1,12}(\t)$. Let $n_0$ be in the preimage of $p_0^{1,12}$. Since the $n_0$ component of $E_2E_1F_1F_2n_0$ is nonzero in the PBW basis and applying either $E_2$ or $E_1E_{12}$ to this vector is zero, the map} \newline
	
	We prove that any proper submodule $M$ of $P^{1,12}(\t)$ is superfluous. Suppose that $P^{1,12}(\t)=M+N$ for some $N\subseteq P$. We decompose $p_0^{1,12}$ into two vectors $p_0^{1,12}-v\in M$ and $v\in N$ with $v=Up_0^{1,12}$ for some $U\in \U$ such that $[K_i,U]=0$ for $i\in\{1,2\}$. Let $a,b,c,d\in \Q_4(t_1,t_2)$ so that $U=a+bF_1E_1+cF_{12}E_2E_1+dF_1F_2E_2E_1$. We may normalize $v$ so that $a=1$. We prove that $U$ is invertible by showing $U':=bF_1E_1+cF_{12}E_2E_1+dF_1F_2E_2E_1$ acts nilpotently on the $(t_1,t_2)$-weight space. We compute  $(U')^2=bdF_1F_2E_2E_1$, which implies $(U')^3=bdF_1F_2(E_2E_1U')=0$. Thus, $U^{-1}v=p_0^{1,12}$ and implies that $N=P^{1,12}(\t)$.
\end{proof}
\begin{cor}
	Let $\t\in \R_{1,12}$. Then $P^{1,12}(\t)$ is the projective cover of $\Vt$.
\end{cor}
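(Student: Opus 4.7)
The plan is to simply combine Propositions \ref{prop:seq2,112} and \ref{prop:proj1i}. Recall that a projective cover of $V(\t)$ is a projective module $P$ together with a surjection $p : P \twoheadrightarrow V(\t)$ whose kernel is superfluous in $P$. Since $\t \in \R_{1,12}$ satisfies $t_1^2 = 1$, Proposition \ref{prop:seq2,112} provides a non-split short exact sequence
\begin{align}
0 \to P^{2}(\x\t) \to P^{1,12}(\t) \xrightarrow{\,p\,} V(\t) \to 0,
\end{align}
so there is a canonical surjection $p$ onto $V(\t)$ with kernel $P^{2}(\x\t)$.

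Proposition \ref{prop:proj1i} tells us that $P^{1,12}(\t)$ is projective in $\U$-mod and, moreover, is hollow: every proper submodule of $P^{1,12}(\t)$ is superfluous. Since $P^{2}(\x\t)$ is a proper submodule (it is the kernel of a nonzero map), it is automatically superfluous in $P^{1,12}(\t)$. Thus $(P^{1,12}(\t), p)$ satisfies all three defining properties of a projective cover of $V(\t)$.

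The only thing to check is that the conditions of Proposition \ref{prop:proj1i} apply, namely that $\t \in \R_{1,12}$, which is exactly the hypothesis. There is no real obstacle here; the work was done in the two preceding propositions, and the corollary is just their formal consequence.
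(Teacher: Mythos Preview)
Your argument is correct and matches the paper's approach: the corollary is stated without proof in the paper, as it follows immediately from Propositions \ref{prop:seq2,112} and \ref{prop:proj1i} exactly as you outline. The non-splitting observation is not actually needed---what matters is only that $\ker p$ is a proper submodule (since $V(\t)\neq 0$), hence superfluous by hollowness.
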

\comm{\begin{thm}\label{thm:proj2112}
	Suppose $\t,\s\notin\R$, and either $\t\s\in\R_{1,12}$ or $\t\s\in\R_{12,2}$. Then we have a decomposition into indecomposable  representations:
	\begin{align}
	V(\t)\otimes V(\s)\cong \bigoplus_{\psi\in \Psi_{1,12}(\t\s)}P^{1,12}(\mvec{\sigma^\psi ts})\oplus \bigoplus_{\psi\in \Psi_{12,2}(\t\s)}P^{12,2}(\mvec{\sigma^\psi ts})\oplus \bigoplus_{\psi\in \Psi_\emptyset(\t\s)} V(\mvec{\sigma^\psi}\t\s).
	\end{align}
\end{thm}
\begin{proof}
	The inclusions of $P^{1,12}(\mvec{\sigma^\psi ts})$ and $P^{12,2}(\mvec{\sigma^\psi ts})$ into $\Vts$ are given by the maps $p_0^{1,12}\mapsto E_2E_1F_1F_2v_h\hotimes F^\psi v_h$ and $p_0^{12,2}\mapsto E_1E_2F_2F_1v_h\hotimes F^\psi v_h$, respectively. It can be verified that the images of these maps have trivial intersection. The other inclusions are given by the maps which send $v_h\in V(\mvec{\sigma^\psi ts})$ to $\Omega \otimes F^\psi v_h\in \Vts$.
	Since these inclusions have total dimension 64, this yields an isomorphism.
\end{proof}}
Next, we consider the subalgebra $B^{12}=\bracks{K_1,K_2,E_{12}, F_1E_2,F_2E_1}$. The reader can verify that the left $\U$-module generated by $B^{12}$ modulo the relations $\bracks{XK_i-Xt_i}$ for each $X\in \U$ is at most 48-dimensional. Therefore, $P^{12}(\t)$ is at least 16-dimensional. Using the following relations: $E_1E_2p_0^{12}=-\zeta E_2E_1p_0^{12}$, $\floor{\zeta t_1}E_2p_0^{12}= F_1E_1E_2 p_0^{12}$, and $\floor{\zeta t_2}E_1p_0^{12}=F_2E_2E_1 p_0^{12}$, we have that  $P^{12}(\t)$ is at most 16-dimensional. Therefore, $P^{12}(\t)$ is 16-dimensional and assuming $\floor{\zeta t_1}\neq0$ and $\floor{\zeta t_2}\neq0$, a PBW basis is given by
\begin{align}
\{F^\psi E^{\psi'}p_0^{1,2}:\psi\in\Psi, \psi'\in\{(000),(101)\}\}.
\end{align}

\begin{prop}\label{prop:seq12}
	For any $\t\in\R_{12}$, $P^{12}(\t)$ belongs to the exact sequences
	\begin{align}
	0\to V(\x\y\t)\to P^{12}(\t)\to \Vt\to 0.
	\end{align}
\end{prop}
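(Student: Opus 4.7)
The plan is to produce an injection $\iota : V(\x\y\t) \hookrightarrow P^{12}(\t)$ and a surjection $\pi : P^{12}(\t) \twoheadrightarrow \Vt$ with $\pi \circ \iota = 0$, and then close the sequence by a dimension count. The natural candidate for the image of the highest weight vector of the submodule is
\[
v := E^{(101)} p_0^{12} = E_1 E_2\, p_0^{12},
\]
which has weight $\x\y\t$ by the $K$-$E$ commutations of the quantum group, since $E_1$ and $E_2$ shift the character by $\x$ and $\y$ respectively. Non-vanishing of $v$ is guaranteed by the PBW basis for $P^{12}(\t)$ recalled immediately before the statement, and this basis is valid on $\R_{12}$ because the defining conditions $t_1^2 \neq 1$, $t_2^2 \neq 1$, $(t_1 t_2)^2 = -1$ preclude $t_i^2 = -1$ and hence give $\floor{\zeta t_i} \neq 0$.

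The first concrete step is to verify that $v$ is a highest weight vector. Trivially $E_1 v = E_1^2 E_2 p_0^{12} = 0$. Using the relation $E_1 E_2 p_0^{12} = -\zeta E_2 E_1 p_0^{12}$, which is equivalent to $E_{12} p_0^{12} = 0$ under the definition $E_{12} = -(E_1 E_2 + \zeta E_2 E_1)$, we obtain $E_2 v = -\zeta E_2^2 E_1 p_0^{12} = 0$. For $E_{12} v$, the $q$-commutations $E_{12} E_1 = \zeta E_1 E_{12}$ and $E_{12} E_2 = -\zeta E_2 E_{12}$ reduce $E_{12} E_1 E_2 p_0^{12}$ to a scalar multiple of $E_1 E_2 E_{12} p_0^{12} = 0$. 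With $v$ annihilated by all of $U^+$, the universal property of $V(\x\y\t) = \text{Ind}_B^{\U}(V_{\x\y\t})$ produces a $\U$-module map $\iota : V(\x\y\t) \to P^{12}(\t)$ sending $v_h \mapsto v$. Its images $F^\psi v = F^\psi E^{(101)} p_0^{12}$ for $\psi \in \Psi$ are exactly the eight PBW basis elements of $P^{12}(\t)$ not of the form $F^\psi p_0^{12}$, hence linearly independent, so $\iota$ is injective.

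For the surjection, note that $v_h \in \Vt$ is annihilated by every non-Cartan generator of $B^{12}$: $E_{12} v_h = 0$ because $E_{12} \in U^+$, and $F_1 E_2 v_h = F_2 E_1 v_h = 0$ because $E_i v_h = 0$. Together with $K_i v_h = t_i v_h$, this shows that the character of $B^{12}$ defining $\bracks{p_0^{12}}$ is realized on $v_h$, so the universal property of $P^{12}(\t) = \text{Ind}_{B^{12}}^{\U}(\bracks{p_0^{12}})$ yields a surjection $\pi : P^{12}(\t) \to \Vt$ with $\pi(p_0^{12}) = v_h$. Then $\pi(v) = E_1 E_2 v_h = 0$ places $\iota(V(\x\y\t))$ inside $\ker \pi$, and the dimension equality $\dim \ker \pi = 16 - 8 = 8 = \dim V(\x\y\t)$ forces equality, yielding the desired exact sequence. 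The main obstacle is the cluster of $q$-commutator computations that show $v$ is a highest weight vector; once these are in place, the construction of $\iota$ and $\pi$ follows formally from Frobenius reciprocity, and exactness is a dimension count against the PBW basis already in hand.
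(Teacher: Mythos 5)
Your proof is correct, and its first half coincides with the paper's: both take $E_1E_2p_0^{12}$, of weight $\x\y\t$, as a highest weight vector (killed by $E_1$ trivially, by $E_2$ via $E_{12}p_0^{12}=0$) and let it generate the copy of $V(\x\y\t)$ spanned by the PBW vectors $F^\psi E^{(101)}p_0^{12}$. Where you diverge is the quotient half. The paper never invokes the universal property of $P^{12}(\t)$ there; instead it shows that $E_1p_0^{12}$ and $E_2p_0^{12}$ already lie in the submodule, using the relations $\floor{\zeta t_1}E_2p_0^{12}=F_1E_1E_2p_0^{12}$ and $\floor{\zeta t_2}E_1p_0^{12}=F_2E_2E_1p_0^{12}$ together with $\floor{\zeta t_i}\neq0$ for $\t\in\R_{12}$, so that the coset of $p_0^{12}$ is a highest weight vector of weight $\t$ generating the $8$-dimensional quotient, which is therefore $\Vt$. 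You instead obtain the surjection $P^{12}(\t)\twoheadrightarrow\Vt$ by Frobenius reciprocity, since $v_h\in\Vt$ is annihilated by $E_{12}$, $F_1E_2$, $F_2E_1$ and has the right $K_i$-eigenvalues, and then close the sequence by the count $16-8=8$ against the PBW basis. Your route is cleaner and makes clear that the surjection exists for every $\t$, while the hypothesis $\t\in\R_{12}$ enters only through the validity of the PBW basis (nonvanishing and independence of the $F^\psi E_1E_2p_0^{12}$ and $\dim P^{12}(\t)=16$), essentially the same place the paper uses it. One cosmetic slip: with $E_{12}=-(E_1E_2+\zeta E_2E_1)$ and $E_i^2=0$ one gets $E_1E_{12}=\zeta E_{12}E_1$ and $E_{12}E_2=\zeta E_2E_{12}$, i.e.\ $E_{12}E_1=-\zeta E_1E_{12}$ and $E_2E_{12}=-\zeta E_{12}E_2$, so your stated signs are reversed; this is harmless, both because only "commutes up to a nonzero scalar" is needed and because the check $E_{12}v=0$ is redundant anyway, as $E_{12}\in\bracks{E_1,E_2}$ and $E_1v=E_2v=0$.
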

\begin{proof}
	Since $E_{12}p_0^{12}=0$, then $E_1E_2p_0^{12}=-\zeta E_2E_1p_0^{12}$ and implies the inclusion of $V(\x\y\t)$ into $P^{12}(\t)$ by the map $v_h\mapsto E_1E_2p_0^{12}$ is well defined.\newline 
	
	We now verify that the quotient of $P^{12}(\t)$  by $V(\zeta t_1,\zeta t_2)$ is isomorphic to $\Vt$. Since the action of $U^-$ on the coset space $p_0^{12}+V(\x\y\t)$ is 8-dimensional, it is enough to verify that $E_1p_0^{12},E_2p_0^{12}\in V(\x\y\t)$.
	Since $\t\in\R_{12}$, both $\floor{\zeta t_1}$ and $\floor{\zeta t_2}$ are nonzero. Therefore, $E_1p_0^{12}=\dfrac{1}{\floor{\zeta t_1}} F_1E_1E_2 p_0^{12}$ and $E_2p_0^{12}=\dfrac{1}{\floor{\zeta t_2}} F_2E_2E_1 p_0^{12}$ vanish in the quotient $P^{12}(\t)/V(\x\y\t)$. 
\end{proof}
\begin{prop}
	Fix $\t\in\R_{12}$, then $P^{12}(\t)$ is a hollow projective in $\U$-mod.
\end{prop}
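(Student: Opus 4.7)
The plan is to follow the two-step template of Propositions \ref{prop:proj1} and \ref{prop:proj1i}: prove projectivity via an explicit splitting, then prove hollowness via nilpotency of a weight-space correction.

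For projectivity, given a surjection $p\colon N\twoheadrightarrow P^{12}(\t)$ and any $n_0\in p^{-1}(p_0^{12})$, I aim to produce an element $U\in\U$ of weight zero such that $p_0^{12}\mapsto Un_0$ extends to a $\U$-module splitting of $p$. This requires the relations $E_{12}U=0$, $F_1E_2\cdot U=E_2F_1\cdot U=0$, and $F_2E_1\cdot U=E_1F_2\cdot U=0$ to hold in $\U$, together with $Up_0^{12}$ having nonzero $p_0^{12}$-component. Using $E_\alpha^2=F_\alpha^2=0$, the commutations $E_iF_j=F_jE_i$ for $i\neq j$, and the commutators $[E_{12},F_1]=E_2K_1^{-1}$ and $[E_{12},F_2]=-K_2E_1$ (computable directly from $E_{12}=-(E_1E_2+\zeta E_2E_1)$ and Lemma \ref{lem:Kcomm}), a candidate is $U=E_{12}E_2E_1F^{(111)}$, for which the three killing conditions reduce, after one commutator step each, to either $E_{12}^2=0$ or $E_2^2=0$. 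That $Up_0^{12}$ has nonzero $p_0^{12}$-component follows from a direct PBW expansion, with the leading scalar proportional to $\floor{t_1}\floor{t_2}\floor{t_1t_2}$, which is nonzero on $\R_{12}$ since $t_1^2\neq1$, $t_2^2\neq1$, and $(t_1t_2)^2=-1$.

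For hollowness, suppose $P^{12}(\t)=M+N$ with $M$ proper, and decompose $p_0^{12}=m+v$ with $m\in M$ and $v\in N$. Then $v$ lies in the $(t_1,t_2)$-weight space of $P^{12}(\t)$, and using the PBW basis $\{F^\psi E^{\psi'}p_0^{12}:\psi\in\Psi,\ \psi'\in\{(000),(101)\}\}$ one checks that this weight space is three-dimensional with basis $p_0^{12}$, $F_{12}E_1E_2\,p_0^{12}$, and $F_1F_2E_1E_2\,p_0^{12}$. After normalizing we write $v=(1+U')p_0^{12}$ with $U'=(aF_{12}+bF_1F_2)E_1E_2$, and it suffices to show $U'$ acts nilpotently on this weight space so that $1+U'$ is invertible. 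Using $(E_1E_2)^2=-E_{12}E_1E_2$ (from $E_1^2=E_2^2=0$ together with $E_{12}=-(E_1E_2+\zeta E_2E_1)$) and $E_{12}p_0^{12}=0$, a direct iteration shows $(U')^k p_0^{12}=0$ for some small $k$, so $p_0^{12}\in N$ and hence $N=P^{12}(\t)$. The main obstacle is the projectivity step: $B^{12}$'s generators mix raising and lowering operators, so the ``place all $E_\alpha$'s at the front'' strategy of Propositions \ref{prop:proj1} and \ref{prop:proj1i} must be supplemented by using $F_1E_2=E_2F_1$ and $F_2E_1=E_1F_2$ to convert the $B^{12}$-conditions into nilpotency conditions on $E$-factors alone, while carefully tracking the non-trivial commutators $[E_{12},F_i]$.
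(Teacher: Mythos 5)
Your overall template (a Frobenius-reciprocity splitting plus a weight-space analysis of $p_0^{12}$) is the paper's, but both of your key computations break down precisely on $\R_{12}$. For projectivity, your candidate is the generic splitting operator in disguise: expanding $E_{12}=-(E_1E_2+\zeta E_2E_1)$ and using $E_i^2=0$ gives $E_{12}E_2E_1=E^{(111)}$, so your $U$ is exactly $E^{(111)}F^{(111)}$, the operator from Lemma \ref{lem:projgen}. The $p_0^{12}$-component of $Up_0^{12}$ is the Cartan part of $U$ evaluated at $\t$, i.e.\ the coefficient of $v_h$ in $E^{(111)}F^{(111)}v_h=\Omega$, which by Proposition \ref{prop:computation} is $-\zeta\floor{t_1}\floor{t_2}\floor{\zeta t_1t_2}$ --- not $\floor{t_1}\floor{t_2}\floor{t_1t_2}$. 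Since $(t_1t_2)^2=-1$ on $\R_{12}$, the factor $\floor{\zeta t_1t_2}$ vanishes there, so $Up_0^{12}$ lies in the radical $V(\x\y\t)$, the composite $P^{12}(\t)\to N\to P^{12}(\t)$ kills the generator of the top quotient $\Vt$, and your map cannot be adjusted to a splitting. This degeneration is exactly why $\Vt$ itself is not projective on $\R$; one needs a ``partial'' operator such as the paper's $E_{12}F_1F_2$, whose Cartan part $-\zeta\floor{K_2}K_1^{-1}$ survives on $\R_{12}$.

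For hollowness, the nilpotency claim is false: commuting $E_1E_2$ past $F_{12}$ and $F_1F_2$ produces Cartan terms (equations (\ref{eq:E12F3}) and (\ref{eq:E12F12})) that do not die on $p_0^{12}$, even though the $(E_1E_2)^2$ pieces do. Concretely, $E_1E_2F_{12}E_1E_2p_0^{12}=-\floor{\zeta t_1}(\zeta t_2)^{-1}E_1E_2p_0^{12}$ and $E_1E_2F_1F_2E_1E_2p_0^{12}=\floor{\zeta t_1}\floor{\zeta t_2}E_1E_2p_0^{12}$, so $(U')^2p_0^{12}=c\,U'p_0^{12}$ with $c=\floor{\zeta t_1}\bigl(b\floor{\zeta t_2}-a(\zeta t_2)^{-1}\bigr)$, which is generically nonzero on $\R_{12}$ (where $\floor{\zeta t_1},\floor{\zeta t_2}\neq0$); thus $U'$ is not nilpotent, and for $b=0$, $a=\zeta t_2/\floor{\zeta t_1}$ one even gets $c=-1$, so $1+U'$ is genuinely singular on the $(t_1,t_2)$-weight space and invertibility cannot be the mechanism. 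The paper's argument avoids this: it shows that $E_1E_2v$ and $E_2E_1v$ cannot both vanish for a weight vector $v$ with unit $p_0^{12}$-coefficient when $\t\in\R_{12}$, which places $E_1E_2p_0^{12}$, hence all of $V(\x\y\t)$, hence $p_0^{12}$ itself, in the summand containing $v$. Note that your argument never invokes $(t_1t_2)^2=-1$, which is precisely where the hypothesis $\t\in\R_{12}$ must enter; also, your ``after normalizing'' silently assumes the $N$-component of $p_0^{12}$ has nonzero scalar part, and you need the extra remark that otherwise the same reasoning applied to the $M$-component contradicts properness of $M$.
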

\begin{proof}
		To prove $P^{12}(\t)$ is projective, we modify the argument given in Proposition \ref{prop:proj1}. It can be shown that $p_0^{12}\mapsto E_{12}F_1F_2n_0$ determines a splitting.
\comm{	Let $N\twoheadrightarrow P^{12}(\t)$ be a surjection and suppose $n_0\in N$ is in the preimage of $p_0^{12}$. Then the map $p_0^{12}\mapsto E_{12}F_1F_2n_0$ determines a splitting since the $n_0$ component of $E_{12}F_1F_2n_0$ is nonzero when expressed in the PBW basis and applying $E_{12},E_1F_2,$ and $E_2F_1$ to this vector is zero. Thus, $ P^{12}(t)$ is projective.}\newline 
	
	We prove that $P^{12}(\t)$ is hollow. Suppose that $P^{12}(\t)= M + N$ for some representations $M$ and $N$. Then $p_0^{12}=Up_0^{12}+(1-U)p_0^{12}$ with $Up_0^{12}\in M$, $(1-U)p_0^{12}\in N$ and $U\in \U$ such that $K_iU=UK_i$ for $i\in \{1,2\}$. We may assume $U=1+aF_1E_1+bF_2E_2$ with $a,b\in \Q_4(t_1,t_2)$. We prove that at least one of $E_1E_2Up_0^{12}$ or $E_2E_1Up_0^{12}$ is nonzero, which implies $M$ is 16-dimensional and equals $P^{12}(\t)$. We compute
	\begin{align}
	E_1E_2(1+aF_1E_1+bF_2E_2)=(1-\zeta a\floor{\zeta t_1}-b\floor{t_2})E_1E_2\\
	E_2E_1(1+aF_1E_1+bF_2E_2)=(1-a\floor{t_1}+\zeta b \floor{\zeta t_2})E_2E_1.
	\end{align}
	If both of these expressions equal zero, then 
	\begin{align}
	\frac{1-b\floor{t_2}}{\zeta \floor{\zeta t_1}}=\frac{1+\zeta b\floor{t_2}}{\floor{t_1}}.
	\end{align}
	The above equality has no solution since $\t\in\R_{12}$. This proves the claim.
\end{proof}
\begin{cor}
	For all $\t\in\R_{12}$, $P^{12}(\t)$ is the projective cover of $\Vt$.
\end{cor}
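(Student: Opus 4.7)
The plan is short: the corollary is a direct packaging of the two preceding propositions against the definition of projective cover. Recall that $(P,p)$ is a projective cover of $V$ if $P$ is projective, $p\colon P\to V$ is surjective, and $\ker p$ is a superfluous submodule of $P$. I need to supply each of these three ingredients.

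First I would invoke the previous proposition, which asserts that for $\t\in\R_{12}$ the module $P^{12}(\t)$ is both hollow and projective in $\U$-mod; this handles the first (projectivity) and gives the tool for the third ingredient. Next, I would quote Proposition \ref{prop:seq12}, which provides a short exact sequence
\begin{equation*}
0\to V(\x\y\t)\to P^{12}(\t)\xrightarrow{p}\Vt\to 0,
\end{equation*}
so the required surjection $p\colon P^{12}(\t)\twoheadrightarrow\Vt$ exists and $\ker p=V(\x\y\t)$.

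Finally, I would observe that $\ker p$ is a proper submodule of $P^{12}(\t)$: indeed $\dim P^{12}(\t)=16$ while the quotient $\Vt$ has dimension $8\neq 0$, so $\ker p\subsetneq P^{12}(\t)$. Because $P^{12}(\t)$ is hollow, every proper submodule is superfluous, hence $\ker p$ is superfluous in $P^{12}(\t)$. Combining these three facts, $(P^{12}(\t),p)$ satisfies the definition of a projective cover of $\Vt$. There is no real obstacle here — all the substantive work (computing the exact sequence, and establishing hollowness and projectivity) has already been carried out in Propositions \ref{prop:seq12} and its companion; the corollary is essentially a bookkeeping step verifying the three clauses of the definition.
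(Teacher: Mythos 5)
Your proposal is correct and matches the paper's intent exactly: the paper derives the corollary as an immediate consequence of Proposition \ref{prop:seq12} (supplying the surjection $P^{12}(\t)\twoheadrightarrow\Vt$ with proper kernel $V(\x\y\t)$) together with the preceding proposition that $P^{12}(\t)$ is hollow and projective for $\t\in\R_{12}$, hollowness making the kernel superfluous. Your write-up is just the explicit bookkeeping of that same argument.
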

\comm{\begin{thm}\label{thm:proj12}
	Suppose $\t,\s\notin\R$ and $\t\s$, $\x\t\s$, or $\y\t\s$ belong to $\R_{12}$. Then $\Vt\otimes \Vs$ decomposes as a direct sum of indecomposable  representations:
	\begin{align}
	V(\t)\otimes V(\s)\cong \bigoplus_{\psi\in\Psi_{12}(\t\s)}P^{12}(\mvec{\sigma^\psi ts})\oplus \bigoplus_{\psi\in\Psi_\emptyset(\t\s)}V(\mvec{\sigma^\psi ts}).
	\end{align}
\end{thm}
\begin{proof}
	The inclusion of $P^{12}(\mvec{\sigma^\psi ts})$ into $\Vts$ is given via the map $p_0^{12}\mapsto E_{12}F_1F_2v_h\hotimes F^\psi v_h$. The other inclusions are given by the maps which send $v_h\in V(\mvec{\sigma^\psi ts})$ to $\Omega \otimes F^\psi v_h\in \Vts$.
	Since these inclusions have trial intersection and their total dimension is 64, this yields an isomorphism.
\end{proof}}

	The last case to consider is the representation $P^{1,2}(\t)$, which is given by induction over $B^{1,2}=\bracks{K_1,K_2,F_1E_2E_1,F_2E_1E_2}$. Similar to the subalgebra $B^{12}$, the left $\U$-module generated by $B^{1,2}$ modulo the relations $\bracks{XK_i-Xt_i}$ for each $X\in\U$ can be shown to be at most 16-dimensional. Thus, $P^{1,2}(\t)$  is at least 48-dimensional and has the following relations $\floor{\zeta t_2}E_1E_2p_0^{1,2}=F_2E_2E_1E_2p_0^{1,2}$ and $\floor{\zeta t_1}E_2E_1p_0^{1,2}=-t_1F_1E_1E_2E_1p_0^{1,2}$. From these relations, we can show that $P^{1,2}(\t)$ is 48-dimensional. Assuming that $\floor{\zeta t_1}$ and $\floor{\zeta t_2}$ are nonzero, we replace any expression $F^\psi E_1E_2p_0^{1,2}$ and $F^\psi E_2E_1p_0^{1,2}$ with $-\frac{1}{\floor{\zeta t_2}}F^\psi F_2E_2E_1E_2p_0^{1,2}$ and $-\frac{1}{\floor{\zeta t_1}}F^\psi F_1E_1 E_2E_1p_0^{1,2}$, respectively. Under these assumptions $P^{1,2}(\t)$ has a basis given by
	\begin{align}
	\{F^\psi E^{\psi'}p_0^{1,2}:\psi\in\Psi, \psi'\in\{(000),(100),(001),(110),(011),(111)\}\}.
	\end{align}
	
	\begin{prop}\label{prop:proj11}
		Let $\t\in \R_{1,2}$. Then $P^{1,2}(\t)$ is a hollow projective in $\U$-mod.
	\end{prop}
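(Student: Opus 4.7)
The plan is to follow the two-step template of Propositions \ref{prop:proj1}, \ref{prop:proj1i}, and the analogous result for $P^{12}(\t)$: first verify projectivity by exhibiting a splitting of any surjection $p \colon N \twoheadrightarrow P^{1,2}(\t)$, then separately verify hollowness by analyzing the six-dimensional weight-$\t$ subspace of $P^{1,2}(\t)$.

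For projectivity, given $p$ and a lift $n_0 \in p^{-1}(p_0^{1,2})$, I would define a splitting by $p_0^{1,2} \mapsto X n_0$ for a weight-zero element $X \in \U$ satisfying (i) the identities $F_1 E_2 E_1 \cdot X = 0$ and $F_2 E_1 E_2 \cdot X = 0$ in $\U$ (so the assignment extends to a $\U$-module map), and (ii) a nonzero Cartan residue on $v_h \in V(\t)$ (so that the induced endomorphism of the projective cover $P^{1,2}(\t)$ is nonzero on the simple head, hence an isomorphism). A natural first candidate is $X = E^{(111)}F^{(111)} = E_1 E_{12} E_2 F_1 F_{12} F_2$: condition (i) follows since $F_1 E_2 E_1 X = 0$ is immediate from $E_1^2 = 0$, and $F_2 E_1 E_2 X = 0$ reduces to the identity $E_1 E_2 E_1 E_{12} E_2 = 0$, which is a consequence of $E_1 E_{12} = \zeta E_{12} E_1$, $E_2 E_{12} = -\zeta E_{12} E_2$, and $E_{12}^2 = 0$. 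However, the Cartan residue of this candidate is the Casimir element $\Omega$ of Proposition \ref{prop:computation}, which vanishes identically on $\R_{1,2}$ since $\floor{t_1} = \floor{t_2} = \floor{t_1 t_2} = 0$. The correct $X$ must therefore be constructed by adding lower-order correction terms whose commutators, computed via $[E_{12}, F_1] = -\zeta K_1^{-1} E_2$, $[E_{12}, F_2] = -K_2 E_1$, and $[E_2, F_{12}] = -F_1 K_2^{-1}$, produce a nonzero Cartan scalar at $\t$ while preserving (i).

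For hollowness, suppose $P^{1,2}(\t) = M + N$. Decomposing $p_0^{1,2} = m + n$ with $m \in M$, $n \in N$, both summands lie in the six-dimensional weight-$\t$ subspace of $P^{1,2}(\t)$, which is spanned by $\{F^\psi E^\psi p_0^{1,2}\}$ for $\psi \in \{(000),(100),(001),(110),(011),(111)\}$. Write $n = U p_0^{1,2}$ with $U \in \U$ weight-zero, and argue by cases on whether the $p_0^{1,2}$-coefficient of $n$ is nonzero (if it vanishes, the symmetric argument applies to $m$, contradicting $M$ being proper). Normalize so that $U = 1 + U'$, with $U'$ a linear combination of the five non-identity weight-zero monomials $F_1 E_1$, $F_2 E_2$, $F_1 F_{12} E_1 E_{12}$, $F_{12} F_2 E_{12} E_2$, and $F^{(111)} E^{(111)}$. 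The key input is that the simultaneous vanishing $\floor{t_1} = \floor{t_2} = \floor{t_1 t_2} = 0$ on $\R_{1,2}$ forces $(F_i E_i)^2 p_0^{1,2} = -\floor{t_i} F_i E_i p_0^{1,2} = 0$ together with the analogous higher-order collapses of mixed products among these monomials; the resulting nilpotency of $U'$ on $p_0^{1,2}$ makes $U$ invertible, giving $p_0^{1,2} = U^{-1} n \in N$ and hence $N = P^{1,2}(\t)$.

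The main obstacle is the construction of the correct element $X$ in the projectivity step. The hollowness argument parallels the template of the earlier propositions, with the six-dimensional top weight space playing the role of the smaller such spaces there. But for the splitting, the usual Casimir-type choice $E^{(111)} F^{(111)}$ contributes a zero Cartan residue on $\R_{1,2}$, so one must carefully add lower-order corrections whose commutators produce the needed nonzero scalar while not spoiling the annihilation conditions; tracking these correction terms through the PBW normal form is the technical heart of the proof.
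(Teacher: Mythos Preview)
Your hollowness sketch matches the paper's argument: write $U = 1 + U'$ on the six-dimensional $\t$-weight space, then invert $U$ by successively stripping off the nilpotent pieces. The paper does this in stages---first removing $F_1E_1$, $F_2E_2$, and $F^{(111)}E^{(111)}$ by their square-zero property, then handling the residual $aF^{(110)}E^{(110)}+bF^{(011)}E^{(011)}$ by an explicit inverse $1-aF^{(110)}E^{(110)}-bF^{(011)}E^{(011)}+(a-t_1t_2b)^2F^{(111)}E^{(111)}$---so your claim that ``nilpotency of $U'$'' suffices is correct in spirit but would still need these cross-term computations to be complete.

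The projectivity step, however, has a genuine gap. You correctly observe that $E^{(111)}F^{(111)}$ fails because its Cartan residue is $\Omega$, which vanishes on $\R_{1,2}$, and you propose to repair this by ``adding lower-order correction terms'' to $E^{(111)}F^{(111)}$. But you do not produce such an element, and the strategy of correcting $E^{(111)}F^{(111)}$ is not what works here. The paper's splitting element is
\[
X = 1 - t_1t_2\,F_1F_2E_1E_2 - t_1t_2\,F_2F_1E_2E_1,
\]
which is built from the identity, not from $E^{(111)}F^{(111)}$. The point is that on $\R_{1,2}$ the defining relations $F_1E_2E_1\cdot p_0^{1,2}=F_2E_1E_2\cdot p_0^{1,2}=0$ are exactly what one must enforce, and the two subtracted terms are tailored so that $F_1E_2E_1\cdot X$ and $F_2E_1E_2\cdot X$ vanish while the leading $1$ guarantees a nonzero $n_0$-component. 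Without this explicit $X$ (or an equivalent), your projectivity argument is incomplete. (Minor: the commutators you quote for $[E_{12},F_1]$ and $[E_{12},F_2]$ have the wrong signs relative to the paper's conventions, though this is not the essential issue.)
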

	\begin{proof}
		Let $p:N\twoheadrightarrow P^{1,2}(\t)$ be a surjection and fix any vector $n_0\in p^{-1}(p_0^{1,2})$. The map $p_0^{1,2}\mapsto (1-t_1t_2F_1F_2E_1E_2-t_1t_2F_2F_1E_2E_1)n_0$ is a splitting of $p$.\newline 
		
		Suppose $P^{1,2}(\t)=M+N$ for some $M\subsetneq$ and $N\subseteq P$. Then $p_0^{1,2}=(p_0^{1,2}-v)+v$ for some $v=Up_0^{1,2}\in N$ such that $U$ is weight preserving, and $p_0^{1,2}-v\in M$. We may assume for some $a_{\psi\psi'}\in\Q_4(t_1,t_2)$ and $a_{(000)(000)}=1$,
		
		\begin{align}
		U=\sum_{\substack{\psi,\psi'\in \Psi\setminus\{(101),(010)\} \\P(\psi)=P(\psi')}}a_{\psi\psi'}F^\psi E^{\psi'}.
		\end{align}

		Since $(F_i E_i)^2p_0^{1,2}=0$ for $i\in\{1,2\}$, each of $1-a_{\delta_i\delta_i}F_i E_i$ is invertible on the $(t_1,t_2)$-weight space. By the same reasoning, $1-a_{{(111)}{(111)}}F^{(111)} E^{(111)}$ is invertible. Therefore,
		\begin{align}
		(1-a_{{(111)}{(111)}}F^{(111)} E^{(111)})(1-a_{(001)(001)}F_2E_2)(1-a_{(100)(100)}F_1E_1)Up_0^{1,2}
		\end{align} is equal to	$\left(1+aF^{(110)}E^{(110)}+bF^{(011)}E^{(011)}\right)p_0^{1,2}$
		for some $a,b\in \Q_4{(t_1,t_2)}$. It can be verified that multiplying this quantity by $
		{1-aF^{(110)}E^{(110)}-bF^{(011)}E^{(011)}+(a-t_1t_2b)^2F^{(111)} E^{(111)}}
		$  yields $p_0^{1,2}\in N$. Thus, $P$ is hollow as $M$ is superfluous.
	\end{proof}

	\begin{cor}\label{cor:proj0} For all $\t\in\R_{1,2}$, $P^{1,2}(\t)$ is the projective cover of $\Vt$.
\end{cor}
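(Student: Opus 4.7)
The plan is to assemble the projective cover from the two ingredients already in hand: Proposition \ref{prop:proj11} gives that $P^{1,2}(\t)$ is both projective and hollow, so I only need to exhibit a surjection $p\colon P^{1,2}(\t)\twoheadrightarrow \Vt$ and observe that its kernel is a proper submodule (hence superfluous by hollowness).

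First I would construct the surjection. The map $p_0^{1,2}\mapsto v_h$ extends to a $\U$-linear surjection $P^{1,2}(\t)\to\Vt$ by the universal property of induction: one only needs to check that the $B^{1,2}$-action on $v_h\in \Vt$ agrees with its action on $p_0^{1,2}$. For the Cartan generators, $K_i v_h = t_i v_h$ by definition of $\Vt$. For the remaining generators, $E_1 v_h = E_2 v_h = 0$, so $F_1 E_2 E_1 v_h = F_2 E_1 E_2 v_h = 0$, matching the trivial action on $p_0^{1,2}$. Surjectivity is immediate because the image contains $v_h$, which is a cyclic generator of $\Vt$ under $U^-$.

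Next I would observe that $\ker p$ is a proper submodule of $P^{1,2}(\t)$. Indeed, the weight-$\t$ line $\bracks{p_0^{1,2}}$ maps injectively to $\bracks{v_h}$, so $p_0^{1,2}\notin\ker p$; in particular $\ker p\subsetneq P^{1,2}(\t)$. By the hollowness half of Proposition \ref{prop:proj11}, every proper submodule is superfluous, so $\ker p$ is superfluous. Combined with projectivity, this is exactly the definition of a projective cover, which gives the corollary.

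The only step with any substance is checking that the induced map $P^{1,2}(\t)\to \Vt$ is well defined; all the hard analysis (constructing the splitting, and the delicate invertibility argument showing that weight-$\t$ endomorphisms of the form $1-U'$ are invertible on the cyclic generator) was already absorbed into Proposition \ref{prop:proj11}. So there is no real obstacle remaining, and the corollary follows by the same formal pattern used to deduce the earlier cover corollaries from their respective hollow-projective propositions.
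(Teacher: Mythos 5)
Your proposal is correct and follows exactly the deduction the paper intends: the corollary is drawn immediately from Proposition \ref{prop:proj11} (hollow and projective), with the surjection $P^{1,2}(\t)\twoheadrightarrow\Vt$ coming from the universal property of induction since $F_1E_2E_1$ and $F_2E_1E_2$ kill $v_h$, and hollowness making the (proper) kernel superfluous. Your write-up simply makes explicit the routine verification the paper leaves implicit, so there is nothing to correct.
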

\comm{\begin{thm}\label{thm:projzero}
Suppose $\t,\s\notin\R$ and $\t\s\in\R_{1,2}$. Then we have a decomposition into indecomposable representations:
\begin{align}
V(\t)\otimes V(\s)\cong P^{1,2}(\mvec{-ts} )\oplus \bigoplus_{\psi\in \Psi_\emptyset(\t\s)} V(\mvec{\sigma^\psi}\t\s).
\end{align}
\end{thm}
\begin{proof}
	The summands $V(\mvec{\sigma^\psi}\t\s)$ are included into $\Vts$ via the maps $v_h\mapsto \Omega \hotimes F_{12}v_h$ and $v_h\mapsto \Omega \hotimes F_1F_2v_h$. These maps are nonzero since $\t\s\in \R_{1,2}$ implies $\x\y\t\s\notin\R$. Then $P^{1,2}(-\t\s)$ is included via the map $p_0^0\mapsto (1-t_1t_2F_1F_2E_1E_2-t_1t_2F_2F_1E_2E_1)v_h\hotimes v_l$. Since these inclusions intersect trivially and their dimensions sum to 64, they determine an isomorphism.
\end{proof}	}
 
 In Table \ref{table:index}, we define a collection of subsets $\mathscr{I}(J)$ dependent on $J\subseteq\overline{\Phip}$. For each $I\in \mathscr{I}(\t)$, we define 
 \begin{align}
 	\Psi_{I}(\t)&=\{\psi\in\Psi: \mvec{\sigma^\psi}\t\in\R_I\text{ and if } I\neq\emptyset, \exists\alpha\in I \text{ such that } F_\alpha F^\psi v_h=0\in \Vt \}.
 \end{align} 

These sets are characterized in the following table.

\begin{table}[h!]\caption{Description of $\mathscr{I}(J)$ and the respective number of elements in $\Psi_{I}(\t)$ for $\t\in \R_J$.}\label{table:index}
	\begin{tabular}{c|c|c}
	$J$ & $\mathscr{I}(J)$ & $|\Psi_{I}(\t)|$\\\hline
	$\emptyset$&$\emptyset$&8\\
	$\{\alpha\}$&$\emptyset,\{\alpha\}$&4,2\\
	$\{1,12\}$&$\emptyset,\{1,12\}, \{12,2\}$&2,1,1\\
	$\{12,2\}$&$\emptyset,\{1,12\},\{12,2\}$&2,1,1\\
	$\{1,2\}$&$\emptyset,\{1,2\}$&2,1\\
\end{tabular}
\end{table}

\projdecomp*

\begin{proof}
	The summands $V(\mvec{\sigma^\psi}\t\s)$ are included into $\Vts$ via the maps $v_h\mapsto \Omega \hotimes F^\psi v_h$. As in the proof of Lemma \ref{lem:inclusions}, these maps are nonzero since $\mvec{\sigma^\psi}\t\s\in \R_\emptyset$.\\
	
	Including the projective representation $P^I(\mvec{\sigma^\psi}\t\s)$ into the tensor product is given by the map splitting map used to show projectivity by setting $n_0=v_h\hotimes F^\psi v_h$. For $P^{12,2}(\mvec{\sigma^\psi}\t\s)$, we use the inclusion $p_0^{12,2}\mapsto E_1E_2F_2F_1 v_h\hotimes F^\psi v_h$. All of these inclusions have trivial intersection, and it can be readily verified from the Table \ref{table:index} that the total dimension is 64. Thus, proving the isomorphisms. 
\end{proof}
\begin{rem}
	The $\psi\in\Psi$ which determine some inclusion of $P^I(\mvec{\sigma^\psi}\t\s)\not\cong V(\mvec{\sigma^\psi}\t\s)$ are $(111)$ and the ones appearing in Table \ref{table:corresp} and Figure \ref{fig:corresp}.
\end{rem}
	\section{Cyclicity in the Generic Case}\label{sec:cyclicgen}
	We begin this section by defining a homogeneous cyclic representation. One goal for the remainder of this paper is to describe which representations $V(\t)\otimes V(\s)$ are homogeneous cyclic and extend Corollary \ref{cor:transfer} to those representations. The second goal is to give a complete description of when equation (\ref{eq:transfer}) holds. Thus, classifying all tensor product representations in the $\slthree$ case. In this section, we establish the methods used to find homogeneous cyclic vectors. By noting how a representation fails to be cyclically generated, we sort tensor product representations into families on which equation (\ref{eq:transfer}) holds for some $\mvec{\lambda}\in\P$. Throughout this section, we assume $\g=\slthree$.
	\begin{defn}
		A \emph{homogeneous cyclic vector} for a $\U$-module $M$ is a weight vector $\tilde{v}\in M$ such that 
		$
		M=\bracks{F^\psi E^{\psi'}.\tilde{v}:\psi,\psi'\in\Psi }.
		$
		We say that $M$ is generated by $\tilde{v}$ and call $M$ a \emph{homogeneous cyclic representation}. 
	\end{defn}
	\begin{lem}\label{lem:cyclic}
		If $\Vt\otimes\Vs$ and $V(\mvec{w})\otimes V(\mvec{z})$ are both homogeneous cyclic representations, then they are isomorphic if and only if $\mvec{ts}=\mvec{wz}$.
	\end{lem}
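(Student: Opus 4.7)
The plan is to realize both homogeneous cyclic tensor products as copies of a single universal induced module. First, I would establish a general principle: any homogeneous cyclic $\U$-module $M$ of dimension $|\Psi|^2 = 64$ whose cyclic generator has weight $\mu$ is isomorphic to $W_\mu := \U \otimes_{U^0} \bracks{\mu}$, where $\bracks{\mu}$ denotes the one-dimensional $U^0$-module on which each $K_i$ acts by $\mu(K_i)$. The key input is the PBW theorem (Corollary \ref{cor:basis}), which identifies $\U$ as a free right $U^0$-module of rank $|\Psi|^2 = 64$; this forces $\dim W_\mu = 64$, and $W_\mu$ carries the cyclic vector $1 \otimes 1$ of weight $\mu$. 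The universal property of induction then yields a surjective $\U$-module map $W_\mu \twoheadrightarrow M$ sending $1 \otimes 1$ to the given cyclic generator, and this map is an isomorphism by a dimension comparison.

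Next, I would apply this general principle together with the characterization (stated in the paragraph preceding this lemma) that the cyclic generator of a homogeneous cyclic $\Vt\otimes\Vs$ has weight $\mvec{-ts}$. Since $\dim(\Vt\otimes\Vs) = |\Psi|^2 = 64$, we obtain $\Vt\otimes\Vs \cong W_{\mvec{-ts}}$, and similarly $V(\mvec{w})\otimes V(\mvec{z}) \cong W_{\mvec{-wz}}$. The reverse direction of the lemma is then immediate: if $\t\s = \mvec{wz}$, then $\mvec{-ts} = \mvec{-wz}$, so both tensor products are isomorphic to the common module $W_{\mvec{-ts}}$. For the forward direction, an isomorphism $\phi: \Vt\otimes\Vs \to V(\mvec{w})\otimes V(\mvec{z})$ carries any cyclic generator to a cyclic generator and preserves weight; since cyclic generators on the two sides have weights $\mvec{-ts}$ and $\mvec{-wz}$ respectively, we conclude $\mvec{-ts} = \mvec{-wz}$, equivalently $\t\s = \mvec{wz}$.

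The main obstacle is justifying the weight characterization, namely that every cyclic generator of a homogeneous cyclic $\Vt\otimes\Vs$ must lie in the $\mvec{-ts}$-weight space (and analogously for $V(\mvec{w})\otimes V(\mvec{z})$). This is what makes the forward direction work, and it is precisely the content of the case-by-case cyclic-generator constructions developed in Sections \ref{sec:cyclicgen}, \ref{sec:cyclic1}, and \ref{sec:cyclici}, where the location of the cyclic generator is identified in each stratum of $\P^2$.
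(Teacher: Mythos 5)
Your proof is correct, and on the ``only if'' direction it is essentially the paper's own argument: a $\U$-isomorphism carries a homogeneous cyclic vector to a homogeneous cyclic vector of the same weight, and these weights are $\mvec{-ts}$ and $\mvec{-wz}$ respectively, forcing $\t\s=\mvec{wz}$. Where you genuinely add something is the ``if'' direction: the paper's one-line proof only records the forward implication and leaves the converse implicit, whereas you make it explicit by realizing any $64$-dimensional homogeneous cyclic module with generator of weight $\mu$ as the universal module $W_\mu=\U\otimes_{U^0}\C_{\mu}$, where $\C_\mu$ is the one-dimensional $U^0$-module determined by $\mu$. This works: by Corollary \ref{cor:basis}, $\U$ is free as a right $U^0$-module on the $64$ monomials $E^\psi F^{\psi'}$, so $\dim W_\mu=64$; the map $u\otimes 1\mapsto u\tilde v$ is well defined and surjective, hence an isomorphism by dimension, and taking $\mu=\mvec{-ts}=\mvec{-wz}$ identifies the two tensor products through $W_\mu$. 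Two small corrections to how you source the key input: the fact that every homogeneous cyclic vector of $\Vt\otimes\Vs$ lies in the $\mvec{-ts}$ weight space is not the outcome of the case-by-case analysis of Sections \ref{sec:cyclic1} and \ref{sec:cyclici}; it is Lemma \ref{lem:cyclic-ts}, proved directly from Lemma \ref{lem:multiplyroots2} immediately after the present lemma, and since that proof does not use the present lemma the forward reference is harmless. Note also that the paper's proof quietly uses the same fact when it asserts that the weight of $F^{(111)}E^{(111)}\tilde v$ is $\mvec{-ts}$ (this operator is weight-preserving, so the claim amounts to $\tilde v$ having weight $\mvec{-ts}$), so your reliance on it puts you on the same footing as the paper while supplying the converse the paper omits.
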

	\begin{proof}
		Since  $\Vt\otimes\Vs$ and $V(\mvec{w})\otimes V(\mvec{z})$ are assumed to be homogeneous cyclic, the both have a generating vector. An isomorphism between these representations must send one cyclic vector $\tilde{v}$ to another cyclic vector $\tilde{w}$ of the same weight. Therefore, the weight $\mvec{-ts}$ of $F^{(111)}E^{(111)}\tilde{v}$ is equal to the weight $\mvec{-wz}$ of $F^{(111)}E^{(111)}\tilde{w}$.
	\end{proof}
	We introduce the notation $wt(\mvec{\lambda})$ to denote the $\mvec{\lambda}$ weight space of $\Vts$. Recall the basis $\B'$ of $\Vts$ as in (\ref{eq:basis2}), which will be used throughout the remainder of the paper. 	Since $K_1$ and $K_2$ are group-like, the weight of a vector $F^{\psi_1}v_h\hotimes F^{\psi_2}v_h\in \Vts$ only depends on $P(\psi_1)+P(\psi_2)$ mod 4 and can be determined from (\ref{eq:xipowersforsigma}).
		\begin{lem}\label{lem:multiplyroots2}
		For each $\alpha\in{\overline{\Phip}}$ and $\psi_1,\psi_2\in\Psi$ such that $\psi_1(\alpha)$ is nonzero, there exists $a\in\Q_4$ such that
		$
		F_\alpha F^{\psi_1} v_h\hotimes F^{\psi_2} v_h =F^{\psi_1+\delta_\alpha}v_h\hotimes F^{\psi_2} v_h\neq 0.
		$ For each $\alpha\in{\Deltap}$ and $\Psi$, there exist coefficients $b_{\psi},c_{\psi}\in\Q_4$ such that 
		\begin{align}\label{eq:EFxF}
		E_\alpha F^{\psi_1} v_h\hotimes F^{\psi_2} v_h=\sum_{P(\psi')=-P(\delta_\alpha)+P(\psi_2)}c_{\psi'}F^{\psi_1} v_h\hotimes F^{\psi'} v_h+\sum_{P(\psi')=-P(\delta_\alpha)+P(\psi_1)}c_{\psi'}F^{\psi'} v_h\hotimes F^{\psi_2} v_h.
		\end{align}
	\end{lem}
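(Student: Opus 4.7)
The plan is to establish the two claims separately: Part 1 is a direct computation in the PBW algebra $U^-$, while Part 2 is an application of the coproduct of $\U$ together with weight bookkeeping.

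For Part 1, I would proceed by case analysis on $\alpha \in \overline{\Phip} = \{\alpha_1, \alpha_{12}, \alpha_2\}$, since only the left multiplication by $F_\alpha$ on the PBW monomial $F^{\psi_1}$ in the first tensor factor matters. When $\alpha = \alpha_1$ is minimal with respect to $<_{br}$, the product $F_\alpha F^{\psi_1}$ is already in PBW form, so the claim is immediate with $a = 1$. For $\alpha = \alpha_{12}$ or $\alpha = \alpha_2$, I would invoke the commutation relations of Def/Prop \ref{genrel}, namely $F_\alpha F_{\alpha+\beta} = \zeta F_{\alpha+\beta} F_\alpha$ and $F_\beta F_{\alpha+\beta} = -\zeta F_{\alpha+\beta} F_\beta$, to slide $F_\alpha$ past the existing factors of $F^{\psi_1}$ into its correct position in the $<_{br}$ ordering. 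Any potential ``correction'' term produced during these reorderings either contains a repeated factor and vanishes by $F_\alpha^2 = 0$ or $F_{12}^2 = 0$, or corresponds to a PBW index $\psi$ with $rt(F^\psi) > rt(F^{(1\dots1)})$ and vanishes by maximality of $(1\dots1)$ as in the setup preceding Lemma \ref{lem:compl}. The surviving coefficient $a \in \Q_4$ is an explicit product of powers of $\zeta$ from the successive commutations, hence nonzero. Tensoring on the right with $F^{\psi_2} v_h$ preserves both the equality and the nonvanishing.

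For Part 2, since $\alpha \in \Deltap$ is simple, the coproduct is $\Delta(E_\alpha) = E_\alpha \otimes K_\alpha + 1 \otimes E_\alpha$, so applying $E_\alpha$ to $F^{\psi_1} v_h \hotimes F^{\psi_2} v_h$ yields
\begin{equation*}
(E_\alpha F^{\psi_1} v_h) \hotimes (K_\alpha F^{\psi_2} v_h) + (F^{\psi_1} v_h) \hotimes (E_\alpha F^{\psi_2} v_h).
\end{equation*}
Because $F^{\psi_2} v_h$ is a weight vector, $K_\alpha$ acts on it as a scalar. Expanding $E_\alpha F^{\psi_1} v_h$ in the PBW basis of $\Vt$ produces a linear combination of vectors $F^{\psi'} v_h$ whose weight is shifted from that of $F^{\psi_1} v_h$ by $+\alpha$; equivalently, the indices $\psi'$ range over the set with $P(\psi') = P(\psi_1) - P(\delta_\alpha)$, using the weight formula \eqref{eq:xipowersforsigma} and the projection \eqref{eq:proj}. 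The second summand expands analogously over $P(\psi') = P(\psi_2) - P(\delta_\alpha)$. Collecting all scalars into the coefficients $c_{\psi'}$ (and similarly in the first sum) yields the stated identity.

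The main obstacle lies in Part 1: one must confirm the leading PBW term of $F_\alpha F^{\psi_1}$ is a pure scalar multiple of $F^{\psi_1 + \delta_\alpha}$, and that potential mixing with other basis elements in the same weight space is eliminated precisely by $F_\alpha^2 = 0$ together with the convex ordering $<_{br}$. This reduces to a finite but careful bookkeeping calculation in the fourth-root PBW algebra. Once Part 1 is secured, Part 2 is a direct application of the coproduct and a weight argument.
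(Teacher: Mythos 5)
Your argument for the second claim mis-describes how $\U$ acts on $\Vts$. In $\Vts=\text{Ind}_B^{\U}\bigl(V_{\t}\otimes\text{Ind}_B^{\U}(V_{\s})\bigr)$ the algebra acts by left multiplication on the outer induction factor, and only elements of $B$ pass through $\otimes_B$ to act via the coproduct (see Example \ref{ex} and Appendix \ref{Ind}); so the identity $E_\alpha.(F^{\psi_1}v_h\hotimes F^{\psi_2}v_h)=(E_\alpha F^{\psi_1}v_h)\hotimes(K_\alpha F^{\psi_2}v_h)+(F^{\psi_1}v_h)\hotimes(E_\alpha F^{\psi_2}v_h)$ is not valid here. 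The paper's route is the intermediate step displayed right after the lemma: straighten $E_\alpha F^{\psi_1}=F^{\psi_1}E_\alpha+\sum F^{\psi'}\cdot(\text{Cartan})$ inside $\U$; then $E_\alpha\in B$ passes through and, since $\gamma_{\t}(E_\alpha)=0$, only hits the second factor, while the Cartan pieces act by scalars on the joint weight. Your formula gives wrong coefficients in general: in $\Vts$ one has $E_1.(F_1v_h\hotimes F_{12}F_2v_h)=\floor{t_1s_1}\,v_h\hotimes F_{12}F_2v_h$, whereas the coproduct-on-both-factors computation yields $s_1\floor{t_1}$, and these agree only when $s_1^2=1$. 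Appealing to $\Vt\otimes\Vs\cong\Vts$ does not repair this, because the isomorphism $\Theta$ of Proposition \ref{prop:iso} does not match the basis (\ref{eq:basis2}) with the standard tensor basis. The support pattern in (\ref{eq:EFxF}) happens to come out the same either way, so your conclusion is not false, but the derivation is for the wrong action and would lead you astray in the coefficient computations of Sections \ref{sec:cyclic1} and \ref{sec:cyclici}.

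For the first claim, the assertion that every correction term produced while sliding $F_\alpha$ into PBW position dies by $F_\beta^2=0$ or by $rt$-maximality is incorrect: the straightening relation $F_\alpha F_\beta=-\zeta F_\beta F_\alpha-\zeta F_{\alpha+\beta}$ creates a term $F_{\alpha+\beta}$ with the same $rt$ and no repeated factor. Concretely $F_2F^{(100)}=\zeta F^{(101)}-F^{(010)}$, so $F_\alpha F^{\psi_1}$ need not be a scalar multiple of the single monomial $F^{\psi_1+\delta_\alpha}$; and under the hypothesis as printed ($\psi_1(\alpha)\neq0$) the product can even vanish ($F_1F^{(100)}=0$) or land on a different monomial ($F_2F^{(101)}=-F^{(011)}$), so the nonvanishing cannot be argued the way you do. What is actually needed downstream (see the first display in the proof of Lemma \ref{lem:effective} and the use in Lemma \ref{lem:cyclic-ts}) is the bookkeeping statement that $F_\alpha F^{\psi_1}v_h\hotimes F^{\psi_2}v_h$ is a $\Q_4$-combination of vectors $F^{\psi}v_h\hotimes F^{\psi_2}v_h$ with $P(\psi)=P(\delta_\alpha)+P(\psi_1)$ and the second factor unchanged; your proof of the first claim should either be weakened to that statement or must track the extra $F_{\alpha+\beta}$ terms honestly, since the blanket vanishing argument does not go through.
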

	Equation (\ref{eq:EFxF}) follows from the intermediate step
	\begin{align}
	E_\alpha F^{\psi_1} v_h\hotimes F^{\psi_2} v_h=
	F^{\psi_1}E_\alpha.( v_h\hotimes F^{\psi_2})+\sum_{P(\psi')=-P(\delta_\alpha)+P(\psi_1)}c_{\psi'}F^{\psi'} v_h\hotimes F^{\psi_2} v_h.
	\end{align}
	\begin{lem}\label{lem:cyclic-ts}
		A homogeneous cyclic vector $\tilde{v}$ for $\Vts$, if one exists, must belong to the $\mvec{-ts}$ weight space and have a nonzero $v_h\hotimes v_l$ component.
	\end{lem}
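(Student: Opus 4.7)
The plan is to handle the two claims in sequence: first that the weight of $\tilde{v}$ is $\mvec{-ts}$, and then that the coefficient of $v_h\hotimes v_l$ in the PBW expansion of $\tilde{v}$ is nonzero. The main tools are the weight-multiset structure of $\Vts$ and coproduct expansions of $F^{(111)}$ and $E^{(111)}$.

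\emph{Weight.} Since $\tilde{v}$ is a weight vector of some weight $\mvec{w}$, and $\dim\Vts = 64 = |\Psi|^2$, cyclicity forces the 64 vectors $\{F^\psi E^{\psi'}\tilde{v}\}_{\psi,\psi'\in\Psi}$ to form a basis of $\Vts$. In particular, the multiset of their weights must agree with the weight multiset of $\Vts$. The weight of $F^\psi E^{\psi'}\tilde{v}$ is $\mvec{w}\cdot\mvec{\sigma^\psi}(\mvec{\sigma^{\psi'}})^{-1}$, while the weights of $\Vts$ are $\t\s\cdot\mvec{\sigma^{\psi_1}}\mvec{\sigma^{\psi_2}}$. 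Using that $\mvec{\sigma^\cdot}$ is additive together with $\mvec{\sigma^{(111)}}=\mvec{-1}$, I would rewrite $(\mvec{\sigma^{\psi'}})^{-1}=\mvec{-1}\cdot\mvec{\sigma^{(111)-\psi'}}$ and reindex via $\psi_2:=(111)-\psi'$ (a bijection on $\Psi$) to identify the first multiset with $\mvec{w}\cdot\mvec{-1}\cdot\{\mvec{\sigma^\psi}\mvec{\sigma^{\psi_2}}:\psi,\psi_2\in\Psi\}$. Equality of the two multisets then forces $\mvec{w}\cdot\mvec{-1}=\t\s$, i.e.\ $\mvec{w}=\mvec{-ts}$.

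\emph{Nonzero component.} Write $\tilde{v}=\sum_j c_j F^{\psi_1^{(j)}}v_h\hotimes F^{\psi_2^{(j)}}v_h$ in the PBW basis of the $\mvec{-ts}$-weight space, with $c_1$ the coefficient of $e_1:=v_h\hotimes v_l$. The $\t\s$-weight space of $\Vts$ is two-dimensional, spanned by $v_h\hotimes v_h$ and $v_l\hotimes v_l$. By the count from the first step, the only pairs shifting $\mvec{-ts}$ to $\t\s$ are $((000),(111))$ and $((111),(000))$, so cyclicity requires $\{F^{(111)}\tilde{v},\,E^{(111)}\tilde{v}\}$ to span this 2D space. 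Expanding $\Delta(F^{(111)})$, whose tensor factors involve only $F$'s and $K^{-1}$'s, one sees that the right factor cannot raise $F^{\psi_2^{(j)}}v_h$ back to $v_h$, hence $F^{(111)}\tilde{v}$ has zero $v_h\hotimes v_h$ component. So $E^{(111)}\tilde{v}$ must supply the $v_h\hotimes v_h$ contribution. Expanding $\Delta(E^{(111)})$ basis-vector-by-basis-vector (via Lemma \ref{lem:multiplyroots2}), the coefficient of $v_h\hotimes v_h$ in $E^{(111)}\tilde{v}$ takes the form $\sum_j c_j\mu_j(\t,\s)$, with $\mu_1=\Omega_\s$ arising from the $1\otimes E^{(111)}$ coproduct term acting on $e_1$. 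The argument concludes by a linear-algebra check on the $10\times 10$ matrix $(\mu_j)$ together with the analogous matrix for $F^{(111)}\tilde{v}$, showing that the 10 weight-preserving operators produce a spanning set of the $\mvec{-ts}$-weight space only if $c_1\neq 0$.

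\emph{Main obstacle.} The weight step is surprisingly clean once the reindexing $\psi'\mapsto(111)-\psi'$ is identified. The main technical work lies in the second step: explicitly computing each $\mu_j(\t,\s)$ via coproduct expansion and showing that the contribution of $c_1$ cannot be replicated or canceled by the other $c_j$, thereby producing the desired invertibility obstruction when $c_1=0$. Degenerate parameter strata where generic scalars such as $\Omega_\s$ vanish are naturally absorbed into the acyclicity locus treated in subsequent sections.
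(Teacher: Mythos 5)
Your first step (the weight) is correct and genuinely different from the paper's route: the paper pins down the weight by locating forced nonzero components of $\tilde v$ (a component $v_h\hotimes F^{\psi_3}v_h$ forced by $F^{(111)}\tilde v\neq0$, and a component of $P$-degree at least $(2,2)$ forced by $E^{(111)}\tilde v\neq0$) and comparing their weights, whereas you compare the weight multiset of the $64$ images $F^\psi E^{\psi'}\tilde v$ with that of $\Vts$. Your reindexing $\psi'\mapsto(111)-\psi'$ is valid, but the final ``forces'' deserves its one-line justification that no nontrivial root-of-unity character translation preserves the weight multiset; the quickest version is that the class of $P$-degree $(2,2)$ is the unique weight of $\Vts$ of multiplicity ten, while after your reindexing the unique multiplicity-ten weight among the images is $\mvec{w}$, so matching them gives $\mvec{w}=\mvec{-ts}$ directly.

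The second step has a genuine gap. From the requirement that $\{F^{(111)}\tilde v,\,E^{(111)}\tilde v\}$ span the two-dimensional $\t\s$ weight space you keep only the condition that $E^{(111)}\tilde v$ has a nonzero $v_h\hotimes v_h$ component, and then try to convert $\sum_j c_j\mu_j(\t,\s)\neq0$ into $c_1\neq0$. That cannot work: several of the other $\mu_j$ are generically nonzero (this is exactly the content of Lemma \ref{lem:vhvh}, and it is why Sections \ref{sec:cyclic1}--\ref{sec:cyclici} can build cyclic vectors by adding components other than $v_h\hotimes v_l$), so nonvanishing at level $(000)$ places no constraint on $c_1$. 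The proposed rescue, that the $10\times10$ matrix of weight-preserving operators is singular whenever $c_1=0$, is a new unproved claim aimed at the wrong weight space: when $c_1=0$ the failure of cyclicity occurs in the $\t\s$ weight space, because $F^{(111)}\tilde v=0$, not necessarily among the ten weight-$\mvec{-ts}$ images. The productive half of your own spanning condition is the one you dropped: spanning also requires $F^{(111)}\tilde v\neq0$. In $\Vts$ (the induced model in which the basis (\ref{eq:basis2}) and the lemma are stated) the elements of $U^-$ act by left multiplication on the first tensor factor, and $F^{(111)}F^{\psi_1}=0$ for $\psi_1\neq(000)$; the only basis vector of the $\mvec{-ts}$ weight space with first factor $v_h$ is $v_h\hotimes v_l$, hence $F^{(111)}\tilde v=c_1\,v_l\hotimes v_l$ and cyclicity forces $c_1\neq0$. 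This is essentially the paper's argument. Note also that your expansions of $\Delta(F^{(111)})$ and $\Delta(E^{(111)})$ belong to the coproduct model $\Vt\otimes\Vs$ rather than to $\Vts$; in the coproduct model $F^{(111)}$ need not kill the other weight-$\mvec{-ts}$ basis vectors, which is precisely why your route became hard.
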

	\begin{proof}
		Suppose $\Vts$ admits a  homogeneous cyclic vector $\tilde{v}\in wt(\mvec{\lambda})$ for some $\mvec{\lambda}\in\P$. Since $E^{(111)} \tilde{v}\neq0$, there is a nonzero component $F^{\psi_1}v_h\hotimes F^{\psi_2}v_h$ of $\tilde{v}$ such that 
		$
		E^{(111)}.(F^{\psi_1}v_h\hotimes F^{\psi_2}v_h)
		$ is nonzero.
		In particular, by Lemma \ref{lem:multiplyroots2}, 
		$
		P(\psi_1)+P(\psi_2)\geq (22).
		$
		Since $F^{(111)}\tilde{v}\neq0$, $\tilde{v}$ has a nonzero component $v_h\hotimes F^{\psi_3}v_h$. Since 
		$0\neq P(\psi_3)=P(\psi_1)+P(\psi_2)$ mod $4$, we have $\psi_3={(111)}$.
		Thus, $\lambda=\mvec{-ts}$, and $v_h\hotimes v_l$ is a nonzero component of $\tilde{v}$.
	\end{proof}

	Let $\pi$ denote the projection in $\Vts$ to the subspace $\bracks{v_h\hotimes F^\psi v_h:\psi\in\Psi}$ and $\pi_\psi$ the projection to $\bracks{v_h\hotimes F^\psi v_h}$, both taken with respect to the basis $\mathscr{B}$'. Let $d_\psi$ denote the scalar part of the projection $\pi_\psi$. Then, for every $v\in\Vts$,
	$
	\pi_\psi(v)=d_\psi(v)v_h\hotimes F^\psi v_h.
	$
	\begin{defn}\label{defn:effective}
		Let $\psi\in\Psi$. A vector $v\in wt(\mvec{-ts})$ is \emph{effective at level $\psi$} if there exists $x\in \U$ such that $\pi_\psi(xv)\neq 0$, otherwise $v$ is \emph{not effective at level $\psi$}.
	\end{defn}
	
	Informally, a $\mvec{-ts}$ weight vector is effective at level $\psi$ if any vector in its image under $U^+$ has a nonzero $v_h\hotimes F^\psi v_h$ component. A vector which is effective for each $\psi$ is a cyclic vector.

	\begin{lem}\label{lem:effective}
		Let $\psi_1,\psi_2,\psi_3\in\Psi$ such that $rt(\psi_2)<rt(\psi_3)$. Then $F^{\psi_1}v_h\hotimes  F^{\psi_2} v_h$ is not effective at level $\psi_3$. 
	\end{lem}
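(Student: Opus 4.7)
My plan is to exploit an $rt$-grading on $V(\mvec{t})\hotimes V(\mvec{s})$, inherited additively from the grading on each tensor factor, and then track carefully which coproduct terms can produce the $v_h\hotimes F^{\psi_3}v_h$ component. Define $rt(F^{\phi}v_h\hotimes F^{\chi}v_h)=rt(\phi)+rt(\chi)$. The first step is to verify that $\U$ acts by homogeneous operators with $\deg(E_\alpha)=-h(\alpha)$ and $\deg(F_\alpha)=h(\alpha)$: this is immediate from the simple-root coproducts $\Delta(E_\alpha)=E_\alpha\otimes K_\alpha+1\otimes E_\alpha$ and $\Delta(F_\alpha)=F_\alpha\otimes 1+K_\alpha^{-1}\otimes F_\alpha$, in which each summand changes the $rt$-degree of exactly one factor by the prescribed amount, and extends to the higher root vectors $E_{\alpha_{12}}$, $F_{\alpha_{12}}$ via the $q$-commutator relations defining them.

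By linearity, reduce to the case of $x\in\U$ homogeneous of $rt$-degree $d$. Expanding $\Delta(x)=\sum_i x'_i\otimes x''_i$ in a bi-homogeneous PBW form, the coefficient of $v_h\hotimes F^{\psi_3}v_h$ in $xv$ is a sum of products $\pi_0(x'_iF^{\psi_1}v_h)\cdot \pi_{\psi_3}(x''_iF^{\psi_2}v_h)$, where $\pi_0$ denotes the $v_h$-component. The grading compels both factors to have very specific degrees: $\deg(x'_i)=-rt(\psi_1)$ and $\deg(x''_i)=rt(\psi_3)-rt(\psi_2)$, and under the hypothesis the latter is strictly positive, so $x''_i$ must have strictly more $F$-content than $E$-content.

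The main combinatorial step is then to show that no coproduct term of $x$ can simultaneously yield a nonzero $v_h$-component on the first factor and a nonzero $F^{\psi_3}v_h$-component on the second when $rt(\psi_3)>rt(\psi_2)$. I would argue this by combining Lemma~\ref{lem:EFtype} with the explicit coproducts of $E_{12}$ and $F_{12}$ (which contain unavoidable ``mixed'' summands such as $-2E_2\otimes E_1K_2$): these mixed terms force any distribution of $F$-content to the right factor to drag corresponding $E$-content along, preventing the left factor from retaining the full $E$-content needed to raise $F^{\psi_1}v_h$ back to a nonzero $v_h$-component.

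The hard part will be making the allocation argument fully rigorous. A naive bookkeeping suggests that taking $x'_i$ to be built from the $E$-generators of $x$ and $x''_i$ from the $F$-generators could satisfy both constraints, so the proof must extract more than the $rt$-degree count — most likely by using the maximality of $(1\dots 1)\in\Psi$ established in Section~\ref{sec:notation} to annihilate otherwise-surviving PBW monomials, together with the weight constraint $v\in wt(\mvec{-ts})$ which couples the $\psi_3$-component's weight to the weight of $x$ and removes candidate coproduct terms that the degree argument alone admits.
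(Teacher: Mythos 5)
There is a genuine gap, and it sits exactly where you flag it: the ``main combinatorial step'' is never carried out, and in the model you have chosen it cannot be, because the statement you are trying to prove is false there. You analyze the action through the full coproduct, so that summands such as $K_\alpha^{-1}\otimes F_\alpha$ of $\Delta(F_\alpha)$ may deposit $F$-content on the second tensor factor. These terms really do produce the forbidden components: in $V(\t)\otimes V(\s)$ with the coproduct action and the standard tensor basis, applying $E_1F_1$ to $F_1v_h\otimes F_{12}F_2v_h$ (a vector of weight $\mvec{-ts}$, with $rt((011))=3<4=rt((111))$) yields the component $t_1^{-1}s_1\floor{t_1}\,v_h\otimes F^{(111)}v_h$, which is nonzero for generic $t_1$. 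So no refinement of the $rt$-count, nor the maximality of $(1\dots1)$, nor the weight constraint (which only restricts the admissible pairs $(\psi_1,\psi_2)$ and is satisfied in this example), can close the argument as you have set it up: the ``naive bookkeeping'' you worry about is not a bookkeeping artifact but an actual nonvanishing contribution in that picture.

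The lemma is a statement about the induced model $\Vts=\text{Ind}_{B}^{\U}\bigl(V_{\t}\otimes \text{Ind}_{B}^{\U}(V_{\s})\bigr)$ and the projections $\pi_\psi$ taken with respect to the basis (\ref{eq:basis2}), and there the action is not the coproduct action you wrote down: an element of $\U$ multiplies the outer factor, and only elements of the inducing subalgebra $B$ pass through to act on $V_{\t}\otimes\text{Ind}_{B}^{\U}(V_{\s})$ via $\Delta$; the negative root vectors do not pass through at all (compare Example \ref{ex}, where $F_1.(v_h\hotimes F^{(101)}v_h)=F_1v_h\hotimes\floor{\zeta s_1}F^{(001)}v_h$). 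This is what makes the paper's proof a few lines: $F_i$ acts only on the first hat-tensor factor, while $E_i$ either acts on the first factor or, when it passes through, applies $E_i$ (or $K_i$) to the second factor and hence can only lower its $rt$-degree. Thus every generator, and by composition every element of $\U$, sends $F^{\psi_1}v_h\hotimes F^{\psi_2}v_h$ to a combination of vectors $F^{\psi}v_h\hotimes F^{\psi'}v_h$ with $rt(\psi')\leq rt(\psi_2)<rt(\psi_3)$, so a nonzero $v_h\hotimes F^{\psi_3}v_h$ component can never arise. If you insist on working with the ordinary tensor product you must first transport the basis and the projections through the isomorphism $\Theta$ of Proposition \ref{prop:iso}, which amounts to redoing exactly this computation; as written, your allocation argument is incomplete and the tools you propose to finish it do not address the offending coproduct terms.
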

	\begin{proof}
		We consider the actions of $U^+$ and $U^-$ on $F^{\psi_1}v_h\hotimes  F^{\psi_2} v_h$. Since $U^+$ does not belong to the Borel subalgebra, the actions of $F_1$ and $F_2$ are only on the first tensor factor. For some coefficients $a_\psi\in\Q_4$, we have
		\[
		F_i.(F^{\psi_1}v_h\hotimes  F^{\psi_2} v_h)
		=\sum_{P(\psi)=P(\psi^{\alpha_i})+P(\psi_{1})}a_{\psi}F^{\psi}v_h\hotimes F^{\psi_2} v_h.
		\] In addition, there exist coefficients $b_\psi,c_\psi,\in\Q_4[t_1^\pm,t_2^\pm,s_1^\pm, s_2^\pm]$ so that
		\[
		E_i.( F^{\psi_1}v_h\hotimes  F^{\psi_2} v_h)
		=\sum_{P(\psi)=-P(\psi^{\alpha_i})+P(\psi_2)}b_{\psi}F^{\psi_1} v_h\hotimes F^{\psi} v_h
		+\sum_{P(\psi)=-P(\psi^{\alpha_i})+P(\psi_1)}c_{\psi}F^{\psi} v_h\hotimes F^{\psi_2} v_h.
		\] 
		In either case, each nonzero component of the resulting expression is a vector $F^\psi v_h\hotimes F^{\psi'} v_h $ with $rt(\psi')\leq rt(\psi_2)<rt(\psi_3)$. Thus, $F^{\psi_3}v_h$ cannot occur in the second tensor factor from the action of $U^+$ or $U^-$, and so $F^{\psi_1}v_h\hotimes  F^{\psi_2} v_h$ is not effective at level $\psi_3$. 
	\end{proof}
	
	The natural guess for a cyclic generator  $\tilde{v}\in\Vts$ is $v_h\hotimes  v_l$; however, it may not be the case that it generates the entire module. Indeed, multiplication by elements 
	
	$F^\psi E^{\psi'}$  on $v_h\hotimes v_l$ is given by
$
	F^\psi E^{\psi'}(v_h\hotimes v_l)=F^\psi (v_h\hotimes E^{\psi'} v_l)
	=(F^\psi v_h)\hotimes  (E^{\psi'} v_l).
	$ An instance of this can be seen in Example \ref{ex} and the expression vanishes if $s_1=\zeta$.\newline 

	Figure \ref{fig:standard} shows the subspace of $\Vts$ generated by $v_h\hotimes v_l$ under the action of $U^+$, assuming that it is a cyclic vector. To distinguish diagrams for $\Vts$ from those of $\Vt$, each vertex is labeled with a $\boxtimes$, and the multiplicity two weight space is labeled by $\otimes$. As before, each edge corresponds to a nonzero matrix element of either $E_1$ or $E_2$. However, in later diagrams, these may depend on the choice of generator $\tilde{v}\in wt(\mvec{-ts})$. We will assume $\tilde{v}$ is chosen maximally, in the sense that all possible nonzero matrix elements are present in the diagram. Since the action of $F_1$ and $F_2$ is  independent of the choice of parameters, we do not include edges corresponding to their action. 
	
	\begin{figure}[h!]
		\includegraphics[scale=1.5]{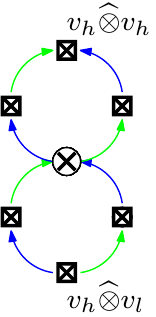}
		\caption{Diagram showing $v_h\hotimes  v_l$ as a homogeneous cyclic vector for the representation $\Vts$.}\label{fig:standard}
	\end{figure}
	
	\begin{prop}\label{prop:s degeneracy}
		The following are equivalent in $\Vts$:
		\begin{align}
			\bullet~v_h\hotimes  v_l \text{ is a homogeneous cyclic vector}
			&& \bullet~v_h\hotimes  v_l \text{ is effective at level } (000)
			&& \bullet~  	\s\notin \R.		\end{align} 	
	\end{prop}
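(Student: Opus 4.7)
The plan is to prove the three conditions equivalent by establishing the cycle $(\text{i}) \Rightarrow (\text{ii}) \Rightarrow (\text{iii}) \Rightarrow (\text{i})$, where (i), (ii), (iii) denote the three bullets in order. The implication $(\text{i}) \Rightarrow (\text{ii})$ is immediate from the definitions: a homogeneous cyclic vector spans $\Vts$ under the $\U$-action, so some $x\in\U$ produces a nonzero $v_h\hotimes v_h$ coordinate, giving effectiveness at level $(000)$.

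For $(\text{iii}) \Rightarrow (\text{i})$, I would exploit the identity $F^\psi E^{\psi'}(v_h \hotimes v_l) = F^\psi v_h \hotimes E^{\psi'} v_l$ coming from the Borel-induced description of $\Vts$: each $E^{\psi'}$ lies in $B$ and crosses $\otimes_B$ to act on the second tensor factor only (the cross-terms in its coproduct vanish because $E_i v_h = 0$), while each $F^\psi$ lies outside $B$ and left-multiplies on the first factor. When $\s \notin \R$, Corollary \ref{cor:irred} together with Lemma \ref{lem:det} give that $\{E^{\psi'} v_l : \psi' \in \Psi\}$ is a basis of $V(\s)$; combined with the standard basis $\{F^\psi v_h : \psi \in \Psi\}$ of $V(\t)$, the set $\{F^\psi v_h \hotimes E^{\psi'} v_l\}_{\psi, \psi' \in \Psi}$ is a basis of $\Vts$, showing $v_h \hotimes v_l$ is homogeneous cyclic.

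For $(\text{ii}) \Rightarrow (\text{iii})$, suppose $\pi_{(000)}(x(v_h \hotimes v_l)) \neq 0$ for some $x \in \U$. Since $v_h \hotimes v_l$ has weight $\mvec{-ts}$ and $v_h \hotimes v_h$ has weight $\mvec{ts}$, we may assume $x$ has $K_i$-weight $-1$ for $i = 1, 2$. Decomposing $x$ in the PBW monomial basis $F^\psi K^\kappa E^{\psi'}$, this weight constraint becomes the congruence $h(\psi') - h(\psi) \equiv 2\alpha_1 + 2\alpha_2$ modulo $4$ in each root coordinate; combined with the bounds $0 \le h(\psi)_i, h(\psi')_i \le 2$ forced by $\psi \in \{0,1\}^{\overline{\Phip}}$, only the pairs $(\psi, \psi') \in \{(0, (111)), ((111), 0)\}$ survive. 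The second pair sends $v_h \hotimes v_l$ to a multiple of $v_l \hotimes v_l$, whose $(000)$-projection vanishes, so the contributing term must be a $K^\kappa E^{(111)}$, giving $K^\kappa E^{(111)}(v_h \hotimes v_l) = (\t\s)^\kappa\, v_h \hotimes \Omega_\s$. By Proposition \ref{prop:computation}, $\Omega_\s = -\zeta \floor{s_1}\floor{s_2}\floor{\zeta s_1 s_2}\, v_h$, which is nonzero precisely when $\s \notin \R$.

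The main obstacle is this final weight enumeration: one must verify that within the finite cube $\{0,1\}^{\overline{\Phip}}$, the root-lattice congruence modulo $4$ admits only the two extreme solutions. This is elementary casework specific to the $\slthree$ root data (computing $(\alpha_i, c_1\alpha_1 + c_2\alpha_2) \equiv 2 \pmod 4$ for $i=1,2$, which forces both $c_1$ and $c_2$ even with $c_1 + c_2 \equiv 0 \pmod 4$), but it is the substantive input beyond the Borel-induced action identity, and it pins down the role of $\s$ (rather than $\t$) in the statement.
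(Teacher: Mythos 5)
Your proposal is correct and follows essentially the paper's own route: both rest on the identity $F^\psi E^{\psi'}(v_h\hotimes v_l)=F^\psi v_h\hotimes E^{\psi'}v_l$, the basis criterion of Lemma \ref{lem:det} (together with Corollary \ref{cor:irred}) applied in the second tensor factor, and the computation $E^{(111)}(v_h\hotimes v_l)=v_h\hotimes\Omega=-\zeta\floor{s_1}\floor{s_2}\floor{\zeta s_1s_2}\,v_h\hotimes v_h$ from Proposition \ref{prop:computation}; your PBW/weight argument for the implication from effectiveness at level $(000)$ only makes explicit what the paper's terse proof leaves implicit. One small correction to the casework you flag: the congruence $(\alpha_i,c_1\alpha_1+c_2\alpha_2)\equiv 2\pmod 4$ for $i=1,2$ forces $c_1\equiv c_2\equiv 2\pmod 4$ (not merely both even with $c_1+c_2\equiv 0\pmod 4$, which would wrongly admit the zero shift), and excluding the mixed differences $(\pm2,\mp2)$ uses that $(2,0)$ and $(0,2)$ are not of the form $(\psi_{(1)}+\psi_{(12)},\psi_{(2)}+\psi_{(12)})$ for $\psi\in\{0,1\}^{\overline{\Phip}}$; with these fixes the surviving pairs are exactly the two you name, and the argument goes through.
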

	\begin{proof} 
		The first two statements are seen to be equivalent by considering 
		\begin{equation*}
		\{v_h\hotimes v_l,v_h\hotimes E^{(100)}v_l,v_h\hotimes E^{(010)}v_l,v_h\hotimes E^{(110)}v_l,v_h\hotimes E^{(001)}v_l ,v_h\hotimes E^{(101)}v_l,v_h\hotimes E^{(011)}v_l,v_h\hotimes E^{(111)} v_l\}.
		\end{equation*}
		As in Lemma \ref{lem:det}, this is a linearly independent set if and only if
		$
		E^{(111)}(v_h\hotimes  v_l)=v_h\hotimes  \Omega\neq0.
		$
		This is equivalent to $v_h\hotimes v_l$ being a cyclic generator. 
		The latter equivalence follows from Proposition \ref{prop:computation}, which shows that
		\[
		E^{(111)}(v_h\hotimes v_l)
		=v_h\hotimes  \Omega
		=-\zeta\floor{s_1}\floor{s_2}\floor{\zeta s_1s_2}v_h\otimes v_h.\qedhere
		\]
	\end{proof}
	We outline an informal algorithm which finds a homogeneous cyclic $\tilde{v}$ vector for  $\Vts$, if one exists, or tells one does not exist. This algorithm is a guide for the computations in Sections \ref{sec:cyclic1} and \ref{sec:cyclici}. 
	\begin{enumerate}
		\item Suppose $\tilde{v}=v_h\hotimes  v_l$.
		\item If $\tilde{v}$ is a generator stop, otherwise find the greatest $\underline{\psi}\in\Psi$ such that $\tilde{v}$ is not effective at level $\psi$.
		\item If there exists $v\in wt(\mvec{-ts})$ such that $\tilde{v}+v$ is effective at levels $\psi$ through ${(111)}$, then replace $\tilde{v}$ with $\tilde{v}+v$ and return to (2).  Otherwise, $\tilde{v}$ cannot be made into a generator and the representation is not homogeneous cyclic, stop.
	\end{enumerate}
	
	Proposition \ref{prop:s degeneracy} tells us to proceed to step $(3)$ of the algorithm if $\s$ belongs to $\X_1,\X_2$ or $\H$. Each case corresponds to different levels for which $v_h\hotimes v_l$ is not effective. Suppose a representation  $V(\t)\hotimes  V(\s)$ is not cyclic and the algorithm produces a vector $\tilde{v}$ which is not effective at level $\psi$. The diagram we obtain as a result is similar to the one in Figure \ref{fig:standard}, but with some edges, corresponding to the zero actions of $E_1$ and $E_2$, deleted.  In contrast to Figure \ref{fig:subs}, a disconnected graph implies a direct sum decomposition of the representation.  Moreover, $v_h\hotimes F^\psi v_h$ belongs to the head of $\Vts$. The head of $\Vts$ together with the product  $\mvec{ts}$ is enough to determine $\Vts$ up to isomorphism. We will be able to determine which algebraic sets contain $(\mvec{t},\mvec{s})$ from the diagrams we construct in the following sections, and therefore isomorphism classes of representations $\Vts$.
	\section{Cyclicity for  $\s\in\X_1\cup\X_2$}\label{sec:cyclic1}
	The cases with $\s\in\X_1\cup \X_2$ are easier to manage than those with $\s\in \H$, and so they are treated first. By the symmetry of the computations in this section, we only show the cases when  $\s\in\R_1$ and   $\s\in\R_{1,2}$. The conclusion of this section is given in Corollary \ref{cor:X1X2}. Throughout this section {we assume $s_1^{2}=1$ unless stated otherwise}. 
	
	\begin{lem}
		For $\tilde{v}$ to be effective at level $(011)$, the $F_1v_h\hotimes  F_{12}F_2v_h$ component of $\tilde{v}$ must be nonzero. Moreover, 
		$
		d_{(011)}(E_1.(F_1v_h\hotimes F_{12}F_2v_h))
		=s_1\floor{t_1}.
		$
	\end{lem}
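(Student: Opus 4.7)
Both parts of the lemma follow from a direct computation using the coproduct $\Delta(E_1) = E_1 \otimes K_1 + 1 \otimes E_1$, supplemented by Lemma~\ref{lem:effective} and the action table of $U^+$ on $\Vs$ (Table~\ref{table:actions}) for the necessity claim.

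For the scalar formula I will expand
\begin{align*}
E_1.(F_1 v_h \hotimes F_{12} F_2 v_h) = E_1 F_1 v_h \hotimes K_1 F_{12} F_2 v_h + F_1 v_h \hotimes E_1 F_{12} F_2 v_h.
\end{align*}
The relation $[E_1, F_1] = \floor{K_1}$ together with $E_1 v_h = 0$ gives $E_1 F_1 v_h = \floor{t_1} v_h$; since $(\alpha_1, \alpha_1 + 2\alpha_2) = 0$, $K_1$ commutes with $F_{12} F_2$ and acts on $F_{12} F_2 v_h$ by the scalar $s_1$. Hence the first summand equals $s_1 \floor{t_1}\, v_h \hotimes F_{12} F_2 v_h$. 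The second summand carries $F_1 v_h$ in the first tensor factor and therefore contributes nothing to $\pi_{(011)}$, yielding $d_{(011)} = s_1 \floor{t_1}$.

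For necessity I will write $\tilde v = \sum c_{\psi_1,\psi_2}\, F^{\psi_1} v_h \hotimes F^{\psi_2} v_h$ over the ten PBW basis vectors in $wt(\mvec{-ts})$ (those with $P(\psi_1) + P(\psi_2) = (2,2)$). By Lemma~\ref{lem:effective}, components with $rt(\psi_2) < rt((011)) = 3$ cannot contribute at level $(011)$, reducing the analysis to the three candidates $(\psi_1,\psi_2) \in \{((000),(111)), ((100),(011)), ((001),(110))\}$. Applying $E_1$ via the coproduct, a nonzero $v_h \hotimes F_{12} F_2 v_h$ coefficient arises from (a) the $1 \otimes E_1$ piece only when $\psi_1 = (000)$ and $E_1 F^{\psi_2} v_h$ has an $F^{(011)} v_h$ component, which by Table~\ref{table:actions} forces $\psi_2 = (111)$ with scalar $\floor{s_1}$; or (b) the $E_1 \otimes K_1$ piece only when $\psi_1 = (100)$ and $\psi_2 = (011)$. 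The hypothesis $s_1^2 = 1$ makes $\floor{s_1} = 0$, killing case (a); the third candidate contributes zero since $E_1 F_2 v_h = F_2 E_1 v_h = 0$ and the remaining term has $F_2 v_h$ in the first factor. Thus only $c_{(100),(011)} \neq 0$ can make $\tilde v$ effective at level $(011)$.

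The main obstacle, if one adheres to the formal definition of effectiveness allowing arbitrary $x \in \U$, is ruling out contributions from more complicated PBW basis elements $F^\varphi E^{\varphi'}$ of weight $(-1, -\zeta)$ (such as $F_2 E_{12}$, $F_{12} E_1 E_{12}$, or $F_{12} F_2 E_1 E_{12} E_2$). A finite weight-constraint enumeration identifies these explicitly, and each contributes zero to $\pi_{(011)}$ on the three candidates via cancellations powered by the identity $\floor{\zeta s_1} = s_1$ under $s_1^2 = 1$ together with the $\Uzslthree$ braid relations $F_2 F_{12} = -\zeta F_{12} F_2$ and $F_2 F_1 F_2 = -F_{12} F_2$.
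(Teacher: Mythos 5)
Your overall strategy mirrors the paper's: necessity via Lemma~\ref{lem:effective} plus the vanishing $\floor{s_1}=0$, and the scalar via $[E_1,F_{12}F_2]=0$ (equation~(\ref{eq:E1F32})). Two points need attention, however. First, your displayed expansion $E_1.(F_1v_h\hotimes F_{12}F_2v_h)=E_1F_1v_h\hotimes K_1F_{12}F_2v_h+F_1v_h\hotimes E_1F_{12}F_2v_h$ is the coproduct action on the ordinary tensor product $\Vt\otimes\Vs$, not the module structure of $\Vts$, in which $d_{(011)}$ and the basis $\{F^\psi v_h\hotimes F^{\psi'}v_h\}$ are defined: in $\Vts$ the element $E_1$ acts by left multiplication on the first slot, and only elements of $B$ cross the $\otimes_B$ (cf.\ Example~\ref{ex} and Lemma~\ref{lem:multiplyroots2}). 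The correct computation writes $E_1F_1=F_1E_1+\floor{K_1}$; the $F_1E_1$ term dies because $E_1$ kills $v_h$ and commutes with $F_{12}F_2$, while $\floor{K_1}$ acts diagonally on the pair and gives the coefficient $\floor{t_1s_1}$, which equals $s_1\floor{t_1}$ only because of the section's standing hypothesis $s_1^2=1$. Your formula happens to output the same number here, but it is not an identity in $\Vts$ for general $\s$ (the two modules are isomorphic via Proposition~\ref{prop:iso}, but $\Theta$ does not match the two bases vector-by-vector, so coefficient extraction cannot be transported naively). As written, the key display is therefore unjustified; replaced by the left-multiplication computation it becomes a two-line argument identical to the paper's.

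Second, for necessity your reduction to the three candidates $((000),(111))$, $((100),(011))$, $((001),(110))$ via Lemma~\ref{lem:effective} is exactly right, and killing the first via $\floor{s_1}=0$ matches the paper. But your treatment of arbitrary $x\in\U$ (the last paragraph) is asserted, not proved: "a finite weight-constraint enumeration \dots each contributes zero via cancellations" is precisely the step that needs an argument. The clean way to close it is a weight argument rather than an enumeration: any $x$ acting on $F^{\psi_1}v_h\hotimes F^{\psi_2}v_h$ first gets rewritten so that only elements of $B=U^0U^+$ cross to the second factor, and these can only raise the second factor's weight by a nonnegative sum of positive roots. Since the weight difference from $F^{(110)}v_h$ to $F^{(011)}v_h$ is $\alpha_1-\alpha_2$, no such element exists, so $((001),(110))$ is never effective at level $(011)$; and $F^{(011)}v_h$ is reached from $F^{(111)}v_h$ only through the weight-$\alpha_1$ raising space, whose matrix element is $\floor{s_1}=0$. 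With that replacement your necessity argument is complete and agrees with the paper's (terse) one.
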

	\begin{proof}
		We have already shown that $v_h\hotimes  v_l$ is not a homogeneous cyclic vector. More precisely, 
		\[
		E_1(v_h\hotimes  v_l)=v_h\hotimes  E_1v_l=-\floor{s_1}v_h\hotimes  F_{12}F_2v_h=0,
		\] 
		having referred to Table \ref{table:actions} and as $\floor{1}=\floor{-1}=0$. We wish to find a vector $\tilde{v}$ such that $E_1\tilde{v}$ is effective at level $(011)$. It follows from Lemma \ref{lem:effective} that $\tilde{v}$ must have a nonzero $F_1v_h\hotimes  F_{12}F_2v_h$ component. We apply $E_1$ to it, and by equation (\ref{eq:E1F32}), $E_1$ commutes with $F_{12}F_2$, 
		\[
		E_1.(F_1v_h\hotimes  F_{12}F_2v_h)=
		\floor{t_1s_1}v_h\hotimes  F_{12}F_2v_h=s_1 \floor{t_1}v_h\hotimes  F_{12}F_2v_h.\qedhere
		\]
	\end{proof}
	\begin{cor}\label{cor:011,110} 
		The pair $(\t,\s)$ is not effective at level $(011)$ if and only if it belongs to $\X_1^2$, and is not effective at level $(101)$ if and only if it belongs to $\X_2^2$.
	\end{cor}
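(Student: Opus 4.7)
The plan is to read the statement at level $(011)$ directly off the preceding lemma, and obtain the statement at level $(101)$ by the analogous argument with the roles of $\alpha_1$ and $\alpha_2$ interchanged. The preceding lemma records the two crucial facts under $s_1^2 = 1$: any $\tilde v \in wt(\mvec{-ts})$ effective at level $(011)$ must have a nonzero $F_1 v_h \hotimes F_{12} F_2 v_h$ component, and the scalar registered by $E_1$ on that component at level $(011)$ equals $s_1 \floor{t_1}$.

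For the forward direction, assume $(\t,\s) \in \X_1^2$, so $s_1^2 = t_1^2 = 1$ and hence $\floor{s_1} = \floor{t_1} = 0$. By the preceding lemma, any candidate $\tilde v$ must have a nonzero $F_1 v_h \hotimes F_{12} F_2 v_h$ component, yet the resulting contribution at level $(011)$ is $s_1 \floor{t_1} = 0$. Therefore no vector in $wt(\mvec{-ts})$ is effective at level $(011)$. For the converse, suppose $(\t,\s) \notin \X_1^2$. If $\s \notin \X_1$ then $\floor{s_1} \neq 0$, and a direct computation using Table \ref{table:actions} gives $E_1 \cdot (v_h \hotimes v_l) = -\floor{s_1}\, v_h \hotimes F_{12} F_2 v_h \neq 0$, so $v_h \hotimes v_l$ is itself effective at level $(011)$. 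Otherwise $\s \in \X_1$ but $\t \notin \X_1$, so $s_1 = \pm 1$ and $\floor{t_1} \neq 0$; the lemma then yields $E_1 \cdot (F_1 v_h \hotimes F_{12} F_2 v_h) = s_1 \floor{t_1}\, v_h \hotimes F_{12} F_2 v_h \neq 0$, and $F_1 v_h \hotimes F_{12} F_2 v_h$ is effective.

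The assertion at level $(101)$ follows from the symmetric analysis with subscripts $1$ and $2$ swapped throughout. Concretely, repeating the proof of the preceding lemma under the hypothesis $s_2^2 = 1$ produces the mirror statement identifying the analogous necessary component in $wt(\mvec{-ts})$ whose $E_2$-image governs effectiveness at the corresponding level, with coefficient $s_2 \floor{t_2}$; the dichotomy above then runs verbatim with $\X_1$ replaced by $\X_2$. The main subtlety—already absorbed into the preceding lemma—is verifying that the other candidate components in $wt(\mvec{-ts})$ with $rt(\psi_2) \geq 3$, namely $v_h \hotimes v_l$ (ruled out because $\floor{s_1} = 0$) and $F_2 v_h \hotimes F_1 F_{12} v_h$, cannot alternatively produce a nonzero projection at level $(011)$ when $s_1^2 = 1$. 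This reduces to explicit commutator computations for $E_i$ acting on $\Vts$ via the Hopf coproduct, together with an application of Lemma \ref{lem:effective} to terminate the analysis once the image lands in a subspace with $rt(\psi_2) < 3$.
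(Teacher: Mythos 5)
Your treatment of level $(011)$ coincides with the paper's: the corollary is read off the preceding lemma exactly as you do, with $v_h\hotimes v_l$ disposing of the case $\s\notin\X_1$, the coefficient $s_1\floor{t_1}$ handling $\s\in\X_1$, and the residual verification you flag (that $v_h\hotimes v_l$ and $F_2v_h\hotimes F_1F_{12}v_h$ cannot substitute) is indeed what that lemma, together with Lemma \ref{lem:effective}, is meant to carry. The one genuine issue is the second half. Swapping the indices $1\leftrightarrow 2$ does not land at level $(101)$: the mirror of $F^{(011)}=F_{12}F_2$ is $F^{(110)}=F_1F_{12}$, so the symmetric computation (under $s_2^2=1$ the necessary component is $F_2v_h\hotimes F_1F_{12}v_h$, and since $[E_2,F_1F_{12}]=0$ by (\ref{eq:E2F13}) one gets $d_{(110)}\bigl(E_2.(F_2v_h\hotimes F_1F_{12}v_h)\bigr)=\floor{t_2s_2}=s_2\floor{t_2}$) characterizes ineffectiveness at level $(110)$, not $(101)$. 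Level $(101)$ names the basis vector $v_h\hotimes F_1F_2v_h$ of the multiplicity-two weight space, whose effectiveness is governed by the analysis of Section \ref{sec:cyclici} (Lemma \ref{b1b2 not cyclic}, Corollary \ref{cor:101,010}) and is not characterized by $\X_2^2$. The ``$(101)$'' in the statement is evidently a slip for ``$(110)$'' (compare the corollary's label and the remark following Lemma \ref{lem:si is 1}, where the mirror vector $v_h\hotimes v_l+F_2v_h\hotimes F_1F_{12}v_h$ appears); your symmetry argument does prove that intended claim, but as written it asserts that the $1\leftrightarrow2$ swap yields the assertion at level $(101)$, which is not what the swap produces, so you should name the mirror level explicitly rather than identify it with the stated one.
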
 Let $\rho_B$ and $\rho_{U^{0}}$ be the projections in $\U$ to $B$ and $U^{0}$ in the PBW basis, respectively.
	
	\begin{lem} \label{lem:si is 1}
		Suppose $s_2$ is not a fourth root of unity. Then \begin{align}
		d_{(000)}(E_1E_{12}E_2.(v_h\hotimes v_l+F_1v_h\hotimes F_{12}F_2v_h))
		=\zeta \floor{s_2}\floor{\zeta s_2}\floor{t_1},
		\end{align} 
		and $ v_h\hotimes v_l+F_1v_h\hotimes  F_{12}F_2v_h$ is an effective vector for level $(000)$  if and only if $\t\notin \X_1$.
	\end{lem}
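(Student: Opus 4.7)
The plan is to reduce the computation to a single coproduct summand and then evaluate each tensor factor using Table \ref{table:actions}. First, I would dispatch the $v_h \hotimes v_l$ summand. Since $\Delta(E_i) = E_i \otimes K_i + 1 \otimes E_i$ and $E_i.v_h = 0$ in $V_\t$, an easy iteration gives
\[
E_1 E_{12} E_2.(v_h \hotimes v_l) = v_h \hotimes E_1 E_{12} E_2 v_l = v_h \hotimes \Omega_{\s},
\]
and by Proposition \ref{prop:computation} the irreducibility vector $\Omega_{\s}$ carries a factor of $\floor{s_1}$, which vanishes under the hypothesis $s_1^2 = 1$. Hence this summand contributes nothing to $d_{(000)}$, and the full calculation reduces to the second summand $F_1 v_h \hotimes F_{12} F_2 v_h$.

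For the remaining summand, I would expand $\Delta(E^{(111)}) = \Delta(E_1)\Delta(E_{12})\Delta(E_2)$ using the explicit coproduct $\Delta(E_{12}) = E_{12} \otimes K_1 K_2 + 1 \otimes E_{12} - 2 E_2 \otimes E_1 K_2$, obtained from the defining identity $E_{12} = -(E_1 E_2 + \zeta E_2 E_1)$, and identify those summands whose image has a $v_h \hotimes v_h$ component. A weight count on the first tensor slot forces the first factor to carry root weight $\alpha_1$ in order to raise $F_1 v_h$ to a multiple of $v_h$; checking the twelve possibilities (several of which vanish outright by $E_i^2 = 0$) singles out the unique surviving term $E_1 \otimes K_1 E_{12} E_2$.

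It then suffices to evaluate this term on $F_1 v_h \hotimes F_{12} F_2 v_h$. The first slot yields $E_1 F_1 v_h = \floor{t_1} v_h$ directly from Table \ref{table:actions}. For the second slot I would compute $E_2 F_{12} F_2 v_h = s_2^{-1} F_1 F_2 v_h + \floor{s_2} F_{12} v_h$ (again from Table \ref{table:actions}), then apply $E_{12}$ using $E_{12} F_{12} v_h = \floor{s_1 s_2} v_h$ (from Lemma \ref{lem:Kcomm}) together with a short direct computation of $E_{12} F_1 F_2 v_h$ via $E_{12} = -(E_1 E_2 + \zeta E_2 E_1)$. The identities $\floor{s_1} = 0$, $\floor{\zeta s_1} = s_1$, and $\floor{s_1 s_2} = s_1\floor{s_2}$ (all valid under $s_1^2 = 1$) collapse most cross terms, and the two surviving contributions combine through the identity $-\zeta s_2^{-1} + \floor{s_2} = -\zeta\floor{\zeta s_2}$ (immediate from $\floor{\zeta s_2} = (s_2 + s_2^{-1})/2$). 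Applying the $K_1$ weight shift and the $s_1^2 = 1$ cancellation then produces the claimed scalar $\zeta\floor{s_2}\floor{\zeta s_2}\floor{t_1}$ on $v_h \hotimes v_h$.

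For the effectiveness conclusion, the forward direction is immediate: the hypothesis that $s_2$ is not a fourth root of unity forces $\floor{s_2}\floor{\zeta s_2} \neq 0$, so the displayed scalar is nonzero precisely when $\floor{t_1} \neq 0$, i.e.\ when $\t \notin \X_1$. The converse uses that the $\mvec{ts}$-weight space of $\Vts$ is one-dimensional, spanned by $v_h \hotimes v_h$, together with PBW triangularity: any $x \in \U$ transporting $v$ into weight $\mvec{ts}$ reduces, modulo terms that cannot contribute to $\pi_{(000)}$, to a scalar multiple of $E^{(111)}$, so vanishing of the above scalar forces $v$ not to be effective at level $(000)$. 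The main place requiring care will be the collapse of the second-slot algebra, particularly the identity $-\zeta s_2^{-1} + \floor{s_2} = -\zeta\floor{\zeta s_2}$, which is short but delicate.
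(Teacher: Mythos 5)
Your computational core is sound and your route is genuinely different from the paper's. You expand $\Delta(E_1)\Delta(E_{12})\Delta(E_2)$ directly --- your formula $\Delta(E_{12})=E_{12}\otimes K_1K_2+1\otimes E_{12}-2E_2\otimes E_1K_2$ is correct at a fourth root of unity --- and isolate the unique weight-admissible summand $E_1\otimes K_1E_{12}E_2$, then evaluate the two slots; the paper instead stays inside the induced module and quotes two precomputed identities, $[E_1E_{12}E_2,F_1]=E_{12}E_2\floor{K_1}$ (equation (\ref{eq:E132F1})) and $\rho_{U^{0}}E_{12}E_2F_{12}F_2=-\zeta\floor{K_2}\floor{\zeta K_1K_2}$ (equation (\ref{eq:E32F32})), which makes the whole proof two lines. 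Your dispatch of the $v_h\hotimes v_l$ summand through the vanishing of the irreducibility vector of $V(\s)$ (the factor $\floor{s_1}=0$) is exactly the intended mechanism, and your final scalar agrees with the paper's proof (both routes actually give $-\zeta\floor{s_2}\floor{\zeta s_2}\floor{t_1}$; the $+\zeta$ in the statement is the paper's own internal sign slip and is immaterial to the conclusion).

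Two points need repair. First, you apply the coproduct slot-wise to $F_1v_h\hotimes F_{12}F_2v_h$, but in the paper's conventions $\Vts$ is the induced module on which $\U$ acts by left multiplication, and the two actions genuinely differ on such vectors (compare Example \ref{ex}); slot-wise $\Delta$ is the action on $V(\t)\otimes V(\s)$, and the identification with $\Vts$ is the nontrivial isomorphism $\Theta$ of Proposition \ref{prop:iso}, which does not match the same-looking basis vectors: for instance $\Theta^{-1}(F_1v_h\hotimes F_{12}F_2v_h)=F_1v_h\otimes F_{12}F_2v_h+t_1^{-1}v_h\otimes v_l$. Your answer comes out right because the $\Theta$-correction terms cannot create a $v_h\hotimes v_h$ component and the extra multiple of $v_h\otimes v_l$ is killed by $E^{(111)}$ since the irreducibility vector of $V(\s)$ vanishes, but this bridge must be stated --- or, more simply, commute $E_1E_{12}E_2$ past $F_1$ as in (\ref{eq:E132F1}) and let the Borel part pass through $v_h$, which is the paper's computation. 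Second, your converse argument asserts that the $\mvec{ts}$-weight space of $\Vts$ is one-dimensional; it is not, since $v_l\hotimes v_l$ has the same weight. What is actually needed (and what your appeal to PBW triangularity should be made to say) is that a PBW monomial with a nontrivial leading $F^\psi$ can never contribute to $\pi_{(000)}$, and that $E^{(111)}$ is the only $E$-monomial whose weight shift carries $\mvec{-ts}$ to $\mvec{ts}$; then effectiveness at level $(000)$ of a $\mvec{-ts}$-weight vector $v$ is equivalent to $d_{(000)}(E^{(111)}v)\neq0$, and the criterion $\t\notin\X_1$ follows because $\floor{s_2}\floor{\zeta s_2}\neq0$ under the hypothesis on $s_2$. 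Both repairs are routine, so the proposal is salvageable as written.
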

	\begin{proof}
		By equation (\ref{eq:E32F32}), 
		$
		\rho_{U^{0}} E_{12}E_2F_{12}F_2=-\zeta\floor{K_2}\floor{\zeta K_1K_2}
		$
		and by equation (\ref{eq:E132F1}),
		\begin{align*}
		E_1E_{12}E_2F_1v_h\hotimes  F_{12}F_2v_h&= E_{12}E_2\floor{K_1}v_h\hotimes  F_{12}F_2v_h=-\zeta \floor{s_2}\floor{\zeta s_2}\floor{t_1} v_h\otimes v_h.
		\end{align*}		
		Indeed, when $s_2$ is not a fourth root of unity, effectiveness of
		$
		F_1v_h\hotimes F_{12}F_2v_h
		$ at level $(000)$ is equivalent to $t_1^2\neq 1$.
	\end{proof}
	Similarly, if $s_1$ is not a fourth root of unity and $\s\in\X_2$, then
	$
	v_h\hotimes v_l+F_2v_h\hotimes F_1F_{12}v_h
	$
	is a homogeneous cyclic vector if and only if $\t\not\in\X_2$. Next, we suppose that both $s_1$ and $s_2$ are fourth roots of unity which square to 1. The case when $s_1$ and $s_2$ are fourth roots of unity and exactly one has square $-1$ is considered in the next section. 
	\begin{lem}\label{s1 s2 is 1}
		Let $\s\in\R_{1,2}$. The representation $\Vts$ is cyclic if and only if $\t\notin \mathcal{R}.$
	\end{lem}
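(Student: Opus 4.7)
The plan is to split the biconditional into the two directions, both analyzed via the effectiveness machinery of Section \ref{sec:cyclicgen}.

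For the forward direction I proceed by contrapositive. If $t_1^2 = 1$ then $(\t,\s) \in \X_1^2$, and Corollary \ref{cor:011,110} forbids any $(-\t\s)$-weight vector from being effective at level $(011)$; symmetrically, $t_2^2 = 1$ forbids effectiveness at $(101)$. The remaining case is $\t \in \H \setminus (\X_1 \cup \X_2)$. To handle it, I would write a general $(-\t\s)$-weight vector in the basis $\{F^{\psi_1} v_h \hotimes F^{\psi_2} v_h : P(\psi_1) + P(\psi_2) \equiv (2,2) \pmod 4\}$ and apply each $E^\psi$ with $\psi \neq (000)$, tracking the coefficient of $v_h \hotimes v_h$ via Table \ref{table:actions} and the commutation rules of Appendix \ref{sec:comp}. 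Using $s_1^2 = s_2^2 = 1$ to collapse most summands and $(t_1 t_2)^2 = -1$ to factor the remainder, the goal is to show that every such coefficient contains $\floor{\zeta t_1 t_2}$ as a factor and hence vanishes, obstructing cyclicity.

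For the sufficiency direction I will implement the algorithm at the end of Section \ref{sec:cyclicgen}, starting from $\tilde v_0 = v_h \hotimes v_l$, which by Proposition \ref{prop:s degeneracy} fails to be effective at level $(000)$. Motivated by Lemma \ref{lem:si is 1} and its $s_2$-symmetric counterpart, I would add correction terms proportional to $F_1 v_h \hotimes F^{(011)} v_h$ and $F_2 v_h \hotimes F^{(110)} v_h$ to repair effectiveness at the failing levels $(011)$ and $(101)$. Lemma \ref{lem:effective} guarantees that each correction only impacts levels at or below its own, so the iteration terminates and yields a candidate
\begin{align*}
\tilde v = v_h \hotimes v_l + a_1 F_1 v_h \hotimes F^{(011)} v_h + a_2 F_2 v_h \hotimes F^{(110)} v_h + (\text{lower corrections}),
\end{align*}
with scalars chosen so that $\tilde v$ is effective at every level $\psi \neq (000)$.

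The main obstacle is verifying effectiveness at level $(000)$. Because $\floor{s_1} = \floor{s_2} = 0$ in this regime, the one-sided trick of Lemma \ref{lem:si is 1} fails and the cross-contributions from the two correction terms must be tracked simultaneously. The key calculation is to expand $d_{(000)}(E_1 E_{12} E_2\,\tilde v)$ using the identities (\ref{eq:E32F32}) and (\ref{eq:E132F1}) from the appendix and to show that, up to a nonzero scalar, it factors as $\floor{t_1}\floor{t_2}\floor{\zeta t_1 t_2}$. The condition $\t \notin \R = \X_1 \cup \X_2 \cup \H$ is precisely that none of these three quantities vanish, so $\tilde v$ is effective at $(000)$ as well. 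Together with effectiveness at the other levels, this proves that $\tilde v$ generates $\Vts$ and completes the equivalence.
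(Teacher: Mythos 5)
Your forward direction is organized reasonably: the cases $t_1^2=1$ and $t_2^2=1$ do follow from Corollary \ref{cor:011,110} since $\R_{1,2}\subseteq\X_1\cap\X_2$, and the remaining case $\t\in\H$ can indeed be settled by checking the $v_h\hotimes v_h$ coefficients of $E^{(111)}$ applied to the ten basis vectors of the $\mvec{-ts}$ weight space. But your stated mechanism is off: with $s_1^2=s_2^2=1$ almost all of those coefficients vanish because they carry a factor $\floor{s_1}$, $\floor{s_2}$, or $\floor{s_1s_2}$; the only $\t$-dependent contribution is the one coming from $v_l\hotimes v_h$, namely $-\zeta\floor{t_1}\floor{t_2}\floor{\zeta t_1t_2}$, and that is the only place a factor $\floor{\zeta t_1t_2}$ appears.

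The genuine gap is in your sufficiency argument at level $(000)$. Your candidate $\tilde v$ consists of $v_h\hotimes v_l$, the two corrections $F_1v_h\hotimes F^{(011)}v_h$ and $F_2v_h\hotimes F^{(110)}v_h$, and ``lower corrections'' chosen only to repair levels $\psi\neq(000)$. For $\s\in\R_{1,2}$ every one of those pieces is dead at level $(000)$: the first gives $d_{(000)}(E^{(111)}(v_h\hotimes v_l))=-\zeta\floor{s_1}\floor{s_2}\floor{\zeta s_1s_2}=0$, and by Lemma \ref{lem:si is 1} and its $1\leftrightarrow2$ mirror the two corrections contribute $\zeta\floor{s_2}\floor{\zeta s_2}\floor{t_1}$ and $\zeta\floor{s_1}\floor{\zeta s_1}\floor{t_2}$, both zero since $\floor{s_1}=\floor{s_2}=0$. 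So the ``key calculation'' you propose, expanding $d_{(000)}(E_1E_{12}E_2\tilde v)$ via (\ref{eq:E32F32}) and (\ref{eq:E132F1}), yields $0$ identically and cannot factor as $\floor{t_1}\floor{t_2}\floor{\zeta t_1t_2}$. The missing idea, which is the heart of the paper's proof, is that in this regime effectiveness at level $(000)$ can only be achieved by appending the component $v_l\hotimes v_h$, for which Proposition \ref{prop:computation} gives
\begin{align*}
E^{(111)}(v_l\hotimes v_h)=\Omega\hotimes v_h=-\zeta\floor{t_1}\floor{t_2}\floor{\zeta t_1t_2}\,v_h\hotimes v_h,
\end{align*}
nonzero precisely when $\t\notin\R$; this single computation simultaneously supplies the cyclic vector when $\t\notin\R$ and the obstruction (including your $\t\in\H$ case) when $\t\in\R$. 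Without identifying $v_l\hotimes v_h$ as the required correction, neither direction of the equivalence is actually established.
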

	\begin{proof}
		By the above,
		$	v_h\hotimes v_l+F_1v_h\hotimes F_{12}F_2v_h+F_2v_h\hotimes F_1F_{12}v_h
		$ is not effective at level $(000)$ under the assumption $\mvec{s}\in\R_{1,2}$. Based on the previous computation, effectiveness only needs to be shown at level $(000)$ and only by appending $v_l\hotimes v_h$ may we obtain a cyclic vector.
		By Proposition \ref{prop:computation},
		\begin{align*}
		E^{(111)}.(v_l\hotimes  v_h)
		&=	\Omega \hotimes  v_h=
		-\zeta  \floor{t_1}\floor{t_2}\floor{\zeta t_1t_2}v_l\hotimes  v_h.\qedhere
		\end{align*}
	\end{proof}
	To summarize the results of this section, we have the following corollary. 
	\begin{cor}\label{cor:X1X2}
		If $(\t,\s)$ belongs to any of $\X_1^2$, $\X_2^2$, or 
		$\mathcal{R}\times\R_{1,2}$ then $V(\t)\hotimes V(\s)$
		is not cyclic. 
	\end{cor}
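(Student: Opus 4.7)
The statement is essentially a repackaging of the case-by-case results already established in the section, so the plan is to assemble them rather than to do new computations. By Lemma \ref{lem:cyclic-ts}, any homogeneous cyclic vector for $\Vts$ must lie in the weight space $wt(\mvec{-ts})$ and admit a nonzero image under every projection $\pi_\psi$ after action by elements of $U^+$; equivalently, there must exist $\tilde v\in wt(\mvec{-ts})$ that is effective at every level $\psi\in\Psi$.

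For $(\t,\s)\in\X_1^2$, Corollary \ref{cor:011,110} says that no vector in $wt(\mvec{-ts})$ is effective at level $(011)$. Hence the image of every $\tilde v$ under the action of $U^+$ lies in the kernel of $\pi_{(011)}$, so $\tilde v$ cannot be cyclic; the representation $\Vts$ is therefore not homogeneous cyclic. The same reasoning, with level $(101)$ in place of $(011)$, handles the $\X_2^2$ case via the second half of Corollary \ref{cor:011,110}.

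For $(\t,\s)\in\mathcal{R}\times\R_{1,2}$, the conclusion is immediate from Lemma \ref{s1 s2 is 1}: when $\s\in\R_{1,2}$, the representation $\Vts$ is cyclic if and only if $\t\notin\mathcal{R}$, so $\t\in\mathcal{R}$ forces $\Vts$ to fail to be homogeneous cyclic. Combining the three cases completes the proof.

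There is essentially no obstacle: the work has been done in Corollary \ref{cor:011,110} and Lemma \ref{s1 s2 is 1}, and all that remains is to record that these three loci are precisely the ones where cyclicity has been ruled out. The only thing to be careful about is being explicit that ``not effective at level $\psi$'' (for any single $\psi$) is an obstruction to homogeneous cyclicity, since by Definition \ref{defn:effective} this means $\pi_\psi(xv)=0$ for every $x\in\U$, so the cyclic submodule $\bracks{F^{\psi'}E^{\psi''}\tilde v}$ misses the one-dimensional subspace $\bracks{v_h\hotimes F^\psi v_h}$.
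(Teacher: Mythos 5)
Your proposal is correct and matches the paper's intent exactly: the paper offers Corollary \ref{cor:X1X2} as a summary of the section with no separate proof, and your assembly of Corollary \ref{cor:011,110} (levels $(011)$ and $(101)$ for $\X_1^2$ and $\X_2^2$) together with Lemma \ref{s1 s2 is 1} (for $\R\times\R_{1,2}$) is precisely the intended argument. Your added remark that ineffectiveness at a single level $\psi$ obstructs homogeneous cyclicity, via Lemma \ref{lem:cyclic-ts} and Definition \ref{defn:effective}, is a correct and welcome explication of a step the paper leaves implicit.
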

	Generically, each of these cases can be illustrated by omissions from the representation graph of Figure \ref{fig:standard}, they can be seen in Figure \ref{fig:si is 1}. 
	\begin{figure}[h!]
		\centering
	\begin{subfigure}[]{0.3\textwidth}
	\centering
	\includegraphics[scale=1.5]{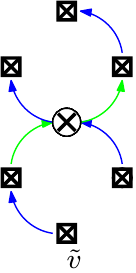}
	\caption{$\X_1^2$}
	\label{fig:1t1s}
\end{subfigure}	
		\hfill
		\begin{subfigure}[]{0.3\textwidth}
			\centering
			\includegraphics[scale=1.5]{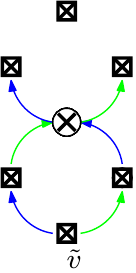}
			\caption{$\H\times\R_{1,2}$}
			\label{fig:tt11}
		\end{subfigure}
		\hfill
	\begin{subfigure}[]{0.3\textwidth}
\centering
\includegraphics[scale=1.5]{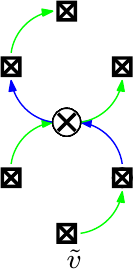}
\caption{$\X_2^2$}
\label{fig:t1s1}
\end{subfigure}
		\caption{Representation graph of $\Vts$ generated by $U^+$ acting on $\tilde{v}\in wt(\mvec{\lambda})$ when $(\t,\s)$ belongs to the indicated subset \hbox{of  $\P^2$.}
		}\label{fig:si is 1}
	\end{figure}
	
	\section{Cyclicity for  $\s\in\H$}\label{sec:cyclici}
	In addition to the cases considered in Section \ref{sec:cyclic1}, $v_h\hotimes  v_l$ is not a homogeneous cyclic vector when $(s_1s_2)^{2}=-1$. This section requires more work than the last because the computations involve the multiplicity two weight space occurring at levels $(101)$ and $(010)$. Throughout this section, {we assume that $\s\in\H$  unless specified otherwise.}
	\begin{lem}
		The vector subspace generated by $ E_1E_2v_h\hotimes  v_l$ and $E_2E_1v_h\hotimes  v_l$ has \hbox{dimension $1$.}
	\end{lem}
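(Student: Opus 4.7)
The plan is to reduce the claim about $\Vts$ to a purely ``right-tensor-factor'' computation inside $\Vs$. Since $\Delta(E_i)=E_i\otimes K_i+1\otimes E_i$ and $E_iv_h=0$, every $E_i$ acts on $v_h\hotimes w$ as $v_h\hotimes E_iw$. Applying this twice,
\begin{align}
E_1E_2(v_h\hotimes v_l)=v_h\hotimes E_1E_2v_l, && E_2E_1(v_h\hotimes v_l)=v_h\hotimes E_2E_1v_l,
\end{align}
so it suffices to show that the subspace of $\Vs$ spanned by $E_1E_2v_l$ and $E_2E_1v_l$ is one-dimensional when $(s_1s_2)^2=-1$.

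Next I would compute these two vectors in the PBW basis of $\Vs$ by two successive applications of Table \ref{table:actions}. The intermediate vectors $E_2v_l=\floor{s_2}F^{(110)}v_h$ and $E_1v_l=\floor{s_1}F^{(011)}v_h$ then produce
\begin{align}
E_1E_2v_l&=\floor{s_2}\bigl(\zeta s_1\,F^{(101)}v_h-\floor{\zeta s_1}\,F^{(010)}v_h\bigr),\\
E_2E_1v_l&=\floor{s_1}\bigl(s_2^{-1}F^{(101)}v_h+\floor{s_2}\,F^{(010)}v_h\bigr).
\end{align}
Both vectors lie in the two-dimensional weight space $\bracks{F^{(101)}v_h,F^{(010)}v_h}$, so linear dependence is detected by a $2\times 2$ determinant.

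The key algebraic identity is that this determinant factors as $\floor{s_1}\floor{s_2}\floor{\zeta s_1s_2}$ up to a nonzero scalar. To see this, expand and use $\floor{\zeta x}=\tfrac{x+x^{-1}}{2}$ and $\floor{x}=\tfrac{x-x^{-1}}{2\zeta}$; the cross terms collapse to $\tfrac{s_1s_2+(s_1s_2)^{-1}}{2}=\floor{\zeta s_1s_2}$. By hypothesis $\s\in\H$ means $(s_1s_2)^2=-1$, which forces $\floor{\zeta s_1s_2}=0$ and hence linear dependence of $E_1E_2v_l$ and $E_2E_1v_l$.

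The only remaining point is to rule out the degenerate case in which both vectors vanish (which would give dimension $0$, not $1$). Both coefficient rows vanish only if $\floor{s_1}=\floor{s_2}=0$, i.e.\ $\s\in\R_{1,2}$; but $\R_{1,2}\cap\H=\varnothing$ since $s_1^2=s_2^2=1$ forces $(s_1s_2)^2=1\neq -1$. Hence at least one of $E_1E_2v_l$, $E_2E_1v_l$ is nonzero, and the span is exactly one-dimensional. I do not anticipate a serious obstacle; the only care needed is the small verification that the determinant really simplifies to the advertised product, which is a short manipulation of quantum integers at $q=\zeta$.
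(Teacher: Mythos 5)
Your proposal is correct and follows essentially the same route as the paper: express $E_1E_2v_l$ and $E_2E_1v_l$ in the basis $\bracks{F^{(101)}v_h,F^{(010)}v_h}$ via Table \ref{table:actions} and show the $2\times 2$ determinant of coefficients reduces to a multiple of $\floor{\zeta s_1s_2}$, which vanishes on $\H$. Your two additions — making explicit that $E_i$ passes to the second tensor factor since $\gamma_{\t}(E_i)=0$, and checking that not both vectors vanish (since $\R_{1,2}\cap\H=\varnothing$) so the dimension is exactly $1$ rather than at most $1$ — are small completeness touches on the same argument.
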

	\begin{proof}
		The proof is a computation of the vectors $E_1E_2v_l$ and $E_2E_1v_l$ into simplified terms which involve only the $F^\psi$, and showing they are multiples of each other. Using Table \ref{table:actions}, we express these vectors in the basis $\langle F^{(101)}v_h, F^{(010)}v_h \rangle$:
		\[E_1E_2v_l= \floor{s_2}E_1F^{(110)}v_h=\floor{s_2}(	\zeta 	s_1 F^{(101)}v_h-\floor{\zeta s_1} F^{(010)}v_h)=
		\floor{s_2}\begin{bmatrix}
		\zeta 	s_1\\- \floor{\zeta s_1}
		\end{bmatrix}\]
		and 
		\[
		E_2E_1v_l=\floor{s_1}E_2F^{(011)}v_h=\floor{s_1}(s_2^{-1}F^{(101)}v_h+\floor{s_2}F^{(010)}v_h) =
		\floor{s_1}\begin{bmatrix}
		s_2^{-1}\\\floor{s_2}
		\end{bmatrix}.\]
		The linear dependence is exhibited by computing the determinant of the matrix of coefficients, ignoring scale factors:
		\[
		\begin{vmatrix}
		{\zeta s_1}&{s_2}^{-1}\\ -\floor{\zeta s_1}&\floor{s_2}
		\end{vmatrix}=\floor{\zeta s_1s_2}=0. \qedhere
		\]\end{proof}
	\begin{cor}\label{cor:E101E011}
		The vectors $E^{(101)}v_h\hotimes  v_l$ and $E^{(011)}v_h\hotimes  v_l$ are zero when $\s\in\H$. Hence, 
		$v_h\hotimes  v_l$ is not effective at the levels $(001)$ and $(100)$.
	\end{cor}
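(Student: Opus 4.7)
The plan has two parts: first establish that the two vectors $E^{(101)}(v_h\hotimes v_l)$ and $E^{(011)}(v_h\hotimes v_l)$ vanish when $\s\in\H$, then deduce non-effectiveness at the two prescribed levels. The key input will be the proportionality $\bracks{E_1 E_2 v_l,E_2 E_1 v_l}$ established in the preceding lemma, together with the nilpotency $E_\alpha^2=0$.

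For the vanishing, I first observe that via the coproduct $\Delta(E_i)=E_i\otimes K_i+1\otimes E_i$ and the relation $E_iv_h=0$, any $E\in U^+$ acts on the tensor by $E.(v_h\hotimes v_l)=v_h\hotimes Ev_l$, so it suffices to show $E_1E_{12}v_l=0$ and $E_{12}E_2v_l=0$ in $V(\s)$. Using the defining relation $E_{12}=-(E_1E_2+\zeta E_2E_1)$ and $E_1^2=0$, I get
\[
E_1E_{12}v_l=-E_1^2E_2v_l-\zeta E_1E_2E_1v_l=-\zeta E_1E_2E_1v_l.
\]
By the preceding lemma, $E_2E_1v_l=\lambda E_1E_2v_l$ for some scalar $\lambda$ whenever $\s\in\H$, so $E_1E_2E_1v_l=\lambda E_1^2E_2v_l=0$. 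Hence $E_1E_{12}v_l=0$. The analogous computation $E_{12}E_2=-\zeta E_2E_1E_2$, combined with $E_2E_1v_l=\lambda E_1E_2v_l$ and $E_2^2=0$, yields $E_{12}E_2v_l=0$.

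To deduce non-effectiveness at level $(001)$, I expand an arbitrary $x\in\U$ in the PBW basis as $x=\sum F^{\psi_1}K^{\underline k}E^{\psi_2}$ and apply to $v_h\hotimes v_l$. The part $K^{\underline k}E^{\psi_2}.(v_h\hotimes v_l)$ equals a scalar times $v_h\hotimes E^{\psi_2}v_l$; then $F^{\psi_1}$ acts via its coproduct. Only the fully‐diagonal summand $K^{-\psi_1}\otimes F^{\psi_1}$ of $\Delta(F^{\psi_1})$ leaves $v_h$ in the first tensor factor (every other branch introduces some $F_\alpha v_h\ne c\,v_h$, and hence lies in a basis direction orthogonal to $v_h\hotimes F_2v_h$). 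Consequently $\pi_{(001)}\bigl(x.(v_h\hotimes v_l)\bigr)$ is a linear combination of $\mvec{t}^{-\psi_1}$ times the $F_2v_h$–coefficient of $F^{\psi_1}E^{\psi_2}v_l$ in $V(\s)$. A weight–space enumeration constrains the surviving PBW pairs to those with $P(\psi_2)-P(\psi_1)\equiv(2,1)\pmod 4$, leaving exactly $(\psi_1,\psi_2)=((000),(110))$ and $((001),(111))$. The first gives $E_1E_{12}v_l=0$ directly, while the second reduces to $F_2\cdot E_1E_{12}(E_2v_l)$, which vanishes because $E_1 E_{12}$ annihilates every vector in the image of $E_2$ acting on $V(\s)$ by the same proportionality argument. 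The case of level $(100)$ is handled symmetrically from $E_{12}E_2v_l=0$.

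The main obstacle will be the second step: keeping bookkeeping clean in the coproduct expansion and justifying that only the fully‐diagonal branch can contribute to the $v_h\hotimes F_2v_h$ coefficient. This rests on the linear independence of $v_h$ from $\{F^\psi v_h:\psi\ne 0\}$ in the first tensor factor, together with the 1–dimensionality of the $\sigma^{(001)}\mvec{s}$ weight space in $V(\s)$. Once these two facts are in place, the enumeration of admissible $(\psi_1,\psi_2)$ is short and every remaining case feeds back into the vanishing established in the first step.
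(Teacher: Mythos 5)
Your reading of the statement is the intended one: the operators relevant to levels $(001)$ and $(100)$ are $E_1E_{12}$ and $E_{12}E_2$ (in the paper's PBW labelling these are $E^{(110)}$ and $E^{(011)}$), and your first step is exactly the route the paper intends — the corollary is meant to follow from the preceding dimension-one lemma via $E_{12}=-(E_1E_2+\zeta E_2E_1)$ and $E_\alpha^2=0$. However, the proportionality from that lemma must be used in the correct direction, and as written your argument has a gap on part of $\H$. From $E_2E_1v_l=\lambda E_1E_2v_l$ you may kill $E_1E_2E_1v_l$, but to kill $E_2E_1E_2v_l$ you need the reverse containment $E_1E_2v_l\in\bracks{E_2E_1v_l}$. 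On the loci $\H\cap\X_1$ and $\H\cap\X_2$ exactly one of $\floor{s_1},\floor{s_2}$ vanishes, so exactly one of the vectors $E_1E_2v_l$, $E_2E_1v_l$ is zero while the other is not; then only one direction of the linear dependence is available, and one of your two vanishing claims is not covered by the cited relation (indeed, in one of these cases the equation $E_2E_1v_l=\lambda E_1E_2v_l$ has no solution $\lambda$ at all). The gap closes with a one-line computation from Table \ref{table:actions}: for instance
\begin{align*}
E_1(E_2E_1v_l)=\floor{s_1}\bigl(s_2^{-1}\floor{\zeta s_1}+\zeta s_1\floor{s_2}\bigr)F^{(001)}v_h,
\qquad s_2^{-1}\floor{\zeta s_1}+\zeta s_1\floor{s_2}=\tfrac{1}{2}\bigl(s_1s_2+(s_1s_2)^{-1}\bigr),
\end{align*}
and the right-hand coefficient vanishes identically on $\H$; the computation for $E_2(E_1E_2v_l)$ is symmetric. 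With that supplement, part one is correct and matches the paper.

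Your deduction of non-effectiveness is more complicated than necessary because it misdescribes the action on $\Vts$: there $F_1,F_2$ do \emph{not} act through their coproduct — since $U^-$ does not lie in the subalgebra one induces over, they act by left multiplication on the first tensor factor only (this is exactly the point made in the proof of Lemma \ref{lem:effective}). Consequently, writing $x=\sum F^{\psi_1}K^{\underline{k}}E^{\psi_2}$, every summand with $\psi_1\neq(000)$ is annihilated by $\pi$, and the weight count mod $4$ leaves a single relevant operator per level: $E_1E_{12}$ for level $(001)$ and $E_{12}E_2$ for level $(100)$. Your extra admissible pair $((001),(111))$, together with the unproved blanket claim that $E_1E_{12}$ annihilates the entire image of $E_2$ in $V(\s)$, are artifacts of working in $V(\t)\otimes V(\s)$ with the coproduct while $\pi_\psi$ and the basis $\mathscr{B}'$ are defined in $\Vts$; if you do want to keep that term, it vanishes either because $E^{(111)}v_l=\Omega=-\zeta\floor{s_1}\floor{s_2}\floor{\zeta s_1s_2}v_h=0$ on $\H$, or simply because $E_1E_{12}E_2v_l=E_1\bigl(E_{12}E_2v_l\bigr)=0$ once the first part is established.
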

	
	As in Lemmas \ref{lem:si is 1} and \ref{s1 s2 is 1}, we determine which $\mvec{-ts}$ weight vectors must be added to $v_h\hotimes v_l$ to yield a cyclic vector $\tilde{v}$ and state when no such vectors exist. We first produce a vector which is effective at either level $(101)$ or $(010)$, and does not belong to the $\langle E^{(101)}v_h\hotimes  v_l\rangle$ subspace.
	Recall that $\pi$ denotes the projection to the subspace $\bracks{v_h\hotimes F^\psi v_h:\psi\in\Psi}$ with respect to the basis $\mathscr{B}'$.\begin{defn}
		A vector ${v}\in wt{(\mvec{-ts})}$ is said to have the \emph{spanning property} if \begin{align}
		\text{span}\{ \pi E_1E_2v,\pi E_{12}v\}=\text{span}\{ v_h\hotimes  F_1F_2v_h,v_h\hotimes  F_{12} v_h\}.
		\end{align} 
	\end{defn}We consider four vectors: 
	\begin{align}
	F_1F_2v_h\hotimes  F_1F_2v_h, && F_1F_2v_h\hotimes  F_{12}v_h, &&
	F_{12}v_h\hotimes  F_1F_2v_h,  &&\text{and}&& F_{12}v_h\hotimes  F_{12}v_h
	\end{align}
	whose linear combinations are candidates for producing a vector with the spanning property. We denote the span of these vectors by $\Lambda$. \newline 
	
	Let $\Deltasl(\H)$ be the subset of $\H^2$ given by
	\begin{equation}\label{eq:H}
	\{(\t,\s)\in \H^2:(t_1s_1)^2=-1 \}=\{(\t,\s)\in \mathcal{P}^2 :(t_1s_1)^2= (t_2s_2)^2=(t_1t_2)^2=-1\}.
	\end{equation}
	Note that $\Deltasl(\H)$ is preserved by the action of $\mathcal{P}$ on $\mathcal{P}^2$ given in (\ref{eq:action}).
	
	\begin{lem}\label{b1b2 not cyclic}
		There exists a  vector $v\in\Lambda$ effective at either level $(101)$ or $(010)$ if and only if $(\t,\s)\notin \Deltasl(\H)$.
	\end{lem}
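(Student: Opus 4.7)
The plan is to compute, for each of the four basis vectors $v_1 = F_1F_2 v_h \hotimes F_1F_2 v_h$, $v_2 = F_1F_2 v_h \hotimes F_{12} v_h$, $v_3 = F_{12} v_h \hotimes F_1F_2 v_h$, $v_4 = F_{12} v_h \hotimes F_{12} v_h$ of $\Lambda$, the projections $\pi E_1E_2 v_j$ and $\pi E_{12} v_j$ onto the subspace $\bracks{v_h \hotimes F_1F_2 v_h, v_h \hotimes F_{12} v_h}$. Because every $v \in \Lambda$ has weight $\mvec{-ts}$, the only raising operators in $U^+$ that can land in the weight space of $v_h\hotimes F_1F_2 v_h$ (equivalently $v_h \hotimes F_{12} v_h$) lie in the two-dimensional weight-$(\zeta,\zeta)$ subspace generated by $E_1E_2$ and $E_{12}$. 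Thus effectiveness of some $v \in \Lambda$ at level $(101)$ or $(010)$ is equivalent to the non-vanishing of at least one of these eight vectors.

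Each $\pi E_1E_2 v_j$ and $\pi E_{12} v_j$ will be unfolded via the coproducts of $E_1$, $E_2$, and $E_{12}$, combined with Table \ref{table:actions}. Corollary \ref{cor:E101E011} is crucial here: the identities $E^{(101)} v_l = E^{(011)} v_l = 0$, valid under the standing assumption $\s \in \H$, annihilate many of the $v_h \hotimes (\cdots)$ terms that arise from raising on the second tensor factor, leaving only controlled contributions. The surviving terms can be assembled into an $8 \times 4$ coefficient matrix $M(\t,\s)$ whose columns are indexed by $v_j$ and whose rows give the coefficients of $v_h \hotimes F_1F_2 v_h$ and $v_h \hotimes F_{12} v_h$ in $\pi E_1E_2 v_j$ and $\pi E_{12} v_j$ respectively. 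The existence of an effective $v \in \Lambda$ is equivalent to $M(\t,\s) \ne 0$.

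The key algebraic claim to prove is that every entry of $M(\t,\s)$ vanishes precisely on $\Deltasl(\H)$. I expect each entry to factor as a product of floor brackets of the form $\floor{t_is_j}$, $\floor{\zeta t_is_j}$, $\floor{\zeta t_1t_2}$, $\floor{\zeta s_1s_2}$, etc. Using the standing assumption $\s\in\H$ (that is, $(s_1s_2)^2=-1$) together with the identity $(t_1s_1)^2(t_2s_2)^2 = (t_1t_2)^2(s_1s_2)^2$, the simultaneous vanishing condition should collapse to the two equations $(t_1t_2)^2=-1$ and $(t_1s_1)^2=-1$, i.e.\ exactly the defining equations of $\Deltasl(\H)$. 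Conversely, when $(\t,\s)\notin \Deltasl(\H)$, some floor bracket is nonzero, producing a nonzero column of $M(\t,\s)$, and the corresponding $v_j$ (or an explicit linear combination) is effective at level $(101)$ or $(010)$.

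The main obstacle will be the bookkeeping for the $E_{12}$ computation. Its coproduct contains a genuine cross-term (of the form $E_2 \otimes E_1K_2$ up to scalar), so $\pi E_{12} v_j$ splits into two distinct contributions whose simplification requires repeated use of Table \ref{table:actions} on both tensor factors and careful tracking of the $\zeta$-twisted commutation relations $K_i E_j = \zeta^{d_iA_{ij}} E_j K_i$. I expect the matching between algebraic cancellations and the $\H$-condition to be delicate, but the final reduction to the single new equation $(t_1s_1)^2 = -1$ should emerge cleanly once the floor-bracket identities are in hand.
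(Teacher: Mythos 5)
Your proposal follows essentially the same route as the paper: reduce effectiveness of $\Lambda$ at levels $(101)$ and $(010)$ to the action of the two operators $E_1E_2$ and $E_{12}$ on the four basis vectors, compute the resulting coefficients of $v_h\hotimes F_1F_2v_h$ and $v_h\hotimes F_{12}v_h$ (the paper reads them off from $\rho_{U^{0}}(E_1E_2F_1F_2)$, $\rho_{U^{0}}(E_1E_2F_{12})$, $\rho_{U^{0}}(E_{12}F_1F_2)$, $\rho_{U^{0}}(E_{12}F_{12})$ using the Appendix relations), and identify the simultaneous vanishing locus, which is $(t_1s_1)^2=(t_2s_2)^2=-1$ and, given the standing assumption $\s\in\H$, coincides with $\Deltasl(\H)$, equivalent to the two conditions you predict. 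The only inessential difference is your appeal to Corollary \ref{cor:E101E011}, which is not actually needed here: the second tensor factors appearing in $\Lambda$ are $F_1F_2v_h$ and $F_{12}v_h$ rather than $v_l$, and any term in which the raising operators act partially on the second factor cannot contribute to $\pi_{(101)}$ or $\pi_{(010)}$, since the first factor then fails to reach $v_h$.
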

	\begin{proof} We show that effectiveness of all vectors $v\in\Lambda$ fails if and only if 	$t_1s_1\in\{\pm i\}$ and $t_2s_2\in \{\pm i\}$.
		We determine when the basis vectors of $\Lambda$ 
		are all simultaneously ineffective at levels $(101)$ and $(010)$. We focus on computing the relevant components of these vectors when acted on by $E_1E_2$ and $E_{12}$. Among the eight vectors to compute, we begin by computing $E_1E_2F_1F_2$, $E_1E_2F_{12}$, $E_{12}F_1F_2$, $E_{12}F_{12}$ and project them via $\rho_{U^{0}}$. We refer to equations (\ref{eq:E12F12}), (\ref{eq:E12F3}), (\ref{eq:E3F12}), and (\ref{eq:E3F3}) to obtain:
		\begin{align*}
		\rho_{U^{0}} E_1E_2F_1F_2&= \floor{K_1}\floor{K_2}, &
		\rho_{U^{0}} E_1E_2F_{12}&=
		- \floor{K_1}K_2^{-1},\\
		\rho_{U^{0}} E_{12}F_1F_2&=
		-\zeta K_1^{-1}\floor{K_2}, &
		\rho_{U^{0}} E_{12}F_{12}&=\floor{K_1K_2}.
		\end{align*}
		It follows that for $\psi\in \{(101), (010)\}$, we have:
		\begin{align*}
		d_\psi(E_1E_2.(F_1F_2v_h\hotimes  F^\psi v_h))&=\floor{\zeta t_1s_1}\floor{\zeta t_2s_2}, &
		d_\psi(E_1E_2.(F_{12}v_h\hotimes  F^\psi v_h))&=\zeta \floor{\zeta t_1s_1}(t_2s_2)^{-1},\\
		d_\psi(E_{12}.(F_1F_2v_h\hotimes  F^\psi v_h))&=(t_1s_1)^{-1}\floor{\zeta t_2s_2}, &
		d_\psi(E_{12}.(F_{12}v_h\hotimes  F^\psi v_h))&=	- \floor{t_1s_1t_2s_2}.
		\end{align*}
		The vanishing of $\floor{\zeta t_1s_1}$ and $\floor{\zeta t_2s_2}$ implies all the above expressions vanish. Thus, all of the above vectors vanish exactly when $(t_1s_1)^2=-1 $ and $ (t_2s_2)^2=-1$. In which case, all vectors in $\Lambda$ are ineffective for levels $(101)$ and $(010)$. Since $\mvec{s}\in\H$, we have proven the claim.
	\end{proof}
	
	\begin{lem}\label{linear indep}
		There exists $v\in \Lambda$ with the spanning property if and only if \begin{align}\label{eq:indep}
		(t_1s_1)^2\neq-1, && (t_2s_2)^2\neq -1, 
		&& \text{and} && (t_1t_2)^2\neq 1.  
		\end{align}
	\end{lem}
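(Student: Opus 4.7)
The strategy is linear-algebraic: parametrize
$$v = a\,F_1F_2v_h\hotimes F_1F_2v_h + b\,F_1F_2v_h\hotimes F_{12}v_h + c\,F_{12}v_h\hotimes F_1F_2v_h + d\,F_{12}v_h\hotimes F_{12}v_h,$$
and let $w_1 = v_h\hotimes F_1F_2v_h$, $w_2 = v_h\hotimes F_{12}v_h$. Writing
\begin{align*}
\pi(E_1E_2\cdot v) &= \alpha(v)w_1 + \beta(v)w_2, \\
\pi(E_{12}\cdot v) &= \gamma(v)w_1 + \delta(v)w_2,
\end{align*}
the spanning property is equivalent to $\det\begin{pmatrix}\alpha(v)&\beta(v)\\\gamma(v)&\delta(v)\end{pmatrix}\neq 0$ for some choice of $(a,b,c,d)$. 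So the lemma reduces to determining when this determinant polynomial in $(a,b,c,d)$ is identically zero.

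First I would assemble the four row vectors $(\alpha(v_i),\beta(v_i),\gamma(v_i),\delta(v_i))$ for the four basis vectors of $\Lambda$. The ``diagonal'' entries, where the second tensor factor is preserved at level $\psi\in\{(101),(010)\}$, are already recorded in the proof of Lemma~\ref{b1b2 not cyclic} using the Cartan-projection identities like $\rho_{U^{0}}E_1E_2F_1F_2 = \floor{K_1}\floor{K_2}$ and $\rho_{U^{0}}E_{12}F_{12} = \floor{K_1K_2}$. The ``off-diagonal'' entries come from those coproduct summands of $\Delta(E_1E_2)$ and $\Delta(E_{12})$ that act by a raising operator on the first factor while weight-preservingly mixing $F_1F_2v_h$ and $F_{12}v_h$ on the second factor; these are computed from Table~\ref{table:actions} together with the group-like commutations $K_1E_2 = \zeta^{-1}E_2K_1$ and $K_2E_1=\zeta^{-1}E_1K_2$, using the fact that $F^\psi v_h$ for $\psi\in\{(101),(010)\}$ is an eigenvector of $K_1K_2$ with eigenvalue $-s_1s_2$.

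The forward (necessity) direction then proceeds by cases. If $(t_1s_1)^2 = -1$, all four diagonal entries of the first row vanish (the factor $\floor{\zeta t_1s_1}$ in both $E_1E_2$ entries and the factor $(t_1s_1)^{-1}$ pole is cancelled by $\floor{\zeta t_2s_2}$ analysis); a direct check then shows that the off-diagonal contributions also collapse so that the $\alpha,\beta$ functionals become proportional to $\gamma,\delta$, forcing $\det = 0$. The case $(t_2s_2)^2=-1$ is symmetric. For $(t_1t_2)^2 = 1$, the identity $[E_{12},F_{12}] = \floor{K_1K_2}$ together with $\floor{t_1t_2} = 0$ forces a linear relation between $E_1E_2$ and $E_{12}$ acting on the highest-weight line in the first factor, so the two rows become proportional on all of $\Lambda$.

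For the converse, assuming all three inequalities hold I would exhibit an explicit $v\in\Lambda$ realizing spanning. The natural candidates are $v = v_1$ and $v = v_4$; for each of these the $2\times 2$ matrix factors nicely, and its determinant is a product of the nonvanishing expressions $\floor{\zeta t_1s_1}$, $\floor{\zeta t_2s_2}$, and $\floor{t_1s_1t_2s_2}$ (the last being nonzero precisely because $(t_1t_2)^2(s_1s_2)^2 = -(t_1t_2)^2 \neq -1$). I expect the main obstacle to be the bookkeeping in Step~2: correctly enumerating all coproduct contributions to the off-diagonal entries and verifying the proportionality claims in the three degenerate cases. Once those entries are written down explicitly, the remaining argument is a direct computation of a $2\times 2$ determinant.
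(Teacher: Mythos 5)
Your setup coincides with the paper's: parametrize $v\in\Lambda$ by coefficients $(c_1,c_2,c_3,c_4)$ and ask when the $2\times2$ matrix formed by $\pi(E_1E_2v)$ and $\pi(E_{12}v)$ in the basis $w_1=v_h\hotimes F_1F_2v_h$, $w_2=v_h\hotimes F_{12}v_h$ can be made nonsingular. However, two of the concrete claims your plan rests on are false. First, the ``off-diagonal'' entries you propose to compute are identically zero: to produce a component whose first factor is $v_h$, the full raising operator must be spent on the first tensor factor, so only Cartan (group-like) elements reach the second factor, and any coproduct term that instead sends an $E$ to the second factor strictly lowers its level and cannot reproduce $F_1F_2v_h$ or $F_{12}v_h$. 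This is precisely the observation the paper uses: the $w_1$-component of $\pi E_1E_2v$ comes only from the $c_1$ and $c_3$ terms, and the $w_2$-component only from $c_2$ and $c_4$; there is no mixing. Second, and as a direct consequence, your sufficiency witnesses $v=v_1$ or $v=v_4$ cannot work: for a single basis vector of $\Lambda$, both $\pi E_1E_2v_i$ and $\pi E_{12}v_i$ are multiples of the single vector $w_j$ matching its second factor, so the determinant vanishes identically. The paper's computation gives $\det=-\zeta\floor{\zeta t_1s_1}\floor{\zeta t_1s_1t_2s_2}\floor{\zeta t_2s_2}(c_1c_4-c_2c_3)$, so a witness needs $c_1c_4\neq c_2c_3$, for instance $v=v_1+v_4$; with that correction the converse is immediate. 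Note also that the relevant third factor is $\floor{\zeta t_1s_1t_2s_2}$, which, since $\s\in\H$, vanishes exactly when $(t_1t_2)^2=1$; your parenthetical invokes $\floor{t_1s_1t_2s_2}$, whose vanishing corresponds to $(t_1t_2)^2=-1$ and is not the condition in the lemma.

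On the necessity side your case analysis is heavier than needed and partly misattributed: when $(t_1s_1)^2=-1$ the common factor $\floor{\zeta t_1s_1}$ kills $\pi E_1E_2v$ for every $v\in\Lambda$ outright (the functionals do not merely become proportional), and the case $(t_1t_2)^2=1$ follows from the factor $\floor{\zeta t_1s_1t_2s_2}$ in the determinant rather than from $[E_{12},F_{12}]=\floor{K_1K_2}$, whose eigenvalue on the relevant weight vectors is a multiple of $\floor{t_1s_1t_2s_2}$ and does not vanish under that hypothesis. Since the proposal defers the computations that constitute the lemma, and the two explicit predictions it does make (nonzero mixing terms, single-basis-vector witnesses) fail, the argument as written has a genuine gap, even though the underlying determinant framework is the right one.
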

	\begin{proof}
		Let $v\in \Lambda$ be the linear combination with coefficients in $\Q[t_1^\pm,t_2^\pm,s_1^\pm,s_2^\pm]$,
		\[
		c_1F_1F_2v_h\hotimes  F_1F_2v_h + c_2F_1F_2v_h\hotimes  F_{12}v_h+
		c_3F_{12}v_h\hotimes  F_1F_2v_h  +c_4F_{12}v_h\hotimes  F_{12}v_h.
		\] 
		Let $v_{12}=\pi E_1E_2v$ and $v_{3}=\pi E_{12}v$. The $v_h\hotimes  F_1F_2v_h$ component of $v_{12}$ only comes from $c_1 F_1F_2v_h\hotimes  F_1F_2v_h$ and $c_3 F_{12}v_h\hotimes  F_1F_2v_h$, while its $v_h\hotimes  F_{12}v_h$ component only from $c_2 F_1F_2v_h\hotimes  F_{12}v_h$ and $c_4 F_{12}v_h\hotimes  F_{12}v_h$. Similarly for $v_{3}$. Moreover, each of these components have already been computed in the proof of Lemma \ref{b1b2 not cyclic}. In our current notation, we write $v_{12}$ and $v_{3}$ as vectors in the basis $\bracks{v_h\hotimes  F_1F_2v_h,v_h\hotimes  F_{12}v_h}$:
		\begin{align*}
		v_{12}=\floor{\zeta t_1s_1}\begin{bmatrix}
		c_1\floor{\zeta t_2s_2}+\zeta c_3(t_2s_2)^{-1} \\	
		c_2\floor{\zeta t_2s_2}+\zeta c_4(t_2s_2)^{-1}
		\end{bmatrix},
		&&
		v_{3}=\begin{bmatrix}c_1(t_1s_1)^{-1}\floor{\zeta t_2s_2}- c_3\floor{t_1s_1t_2s_2}\\
		c_2(t_1s_1)^{-1}\floor{\zeta t_2s_2}- c_4\floor{t_1s_1t_2s_2}\end{bmatrix}.
		\end{align*}
		We compute the determinant of the matrix whose columns are the vectors $v_{12}$ and $v_{3}$:
		\begin{align*}
		\begin{vmatrix}
		v_{12}& v_{3}
		\end{vmatrix}&=\begin{vmatrix}
		v_{12}& v_{3}+t_1s_1v_{12}
		\end{vmatrix}=-\zeta \floor{\zeta t_1s_1}\floor{\zeta t_1s_1t_2s_2}\floor{\zeta t_2s_2}(c_1c_4-c_2c_3).\qedhere
		\end{align*}
	\end{proof}
	The previous lemma has shown the spanning property independently of $v_h\hotimes  v_l$ being present. By adding other $\mvec{-ts}$ weight vectors to the vectors of $\Lambda$ just considered, the spanning property may hold more generally.
	We proceed by assuming at least one of the conditions in (\ref{eq:indep}) is not met.
	\begin{lem}\label{lem:t1s1}
		Suppose $(t_1s_1)^2=-1$. There exists  $v\in wt(\mvec{-ts})$ with the spanning property  if and only if 
		\begin{align}
		(\t,\s)\notin \Deltasl(\H)
		&&\text{ and either }&& \t\notin\X_2 && \text{ or } && \s\notin\R_{12,2}.
		\end{align}
	\end{lem}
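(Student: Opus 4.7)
The proof continues the analysis begun in Lemmas~\ref{b1b2 not cyclic} and \ref{linear indep}, where we now allow $v$ to be an arbitrary element of the full $\mvec{-ts}$ weight space rather than just of $\Lambda$. Beyond $\Lambda$, the $\mvec{-ts}$ weight space of $\Vts$ contains $v_h\hotimes v_l$, $v_l\hotimes v_h$, and the four ``mixed'' vectors $F^{(100)}v_h\hotimes F^{(011)}v_h$, $F^{(011)}v_h\hotimes F^{(100)}v_h$, $F^{(001)}v_h\hotimes F^{(110)}v_h$, $F^{(110)}v_h\hotimes F^{(001)}v_h$. The plan is to compute, for every such candidate summand $w$, the projections $\pi E_1E_2 w$ and $\pi E_{12}w$ at levels $(101)$ and $(010)$, assemble the combined contributions into a single coefficient matrix, and read off when its rank is $2$.

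First I would dispose of $v_h\hotimes v_l$ and $v_l\hotimes v_h$. By Corollary~\ref{cor:E101E011} and the relation $E_{12}=-(E_1E_2+\zeta E_2E_1)$, both $E_{12}$ and $E_1E_2$ annihilate $v_h\hotimes v_l$. Using Lemma~\ref{lem:multiplyroots2}, $\pi$ applied to $E_i(v_l\hotimes v_h)$ also vanishes since $E_i v_h=0$ and the remaining terms all have a nontrivial first tensor factor. Thus only $\Lambda$ and the four mixed vectors can contribute. For each mixed vector I would compute, via the coproduct $\Delta(E_i)=E_i\otimes K_i + 1\otimes E_i$ and Table~\ref{table:actions}, the $v_h\hotimes F_1F_2 v_h$ and $v_h\hotimes F_{12} v_h$ components of $\pi E_1E_2 w$ and $\pi E_{12} w$. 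A representative calculation yields
\[
\pi E_1E_2(F^{(100)}v_h\hotimes F^{(011)}v_h)=\zeta s_1\floor{t_1}\bigl(s_2^{-1}v_h\hotimes F^{(101)}v_h + \floor{s_2}v_h\hotimes F^{(010)}v_h\bigr),
\]
and the other three mixed-vector contributions are analogous, with scalar factors built from $\floor{t_1},\floor{t_2},\floor{s_1},\floor{s_2}$.

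Next I would combine these with the contributions from $\Lambda$ computed in the proof of Lemma~\ref{linear indep}. Under the standing hypothesis $(t_1s_1)^2=-1$ we have $\floor{\zeta t_1s_1}=0$, so $\pi E_1E_2 v = 0$ for every $v\in\Lambda$, while $\pi E_{12}v$ contributes via $v_3$, with scalar weight $(t_1s_1)^{-1}\floor{\zeta t_2s_2}-(\cdot)\floor{t_1s_1t_2s_2}$. Arranging the full data as a $2\times 10$ matrix (rows indexed by the basis $\{v_h\hotimes F_1F_2v_h,\ v_h\hotimes F_{12}v_h\}$, columns by the candidate summands paired with $E_1E_2$ or $E_{12}$), the spanning property holds for some choice of $v$ if and only if this matrix has rank $2$.

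The rank analysis splits on the vanishing of $\floor{\zeta t_2 s_2}$, $\floor{t_1s_1 t_2 s_2}$, $\floor{t_2}$, and $\floor{s_2}$. The primary obstruction is $(\t,\s)\in\Deltasl(\H)$: here $(t_2s_2)^2=-1$ forces $\floor{\zeta t_2s_2}=0$, and $(t_1s_1t_2s_2)^2=1$ forces $\floor{t_1s_1 t_2s_2}=0$, so the $E_{12}$-contributions from $\Lambda$ collapse to zero; simultaneously, $(t_2s_2)^2=-1$ together with $(s_1s_2)^2=-1$ and $(t_1s_1)^2=-1$ force $\floor{t_2}$ and $\floor{s_2}$ to appear only in the vanishing combination, so the mixed vectors cannot recover the missing direction. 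The secondary obstruction is $\t\in\X_2$ and $\s\in\R_{12,2}$: then $\floor{t_2}=\floor{s_2}=0$, which kills every mixed-vector contribution, while $\pi E_1E_2 v$ already vanishes on $\Lambda$, so again the rank drops to $1$. Outside these two loci, I would verify directly that one can always choose coefficients making the matrix have rank $2$, usually by taking a generic combination of one $\Lambda$-vector and one mixed vector.

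The main obstacle will be bookkeeping: there are ten candidate summands and two operators, so the coefficient matrix is large, and one must check in each region carved out by the four scalar conditions that the rank is exactly what the statement predicts. Once the rank calculation is pinned down, translating the vanishing loci into the subset descriptions $\Deltasl(\H)$, $\X_2$, and $\R_{12,2}$ is a matter of reading off the defining equations $(t_2s_2)^2=-1$, $t_2^2=1$, and $s_2^2=1$ (the last combined with $\s\in\H$).
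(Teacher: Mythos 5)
Your overall plan---enlarging the search from $\Lambda$ to the full ten-dimensional $\mvec{-ts}$ weight space, computing the projections $\pi E_1E_2v$ and $\pi E_{12}v$, and doing a rank analysis---is the same strategy as the paper's, but two of your key computations are wrong, and they are exactly the ones that carve out the locus in the statement. First, $E_1E_2$ does \emph{not} annihilate $v_h\hotimes v_l$: by Table \ref{table:actions}, $E_1E_2v_l=\floor{s_2}\bigl(\zeta s_1F^{(101)}v_h-\floor{\zeta s_1}F^{(010)}v_h\bigr)$, which is nonzero whenever $s_2^2\neq1$. Corollary \ref{cor:E101E011} is about effectiveness at levels $(001)$ and $(100)$, i.e.\ about $E^{(110)}=E_1E_{12}$ and $E^{(011)}=E_{12}E_2$ (its ``$E^{(101)}$'' is evidently a typo, cf.\ the remark in Section \ref{sec:props} that $E^{(110)}v_l=E^{(011)}v_l=0$), so it cannot be used to discard $v_h\hotimes v_l$ at levels $(101)$ and $(010)$. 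In fact, under $(t_1s_1)^2=-1$ the only surviving sources for $\pi E_1E_2v$ are $E_1E_2.(v_h\hotimes v_l)$, with coefficient $\floor{s_2}$, and $\pi E_1E_2.(F_2v_h\hotimes F_1F_{12}v_h)=\floor{t_2s_2}\bigl(\zeta s_1\,v_h\hotimes F^{(101)}v_h-\floor{\zeta s_1}\,v_h\hotimes F^{(010)}v_h\bigr)$; the condition ``$\t\in\X_2$ and $\s\in\R_{12,2}$'' is precisely the simultaneous vanishing of $\floor{s_2}$ and $\floor{t_2s_2}$. By discarding $v_h\hotimes v_l$ you lose the $\floor{s_2}$ contribution and cannot recover this boundary. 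Second, your representative calculation is incorrect: commuting gives $E_1E_2F_1=F_1E_1E_2+\floor{K_1}E_2$, hence $\pi E_1E_2.(F_1v_h\hotimes F_{12}F_2v_h)=\floor{-\zeta t_1s_1}\bigl(s_2^{-1}v_h\hotimes F^{(101)}v_h+\floor{s_2}v_h\hotimes F^{(010)}v_h\bigr)$, and the coefficient $\floor{-\zeta t_1s_1}$ vanishes identically under the hypothesis $(t_1s_1)^2=-1$. With your coefficient $\zeta s_1\floor{t_1}$ this mixed vector would remain effective on the locus $\t\in\X_2$, $\s\in\R_{12,2}$ (where $t_1$ is generic), contradicting the ``only if'' direction you are proving, and it is also inconsistent with your later claim that $\floor{t_2}=\floor{s_2}=0$ kills every mixed-vector contribution.

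Your account of the $\Deltasl(\H)$ obstruction also misidentifies the mechanism. On $\Deltasl(\H)$ the quantities $\floor{s_2}$ and $\floor{t_2s_2}$ are generically nonzero, so the non-$\Lambda$ contributions do \emph{not} vanish; what fails is that every achievable $\pi E_1E_2v$, and likewise every achievable $\pi E_{12}v$ (since $E_1E_2v_l$ and $E_2E_1v_l$ are proportional for $\s\in\H$), lies on the single line spanned by $\zeta s_1\,v_h\hotimes F^{(101)}v_h-\floor{\zeta s_1}\,v_h\hotimes F^{(010)}v_h$, because the $\Lambda$-contribution $v_3$ of Lemma \ref{linear indep} is zero there ($\floor{\zeta t_2s_2}=\floor{t_1s_1t_2s_2}=0$). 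The span is pinned to one dimension, not annihilated. To repair your argument: keep $v_h\hotimes v_l$, note that Lemma \ref{lem:effective} already rules out $v_l\hotimes v_h$, $F^{(011)}v_h\hotimes F^{(100)}v_h$ and $F^{(110)}v_h\hotimes F^{(001)}v_h$ at these levels, redo the two remaining mixed-vector projections with the correct coefficients $\floor{-\zeta t_1s_1}=0$ and $\floor{t_2s_2}$, and then run the two-by-two rank argument with $v_3$ supplying the second direction exactly when $(\t,\s)\notin\Deltasl(\H)$.
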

	\begin{proof}
		By Lemma \ref{b1b2 not cyclic}, we require $(t_2s_2)^2\neq -1$ in order for a vector from the subspace $\Lambda$ to contribute to the spanning set. Let $v_{12}$ and $v_{3}$ be as in the proof of Lemma \ref{linear indep}. Under the present assumptions, $v_{12}$ is zero and under a relabeling 
		\[			v_{3}=\begin{bmatrix}c_1(t_1s_1)^{-1}\floor{\zeta t_2s_2}- c_3\floor{t_1s_1t_2s_2}\\
		c_2(t_1s_1)^{-1}\floor{\zeta t_2s_2}- c_4\floor{t_1s_1t_2s_2}\end{bmatrix}
		=\begin{bmatrix}c_1\floor{\zeta t_2s_2}- c_3\floor{t_1s_1t_2s_2}\\
		c_2\floor{\zeta t_2s_2}- c_4\floor{t_1s_1t_2s_2}\end{bmatrix}\]
		in the basis $\bracks{v_h\hotimes F_1F_2v_h,v_h\hotimes F_{12} v_h}$. \newline
		
		Together with $v_{3}$, only the vectors $E_1E_2.(v_h\hotimes  v_l)$ and $\pi E_1E_2.(F^{(010)}v_h\hotimes  F^{(101)}v_h)$ may contribute to the spanning set, as $\pi E_1E_2.(F^{(100)}v_h\hotimes  F^{(011))}v_h=0$.
		More explicitly, by equations (\ref{eq:E12F13}) and (\ref{eq:E1F13}), those vectors are
		\begin{align*}
		E_1E_2.(v_h\hotimes  v_l)
		&=v_h\hotimes (E_1E_2 F_1F_{12})F_2v_h
		=\floor{s_2}\begin{bmatrix}
		\zeta {s_1}\\- \floor{\zeta s_1}
		\end{bmatrix},
	\\
		\pi E_1E_2(F_2v_h\hotimes F_1F_{12}v_h)&=\pi E_1\floor{K_2}(v_h\hotimes F_1F_{12}v_h)=\floor{t_2s_2}\begin{bmatrix}
		\zeta s_1\\-\floor{\zeta s_1}
		\end{bmatrix}.
		\end{align*}
		Both of these vectors are zero when $s_2^2=1$ and $t_2^2=1$, in which case $\text{span}\{ \pi E_1E_2v,\pi E_{12}v\}$ is at most 1-dimensional.
	\end{proof}
	\begin{lem}\label{lem:1t1s}
		Suppose $(t_2s_2)^2=-1$. There exists  $v\in wt(\mvec{-ts})$ with the spanning property  if and only if 
		$
		(\t,\s)\notin \Deltasl(\H).
		$
	\end{lem}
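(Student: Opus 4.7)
The plan parallels the proof of Lemma \ref{lem:t1s1}, with the roles of $(t_1s_1)$ and $(t_2s_2)$ exchanged. Under the hypothesis $(t_2s_2)^2=-1$, the identity $\floor{\zeta t_2s_2}=0$ reduces the expressions from Lemma \ref{linear indep} for $v\in\Lambda$ to
\begin{align*}
v_{12}=\zeta(t_2s_2)^{-1}\floor{\zeta t_1s_1}\begin{bmatrix}c_3\\c_4\end{bmatrix},&&
v_3=-\floor{t_1s_1t_2s_2}\begin{bmatrix}c_3\\c_4\end{bmatrix}.
\end{align*}
Both vectors lie in the single one-dimensional subspace of $\bracks{v_h\hotimes F_1F_2v_h,v_h\hotimes F_{12}v_h}$ spanned by $(c_3,c_4)^T$, so no $v\in\Lambda$ alone can achieve the spanning property. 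The proof therefore hinges on appending $\mvec{-ts}$-weight vectors lying outside $\Lambda$, the natural candidate being $v_h\hotimes v_l$.

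For the forward direction, assume $(\t,\s)\notin\Deltasl(\H)$, i.e.\ $(t_1s_1)^2\neq-1$. Then $\floor{t_1s_1t_2s_2}\neq 0$, so $v_3$ is a nonzero multiple of $(c_3,c_4)^T$. The computation in the proof of Corollary \ref{cor:E101E011} gives
\[
\pi E_1E_2(v_h\hotimes v_l)=v_h\hotimes E_1E_2v_l=\floor{s_2}[\zeta s_1,-\floor{\zeta s_1}]^T,
\]
which generically points in a direction different from $(c_3,c_4)^T$. When $\floor{s_2}\neq 0$, a suitable choice of $(c_3,c_4)$ makes this vector linearly independent from $v_3$, producing the required two-dimensional span. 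In the edge case $\floor{s_2}=0$, which together with $\s\in\H$ forces $s_1^2=-1$, I would substitute the vector $F_2v_h\hotimes F_1F_{12}v_h$, whose $\pi E_1E_2$-image is $\floor{t_2s_2}[\zeta s_1,-\floor{\zeta s_1}]^T$; this remains nonzero since $(t_2s_2)^2=-1$, and the argument completes as before.

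For the converse, assume $(\t,\s)\in\Deltasl(\H)$, so $(t_1s_1)^2=-1$ as well. The four quantities $\floor{\zeta t_1s_1}$, $\floor{\zeta t_2s_2}$, $\floor{t_1s_1t_2s_2}$, and $\floor{\zeta s_1s_2}$ all vanish---the first three by the weight hypotheses, the last from $\s\in\H$. Immediately, both $v_{12}$ and $v_3$ are zero for every $v\in\Lambda$. I would then systematically enumerate the remaining six basis vectors of $wt(\mvec{-ts})$, namely $v_h\hotimes v_l$, $v_l\hotimes v_h$, $F_1v_h\hotimes F_{12}F_2v_h$, $F_{12}F_2v_h\hotimes F_1v_h$, $F_2v_h\hotimes F_1F_{12}v_h$, and $F_1F_{12}v_h\hotimes F_2v_h$, computing $\pi E_1E_2$ and $\pi E_{12}$ applied to each using the commutator identities from Appendix \ref{sec:comp}. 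The main obstacle is to verify systematically that every one of these twelve images lies in the common one-dimensional subspace spanned by $[\zeta s_1,-\floor{\zeta s_1}]^T$---the direction of $E_1E_2v_l$ inside the multiplicity-two weight space of $V(\s)$, which itself collapses to one dimension under $\s\in\H$ by Corollary \ref{cor:E101E011}. The structural reason for this collapse is that each image's deviation from this direction is controlled by one of the four vanishing scalars above. Once the twelve images are shown to lie in a common one-dimensional subspace, the spanning property fails for every $v\in wt(\mvec{-ts})$, completing the proof.
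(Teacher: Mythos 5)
Your reduction of $v_{12}$ and $v_3$ under $\floor{\zeta t_2s_2}=0$ is correct, but the forward direction has a genuine gap: the spanning property concerns the \emph{pair} $\pi E_1E_2v$ and $\pi E_{12}v$ for one and the same $v$, and you only check independence of one piece of each, namely $\pi E_1E_2(v_h\hotimes v_l)=\floor{s_2}[\zeta s_1,-\floor{\zeta s_1}]^T$ against $v_3$, while discarding the two cross terms, which are both nonzero here: $\pi E_1E_2$ of the $\Lambda$-part is $v_{12}=\zeta(t_2s_2)^{-1}\floor{\zeta t_1s_1}[c_3,c_4]^T\neq 0$ (since $(t_1s_1)^2\neq-1$), and $\pi E_{12}(v_h\hotimes v_l)=v_h\hotimes E_{12}v_l$, which by the displayed formulas for $E_1E_2v_l$ and $E_2E_1v_l$ and $(s_1s_2)^2=-1$ equals $\zeta s_2\,[\zeta s_1,-\floor{\zeta s_1}]^T\neq0$. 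Carrying both contributions, the determinant of the pair produced by $v=v_h\hotimes v_l+c_3F_{12}v_h\hotimes F_1F_2v_h+c_4F_{12}v_h\hotimes F_{12}v_h$ is proportional to $\floor{\zeta t_1s_1}\floor{\zeta s_2}$ times the determinant of $[\zeta s_1,-\floor{\zeta s_1}]^T$ against $[c_3,c_4]^T$; so your construction fails on the whole sub-locus $s_2^2=-1$ (i.e.\ $\s\in\R_{1,12}$, which forces $t_2^2=1$), a case allowed by the hypotheses and invisible to your dichotomy $\floor{s_2}\neq0$ versus $\floor{s_2}=0$. Worse, your proposed fix in the edge case $\floor{s_2}=0$ cannot work at all: using $[E_{12},F_2]=\zeta E_1K_2$ one finds $\pi E_{12}(F_2v_h\hotimes F_1F_{12}v_h)=\zeta t_2s_2[\zeta s_1,-\floor{\zeta s_1}]^T$, and the resulting determinant with the $\Lambda$-contribution is proportional to $\floor{t_2s_2}\floor{t_1s_1t_2s_2}+t_2s_2(t_2s_2)^{-1}\floor{\zeta t_1s_1}$, which vanishes identically under the standing hypothesis $(t_2s_2)^2=-1$. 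The paper's proof pairs differently: it uses the fact that $E_{12}.(v_h\hotimes v_l)$ is nonzero (this survives $\floor{s_2}=0$, since then $\floor{s_1}\neq0$) and plays it against $v_{12}$, which is exactly the combination whose determinant is governed by $\floor{\zeta t_1s_1}\floor{\zeta s_2}$; the residual sub-case $s_2^2=-1$ then still needs additional weight vectors such as $F_1v_h\hotimes F_{12}F_2v_h$, so the cross terms genuinely cannot be waved away.

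Your converse direction is only a plan, not a proof: you list the twelve images to be computed and assert they all lie on one line, but the verification is precisely the content needed. The underlying idea is right and is why the paper can be brief there: on $\Deltasl(\H)$ all $\Lambda$-contributions vanish (Lemma \ref{b1b2 not cyclic}), the vectors $F_1F_{12}v_h\hotimes F_2v_h$, $F_{12}F_2v_h\hotimes F_1v_h$ and $v_l\hotimes v_h$ contribute nothing at these levels by Lemma \ref{lem:effective}, and the remaining contributions from $v_h\hotimes v_l$, $F_2v_h\hotimes F_1F_{12}v_h$ and $F_1v_h\hotimes F_{12}F_2v_h$ all lie on the single line $\bracks{E_1E_2v_l}$ because the two-dimensional weight space of $V(\s)$ collapses when $\s\in\H$ (Corollary \ref{cor:E101E011}) and $\floor{\zeta t_1s_1}=0$; writing out those three short computations would complete this half.
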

	\begin{proof}
		By Lemma \ref{b1b2 not cyclic}, we require $(\t,\s)\notin \Deltasl(\H)$. However, for any such $(\t,\s)$ it can be shown that $E_{12}.(v_h\hotimes v_l)$ is non-zero and forms a spanning set with $v_{12}$.
	\end{proof}
	
	\begin{lem}
		Suppose $(t_1t_2)^2=1$.  There exists  $v\in wt(\mvec{-ts})$ with the spanning property  if and only if   $(\t,\s)\notin \R_{1,2}\times \R_{12,2}$.
	\end{lem}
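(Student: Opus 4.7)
The plan is to extend the case analysis of Lemmas \ref{lem:t1s1} and \ref{lem:1t1s} to the present hypothesis. Under the assumption $(t_1t_2)^2=1$ together with the standing $\s \in \H$, we compute $(t_1s_1t_2s_2)^2 = (t_1t_2)^2(s_1s_2)^2 = -1$, so $\floor{\zeta t_1s_1t_2s_2} = 0$. Substituting into the determinant formula from the proof of Lemma \ref{linear indep} shows that $\text{span}\{\pi E_1E_2 v, \pi E_{12} v\}$ is at most one-dimensional for any $v \in \Lambda$. Hence any $v$ with the spanning property must be supplemented from the complementary basis of $wt(\mvec{-ts})$, namely $v_h \hotimes v_l$, $v_l \hotimes v_h$, $F_1 v_h \hotimes F_{12}F_2 v_h$, $F_{12}F_2 v_h \hotimes F_1 v_h$, $F_2 v_h \hotimes F_1F_{12} v_h$, and $F_1F_{12} v_h \hotimes F_2 v_h$.

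For the forward implication, we suppose $(\t,\s) \in \R_{1,2} \times \R_{12,2}$ and show the spanning property fails. In this stratum, $t_1^2 = t_2^2 = s_2^2 = 1$ and $s_1^2 = -1$, which makes $\floor{t_1}$, $\floor{t_2}$, $\floor{s_2}$, $\floor{\zeta t_1s_1}$, and $\floor{t_2s_2}$ all vanish. Using the identities $\rho_{U^0} E_1E_2 F_1F_2 = \floor{K_1}\floor{K_2}$, $\rho_{U^0}E_1E_2 F_{12} = -\floor{K_1}K_2^{-1}$, $\rho_{U^0}E_{12}F_1F_2 = -\zeta K_1^{-1}\floor{K_2}$, $\rho_{U^0}E_{12}F_{12} = \floor{K_1 K_2}$, together with Table \ref{table:actions} and the computations in the Appendix, we compute $\pi E_1E_2 v$ and $\pi E_{12} v$ for each of the six supplementary basis vectors. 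The vanishing conditions force every such projection to be a scalar multiple of a single vector in $\bracks{v_h \hotimes F_1F_2 v_h,\; v_h \hotimes F_{12} v_h}$, so no linear combination can achieve a two-dimensional span.

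For the converse, we assume $(\t,\s) \notin \R_{1,2} \times \R_{12,2}$ while $(t_1t_2)^2 = 1$ and $\s \in \H$, and exhibit a concrete $v$ with the spanning property. The case analysis proceeds according to which of the membership conditions $t_1^2 = 1$, $t_2^2 = 1$, $s_1^2 = -1$, $s_2^2 = 1$ fail: in each case at least one of the supplementary vectors above contributes a projection in a direction linearly independent of the one-dimensional image coming from $\Lambda$, and adjoining it to a generic $\Lambda$-vector yields a spanning $v$. The main obstacle is the forward direction: checking that in the critical stratum all six supplementary projections collapse into the same line requires careful tracking of $\floor{\cdot}$ cancellations using the relations $(t_1t_2)^2 = 1$ and $(s_1s_2)^2 = -1$; once this collapse is established, the converse reduces to a routine linear algebra verification in each subcase.
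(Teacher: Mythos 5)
Your setup matches the paper's: under $(t_1t_2)^2=1$ and $\s\in\H$ one has $\floor{\zeta t_1s_1t_2s_2}=0$, so by the determinant of Lemma \ref{linear indep} the vectors $v_{12}$ and $v_3$ are dependent and $\Lambda$ alone cannot give the spanning property; the remaining candidates are the supplementary $\mvec{-ts}$ weight vectors you list (three of them, $v_l\hotimes v_h$, $F_{12}F_2v_h\hotimes F_1v_h$, $F_1F_{12}v_h\hotimes F_2v_h$, are already excluded by Lemma \ref{lem:effective}, since their second tensor factor has too small height to reach levels $(101)$ and $(010)$). The gap is in your forward direction. On $\R_{1,2}\times\R_{12,2}$ it is \emph{not} true that all relevant projections collapse into one line: since $\floor{\zeta t_2s_2}\neq0$ and $\floor{t_1s_1t_2s_2}\neq0$ there, $\pi E_{12}$ applied to the $\Lambda$-vectors still realizes both directions $v_h\hotimes F_1F_2v_h$ and $v_h\hotimes F_{12}v_h$, and $\pi E_{12}(v_h\hotimes v_l)=-\zeta\floor{s_1s_2}\,v_h\hotimes F_1F_2v_h\neq0$ as well. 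Consequently the inference ``supplementary projections collinear, $\Lambda$-span at most one-dimensional, hence no combination spans'' is a non sequitur: for $v=u+w$ with $u\in\Lambda$ you must still rule out that $\pi E_1E_2v$ is nonzero and independent of $\pi E_{12}v$. What actually closes the argument is that on this stratum every contribution to the $E_1E_2$-side dies: the $\Lambda$-contribution carries $\floor{\zeta t_1s_1}=0$, $v_h\hotimes v_l$ carries $\floor{s_2}=0$, $F_2v_h\hotimes F_1F_{12}v_h$ carries $\floor{t_2s_2}=0$, and $F_1v_h\hotimes F_{12}F_2v_h$ carries $\floor{t_1}=0$; hence $\pi E_1E_2v=0$ for every $v\in wt(\mvec{-ts})$ and no spanning vector exists. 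Your converse sketch is fine in outline.

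For comparison, the paper does not recompute anything here: after noting the dependence of $v_{12}$ and $v_3$, it splits on $(t_1s_1)^2$. If $(t_1s_1)^2\neq-1$, the coefficients $c_i$ can be chosen so that $v_{12}$ is an arbitrary vector, and it is paired with $E_{12}(v_h\hotimes v_l)$, which is nonzero for $\s\in\H$ (as in Lemma \ref{lem:1t1s}); if $(t_1s_1)^2=-1$, then $(t_2s_2)^2=1$ and the case is handed off to Lemma \ref{lem:t1s1}, which demands $s_2^2\neq1$. The failure locus is therefore $(t_1s_1)^2=-1$ together with $s_2^2=1$, which under the standing hypotheses is exactly $\R_{1,2}\times\R_{12,2}$. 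If you repair your forward step as indicated (all $E_1E_2$-projections vanish on the critical stratum), your direct computation becomes a valid self-contained alternative to that reduction; as written, the collapse claim is false and the forward implication does not follow.
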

	\begin{proof}
		Let $v_{12}$ and $v_{3}$ be as above so that
		\begin{align*}
		v_{12}=\floor{\zeta t_1s_1}\begin{bmatrix}
		c_1\floor{\zeta t_2s_2}+\zeta c_3(t_2s_2)^{-1}\\	
		c_2\floor{\zeta t_2s_2}+\zeta c_4(t_2s_2)^{-1}
		\end{bmatrix}
		&&
		\text{and}
		&&
		v_{3}=\begin{bmatrix}c_1(t_1s_1)^{-1}\floor{\zeta t_2s_2}- c_3t_1t_2\floor{s_1s_2}\\
		c_2(t_1s_1)^{-1}\floor{\zeta t_2s_2}- c_4t_1t_2\floor{s_1s_2}\end{bmatrix}
		\end{align*}	in the basis $\bracks{v_h\hotimes F_1F_2v_h,v_h\hotimes F_{12} v_h}$.
		Note that $v_{12}$ and $v_3$ are linearly {dependent}, by Lemma \ref{linear indep}. However, if $(t_1s_1)^2\neq-1$, then for any vector $w$ belonging to $\text{span}\{v_h\hotimes F_1F_2v_h,v_h\hotimes F_{12} v_h\}$ there are choices of $c_i$ so that $v_{12}$ equals to $w$ and similarly for $v_3$. In particular, $(t_1s_1)^2\neq-1$ implies $v_{12}$ and $E_{12}.(v_h\hotimes v_l)$ form a spanning set shown in the above proof. If  $(t_1s_1)^2=-1$, then $(t_2s_2)^2=1$. According to Lemma \ref{lem:t1s1},  we then require $s_2^2\neq 1$ in order to form a spanning set between $v_{3}$ and $E_1E_2.(v_h\hotimes v_l)$. Hence, there is a spanning set if and only if $(t_1s_1)^2\neq -1$ or $s_2^2\neq1$.
	\end{proof}
	
	\begin{cor}\label{cor:101,010}
		If $(\t,\s)$ belongs to $\Deltasl(\H)$ or $\X_2\times \R_{12,2}$ then 
		$(\t,\s)$ is not homogeneous cyclic.
	\end{cor}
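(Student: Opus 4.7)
The plan is to assemble the preceding lemmas of Section~\ref{sec:cyclici}, together with Corollary~\ref{cor:011,110}, into a case analysis that rules out the existence of a homogeneous cyclic vector in each of the two listed regions. Since cyclicity requires a single $\tilde v\in wt(\mvec{-ts})$ effective at every level $\psi\in\Psi$, in each case it suffices to show either that no $v\in wt(\mvec{-ts})$ has the spanning property (so levels $(101)$ and $(010)$ cannot both be reached from a single $\tilde v$) or that some required level is unreachable by any choice of $\tilde v$ whatsoever.

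For $(\t,\s)\in\Deltasl(\H)$, the defining conditions force $(t_1s_1)^2=(t_2s_2)^2=(t_1t_2)^2=-1$, so either Lemma~\ref{lem:t1s1} or Lemma~\ref{lem:1t1s} applies and directly denies the spanning property. Hence no weight vector can be simultaneously effective at $(101)$ and $(010)$, and $\Vts$ is not homogeneous cyclic. This case is essentially immediate once the spanning-property lemmas are in place.

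For the second case $(\t,\s)\in\X_2\times\R_{12,2}$, we have $t_2^2=1$, and the inclusion $\R_{12,2}\subseteq\X_2\cap\H$ forces $s_2^2=1$ and $s_1^2=-1$. The plan is to split on the value of $t_1^2$. If $t_1^2=1$, then $\t\in\R_{1,2}$ and $(t_1t_2)^2=1$, so $(\t,\s)\in\R_{1,2}\times\R_{12,2}$; the final unnumbered lemma of Section~\ref{sec:cyclici} then rules out the spanning property. If $t_1^2\neq 1$, the three numerical conditions of Lemma~\ref{linear indep} all hold, so the spanning-property lemmas of this section do not close the case; I would instead appeal to Corollary~\ref{cor:011,110} in its symmetric $s_2^2=1$ form, noting that $\R_{12,2}\subseteq\X_2$ still places $(\t,\s)$ in $\X_2^2$, which by that corollary forces non-effectiveness at level $(101)$. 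This second subcase is the main delicate point of the argument: one must step outside Section~\ref{sec:cyclici} and use the earlier corollary's symmetric version, since the spanning property itself is not the obstruction there. In either subcase the algorithm cannot produce a cyclic generator, so $\Vts$ is not homogeneous cyclic.
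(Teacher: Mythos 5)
Your argument is correct, and on $\Deltasl(\H)$ it is exactly the paper's reading: every point there satisfies $(t_1s_1)^2=(t_2s_2)^2=-1$, so Lemma \ref{lem:t1s1} (or \ref{lem:1t1s}) denies the spanning property to every vector of $wt(\mvec{-ts})$, and since any homogeneous cyclic vector must lie in that weight space, cyclicity fails. Where you genuinely depart from the paper is on $\X_2\times\R_{12,2}$. The corollary is presented as a summary of the spanning-property lemmas, in effect reading Lemma \ref{lem:t1s1} as killing all of $\X_2\times\R_{12,2}$; but, as you note, that lemma carries the hypothesis $(t_1s_1)^2=-1$, i.e.\ $t_1^2=1$, and when $t_1^2\neq 1$ all three conditions of Lemma \ref{linear indep} hold, so a spanning vector exists and the multiplicity-two levels are not the obstruction. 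Your repair — $\R_{12,2}\subseteq\X_2$ gives $\X_2\times\R_{12,2}\subseteq\X_2^2$, where Corollary \ref{cor:011,110} (equivalently Corollary \ref{cor:X1X2}) already rules out a homogeneous cyclic vector — is exactly right, and in fact it disposes of the entire region $\X_2\times\R_{12,2}$ in one stroke, so the split on $t_1^2$ is not even needed. Two cosmetic points: the level written $(101)$ in Corollary \ref{cor:011,110} should be $(110)$, the level governed by $s_2\floor{t_2}$ (your appeal to it still yields non-cyclicity), and the lemma you call the ``final unnumbered lemma'' is the $(t_1t_2)^2=1$ lemma immediately preceding the corollary, not the last unlabeled lemma of Section \ref{sec:cyclici}. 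In sum, your route buys a justification of the second region that the in-section lemmas alone cannot supply, at the modest cost of reaching back to the Section \ref{sec:cyclic1} results.
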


	\begin{figure}[h!]
		\centering
		\begin{subfigure}[b]{.45\linewidth}
			\centering
			\includegraphics[scale=1.5]{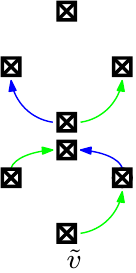}
			\caption{${\X_2\times \R_{12,2}}$}
			\label{fig:11i1}
		\end{subfigure}
		\hfill
		\begin{subfigure}[b]{.45\linewidth}
			\centering
			\includegraphics[scale=1.5]{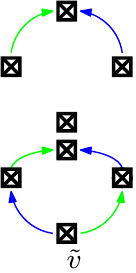}
			\caption{$\Deltasl(\H) $}
			\label{fig:titi}
		\end{subfigure}
		\caption{Representation graph of $\Vts$ generated by $U^+$ acting on $\tilde{v}\in wt(\mvec{\lambda})$ when $(\t,\s)$ belongs to the indicated subset \hbox{of  $\P^2$.}
		}\label{fig:(110) and (001)}
	\end{figure}
	
	Assuming that we have appended a vector $\tilde{v}$ which generates the subspace $\langle v_h\hotimes  F_1F_2v_h,v_h\hotimes  F_{12}v_h\rangle$, it remains to show that $\tilde{v}$ recovers the entire module. As noted in Corollary \ref{cor:E101E011}, \[E^{(110)}(v_h\hotimes  v_l)=E^{(011)}(v_h\hotimes  v_l)=0.\] As such we may neglect the $v_h\hotimes  v_l$ component of the cyclic vector at this point. In fact, the vectors $F_1v_h\hotimes  F_{12}F_2v_h$ and $F_2v_h\hotimes  F_1F_{12}v_h$ do not contribute to effectiveness beyond levels $(101)$ and $(010)$.
	We move our attention to effectiveness at levels $(001)$ and  $(100)$. We first determine whether $F_1F_{12}v_h\hotimes  F_2v_h$ and $F_{12}F_2v_h\hotimes  F_1v_h$ are effective before considering vectors in $\Lambda$. 
	
	\begin{lem}\label{F1F3 effective}
		The vector $F_1F_{12}v_h\hotimes  F_2v_h$ is effective at level $(001)$ if and only if \begin{align}
		(t_1s_1)^{2}\neq -1 &&\text{and} &&(t_1t_2)^{2}\neq -1,
		\end{align}
		and
		$F_{12}F_2v_h\hotimes  F_1v_h$ is effective at level $(100)$
		if and only if \begin{align}
		(t_2s_2)^{2}\neq -1 &&\text{and} &&(t_1t_2)^{2}\neq -1.
		\end{align}
	\end{lem}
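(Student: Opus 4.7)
The plan is to mirror the matrix element arguments in Lemmas \ref{b1b2 not cyclic}--\ref{lem:1t1s}: reduce the effectiveness question to an explicit coefficient computation, then identify under which conditions on $(\t,\s)$ that coefficient vanishes. A weight count first restricts the possible witnesses: to map into $v_h\hotimes F^{(001)}v_h=v_h\hotimes F_{12}v_h$, an operator $x\in\U$ must shift the total weight by $\alpha_1+\alpha_2$. Since $E_\alpha^2=F_\alpha^2=0$ collapses most monomials, the relevant $x$'s lie in a small finite set of $F^\psi K^k E^{\psi'}$ up to central Cartan factors. Moreover, $x\in U^+_{\alpha_1+\alpha_2}$ alone cannot produce a $v_h$ on the first tensor factor, since such an $x$ shifts the first factor by at most $\alpha_1+\alpha_2$, which is insufficient to cancel $F_1F_{12}$ (requiring $2\alpha_1+\alpha_2$). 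Consequently the witness must include an $F$-factor routed to the second tensor factor via the coproduct $\Delta(F_1)=F_1\otimes 1+K_1^{-1}\otimes F_1$, whose cross term converts a $v_h\hotimes F_2v_h$ summand into $\zeta^{-1}t_1^{-1}\,v_h\hotimes F_{12}v_h$ plus terms orthogonal to level $(001)$, using the identity $F_1F_2=\zeta^{-1}F_{12}+\zeta^{-1}F_2F_1$.

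I would then take $x=F_1E_1E_{12}$ and compute $\pi_{(001)}(x.(F_1F_{12}v_h\hotimes F_2v_h))$ in two stages. First, using $\Delta(E_1)\Delta(E_{12})$ together with the vanishings $E_1v_h=E_{12}F_2v_h=0$ and the commutation $E_1F_2=F_2E_1$, show that
\[
E_1E_{12}.(F_1F_{12}v_h\hotimes F_2v_h)=\alpha\,v_h\hotimes F_2v_h+w,
\]
where $\alpha$ is a unit multiple of $\floor{t_1}\floor{\zeta t_1t_2}$ (obtainable from the appendix identity for $\rho_{U^0}(E_1E_{12}F_1F_{12})$ and the Cartan action on $F_2v_h$) and $w$ is supported on vectors with first tensor factor of positive $F$-degree. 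Second, applying $F_1$ via its coproduct, the ``pure'' $\alpha$-summand contributes a $v_h\hotimes F_{12}v_h$ coefficient proportional to $\alpha\zeta^{-1}t_1^{-1}$, while the $w$-summand, after absorbing the $K_1$-factors from $\Delta(E_1)$ that previously acted on the second tensor factor, supplies the remaining $s_1$-dependence. Combining the two contributions through an identity of the form $\floor{t_1}\cdot(\text{unit})+(\text{cross correction})=\floor{\zeta t_1s_1}\cdot(\text{unit})$ shows that the final coefficient is a unit multiple of $\floor{\zeta t_1s_1}\floor{\zeta t_1t_2}$. As $\floor{\zeta x}=0$ if and only if $x^2=-1$, this proves the stated effectiveness criterion; the converse follows from a direct check that any other $x$ of the correct weight reduces, modulo the kernel of $\pi_{(001)}$, to a scalar multiple of this computation.

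The second claim for $F_{12}F_2v_h\hotimes F_1v_h$ at level $(100)$ is proved by the entirely symmetric computation with the roles of $\alpha_1$ and $\alpha_2$ interchanged: the witness becomes $x=F_2E_{12}E_2$, and $[E_2,F_{12}]=-F_1K_2^{-1}$ plays the role of $[E_1,F_{12}]=-K_1^{-1}F_2$; the final coefficient is a unit multiple of $\floor{\zeta t_2s_2}\floor{\zeta t_1t_2}$. The main obstacle is bookkeeping the various cross-term contributions so that they combine correctly. In particular, if one only tracks the ``pure'' $\alpha$-contribution and omits the $w$-correction, the resulting coefficient factors as a multiple of $\floor{t_1}\floor{\zeta t_1t_2}$, giving the wrong vanishing condition $t_1^2=1$ in place of $(t_1s_1)^2=-1$; only the full combination recovers the correct criterion.
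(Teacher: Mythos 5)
Your proposal goes off the rails at the very first step: with the paper's ordering convention $F^{(\psi_{(1)},\psi_{(12)},\psi_{(2)})}=F_1^{\psi(\alpha_1)}F_{12}^{\psi(\alpha_{12})}F_2^{\psi(\alpha_2)}$, level $(001)$ is the component $v_h\hotimes F_2v_h$, not $v_h\hotimes F_{12}v_h$ (that would be level $(010)$). Because of this misreading you conclude that no pure $E$-monomial can serve as a witness and you build an elaborate scheme around $x=F_1E_1E_{12}$ and a coproduct cross term of $\Delta(F_1)$. In fact the paper's proof is exactly the pure $E$-monomial computation you rejected: one applies $E_1E_{12}$ to $F_1F_{12}v_h\hotimes F_2v_h$, uses the appendix identities (\ref{eq:E13F1}), (\ref{eq:E3F3}), (\ref{eq:E12F3}) to get $\rho_{U^{0}}(E_1E_{12}F_1F_{12})=-\zeta\floor{K_1}\floor{\zeta K_1K_2}$, and evaluates these Cartan elements on the weight of $v_h\hotimes F_2v_h$ (namely $K_1\mapsto \zeta t_1s_1$, $K_2\mapsto -t_2s_2$), obtaining $d_{(001)}=-\zeta\floor{\zeta t_1s_1}\floor{t_1s_1t_2s_2}$. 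The $s$-dependence enters automatically because the Cartan part sees the $F_2$-shifted weight of the second factor, not just $\t$; your intermediate claim that the coefficient is a unit multiple of $\floor{t_1}\floor{\zeta t_1t_2}$, and the whole ``cross-correction'' needed to repair it, are therefore wrong. (Vanishing of $\floor{t_1s_1t_2s_2}$ is equivalent to $(t_1t_2)^2=-1$ precisely because of the standing assumption $\s\in\H$ in this section, which your write-up never invokes.)

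A second, independent error is the mechanism you propose for the $F_1$-factor. In the induced model $\Vts=\text{Ind}_B^{\U}(V_{\t}\otimes\text{Ind}_B^{\U}(V_{\s}))$, with respect to which $\pi_\psi$ and effectiveness are defined, $F_1\notin B$ does not pass through $\otimes_B$: it acts by left multiplication on the first factor only, so $\pi_\psi(F_1u)=0$ for every $u$ and every $\psi$. Prepending $F_1$ to a witness can only annihilate, never create, a component of the form $v_h\hotimes F^\psi v_h$, so your $x=F_1E_1E_{12}$ cannot detect effectiveness at any level. Even if one worked in $\Vt\otimes\Vs$ with the coproduct action, the cross term $K_1^{-1}\otimes F_1$ applied to $v_h\otimes F_2v_h$ yields $t_1^{-1}v_h\otimes F_1F_2v_h$, and $F_1F_2v_h$ is itself a PBW basis vector distinct from $F_{12}v_h$, so no $F_{12}v_h$ component is produced; your use of $F_1F_2=\zeta^{-1}F_{12}+\zeta^{-1}F_2F_1$ confuses a rewriting in a non-PBW spanning set with a projection in the basis $\mathscr{B}'$. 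Finally, the ``only if'' direction is not a routine afterthought: one must argue (by weight and PBW considerations, as the paper implicitly does) that any operator carrying $F_1F_{12}v_h\hotimes F_2v_h$ to the level-$(001)$ component reduces to the $E_1E_{12}$ computation, so that vanishing of the single coefficient $-\zeta\floor{\zeta t_1s_1}\floor{t_1s_1t_2s_2}$ indeed forces ineffectiveness.
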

	\begin{proof}
		First, we find $\rho_{U^{0}}E_1E_{12}F_1F_{12}$ given by equations (\ref{eq:E13F1}), (\ref{eq:E3F3}), and (\ref{eq:E12F3}),
		\begin{align*}
		\rho_{U^{0}}E_1E_{12}F_1F_{12}
		&=\rho_{U^{0}}((E_{12}\floor{\zeta K_1}+E_1E_2K_1^{-1})F_{12})=-\zeta \floor{K_1}\floor{\zeta K_1K_2}.
		\end{align*}
		Therefore, $d_{(001)}(E_1E_{12}.(F_1F_{12}v_h\hotimes  F_2v_h))=-\zeta \floor{\zeta t_1s_1}\floor{t_1s_1t_2s_2}$.
		Hence, $F_1F_{12}v_h\hotimes  F_2v_h$ is effective if and only if $(t_1s_1)^{2}\neq -1$ and $(t_1t_2)^{2}\neq -1$.
		A similar computation shows \[d_{(100)}(E_{12}E_2.( F_{12}F_2v_h\hotimes  F_1v_h))=-\zeta \floor{\zeta t_2s_2}\floor{t_1s_1t_2s_2}.\qedhere\] 
	\end{proof}
	
	\begin{lem}
		There does not exist a vector effective for levels $(100)$ and $(001)$ if and only if $(\t,\s)\in\H^2$.
	\end{lem}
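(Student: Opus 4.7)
The plan is to enumerate the basis of $wt(\mvec{-ts})\subseteq\Vts$ and, for each candidate basis vector $v$, to compute the scalars $d_{(001)}(E^{(110)}.v)$ and $d_{(100)}(E^{(011)}.v)$ that govern effectiveness at the two target levels. By Lemma \ref{lem:effective}, the requirement $rt(\psi_2)\geq rt(\psi_3)$ immediately discards $v_l\hotimes v_h$ at both levels and also discards $F_{12}F_2v_h\hotimes F_1v_h$ at level $(001)$. The surviving candidates are $v_h\hotimes v_l$, the two asymmetric vectors $F_1F_{12}v_h\hotimes F_2v_h$ and $F_{12}F_2v_h\hotimes F_1v_h$, the pair $F_1v_h\hotimes F_{12}F_2v_h$ and $F_2v_h\hotimes F_1F_{12}v_h$, together with the four vectors whose second factor lies in $\{F_{12}v_h,F_1F_2v_h\}$.

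For the $(\Leftarrow)$ direction, assume $(\t,\s)\in\H^2$. The decisive arithmetic observation is that $(t_1s_1t_2s_2)^2=(t_1t_2)^2(s_1s_2)^2=1$, forcing $\floor{t_1s_1t_2s_2}=0$; we also have $\floor{\zeta s_1s_2}=0$ from $\s\in\H$. I would compute each candidate scalar in the manner of Lemma \ref{F1F3 effective}, by extracting $\rho_{U^0}$ of the relevant $EF$-composite via the identities in Appendix \ref{sec:comp}. Every such scalar factors through one of the above vanishing quantities, and Corollary \ref{cor:E101E011} handles $v_h\hotimes v_l$ separately. Linearity then shows $d_{(001)}(E^{(110)}.v)$ and $d_{(100)}(E^{(011)}.v)$ both vanish identically on $wt(\mvec{-ts})$, so no linear combination is effective at either level, let alone both.

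For the $(\Rightarrow)$ direction, suppose $\s\in\H$ but $(t_1t_2)^2\neq -1$. In the generic subcase $(t_1s_1)^2\neq -1$ and $(t_2s_2)^2\neq -1$, Lemma \ref{F1F3 effective} directly yields effectiveness of $F_1F_{12}v_h\hotimes F_2v_h$ at $(001)$ and of $F_{12}F_2v_h\hotimes F_1v_h$ at $(100)$; their sum is effective at both levels since, by Lemma \ref{lem:effective}, each summand contributes only at its own target. When exactly one of $(t_is_i)^2=-1$ holds, the corresponding asymmetric vector fails, but effectiveness can be restored by appending a suitable vector from the subspace analyzed in Lemmas \ref{lem:t1s1} and \ref{lem:1t1s}. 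The coefficient $\floor{t_1s_1t_2s_2}$ is the essential ingredient, and it is nonzero precisely because $(t_1t_2)^2\neq -1$.

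The main obstacle is the forward-direction bookkeeping: one must show uniformly that for each of the remaining candidate basis vectors, both raising computations land in the kernels of $d_{(001)}$ and $d_{(100)}$ under the hypothesis $(\t,\s)\in\H^2$. Each individual computation is short, but the cross-terms arising from the coproduct of $E^{(110)}=E_1E_{12}$ and $E^{(011)}=E_{12}E_2$ on tensor products require careful application of the $EF$-commutation identities in Appendix \ref{sec:comp}.
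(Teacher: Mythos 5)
Your proposal is correct and takes essentially the same route as the paper: both arguments reduce effectiveness at levels $(100)$ and $(001)$ to the scalars produced by $E_1E_{12}$ and $E_{12}E_2$ on the $\mvec{-ts}$ weight space, extract them with the Appendix commutation identities, and note that under $\s\in\H$ every candidate scalar is proportional to $\floor{t_1s_1t_2s_2}$ or to an automatically vanishing quantity such as $\floor{\zeta s_1s_2}$, so all vanish exactly when $\t\in\H$, while for $(t_1t_2)^2\neq-1$ a vector of $\Lambda$ (e.g. $F_{12}v_h\hotimes F_{12}v_h$) is effective at both levels. Two harmless slips worth fixing: Lemma \ref{lem:effective} does not literally discard $F_{12}F_2v_h\hotimes F_1v_h$ at level $(001)$ because $rt((100))=rt((001))$, so a short weight argument is needed instead, and the vectors that restore effectiveness in the degenerate subcases are those of $\Lambda$ computed within this very lemma, not the spanning-property Lemmas \ref{lem:t1s1} and \ref{lem:1t1s}, which concern levels $(101)$ and $(010)$.
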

	\begin{proof}
		After Lemma \ref{F1F3 effective}, it remains to compute the actions of $E_1E_{12}$ and $E_{12}E_2$ on $F^\psi v_h\hotimes  F^{\psi'} v_h$ for $\psi,\psi'\in \{(101),(010)\}$. We compute by equations (\ref{eq:E13F12}) and (\ref{eq:E13F3}):
		\begin{align*}
		\rho_B E_1E_{12}F_1F_2
		&=-\zeta E_1\floor{K_1K_2}, &
		\rho_B E_1E_{12}F_{12}&= E_1\floor{K_1K_2}.
		\end{align*}
		By equations (\ref{eq:E32F12}) and (\ref{eq:E32F3}),
		\begin{align*}
		\rho_B E_{12}E_2F_1F_2&=0 &
		\rho_B E_{12}E_2F_{12}&=\zeta E_2\floor{K_1K_2}.
		\end{align*}
		Observe that for each $\psi\in\{(101),(010)\}$,
		$
		\floor{K_1K_2}(v_h\hotimes  F^{\psi'} v_h)=
		-\floor{t_1s_1t_2s_2}v_h\hotimes  F^{\psi'} v_h.
		$
		So each action is zero exactly when $(t_1t_2)^{2}=-1$, in which case there is no effective vector in $\Lambda$. If $\t\in \H$, by Lemma \ref{F1F3 effective}, the vectors $F_1F_{12}\hotimes  F_2v_h$ and $F_{12}F_2v_h\hotimes  F_1v_h$ are also ineffective for levels $(100)$ and $(001)$. This proves the claim.
	\end{proof}

	\begin{cor}\label{cor:100,001}
		The representation $\Vts$ is not homogeneous cyclic due to ineffectiveness at levels $(100)$ and $(001)$ if and only if $(\t,\s)\in \H^2$.
	\end{cor}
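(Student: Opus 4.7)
The plan is to deduce Corollary \ref{cor:100,001} directly from the preceding (unlabeled) lemma, which characterizes exactly when no $\mvec{-ts}$ weight vector can be simultaneously effective at levels $(100)$ and $(001)$. By Lemma \ref{lem:cyclic-ts}, any homogeneous cyclic generator of $\Vts$ must lie in $wt(\mvec{-ts})$, and by Definition \ref{defn:effective} such a generator must be effective at every level $\psi \in \Psi$, including $(100)$ and $(001)$. Hence the assertion ``$\Vts$ fails to be homogeneous cyclic due to ineffectiveness at levels $(100)$ and $(001)$'' is logically equivalent to ``no vector in $wt(\mvec{-ts})$ is simultaneously effective at both $(100)$ and $(001)$.''

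Under this reformulation the corollary reduces to the biconditional already proved: this obstruction occurs precisely when $(\t,\s)\in\H^2$. For the forward direction, if $(\t,\s)\in\H^2$ then the simultaneous vanishing of the relevant $\rho_B E_1 E_{12} F^\psi$ and $\rho_B E_{12} E_2 F^\psi$ matrix entries on $\Lambda$ established in the preceding lemma, combined with the ineffectiveness of $F_1 F_{12} v_h \hotimes F_2 v_h$ and $F_{12} F_2 v_h \hotimes F_1 v_h$ from Lemma \ref{F1F3 effective}, rules out any candidate generator. For the converse, the contrapositive of the preceding lemma shows that if some vector in $wt(\mvec{-ts})$ is effective at both $(100)$ and $(001)$, then $(\t,\s) \notin \H^2$.

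I do not anticipate any substantive obstacle, since the heavy PBW-basis computation has already been carried out in the preceding lemma and in Lemmas \ref{F1F3 effective} and \ref{b1b2 not cyclic}. The remaining content of the corollary is a bookkeeping translation between the algebraic vanishing conditions on matrix entries and the categorical notion of cyclicity, packaged here to align with the stratification of $\P^2$ by cyclicity failure loci that will be assembled in Section \ref{sec:thms}.
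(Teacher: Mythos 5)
Your proposal is correct and matches the paper's (implicit) argument: the corollary is simply the preceding unlabeled lemma repackaged via Lemma \ref{lem:cyclic-ts} and Definition \ref{defn:effective}, exactly as you do, with the section's standing assumption $\s\in\H$ carried along. No substantive difference from the paper's route.
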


\begin{figure}[h!]
		\centering
		\begin{subfigure}{\linewidth}
			\centering
			\includegraphics[scale=1.5]{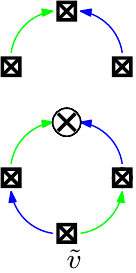}
			\caption{${\H^2}$}
			\label{fig:tisi}
		\end{subfigure}
		\caption{Representation graph of $\Vts$ generated by $U^+$ acting on $\tilde{v}\in wt(\mvec{\lambda})$ when $(\t,\s)$ belongs to the indicated subset \hbox{of  $\P^2$.}}
	\end{figure}
	Lastly, we investigate the $v_h\hotimes  v_h$ level by considering the action of $E^{{(111)}}$. 
	\begin{lem}\label{lem:vhvh}
		We have the following equalities:
		\begin{align}
		d_{(000)}(E^{{(111)}}.(F_1F_2v_h\hotimes  F_1F_2))&=-\floor{t_1s_1}\floor{s_2}\floor{\zeta s_1t_2s_2}\\
		d_{(000)}(E^{{(111)}}.(F_1F_2v_h\hotimes  F_{12}))&=s_1\floor{t_1s_1}\floor{t_2}\\
		d_{(000)}(E^{{(111)}}.(F_{12}v_h\hotimes  F_1F_2))&=-s_1s_2\floor{s_2}(\floor{s_1}\floor{\zeta t_1t_2}-\floor{t_1s_1}t_2^{-1})\\
		d_{(000)}(E^{{(111)}}.(F_{12}v_h\hotimes  F_{12}))&=s_1\floor{\zeta t_1t_2}\floor{s_1}-\zeta \floor{s_1s_2}\floor{t_1s_1}t_2^{-1}s_2^{-1}\\
		d_{(000)}(E^{{(111)}}.(F_1F_{12}v_h\hotimes  F_2))&=s_1s_2\floor{s_2}\floor{t_1t_2}\floor{t_1s_1}\\
		d_{(000)}(E^{{(111)}}.(F_{12}F_2v_h\hotimes  F_1))&=\zeta s_1s_2\floor{s_1}\floor{ t_1t_2}\floor{t_2s_2}.
		\end{align}
	\end{lem}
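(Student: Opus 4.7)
The plan is to establish each of the six identities by direct computation of $E^{(111)}.(F^{\psi_1}v_h\hotimes F^{\psi_2}v_h)$ followed by extraction of the $v_h\hotimes v_h$ coefficient via $d_{(000)}$. Since $E^{(111)}=E_1E_{12}E_2$ and the action on $V(\t)\hotimes V(\s)$ is governed by the coproduct $\Delta(E_i)=E_i\otimes K_i+1\otimes E_i$ (with $\Delta(E_{12})$ derived from this together with $E_{12}=-(E_1E_2+\zeta E_2E_1)$), only finitely many summands of the iterated coproduct contribute to the top weight $\mvec{ts}$ component. I would apply the three raising operators one at a time, at each stage discarding summands whose weights cannot eventually reach $v_h\hotimes v_h$, and retaining only the ``paths'' that fully raise both tensor factors.

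For each such surviving path, the scalar contribution can be read off from Table \ref{table:actions} (the one-sided actions of $E_1$ and $E_2$ on $V(\t)$) together with the Cartan weights $\mvec{\sigma^{\psi}}$ from (\ref{eq:xipowersforsigma}) (the eigenvalues picked up from the $K_i$'s in the left tensorands of the coproduct). Equivalently, one may reduce the resulting $E^\eta F^\varphi$-products to the Borel using the identities of the appendix, i.e.\ the $\rho_{U^0}(E^\eta F^\varphi)$ formulas already invoked in Lemmas \ref{lem:si is 1}, \ref{b1b2 not cyclic}, \ref{linear indep}, and \ref{F1F3 effective}. Summing contributions across all surviving paths and simplifying using $\zeta^2=-1$ would yield the six claimed expressions.

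The main obstacle will be the combinatorial bookkeeping, especially for the first four vectors, whose tensor factors sit inside the two-dimensional weight space $\bracks{F^{(101)}v_h,F^{(010)}v_h}$. In intermediate steps the operators $E_1E_2$, $E_2E_1$, and $E_{12}$ can mix the two basis vectors of this space; however, the relevant mixing coefficients are precisely the entries of the vectors $v_{12}$ and $v_3$ computed in the proof of Lemma \ref{linear indep} and in Lemma \ref{b1b2 not cyclic}, so I would reuse those expressions as a black box. The final $E_1$-application then projects onto $v_h\hotimes v_h$ via $E_1F_1v_h=\floor{t_1}v_h$ and its analogue on the second tensor factor, accounting for the $\floor{t_1s_1}$, $\floor{s_1}$, and $\floor{s_2}$ factors that recur in all six expressions. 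The asymmetric differences visible in identities (3) and (4) will reflect the fact that $F_{12}v_h$ on the right admits two distinct raising paths under $E^{(111)}$, whose scalar contributions do not cancel, whereas identities (1), (2), (5), and (6) arise from a single dominant path.
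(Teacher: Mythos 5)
Your plan is, in outline, the same as the paper's: the paper proves the lemma by reducing $E^{(111)}F^{\psi_1}$ to the Borel subalgebra via the appendix identities (\ref{eq:E132F12}), (\ref{eq:E132F3}), (\ref{eq:E132F1}), (\ref{eq:E32F3}) and then letting the surviving Borel element act across $\hotimes$ on the second factor, which is exactly your ``equivalently, reduce to the Borel'' option (and, like you, it only writes out one of the six computations). However, two specifics in your description would give wrong coefficients if followed literally. First, the $d_{(000)}$'s are coefficients in $\Vts$ with the \emph{left-multiplication} action, where the coproduct enters only when a Borel element crosses $\otimes_B$ and its first leg is evaluated by the character $\gamma_{\t}$ (as in Example \ref{ex}); they are not the coefficients of the coproduct action on the ordinary tensor product $V(\t)\otimes V(\s)$ in the basis $F^{\psi_1}v_h\otimes F^{\psi_2}v_h$, since that module is identified with $\Vts$ only through the nontrivial map $\Theta$ of Proposition \ref{prop:iso}. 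Consequently you cannot read the first-factor contributions from Table \ref{table:actions} with parameter $\t$: the Cartan and raising remnants of the commutation must be retained as operators on the second factor (this is exactly where the factors $\floor{t_1s_1}$, $\floor{t_2s_2}$, $\floor{t_1t_2}$ in the stated formulas come from), and for level $(000)$ you need the full $\rho_B$ projections, which contain $E$-terms such as $E_1E_2\floor{\zeta K_1}K_2$, not the $\rho_{U^0}$ projections you cite.

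Second, reusing $v_{12}=\pi E_1E_2v$ and $v_3=\pi E_{12}v$ from Lemmas \ref{b1b2 not cyclic} and \ref{linear indep} as a black box is insufficient: since $E^{(111)}=E_1E_{12}E_2=-\zeta(E_1E_2)^2$, the final $(000)$ coefficient also receives contributions from intermediate components outside the image of $\pi$ (for instance $F_1v_h\hotimes F_2v_h$ or $F_{12}v_h\hotimes v_h$), which those lemmas discard, so you must carry the full intermediate vectors, not just their $\pi$-parts. Finally, note that the right-hand sides as stated are reached only after invoking the standing hypothesis $\s\in\H$ of Section \ref{sec:cyclici} (e.g.\ the paper's own computation of the first entry yields $-\zeta s_1s_2\floor{t_1s_1}\floor{s_2}\floor{t_2}$, which equals $-\floor{t_1s_1}\floor{s_2}\floor{\zeta s_1t_2s_2}$ precisely when $(s_1s_2)^2=-1$); simplifying ``using $\zeta^2=-1$'' alone will not produce them. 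With these corrections your computation would go through and coincide with the paper's.
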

	\begin{proof}
		We compute by equations (\ref{eq:E132F12}), (\ref{eq:E132F3}), (\ref{eq:E132F1}), and (\ref{eq:E32F3}),
		\begin{align*}\allowdisplaybreaks
		\rho_B E_1E_{12}E_2F_1F_2
		&=-\zeta 
		E_1E_2\floor{\zeta K_1}K_2+ E_{12}\floor{\zeta K_1}\floor{K_2}
		\\
		\rho_B E_1E_{12}E_2F_{12}&= \zeta E_1E_2\floor{K_1K_2} - E_{12}\floor{\zeta K_1}K_2^{-1}
		\\
		\rho_B E_1E_{12}E_2F_1F_{12}
		&=- \zeta E_2\floor{K_1K_2}\floor{\zeta K_1}
		\\
		\rho_B E_1E_{12}E_2F_{12}F_2
		&= E_1 \floor{K_1K_2}\floor{\zeta K_2}.
		\end{align*}
		We include the first computation here, the others are similar:
		\begin{align*}\allowdisplaybreaks
		d_{(000)}(E^{{(111)}}.(F_1F_2v_h\hotimes F_1F_2v_h))&=
		(\floor{s_1}\floor{s_2})\floor{t_1s_1}(t_2s_2)-\zeta (s_1^{-1}\floor{s_2})\floor{t_1s_1}\floor{-\zeta t_2s_2}\\
		&=-\zeta s_1s_2\floor{t_1s_1}\floor{s_2}\floor{ t_2}.
		\qedhere
		\end{align*}	
	\end{proof}
	
	\begin{cor}\label{cor:000}
		The representation $\Vts$ is not effective for level $(000)$ if and only if $(\t,\s)$ belongs to any of $\X_1\times \R_{1,12}$, $\widehat{\Delta}(\H)$, $\X_2\times \R_{12,2}$, or $\R_{1,2}\times \H$.
	\end{cor}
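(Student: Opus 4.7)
The plan is to reduce ineffectiveness at level $(000)$ to a finite collection of polynomial identities in $(t_1,t_2,s_1,s_2)$ and then identify their common zero set with the union of the four listed subsets. First I would observe that any element of $\U$ which raises the weight from $\mvec{-ts}$ to $\mvec{ts}$ must contribute a shift of $2\alpha_1+2\alpha_2$ in the simple root lattice. Since $E_\alpha^2=0$ for every $\alpha\in\overline{\Phip}$ and $\overline{\Phip}=\{\alpha_1,\alpha_{12},\alpha_2\}$, the unique PBW basis element of $U^+$ of this weight is $E^{(111)}=E_1E_{12}E_2$. Hence, by linearity, ineffectiveness at level $(000)$ is equivalent to the vanishing of $d_{(000)}(E^{(111)}\,\cdot\,)$ on a basis of $wt(\mvec{-ts})$.

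Next I would enumerate that basis. Using the projection $P$ from equation (\ref{eq:proj}), the constraint $P(\psi_1)+P(\psi_2)\equiv(2,2)\pmod 4$ yields ten vectors: $v_h\hotimes v_l$ and $v_l\hotimes v_h$; the pair $F_1v_h\hotimes F_{12}F_2v_h$ and $F_{12}F_2v_h\hotimes F_1v_h$; the pair $F_2v_h\hotimes F_1F_{12}v_h$ and $F_1F_{12}v_h\hotimes F_2v_h$; and the four tensor products whose factors each lie in $\{F_{12}v_h, F_1F_2v_h\}$. The six values $d_{(000)}(E^{(111)}\,\cdot\,)$ on the last six vectors are precisely those tabulated in Lemma \ref{lem:vhvh}. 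Proposition \ref{prop:computation} applied to the second tensor factor gives $E^{(111)}(v_h\hotimes v_l)=v_h\hotimes\Omega=-\zeta\floor{s_1}\floor{s_2}\floor{\zeta s_1s_2}\,v_h\hotimes v_h$, with the analogous expression for $v_l\hotimes v_h$ by exchanging $\t$ and $\s$. The two remaining scalars, for $F_1v_h\hotimes F_{12}F_2v_h$ and $F_2v_h\hotimes F_1F_{12}v_h$, are computed in the same coproduct style as Lemma \ref{lem:vhvh} using the identities collected in Appendix \ref{sec:comp}; by the shape of the relevant relations each factors through $\floor{t_1s_1}$ or $\floor{t_2s_2}$ respectively.

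With these ten expressions in hand, I would verify the sufficient direction by direct substitution. On $\X_1\times\R_{1,12}$ the vanishings $\floor{t_1}=\floor{s_1}=\floor{t_1s_1}=\floor{\zeta s_1s_2}=0$ clear every expression; on $\X_2\times\R_{12,2}$ the same argument applies under the symmetry $1\leftrightarrow 2$; on $\DH$ we use $\floor{t_1s_1}=\floor{\zeta t_1t_2}=\floor{\zeta s_1s_2}=0$; and on $\R_{1,2}\times\H$ we combine $\floor{s_1}=\floor{s_2}=0$ with $\floor{\zeta t_1t_2}=0$. A factor-by-factor check then confirms that all ten $d_{(000)}$ values are zero.

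For the converse, the vanishing of the two $\Omega$ expressions immediately forces $\t\in\R$ and $\s\in\R$, reducing the problem to finitely many stratum pairs. I would then work through the $36$ ordered pairs with $(\t,\s)\in\R_I\times\R_J$ where $I,J$ range over $\{\{1\},\{2\},\{12\},\{1,12\},\{12,2\},\{1,2\}\}$, determining in each case which $\floor{\cdot}$ factors are forced to be non-zero and exhibiting, whenever $(\t,\s)$ lies outside the four claimed subsets, a specific $d_{(000)}$ polynomial from Lemma \ref{lem:vhvh} or the two extra computations above that fails to vanish. The symmetry $1\leftrightarrow 2$ roughly halves the bookkeeping, but the main obstacle is the breadth of this case analysis: several near misses such as $\R_{1,12}\times\R_{12,2}$ or $\H\times\R_{12}$ kill most but not all of the expressions, so a careful tabulation of the surviving factors in each stratum pair is needed to ensure no combination is either wrongly included or wrongly excluded.
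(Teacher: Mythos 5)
Your opening reduction is sound and is essentially the paper's: only $E^{(111)}$ can raise $wt(\mvec{-ts})$ to the $(000)$ level, and your list of ten basis vectors of $wt(\mvec{-ts})$ is correct. The execution, however, has genuine gaps. First, you have dropped the standing hypothesis of Section \ref{sec:cyclici} that $\s\in\H$, under which the corollary is stated and proved. The values tabulated in Lemma \ref{lem:vhvh} are simplified using $(s_1s_2)^2=-1$ (for instance $\floor{\zeta s_1t_2s_2}=\zeta s_1s_2\floor{t_2}$ fails without it), so they cannot be fed into a case analysis over all $36$ strata $\R_I\times\R_J$ as you propose; under the standing hypothesis the paper's converse is instead a short split on which of $(t_1s_1)^2=1$, $(t_2s_2)^2=1$, $(t_1t_2)^2=1$ hold, and the two vectors $F_1v_h\hotimes F_{12}F_2v_h$ and $F_2v_h\hotimes F_1F_{12}v_h$ drop out automatically because their $d_{(000)}$ values carry a factor $\floor{\zeta s_1s_2}$. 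Second, your scalar for $v_l\hotimes v_h$ is wrong: $\hotimes$ denotes the asymmetric module $\Ind_B^{\U}(V_{\t}\otimes\Ind_B^{\U}(V_{\s}))$, so there is no $\t\leftrightarrow\s$ exchange symmetry; since $v_h\hotimes v_h$ is a highest weight vector of weight $\t\s$, one gets $d_{(000)}(E^{(111)}(v_l\hotimes v_h))=-\zeta\floor{t_1s_1}\floor{t_2s_2}\floor{\zeta t_1t_2s_1s_2}$, which is exactly how the paper (via Proposition \ref{prop:computation}) obtains the three conditions driving its case analysis, and not $-\zeta\floor{t_1}\floor{t_2}\floor{\zeta t_1t_2}$.

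Third, the sufficiency direction is not a ``factor-by-factor check.'' On $\X_2\times\R_{12,2}$ the fourth value of Lemma \ref{lem:vhvh}, $s_1\floor{\zeta t_1t_2}\floor{s_1}-\zeta\floor{s_1s_2}\floor{t_1s_1}t_2^{-1}s_2^{-1}$, contains none of the vanishing factors on your list and is zero only after a cancellation using $t_2^2=s_2^2=1$ and $s_1^2=-1$; moreover the naive $1\leftrightarrow 2$ symmetry does not act on the tabulated formulas because of the PBW convention for $F_{12}$ and the asymmetry of $\hotimes$. Likewise your list of vanishings for $\R_{1,2}\times\H$, namely $\floor{s_1}=\floor{s_2}=0$ and $\floor{\zeta t_1t_2}=0$, actually describes $\H\times\R_{1,2}$; on the true set one has $\floor{t_1}=\floor{t_2}=\floor{t_1t_2}=0$ and $\floor{\zeta s_1s_2}=0$, and the third and fourth values again vanish only through cancellations. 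Finally, the only-if direction is left as an unexecuted tabulation with acknowledged ``near misses.'' So while the strategy mirrors the paper's, you need the $\s\in\H$ hypothesis made explicit, the corrected computation for $v_l\hotimes v_h$, and an honest verification of the cancellations before this becomes a proof.
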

	\begin{proof}
		Recall the underlying assumption that $(s_1s_2)^2=-1$. Proposition \ref{prop:computation} implies that $v_l\hotimes  v_h$ is not effective for level $(000)$ if and only if 
		\begin{align*}
		(	t_1s_1)^2=1,&&(t_2s_2)^2=1,&&\text{or} &&( t_1t_2)^2=1.
		\end{align*}
		We assume at least one such equality holds, otherwise $v_l\hotimes  v_h$ can be taken as a non-zero component of $\tilde{v}$ to produce a vector effective at level $(000)$. The vectors considered in Lemma \ref{lem:vhvh} may be used as a nonzero component of $\tilde{v}$. We determine when these vectors are all ineffective.
		\newline
		
		Suppose \emph{only} $(t_1s_1)^2=1$ then $F_{12}v_h\hotimes  F_1F_2v_h$, $F_{12}v_h\hotimes  F_{12}v_h$, and $F_{12}F_2v_h\hotimes  F_1v_h$ are effective at level $(000)$. Observe that $d_{(000)}E^{{(111)}}.(F_{12}v_h\hotimes  F_{12}v_h)=0$ only if $s_1^2=1$ or $(t_2s_2)^2=1$. We assume  $s_1^2=1$,  which implies $t_1^2=1$ and $s_2^2=-1$, and all vectors vanish. Hence, each $(\mvec{t},\mvec{s})\in \X_1\times \R_{1,12}$ is not effective at level $(000)$.
		\newline
		
		If $(t_1s_1)^2=1$ \emph{and} $(t_2s_2)^2=1$, then $(t_1t_2)^2=-1$ and all vectors are zero. Thus,  each $(\mvec{t},\mvec{s})\in\widehat{\Delta}(\H)$ is not effective at level $(000)$.
		\newline
		
		If we allow \emph{only} $(t_2s_2)^2=1$, then $d_{(000)}(E^{{(111)}}.(F_1F_2v_h\hotimes  F_{12}v_h))$ vanishes only if $t_2^2=1$. Thus, $s_2^2=1$ and $s_1^2=-1$. At this stage, all vectors vanish. Hence, a pair $(\mvec{t},\mvec{s})\in\X_2\times \R_{12,2}$ is not effective. \newline 
		
		So far we have not considered the $t_1t_2$. Thus, we assume only $(t_1t_2)^2=1$. Again, $F_1F_2v_h\hotimes  F_{12}v_h$ vanishes only if $t_2^2=1$. Assuming this, then $t_1^2=1$ and all vectors vanish.	Therefore, if $(\mvec{t},\mvec{s})\in\R_{1,2}\times \H$. then $\Vts$ does not have a vector effective at level $(000)$. This proves the claim.
	\end{proof}

	\begin{figure}[h!]
		\centering
		\begin{subfigure}{.3\linewidth}
			\centering
			\includegraphics[scale=1.5]{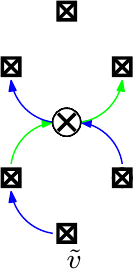}
			\caption{${\X_1\times \R_{1,12}}$}
			\label{fig:1t1i}
		\end{subfigure}
		\begin{subfigure}{.3\linewidth}
			\centering
			\includegraphics[scale=1.5]{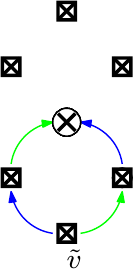}
			\caption{$\widehat{\Delta}(\H)$}
			\label{fig:titinvi}
		\end{subfigure}
		\begin{subfigure}{.3\linewidth}
			\centering
			\includegraphics[scale=1.5]{ti11VtVt}
			\caption{${\R_{1,2}\times \H}$}
			\label{fig:ti11}
		\end{subfigure}	
		\caption{Representation graph of $\Vts$ generated by $U^+$ acting on $\tilde{v}\in wt(\mvec{\lambda})$ when $(\t,\s)$ belongs to the indicated subset \hbox{of  $\P^2$.}
		}\label{fig:(000)}
	\end{figure}

	\section{The Cyclicity Theorem and Transfer Principle}\label{sec:thms}
	We have considered each of the cases identified in Proposition \ref{prop:s degeneracy}. Gathering the results of Corollaries \ref{cor:011,110}, \ref{cor:X1X2}, \ref{cor:101,010},  \ref{cor:100,001},   and \ref{cor:000} we may concisely characterize the existence of a homogeneous cyclic vector and the transfer principle.
	\begin{defn}
		The \emph{acyclicity locus} $\mathcal{A}$ is defined to the subset of $\mathcal{P}^2$ for which $V(\mvec{t})\otimes V(\mvec{s})$ is not homogeneous cyclic. 
	\end{defn}

	\cyclic*
	
	\begin{defn}
		The \emph{cyclicity stratification} of $\mathcal{P}^{2}$ is defined by the filtration 
		\begin{align}
		\mathcal{P}^2_0\subset\mathcal{P}^2_1=\mathcal{A}\subset\mathcal{P}^2_2=\mathcal{P}^2,
		\end{align}
		with
		\begin{equation}
		\mathcal{P}^{2}_0=\R_{1,2}^2\cup\R_{1,12}^2\cup\R_{12,2}^2\cup \DH\cup((\R_{1,12}\cup\R_{12,2})\times \R_{1,2})\cup( \R_{1,2}\times (\R_{1,12}\cup\R_{12,2}))
		\end{equation}
	\end{defn}

	\begin{rem}
		The maximal irreducible subspace generated by some $\tilde{v}\in wt(\mvec{-ts})$ for 
		\begin{align}
		(\t,\s)\in \H^2\cup (\H\times \R_{1,2})\cup 
		(\R_{1,2}\times \H)
		\end{align}
		has two highest weight vectors as seen in Figures \ref{fig:tt11}, \ref{fig:titi}, and \ref{fig:ti11}.
	\end{rem}
	\begin{rem}
		Non-degenerate implies homogeneous cyclic, with a homogeneous cyclic vector given by summing appropriate $\mvec{-ts}$ weight vectors from each direct summand. 
	\end{rem}
	\begin{defn}
		A \emph{transfer} is an isomorphism of representations determined by the action of $\mvec{\lambda}$ on $(\t,\s)$,
		$
		V(\t)\otimes V(\s)\cong V(\mvec{\lambda t})\otimes V(\mvec{\lambda^{-1} s}).
		$
		A transfer is called \emph{trivial} if $\mvec{\lambda}\in\R_{1,2}$. If $\mvec{\lambda}=\mvec{t^{-1}s}$ acts on  $(\t,\s)$ then the transfer is called a \emph{swap}.
	\end{defn}
	We group the defining subsets of $\P^2_0$ and $\P^2_1$ so that they are preserved by swaps, and we refer to the resulting subsets as \emph{symmetrized}. That is to say, we identify \begin{align}
	(\R_{1,12}\times \R_{1,2})\cup( \R_{1,2}\times\R_{1,12}) && \text{and} && (\R_{12,2}\times \R_{1,2})\cup( \R_{1,2}\times\R_{12,2})
	\end{align} as two, rather than four, algebraic sets in order to be preserved under swaps.
	\begin{cor}\label{cor:cyclic1}
		If $(\t,\s)\in \mathcal{P}^{2}\setminus\mathcal{A}$ and its image under $\mvec{\lambda}$ also belongs to $\mathcal{P}^{2}\setminus\mathcal{A}$, then $\mvec{\lambda}$ determines a transfer.
	\end{cor}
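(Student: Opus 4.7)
The plan is to apply Lemma \ref{lem:cyclic} directly, since the action (\ref{eq:action}) of $\mathcal{P}$ on $\mathcal{P}^2$ was designed precisely to preserve the entry-wise product of the two characters. Explicitly, for any $\mvec{\lambda},\t,\s\in\mathcal{P}$, we have
\begin{equation}
(\mvec{\lambda t})(\mvec{\lambda^{-1} s}) = \mvec{ts},
\end{equation}
so the ``total weight'' $\mvec{ts}$ that labels the lowest weight of a homogeneous cyclic generator (cf.\ Lemma \ref{lem:cyclic-ts}) is invariant under the $\mathcal{P}$-action.

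By hypothesis, both $(\t,\s)$ and $\mvec{\lambda}\cdot(\t,\s) = (\mvec{\lambda t},\mvec{\lambda^{-1}s})$ lie in $\mathcal{P}^2\setminus\mathcal{A}$. By definition of the acyclicity locus, this means that both tensor products
\begin{equation}
V(\t)\otimes V(\s) \qquad\text{and}\qquad V(\mvec{\lambda t})\otimes V(\mvec{\lambda^{-1}s})
\end{equation}
admit homogeneous cyclic generators. Lemma \ref{lem:cyclic} then asserts that two homogeneous cyclic tensor products of this form are isomorphic precisely when the products of their character tuples coincide, and this condition is satisfied by the invariance displayed above. Therefore the required isomorphism holds, and $\mvec{\lambda}$ determines a transfer in the sense of the definition preceding the corollary.

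There is no real obstacle here: the work was already absorbed into (i) the definition of the action of $\mathcal{P}$ on $\mathcal{P}^2$, which was arranged so that $\mvec{ts}$ is constant on orbits, and (ii) Lemma \ref{lem:cyclic}, which reduces isomorphism of homogeneous cyclic tensor products to the single invariant $\mvec{ts}$. The content of Theorem \ref{thm:cyclic} is used only to certify that membership in $\mathcal{P}^2\setminus\mathcal{A}$ is exactly the condition guaranteeing existence of a homogeneous cyclic generator, which is what lets us invoke Lemma \ref{lem:cyclic} on both sides.
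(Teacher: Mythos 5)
Your proposal is correct and matches the paper's (implicit) argument: the corollary is exactly the combination of the definition of $\mathcal{A}$, the fact that the action (\ref{eq:action}) preserves the product $\mvec{ts}$, and Lemma \ref{lem:cyclic}, which is how the paper intends it (it is restated as such in the proof of Theorem \ref{thm:transfer}). The only cosmetic point is that "membership in $\mathcal{P}^2\setminus\mathcal{A}$ guarantees a homogeneous cyclic generator" is the definition of the acyclicity locus rather than the content of Theorem \ref{thm:cyclic}, which merely computes $\mathcal{A}$ explicitly.
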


	\transfer*
	\begin{proof}
		The $n=2$ case implies $V(\t)\hotimes  V(\s)$ is a homogeneous cyclic representation. This case is a restatement of  Lemma \ref{lem:cyclic} and Corollary \ref{cor:cyclic1}. \newline
		
		Suppose $n=1$. Figures \ref{fig:t1s1}, \ref{fig:1t1s}, \ref{fig:tt11}, \ref{fig:ti11}, and \ref{fig:tisi} show that
		$\X_1^2$,  $\X_2^2$,
		$\H^2$, and $(\H\times \R_{1,2})\cup 
		(\R_{1,2}\times \H)$ determine non-isomorphic representations. These representations are generated by two or three vectors whose weights are determined by $\mvec{-ts}$, the weight of $\tilde{v}$. Since $\mvec{-ts}$ and the representation diagram are invariant under some $\mvec{\lambda}\in\mathcal{P}$, such a $\mvec{\lambda}$ determines a transfer. Note that the representation diagrams for $\Deltasl(\H)$ are different from those of $\H^2$, see Figure \ref{fig:titi}, but $\Deltasl(\H)\subseteq\H^2$ is preserved by $\mvec{\lambda}$.  \newline
		
		In the $n=0$ case, we only need to consider trivial transfers and swaps on $\mathcal{P}_0^2$. However, the same argument applies.
	\end{proof}

	\appendix
	
	\section{Commutation Relations}\label{sec:comp}
	In this section we gather the general computations used throughout the paper.\allowdisplaybreaks 
	\begin{align}
		[E_1,F_{12}]&=\zeta F_2K_1\\
		[E_2,F_{12}]&=- F_1 K_2^{-1}\\
		[E_{12},F_{12}]&=\floor{K_1K_2}\label{eq:E3F3}\\
		[E_1,F_1F_{12}]&=\zeta F_1F_2K_1- F_{12}\floor{\zeta K_1}	\label{eq:E1F13}\\
		[E_2,F_1F_{12}]&=0\label{eq:E2F13}\\
		[E_1,F_{12}F_2]&=0\label{eq:E1F32}\\
		[E_1E_2,F_1F_2]&=\floor{K_1}\floor{K_2}+F_2E_2\floor{K_1}+F_1E_1\floor{K_2}\label{eq:E12F12}\\
		[E_1E_2,F_{12}]&=F_2E_2K_1- (F_1E_1+\floor{K_1})K_2^{-1}\label{eq:E12F3}\\
		[E_{12},F_1]&=E_2K_1^{-1} \label{eq:E3F1}\\ [E_{12},F_2]&=\zeta E_1K_2\\
		[E_{12},F_1F_2]&=\zeta F_1E_1K_2-\zeta (\floor{K_2}+F_2E_2)K_1^{-1}\label{eq:E3F12}\\
		\label{eq:E12F13}
		[E_1E_2,F_1F_{12}]
		&= F_1F_2E_2K_1-F_{12}E_2\floor{K_1}
		\\ \label{eq:E32F12}
		[E_{12}E_2,F_1F_2]
		&=-\zeta F_1E_1E_2K_2+F_1E_{12}\floor{K_2}\\
		\label{eq:E32F3}
		E_{12}E_2F_{12}&=E_{12}(F_{12}E_2- F_1K_2^{-1})\\
		&=F_{12}E_{12}E_2+\floor{ K_1K_2}E_2-(F_1E_{12}+ E_2K_1^{-1})K_2^{-1}\notag\\
		&=
		F_{12}E_{12}E_2+E_2\floor{\zeta K_1K_2}-F_1E_{12}K_2^{-1}-E_2K_1^{-1}K_2^{-1}\notag
		\\
		\label{eq:E13F1}
		[E_1E_{12},F_1]
		&= E_{12}\floor{\zeta K_1}+ E_1E_2K_1^{-1}\\
		\label{eq:E132F1}
		[E_1E_{12}E_2,F_1]&=E_{12}E_2\floor{K_1}\\
		\label{eq:E13F12}
		[E_1E_{12},F_1F_2]
		&=-F_2E_{12}\floor{K_1}-\zeta F_2E_1E_2K_1^{-1}-\zeta E_1\floor{K_1K_2}\\
		\label{eq:E13F3}
		[E_1E_{12},F_{12}]
		&=- F_2E_{12}K_1+ E_1\floor{K_1K_2}
		\\
		\label{eq:E132F12}
		[E_1E_{12}E_2,F_1F_2]
		&= F_2E_{12}E_2\floor{\zeta K_1}
		-\zeta  E_1E_2\floor{\zeta K_1} K_2\\&\phantom{=}+ E_{12}\floor{\zeta K_1}\floor{K_2}+F_1E_1E_{12}\floor{K_2}\notag
		\\
		\label{eq:E132F3}
		E_1E_{12}E_2F_{12}&=E_1(F_{12}E_{12}E_2+E_2\floor{\zeta K_1K_2}- F_1E_{12}K_2^{-1}-E_2K_1^{-1}K_2^{-1})\\
		&=F_{12}E_1E_{12}E_2+\zeta F_2E_{12}E_2K_1+E_1E_2\floor{\zeta K_1K_2}\notag\\&\phantom{=}-E_1E_2K_1^{-1}K_2^{-1}- F_1E_1E_{12}K_2^{-1}- E_{12}\floor{\zeta K_1}K_2^{-1}\notag
	\\\label{eq:E32F32}
		E_{12}E_2F_{12}F_2&=(F_{12}E_{12}E_2+E_2\floor{\zeta K_1K_2}-F_1E_{12}K_2^{-1}-E_2K_1^{-1}K_2^{-1})F_2\\
		&=F_{12}E_{12}(F_2E_2+\floor{K_2})+ E_2F_2\floor{K_1K_2}+F_1E_{12}F_2K_2^{-1}-\zeta E_2F_2K_1^{-1}K_2^{-1}\notag\\
		&=F_{12}F_2E_{12}E_2+\zeta F_{12}E_1E_2K_2+F_{12}E_{12}\floor{K_2}+ F_2E_2\floor{K_1K_2}+ \floor{K_2}\floor{K_1K_2}\notag\\
		&\phantom{=}+ F_1F_2E_{12}K_2^{-1}+\zeta F_1E_1-\zeta F_2E_2-\zeta \floor{K_2}K_1^{-1}K_2^{-1}\notag
		\end{align}
	\section{The $\text{Ind}$ Functor}\label{Ind}
	In this appendix we define a general induced module. This construction is used to study tensor products of $V(\mvec{t})$ for $\U$ and sets the foundation for proving Theorem \ref{thm:directsum}. 
	\begin{defn}
		Let $A$ be an algebra and $B\subseteq A$ a subalgebra. Define ${\text{Ind}}_B^A:B\text{-}\it{mod} \rightarrow A\text{-}\it{mod}$ the \emph{induction functor} on $B$-modules by
		\begin{align}
		M\mapsto {\text{Ind}}_B^A(M):=A\otimes_B M=A\otimes M/\langle ab\otimes m-a\otimes b.m: a\in A, b\in B, m\in M\rangle.
		\end{align}
		Then ${\text{Ind}}_B^A(M)$ is indeed an $A$-module, with action given by multiplication in the left tensor factor. On $B$-equivariant maps, ${\text{Ind}}_B^A$ produces an $A$-equivariant map:
		\begin{align}
		f\in \Hom_B(M,N)\mapsto {\text{Ind}}_B^A(f):=id_A\otimes f\in \Hom({\text{Ind}}_B^A(M),{\text{Ind}}_B^A(N)).
		\end{align} 
	\end{defn} 
	The $A$-equivariance of ${\text{Ind}}_B^A(f)$ is straightforward to verify.
	\newline
	
	Consider the $B$-modules $M$ and $N$, with $B$ a sub-bialgebra of a bialgebra $A$. There are two types of induced representations on the tensor product of $M$ and $N$, namely
	\begin{align}
	\text{Ind}_B^A(M)\otimes \text{Ind}_B^A(N)=(A\otimes_BM)\otimes(A\otimes_B N)
	\end{align}
	and
	\begin{align}
	\text{Ind}_B^A(M\otimes \text{Ind}_B^A(N))=A\otimes_{B}(M\otimes (A\otimes_{B}N)).
	\end{align}
	Since $\text{Ind}_B^A(M)\otimes \text{Ind}_B^A(N)$ is a tensor product of $A$-modules,  $A$ acts via the coproduct action. Whereas $A$ acts by left multiplication on $\text{Ind}_B^A(M\otimes \text{Ind}_B^A(N))$, only elements of $B$ pass to $M\otimes \text{Ind}_B^A(N)$ which then utilize the coproduct.
	\begin{lem} 
		Let $A$ be a Hopf algebra and $M$ an $A$-module. Define \begin{align}
		\theta:A&\otimes M\rightarrow A\otimes M\\
		a&\otimes m\mapsto a'\otimes S(a'')m, \notag
		\end{align}  under the implied summation convention.
		Then the map $\theta$ is an isomorphism with inverse $\theta^{-1}(a\otimes m)=a'\otimes a''m$.
	\end{lem}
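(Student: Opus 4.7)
The plan is to verify directly that the map $\phi: A \otimes M \to A \otimes M$ defined by $\phi(a \otimes m) = a' \otimes a'' m$ is a two-sided inverse to $\theta$. Both verifications are formal consequences of the standard Hopf algebra axioms, namely coassociativity of the coproduct, the counit identity $\sum a' \epsilon(a'') = a = \sum \epsilon(a') a''$, and the antipode identity $\sum a' S(a'') = \epsilon(a) 1 = \sum S(a') a''$.

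First I would fix Sweedler notation and write $(\Delta \otimes \mathrm{id}) \Delta(a) = (\mathrm{id} \otimes \Delta) \Delta(a)$ as
\begin{align}
(a')' \otimes (a')'' \otimes a'' = a' \otimes (a'')' \otimes (a'')''.
\end{align}
Then I compute $\phi \circ \theta$ applied to $a \otimes m$:
\begin{align}
\phi(\theta(a \otimes m)) = \phi(a' \otimes S(a'') m) = (a')' \otimes (a')'' S(a'') m = a' \otimes (a'')' S((a'')'') m,
\end{align}
where the last equality uses coassociativity. Applying the antipode axiom to $a''$ gives $\sum (a'')' S((a'')'') = \epsilon(a'') 1$, so the expression collapses to $a' \epsilon(a'') \otimes m = a \otimes m$ by the counit axiom. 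The composition $\theta \circ \phi$ is handled symmetrically: after re-bracketing via coassociativity, the factor $\sum S((a'')') (a'')''$ becomes $\epsilon(a'') 1$, and the counit axiom again returns $a \otimes m$.

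Since both $\theta$ and $\phi$ are $\C$-linear and mutually inverse, $\theta$ is a linear isomorphism of $A \otimes M$. The main subtlety worth flagging (rather than a genuine obstacle) is that $\theta$ is not an $A$-module map in any of the obvious structures on $A \otimes M$; its role, as in Proposition \ref{prop:iso}, is purely as a change-of-variables isomorphism that intertwines the coproduct action on $\mathrm{Ind}_B^A(M) \otimes \mathrm{Ind}_B^A(N)$ with the left-multiplication action on $\mathrm{Ind}_B^A(M \otimes \mathrm{Ind}_B^A(N))$ after passing to the relevant quotients. Thus no additional content beyond the Hopf axioms enters the argument, and the verification is a one-line Sweedler calculation in each direction.
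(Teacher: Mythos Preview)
Your proof is correct; the paper states this lemma without proof, evidently regarding the Sweedler-notation verification you carry out as routine. Your computation of $\phi\circ\theta$ and $\theta\circ\phi$ via coassociativity, the antipode identity, and the counit axiom is exactly the standard argument, and your closing remark about $\theta$ intertwining the two actions matches the content of the paper's subsequent Remark.
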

	
	\begin{rem}
		Note that $\theta$ satisfies the following commutative diagram.
		\[\begin{tikzcd}
		A\otimes M \arrow{r}{\theta} \arrow[swap]{d}{L_{\Delta(x)}} & A\otimes M \arrow{d}{L_x\otimes id} \\%
		A\otimes M \arrow{r}{\theta}& A\otimes M
		\end{tikzcd}\]
		Here $L_{\Delta(x)}$ denotes left multiplication of $x'\otimes x''$ on the tensor product, and $L_x$ is left multiplication by $x$. 
	\end{rem}
	
	\begin{prop}\label{prop:iso} Let $A$ be a Hopf algebra, $B\subseteq A$ a subalgebra, and $M$ a $B$-module. Define
		\begin{align}
		\Theta:\emph{\text{Ind}}_B^A(M)\otimes \emph{\text{Ind}}_B^A(N)&\rightarrow \emph{\text{Ind}}_B^A(M \otimes \emph{\text{Ind}}_B^A(N))\\
		[a_1\otimes_B m]\otimes [a_2\otimes_B n]&\mapsto 
		[a_1'\otimes_B (m\otimes [S(a_1'')a_2\otimes_B n])],\notag
		\end{align} under the implied summation convention. Then $\Theta$ defines a natural isomorphism of $A$-modules with inverse
		\begin{align}
		\Theta^{-1}\left(	[a_1\otimes_B (m\otimes [a_2\otimes_B n])]\right)=
		[a_1'\otimes_B m]\otimes [a_1''a_2\otimes_B n].
		\end{align}
		
	\end{prop}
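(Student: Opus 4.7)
The plan is to establish three properties of $\Theta$ in sequence: well-definedness on the relative tensor products, $A$-equivariance, and bijectivity via the stated formula for $\Theta^{-1}$. Naturality in $M$ and $N$ is then immediate since $\Theta$ is given by explicit Sweedler formulas. I implicitly assume $B$ is a sub-bialgebra of $A$ (which holds for the Borel in the paper's setting), so that the coproduct $B$-action on $M\otimes\Ind_B^A(N)$ is defined. The single Hopf-algebraic lever powering every step is the antipode identity
\[
\sum a_{(1)}\otimes S(a_{(2)})a_{(3)} \;=\; a\otimes 1 \;=\; \sum a_{(1)}\otimes a_{(2)}S(a_{(3)}),
\]
which follows from coassociativity combined with $m\circ(S\otimes\mathrm{id})\circ\Delta=\eta\epsilon$.

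For well-definedness I check $B$-balance in each tensor slot of the domain. The right slot is immediate: for $b\in B$, replacing $(a_2,n)$ by $(a_2 b,n)$ gives $S(a_1'')a_2 b\otimes_B n = S(a_1'')a_2\otimes_B bn$. The left slot is substantive. Replacing $(a_1,m)$ by $(a_1 b,m)$, I expand $\Delta(a_1 b) = a_1' b'\otimes a_1'' b''$ and use anti-multiplicativity of $S$; passing $b'$ across $\otimes_B$ invokes the coproduct $B$-action on $M\otimes\Ind_B^A(N)$, producing a factor $b'_{(2)}S(b'')$ sitting next to $b'_{(1)}m$. Applying the antipode identity to $\Delta^2(b)=b_{(1)}\otimes b_{(2)}\otimes b_{(3)}$ collapses this to leave $bm$ in the $M$-slot and $S(a_1'')a_2$ unadulterated, matching $\Theta$ of $[a_1\otimes_B bm]\otimes[a_2\otimes_B n]$. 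Well-definedness of $\Theta^{-1}$ is a parallel but calculation-free check.

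For $A$-equivariance I apply $\Theta$ to the coproduct action $x\cdot([a_1\otimes_B m]\otimes[a_2\otimes_B n]) = [x' a_1\otimes_B m]\otimes[x'' a_2\otimes_B n]$. Expanding the coproduct of $x' a_1$ and using $\Delta^2(x)=x_{(1)}\otimes x_{(2)}\otimes x_{(3)}$ produces
\[
[\, x_{(1)} a_1'\otimes_B (m\otimes [S(a_1'')S(x_{(2)})x_{(3)}a_2\otimes_B n])\,].
\]
Since the pair $(x_{(2)},x_{(3)})$ enters only through the product $S(x_{(2)})x_{(3)}$, the antipode identity collapses it to $\epsilon(x_{(2)})\cdot 1$, yielding $[\,x a_1'\otimes_B (m\otimes [S(a_1'')a_2\otimes_B n])\,]$, which is exactly $x\cdot\Theta(\cdots)$ under left multiplication on the outer tensor factor.

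To conclude, I verify $\Theta^{-1}\circ\Theta=\mathrm{id}$ by direct substitution: $\Theta^{-1}$ applied to $[a_1'\otimes_B(m\otimes[S(a_1'')a_2\otimes_B n])]$ gives $[(a_1')'\otimes_B m]\otimes[(a_1')'' S(a_1'')a_2\otimes_B n]$, which by coassociativity becomes $[(a_1)_{(1)}\otimes_B m]\otimes[(a_1)_{(2)} S((a_1)_{(3)})a_2\otimes_B n]$ and collapses to $[a_1\otimes_B m]\otimes[a_2\otimes_B n]$ via the companion antipode identity; the reverse composition is symmetric. The main obstacle I anticipate is the left-slot well-definedness check, where three overlapping coproducts --- on $a_1$, on $b$, and inside the nested $B$-action on $M\otimes\Ind_B^A(N)$ --- must be tracked simultaneously, but once the Sweedler indices are aligned the argument reduces to a single antipode collapse.
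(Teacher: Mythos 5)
Your proposal is correct and takes essentially the same approach as the paper: well-definedness is established exactly as in the paper's computation, by expanding $\Delta(a_1b)$, using anti-multiplicativity of $S$, passing $b'$ across $\otimes_B$ via the coproduct $B$-action on $M\otimes\text{Ind}_B^A(N)$, and collapsing $b_{(2)}S(b_{(3)})$ with the antipode axiom (with the same implicit sub-bialgebra hypothesis on $B$ that the paper also makes). The only differences are of emphasis --- you write out the $A$-equivariance and the inverse identities, which the paper delegates to the reader and to its preceding remark on $\theta$, while you treat naturality as immediate where the paper displays the square --- and none of this affects correctness.
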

	
	\begin{proof} 
		It is left to the reader to check that $\Theta$ and $\Theta^{-1}$ are indeed inverses.
		Observe that 
		\begin{align*}
		\text{Ind}_B^A(M)\otimes \text{Ind}_B^A(N)\cong (A\otimes M)\otimes (A\otimes N)/R_1 
		\end{align*}
		with 
		\[
		R_1=\langle (a_1b_1\otimes m)\otimes (a_2b_2\otimes n)-
		(a_1\otimes b_1. m)\otimes (a_2\otimes b_2.n): a_1,a_2\in A, b_1,b_2\in B, m\in M, n\in N\rangle
		\]
		and
		\begin{align*}
		\text{Ind}_B^A(M\otimes \text{Ind}_B^A(N))\cong A\otimes (M\otimes (A\otimes N)/R_2
		\end{align*}
		with 
		\[
		R_2= \langle a_1b_1\otimes (m\otimes (a_2b_2\otimes n))-
		a_1\otimes (b'_1. m\otimes (b''_1a_2\otimes b_2.n)): a_1,a_2\in A, b_1,b_2\in B, m\in M, n\in N\rangle.
		\]
		We first prove well-definedness of $\Theta$: 
		\begin{align*}
		&\Theta([a_1b_1\otimes_B m]\otimes [a_2b_2\otimes_B n]-[a_1\otimes_B b_1. m]\otimes [a_2\otimes_B b_2.n])
		\\&=[(a_1b_1)'\otimes_B (m\otimes [S((a_1b_1)'')a_2b_2\otimes_B n])]-[a'_1\otimes_B (b_1. m\otimes [S(a''_1)a_2\otimes_B b_2.n])]
		\\&=[a_1'\otimes_B b_1'(m\otimes [(S(b''_1)S(a''_1)a_2) \otimes_B b_2n])]-[a'_1\otimes_B (b_1. m\otimes [S(a''_1)a_2\otimes_B b_2.n])]\\&=0
		\end{align*}
		and similarly for $\Theta^{-1}$.
		\comm{\begin{align*}
		&\Theta^{-1} ([a_1b_1\otimes_B (m\otimes [a_2b_2\otimes_B n])]-[a_1\otimes_B (b'_1. m\otimes [b''_1a_2\otimes_B b_2.n])])
		\\&=[(a_1b_1)'\otimes_B m]\otimes [(a_1b_1)''a_2b_2\otimes_B n)]-[a_1'\otimes_B b'_1. m]\otimes [(a_1b_1)''a_2\otimes_B b_2.n]
		\\&=0.
		\end{align*}}
		It now remains to show commutativity of the following diagram for any choice of $B$-equivariant maps $f:M\rightarrow M'$ and $g:N\rightarrow N'$.
		\[\begin{tikzcd}
		\text{Ind}_B^A(M)\otimes \text{Ind}_B^A(N) \arrow{r}{\Theta} \arrow[swap]{d}{\text{Ind}_B^A(f)\otimes \text{Ind}_B^A(g)} & \text{Ind}_B^A(M\otimes \text{Ind}_B^A(N)) \arrow{d}{\text{Ind}_B^A(f\otimes \text{Ind}_B^A(g))} \\%
		\text{Ind}_B^A(M')\otimes \text{Ind}_B^A(N')\arrow{r}{\Theta}& \text{Ind}_B^A(M'\otimes \text{Ind}_B^A(N'))
		\end{tikzcd}\]
		We compute \begin{align*}
		&\text{Ind}_B^A(f\otimes \text{Ind}_B^A(g))\circ \Theta( (a_1\otimes m) \otimes( a_2 \otimes n))
		=a_1'\otimes (f(m)\otimes S(a_1'')a_2\otimes g(n)),
		\end{align*}
		which agrees with
		\begin{align*}
		&\Theta\circ (\text{Ind}_B^A(f)\otimes \text{Ind}_B^A(g))( (a_1\otimes m) \otimes( a_2 \otimes n))=\Theta((a_1\otimes f(m)) \otimes( a_2 \otimes g(n))). \qedhere
		\end{align*} 
	\end{proof}
\comm{	\section{Results for $\U_\zeta(\mathfrak{sl}_2)$}
	Here, we prove results for irreducibility and cyclicity of the representations $V(t)$ and $V(t)^{\otimes 2}$ of $\U_\zeta{(\sltwo)}$ using the methods developed for $\Uzslthree$. A summary of these properties are given at the end of this section.  We will use the language from Sections \ref{sec:notation} and \ref{sec:cyclicgen}. In this section, let $\mathcal{P}$ be the group of characters on $U^0$, which is isomorphic to $\C^\times$. 
	
	\begin{defn}
		The 2-dimensional representation $V(t)$ is defined on generators by:
		\begin{align}
		E=\begin{bmatrix}
		0&\floor{t}\\0&0
		\end{bmatrix},&&F=\begin{bmatrix}
		0&0\\1&0
		\end{bmatrix},&&K=\begin{bmatrix}
		t&0\\0&-t
		\end{bmatrix}
		\end{align}
		expressed in the standard basis $(v_h,v_1)=(v_h,Fv_h)$.
	\end{defn}
	\begin{prop}
		The representation $V(t)$ is irreducible if and only if $t^2\neq1$.
	\end{prop}
	\begin{proof}
		We compute $\Omega=EFv_h=\floor{t}v_h$. This vector vanishes when $t^2=1$.
	\end{proof}
	\begin{thm}[\cite{Ohtsuki}]\label{thm:Ohtsuki}
		The tensor product of irreducible representations decomposes as a direct sum of irreducible representations according to
		\begin{equation}\label{eq:directsum}
		V(t)\otimes V(s)\cong V(ts)\oplus V(-ts).
		\end{equation}
	\end{thm}
	A basis of $V(t)\otimes V(s)$ which determines a basis of $V(ts)\oplus V(-ts)$ is
	\begin{align}\label{Obasis}
	(v_h\otimes v_h, \Delta(F)v_h\otimes v_h, \Delta(E)v_1\otimes v_1,v_1\otimes v_1).
	\end{align}
	This basis can be found in Appendix A.3 of \cite{Ohtsuki}.
	Let \begin{align}
	\mathcal{I}=\{(t,s)\in \mathcal{P}^{2}:(ts)^{2}=1\} && \text{and} && \X_1=\{t\in\P:t^2=1\}.
	\end{align} The direct sum decomposition holds for some representations which are not necessarily irreducible. The following is a refinement of Theorem \ref{thm:Ohtsuki}, as $t$ and $s$ are not assumed to be generic. 
	\begin{prop}
		The tensor decomposition in $(\ref{eq:directsum})$ holds if and only if \begin{align}
		(t,s)\in(\mathcal{P}^{2}\setminus\mathcal{I})\cup \X_1^2.
		\end{align} 
	\end{prop}
	
	\begin{proof}
		Since 
		\[
		\Delta(F)v_h\otimes v_h=Fv_h\otimes v_h+K^{-1}v_h\otimes Fv_h=v_1\otimes v_h+t^{-1}v_h\otimes v_1
		\]
		and 
		\[
		\Delta(E)v_1\otimes v_1=Ev_1\otimes Kv_1+v_1\otimes Ev_1
		=-\floor{t}sv_h\otimes v_1+\floor{s}v_1\otimes v_h,
		\]
		the vectors in (\ref{Obasis}) do not form a basis either when  $(t,s)\in\X_1$ so that $\Delta(E)v_1\otimes v_1$ vanishes, or when $(t,s)\in\mathcal{I}$ so that the vectors $\Delta(E)v_1\otimes v_1$ and $\Delta(F)v_h\otimes v_h$ are linearly dependent. We consider each case separately.\newline

		If $\Delta(E)v_1\otimes v_1=0$, then any combination of $v_h\otimes v_1$ and $v_1\otimes v_h$ which is linearly independent from $\Delta(F)v_h\otimes v_h$ can be used in place of $\Delta(E)v_1\otimes v_1$ in (\ref{Obasis}). Thus, proving the isomorphism in (\ref{eq:directsum}) for $(t,s)\in\X_1$. \newline

		In the latter case, let $(t,s)\in\mathcal{I}\setminus\X_1$. Assume $V(t)\otimes V(s)\cong W_1\oplus W_2$, and $W_1$ contains $\Delta(F)v_h\otimes v_h\neq 0$. Since $\Delta(E)v_1\otimes v_1\neq0$ and is proportional to $\Delta(F)v_h\otimes v_h$, both $v_h\otimes v_h$ and $v_1\otimes v_1$ belong to $W_1$. Thus $W_1$ is at least 3-dimensional. Consider any vector $v$ in the ${-ts}$ weight space. Then $v$ can be expressed as a linear combination of $\Delta(F)v_h\otimes v_h$ and $v_1\otimes v_h$, and \begin{equation*}
		\Delta(F)v\in\bracks{\Delta(F)v_1\otimes v_h}=\bracks{v_1\otimes v_1}\subseteq W_1.
		\end{equation*} Thus, $v\in W_1$; and in particular, $v_1\otimes v_h$ belongs to $W_1$. Therefore, $V(t)\otimes V(s)\cong W_1$ is indecomposable for $(t,s)\in\mathcal{I}\setminus\X_1$.
	\end{proof}
	
	Let $V(t)\hotimes V(s)$ denote the induced representation $\text{{Ind}} _{B}^{\U_\zeta(\mathfrak{sl}_2)}\left(V_{t}\otimes \text{{Ind}} _{B}^{\U_\zeta(\mathfrak{sl}_2)}(V_{s})\right)$.
	Using the methods of Section \ref{sec:cyclicgen}, we determine the existence of a cyclic vector for $V(t)\hotimes V(s)$. In this case $v_h\hotimes v_l=v_h\hotimes v_1$.
	\begin{prop}
		The following are equivalent in $V(t)\hotimes V(s)$:
		\begin{itemize}
			\item $v_h\hotimes v_l$ is a homogeneous cyclic vector
			\item $s\notin \X_1$.
		\end{itemize}
	\end{prop}
	\begin{proof}
		It is enough to compute $Ev_h\hotimes v_l=v_h\hotimes Ev_l=\floor{s}v_h\hotimes v_h$.
	\end{proof}
	\begin{prop}
		Suppose $s\in \X_1$. There exists a homogeneous cyclic vector for $V(t)\hotimes V(s)$ if and only if $t\notin\X_1$.
	\end{prop}
	\begin{proof}
		A generating vector must have weight $-ts$ and we have already proven that $v_h\otimes v_l$ is not sufficient for cyclicity under the present assumptions. Thus, we compute\[
		Ev_l\hotimes v_h=\floor{ts}v_h\hotimes v_h=s\floor{t}v_h\hotimes v_h.
		\]
		This shows $v_l\hotimes v_h+v_h\hotimes v_l$ is a generating vector for $V(t)\hotimes V(s)$ if and only if $t\not\in \X_1$. 
	\end{proof}
	\begin{cor}
		The acyclicity locus is $\mathcal{A}=\X_1^2$.
	\end{cor}
	\begin{cor}\label{cor:redind}
		The representations $V(t)\otimes V(t^{-1})$ for $t\notin\X_1$ are homogeneous cyclic, reducible, and indecomposable.
	\end{cor}
	
	We plot the acyclicity locus, denoted by circles, and the curves $ts=1$ and $ts=-1$ in Figure \ref{fig:sl2} below. The other curves drawn denote single isomorphism classes of representations. It is enough to plot only the first and second quadrants by considering sign transfers. Each curve $ts=c$ for $c^2\neq1$ corresponds to an isomorphism class of $V(t)\hotimes V(s)$. If $c^2=1$, then each curve $ts=c$ determines an isomorphism class of $V(t)\hotimes V(s)$ for $s^2\neq 1$. Each point along the curve $s=|t|$ determines a unique decomposable tensor product representation. On the other hand, each point on the curve $s=c$ determines a unique homogeneous cyclic representation whenever $c^2\neq 1$.
	
	\begin{figure}[h!]\centering
		\begin{tikzpicture}
		\draw[dashed, ->] (0,0) -- (6,0) node[right] {$t$};
		\draw[dashed, ->] (0,0) -- (-6,0);
		\draw[dashed,->] (0,0) -- (0,5.5) node[above] {$s$};
		\draw (1,-.2)--(1,.2);
		\draw (-1,-.2)--(-1,.2);
		\draw (-.2,1)--(.2,1);
		\draw[scale=1,domain=0.2:5.2,smooth,variable=\x] plot ({\x},{1/\x});
		{\draw[scale=1,domain=0.1:5.2,smooth,variable=\x] plot ({\x},{.5/\x});
			\draw[scale=1,domain=0.6:5.2,smooth,variable=\x] plot ({\x},{3/\x});
			\draw[scale=1,domain=0.4:5.2,smooth,variable=\x] plot ({\x},{2/\x});}
		\draw[black,fill=white ] (1,1) circle (1ex);
		\draw[dotted] (1.2,-.3) node {1};
		\draw[dotted] (-.8,-.3) node {-1};
		\draw[dotted] (-.3,1) node {1};
		\draw[scale=1,domain=-0.2:-5.2,smooth,variable=\x] plot ({\x},{-1/\x});
		{\draw[scale=1,domain=-0.1:-5.2,smooth,variable=\x] plot ({\x},{-.5/\x});
			\draw[scale=1,domain=-0.6:-5.2,smooth,variable=\x] plot ({\x},{-3/\x});
			\draw[scale=1,domain=-0.4:-5.2,smooth,variable=\x] plot ({\x},{-2/\x});}
		\draw[black,fill=white ] (-1,1) circle (1ex);
		\end{tikzpicture}
		\caption{Isomorphism classes of representations $V(t)\otimes V(s)$.}\label{fig:sl2}
	\end{figure}}

	\bibliographystyle{alpha}

\end{document}